\tikzstyle{every picture}+=[remember picture,inner xsep=0,inner ysep=0.25ex]
\def\VR{\kern-\arraycolsep\strut\vrule &\kern-\arraycolsep}
\def\vr{\kern-\arraycolsep & \kern-\arraycolsep}
\newcommand*{\sublabel}[1]{%
    \let\old@currentlabel\@currentlabel%
    \renewcommand{\@currentlabel}{\theenumii}%
    \label{#1}%
    \let\@currentlabel\old@currentlabel%
}
\newcommand*\interior[1]{\mathring{#1}}
\DeclareMathOperator{\Span}{span}
\DeclareMathOperator{\Rank}{rank}
\DeclareMathOperator{\Card}{card}
\DeclareMathOperator{\sign}{sign}
\def\widebreve{\mathpalette\wide@breve}
\def\wide@breve#1#2{\sbox\z@{$#1#2$}%
     \mathop{\vbox{\m@th\ialign{##\crcr
\kern0.08em\brevefill#1{0.8\wd\z@}\crcr\noalign{\nointerlineskip}%
                    $\hss#1#2\hss$\crcr}}}\limits}
\def\brevefill#1#2{$\m@th\sbox\tw@{$#1($}%
  \hss\resizebox{#2}{\wd\tw@}{\rotatebox[origin=c]{90}{\upshape(}}\hss$}
\newcommand{\RR}{\mathbb R}
\newcommand{\NN}{\mathbb N}
\newcommand{\ZZ}{\mathbb Z}
\newcommand{\cT}{\mathcal T}
\newcommand{\cF}{\mathcal F}
\newcommand{\cB}{\mathcal B}
\newcommand{\cI}{\mathcal I}
\newcommand{\cM}{\mathcal M}
\newcommand{\cN}{\mathcal N}
\newcommand{\cG}{\mathcal G}
\newcommand{\cA}{\mathcal A}
\newcommand{\cH}{\mathcal H}
\newcommand{\cZ}{\mathcal Z}
\newcommand{\cC}{\mathcal C}
\newcommand{\benu}{\begin{enumerate}}
\newcommand{\eenu}{\end{enumerate}}
\newcommand{\bop}{\begin{opomba}}
\newcommand{\eop}{\end{opomba}}
\newcommand{\supp}{\mathrm{supp}}
\newtheorem{theorem}{Theorem}[section]
\newtheorem{corollary}[theorem]{Corollary}
\newtheorem{lemma}[theorem]{Lemma}
\newtheorem{proposition}[theorem]{Proposition}
\theoremstyle{definition}
\newcommand{\mc}{\mathcal}
\newcommand{\mbb}{\mathbb}
\definecolor{green-new}{rgb}{0.0, 0.5, 0.0}
\definecolor{cyan}{rgb}{0.0, 0.8, 1.0}
\newtheorem{remark}[theorem]{Remark}
\numberwithin{equation}{section}
\begin{document}

\title[TMP on reducible cubic curves II]{Constructive approach to the truncated moment problem on reducible cubic curves: 
Hyperbolic type relations}

\author[S. Yoo]{Seonguk Yoo}
\address{Department of Mathematics Education and RINS, Gyeongsang National University, Jinju, 52828, Korea. }
\email{seyoo@gnu.ac.kr}

\author[A. Zalar]{Alja\v z Zalar${}^{2}$}
\address{Alja\v z Zalar, 
Faculty of Computer and Information Science, University of Ljubljana  \& 
Faculty of Mathematics and Physics, University of Ljubljana  \&
Institute of Mathematics, Physics and Mechanics, Ljubljana, Slovenia.}
\email{aljaz.zalar@fri.uni-lj.si}
\thanks{${}^2$Supported by the ARIS (Slovenian Research and Innovation Agency)
research core funding No. P1-0288 and grants No.\ J1-50002, J1-60011.}

\subjclass[2020]{Primary 44A60, 47A57, 47A20; Secondary 15A04, 47N40.}

\keywords{Truncated moment problems; $K$–moment problems; $K$–representing measure; Minimal measure; Moment matrix extensions}
\date{\today}
\maketitle

\begin{abstract}
In this paper, we solve \textit{constructively} the bivariate truncated moment problem (TMP) of even degree on reducible cubic curves, where the conic part is a hyperbola.
 According to the classification from our previous work \cite{YZ24},
 these represent three out of nine possible canonical forms of reducible cubic curves 
 after applying an affine linear transformation.
 The TMP on the union of three parallel lines, the circular and the parabolic type TMP were solved constructively in \cite{Zal22a,YZ24}, while in this paper
 we consider three cases 
 of hyperbolic type, 
 i.e.,  a type without real self-intersection points,
 a type with a simple real self-intersection point
 and
 a type with a double real self-intersection point.
 In all cases, we also establish bounds on the number of atoms in a minimal representing measure.
\end{abstract}


\section{
    Introduction
    }

Let $\ZZ_+$ stand for nonnegative integers. 
    Given a real $2$--dimensional sequence
	$$\beta\equiv\beta^{(2k)}=\{\beta_{0,0},\beta_{1,0},\beta_{0,1},\ldots,\beta_{2k,0},\beta_{2k-1,1},\ldots,
		\beta_{1,2k-1},\beta_{0,2k}\}$$
of degree $2k$ and a closed subset $K$ of $\RR^2$, the \textbf{truncated moment problem ($K$--TMP)} supported on $K$ for $\beta^{(2k)}$
asks to characterize the existence of a positive Borel measure $\mu$ on $\RR^2$ with support in $K$, such that
	\begin{equation}
		\label{moment-measure-cond}
			\beta_{i,j}=\int_{K}x^iy^j d\mu\quad \text{for}\quad i,j\in \ZZ_+,\; 
            i+j\leq 2k.
	\end{equation}
If such a measure exists, we say that $\beta^{(2k)}$ has a representing measure 
	supported on $K$ and $\mu$ is its $K$--\textbf{representing measure ($K$--rm).}

In the degree-lexicographic order $\textit 1,X,Y,X^2,XY,Y^2,\ldots,X^k,X^{k-1}Y,\ldots,Y^k$ of rows and columns, the corresponding moment matrix to $\beta$
is equal to 
	\begin{equation}	
	\label{281021-1448}
            \mc M(k)=
		\mc M(k;\beta):=
		\left(\begin{array}{cccc}
		\mc M[0,0](\beta) & \mc M[0,1](\beta) & \cdots & \mc M[0,k](\beta)\\
		\mc M[1,0](\beta) & \mc M[1,1](\beta) & \cdots & \mc M[1,k](\beta)\\
		\vdots & \vdots & \ddots & \vdots\\
		\mc M[k,0](\beta) & \mc M[k,1](\beta) & \cdots & \mc M[k,k](\beta)
		\end{array}\right),
	\end{equation}
where
	$$\mc M[i,j](\beta):=
		\left(\begin{array}{ccccc}
		\beta_{i+j,0} & \beta_{i+j-1,1} & \beta_{i+j-2,2} & \cdots & \beta_{i,j}\\
		\beta_{i+j-1,1} & \beta_{i+j-2,2} & \beta_{i+j-3,3} & \cdots & \beta_{i-1,j+1}\\
		\beta_{i+j-2,2} & \beta_{i+j-3,3} & \beta_{i+j-4,4} & \cdots & \beta_{i-2,j+2}\\
		\vdots & \vdots & \vdots & \ddots &\vdots\\
		\beta_{j,i} & \beta_{j-1,i+1} & \beta_{j-2,i+2} & \cdots & \beta_{0,i+j}\\
		\end{array}\right).$$
Let 
$\RR[x,y]_{\leq k}:=\{p\in \RR[x,y]\colon \deg p\leq k\}$ 
stand for the set of real polynomials in variables $x,y$ of total degree at most $k$.
For every $p(x,y)=\sum_{i,j} a_{ij}x^iy^j\in \RR[x,y]_{\leq k}$ we define
its \textbf{evaluation} $p(X,Y)$ on the columns of the matrix $\mc M(k)$ by replacing each capitalized monomial $X^iY^j$
in $p(X,Y)=\sum_{i,j} a_{ij}X^iY^j$ by the column of $\mc M(k)$, indexed by this monomial.
Then $p(X,Y)$ is a vector from the linear span of the columns of $\mc M(k)$. If this vector is the zero one, i.e., all coordinates are equal to 0, 
then $p(X,Y)$ is a \textbf{column relation} of $\mc M(k)$.
A column relation $p(X,Y)$ is \textbf{nontrivial}, if 
$p\not\equiv 0$.
The matrix $\mc M(k)$ is \textbf{recursively generated (rg)} if for $p,q,pq\in \RR[x,y]_{\leq k}$ such that $p(X,Y)$ is a column relation of $\mc M(k)$, 
it follows that $(pq)(X,Y)$ is also a column relation of $\mc M(k)$.
The matrix $\mc M(k)$ is \textbf{$p$--pure} if the only column relation of $\mc M(k)$ are those determined recursively by $p$. In this case the TMP for $\beta$ is called \textbf{$p$--pure}.

For $p\in \RR[x,y]$ we denote by
$\cZ(p):=\{(x,y)\in \RR^2\colon p(x,y)=0\}$
the zero set of $p$ and by $\deg p$ its total degree.

A \textbf{concrete solution} to the TMP is a set of necessary and sufficient conditions for the existence of a $K$--representing measure $\mu$, 
that can be tested in numerical examples. 
Among necessary conditions, $\mc M(k)$ must be positive semidefinite (psd) and rg \cite{CF04,Fia95}, and by \cite{CF96} if the support $\supp(\mu)$ of $\mu$ is a subset of 
$\cZ(p)$ for a polynomial $p\in \RR[x,y]_{\leq k}$, 
then $p$ is a column relation of $\mc M(k)$.
The bivariate $\cZ(p)$--TMP (\textit{not necessarily $p$--pure}) is concretely solved in the following cases: (i) $\deg p= 1$ \cite{CF08},
(ii) $\deg p=2$ \cite{CF02,CF04,CF05,Fia15}, 
(iii) $p$ is irreducible with $\deg p=3$ \cite{KZ25+}
and (iv) $p$ is reducible, $\deg p=3$ and $p$ has a special form \cite{Zal22a,YZ24}. 
The bivariate \textit{$p$--pure} TMP is concretely solved also for: 
(v) $p(x,y)=xy+q(x)-x^4$ with $\deg q=3$ \cite{YZ24+}, 
(vi) $p(x,y)=y-x^4$ \cite{FZ25+}
and (vii) $p$ is reducible with $\deg p=3$ and $\beta$ is 
\textit{purely pure} (i.e., the corresponding linear functional
is strictly positive on nonzero polynomials, positive on $\cZ(p)$) \cite{KZ25+}. 
For a more detailed description and some other less concrete solutions to the TMP
on plane algebraic curves we refer the reader to \cite[p.~ 3]{YZ24} or \cite[p.\ 2--3]{KZ25+},
while for a recent development in the area of moment problems to a monograph \cite{Sch17}.

A \textbf{constructive solution} to the $K$--TMP is a solution, where not only the existence of a $K$--rm is characterized, but a concrete $K$--rm is explicitly constructed.

The motivation for this paper was to solve the TMP \textit{constructively} on reducible cubic curves of hyperbolic type, according to the classification of \cite[Proposition 3.1]{YZ24}.
By applying an affine linear transformation, each TMP on reducible cubic curve is equivalent to the TMP on one of nine canonical cases of the form $yc(x,y)=0$, where $c\in \RR[x,y]$, $\deg c=2$.
In \cite{Zal22a}, the case of three parallel lines is solved constructively, while in \cite{YZ24}, the solutions to 
 the \textit{circular type} (the curve is a line and a circle touching at a double real point)
and the \textit{parabolic type} relations (the curve is a line and a parabola that intersect tangentially at a real point) are presented.
In this paper, we solve the TMP constructively for the cases
$c(x,y)=1-xy$, $c(x,y)=x+y-xy$ and $c(x,y)=ay+x^2-y^2$, $a\in \RR\setminus \{0\}$,
which are called in \cite{YZ24} the \textit{hyperbolic type $1$, $2$} and $3$ $\textit{relations}$, respectively.
We also characterize the number of atoms in a minimal representing measure, i.e., a measure with the minimal number of atoms in the support. The question of bounds on the cardinality of minimal representing measures in the TMP, supported on algebraic curves, which is always finite by \cite{Ric57} (or \cite[Theorem 1.24]{Sch17}), has attracted a recent attention of several authors (see \cite{RS18,dDS18,dDK21,Zal24,BBS24+,RTT25+}).

In terms of the self-intersection points of the cubic $yc(x,y)=0$, we can classify the hyperbolic types from the previous paragraph into a type without real self-intersection points (type 1),
a type with a single real self-intersection point (type 2)
and a type with a double real self-intersection point (type 3).
To prove our main results, we follow the idea presented in \cite{Zal22a,YZ24},
which characterizes the existence of a decomposition of $\beta$ into the sum $\beta^{(\ell)}+\beta^{(c)}$,
where $\beta^{(\ell)}=\{\beta_{i,j}^{(\ell)}\}_{i,j\in \ZZ_+,\; i+j\leq 2k}$ and $\beta^{(c)}=\{\beta_{i,j}^{(c)}\}_{i,j\in \ZZ_+,\; 
i+j\leq 2k}$
admit a $\RR$--rm and a $\cZ(c)$--rm, respectively.
The crucial property of the forms of the cubic, which makes this idea realizable, is that the line is equal to $y=0$. This ensures that all but two moments of $\beta^{(\ell)}$ and $\beta^{(c)}$ are not already determined by the original sequence, i.e.,
 $\beta_{2k-1,0}^{(\ell)}$,
 $\beta_{2k,0}^{(\ell)}$,
 $\beta_{2k-1,0}^{(c)}$,
 $\beta_{2k,0}^{(c)}$
in the hyperbolic type 1 case
(as in the case of three parallel lines \cite{Zal22a})
,
 $\beta_{0,0}^{(\ell)}$,
 $\beta_{2k,0}^{(\ell)}$,
 $\beta_{0,0}^{(c)}$,
 $\beta_{2k,0}^{(c)}$
in the hyperbolic type 2 case
(as in the parabolic type case \cite[Section 6]{YZ24})
and
 $\beta_{0,0}^{(\ell)}$,
 $\beta_{1,0}^{(\ell)}$,
 $\beta_{0,0}^{(c)}$,
 $\beta_{1,0}^{(c)}$
in the hyperbolic type 3 case
(as in the circular type case \cite[Section 5]{YZ24}).
Then, by an involved analysis, the characterization of the existence of a decomposition $\beta=\beta^{(\ell)}+\beta^{(c)}$ can be done in all three cases.
We mention that the analysis in the hyperbolic type cases is more demanding than in the corresponding cases with the same positions of the free moments from \cite{Zal22a,YZ24} stated in parentheses, since the solution to the TMP on a hyperbola (see Subsection \ref{subsection:hyperbolic-TMP}) contains more linear algebraic requirements than in the case of other conics.


\subsection{ Reader's Guide}
The paper is organized as follows. 
In Section \ref{preliminiaries} we fix notation and present some preliminary results 
needed to establish our main results.
In Section \ref{Section-common-approach} we recall the approach for solving the TMP constructively on reducible cubic curves in the canonical form $yc(x,y)=0$ developed in \cite[Section 4]{YZ24}.
In Sections \ref{Hyperbolic-type-1}--\ref{sec:hyperbolic-type-3} we solve constructively  the TMP for reducible cubic
curves of hyperbolic types 1--3, respectively, and characterize the cardinality of minimal representing measures
(see Theorems \ref{100622-2204}, \ref{sol:x-y-axy}, 
\ref{thm:hyperbolic-2-minimal-measures},
\ref{221023-1854} and
\ref{thm:hyperbolic-3-minimal-measures}).
Numerical examples demonstrating the main results are also given 
(see Subsections
\ref{ex:hyperbolic-type-1}, \ref{ex:hyperbolic-type-2} and \ref{ex:hyperbolic-type-3}).

\section{Preliminaries}
\label{preliminiaries}

We write $\RR^{n\times m}$ for the set of $n\times m$ real matrices. For a matrix $M$ 
we call the linear span of its columns a \textbf{column space} and denote it by $\cC(M)$.
The set of real symmetric matrices of size $n$ will be denoted by $S_n$. 
For a matrix $A\in S_n$ the notation $A\succ 0$ (resp.\ $A\succeq 0$) means $A$ is positive definite (pd) (resp.\ positive semidefinite (psd)).
We write $\mathbf{0}_{t_1,t_2}$ for a $t_1\times t_2$ matrix with only zero entries and 
$\mathbf{0}_{t}=\mathbf{0}_{t,t}$ for short, where $t_1,t_2,t\in \NN$.
The notation
    $E^{(\ell)}_{i,j}$, 
    $\ell\in \NN$,
    stands for the usual $\ell\times \ell$ coordinate matrix with the only nonzero entry at position $(i,j)$, equal to 1.

In the rest of this section let $k\in \NN$ and $\beta=\beta^{ (2k)}=\{\beta_{i,j}\}_{i,j\in \ZZ_+,\; i+j\leq 2k}$ be a bivariate sequence of degree $2k$.

\subsection{Moment matrix}
Let $\mc M(k)$ be the moment matrix of $\beta$ (see \eqref{281021-1448}).
Let $Q_1, Q_2$ be subsets of the set $\{X^iY^j\colon  i,j \in \ZZ_+,\; i+j\leq k\}$.
We denote by 
$\mc M(k)_{Q_1,Q_2}$ 
the submatrix of $\mc M(k)$ consisting of the rows indexed by the elements of $Q_1$
and the columns indexed by the elements of $Q_2$. In case $Q:=Q_1=Q_2$, we write 
$\mc M(k)_{Q}:=\mc M(k)_{Q,Q}$
for short. 

\subsection{Affine linear transformations} 
\label{sub:affine-linear-trans}

The existence of representing measures is invariant under invertible affine linear transformations of the form 
\begin{equation}
\label{alt}
    \phi(x,y)=(\phi_1(x,y),\phi_2(x,y)):=(a+bx+cy,d+ex+fy),\; (x,y)\in \RR^{2},
\end{equation}
$a,b,c,d,e,f\in \RR$ with $bf-ce \neq 0$.
Namely, let  $L_{\beta}:\mbb{R}[x,y]_{\leq 2k}\to \RR$ be a \textbf{Riesz functional} of the sequence $\beta$ defined by 
$$
	L_{\beta}(p):=\sum_{\substack{i,j\in \ZZ_+,\\ i+j\leq 2k}} a_{i,j}\beta_{i,j},\qquad \text{where}\quad p=
	\sum_{\substack{i,j\in \ZZ_+,\\ i+j\leq 2k}} a_{i,j}x^iy^j.
$$
We define $\widetilde \beta=\{\widetilde \beta_{i,j}\}_{i,j\in \ZZ_+,\; i+j\leq 2k}$ by
	$$\widetilde \beta_{i,j}=L_{\beta}(\phi_1(x,y)^i \cdot \phi_2(x,y)^j).$$
By \cite[Proposition 1.9]{CF04}, $\beta$ admits a $r$--atomic rm supported on $K$ if and only if $\widetilde \beta$ admits a $r$--atomic rm supported on $\phi(K)$.
We write $\widetilde \beta=\phi(\beta)$ and $\mc M(k;\widetilde \beta)=\phi(\mc M(k;\beta))$.

\subsection{Generalized Schur complements}\label{SubS2.1}
Let 
	\begin{equation*}
		M=\left( \begin{array}{cc} A & B \\ C & D \end{array}\right)\in \RR^{(n+m)\times (n+m)}
	\end{equation*}
be a real matrix where $A\in \RR^{n\times n}$, $B\in \RR^{n\times m}$, $C\in \RR^{m\times n}$  and $D\in \RR^{m\times m}$.
The \textbf{generalized Schur complement} \cite{Zha05} of $A$ (resp.\ $D$) in $M$ is defined by
	$$M/A=D-CA^\dagger B\quad(\text{resp.}\; M/D=A-BD^\dagger C),$$
where $A^\dagger $ (resp.\ $D^\dagger $) stands for the Moore-Penrose inverse of $A$ (resp.\ $D$). 

The following lemma will be used in the proofs of our main results.

\begin{lemma}
	\label{140722-1055} 
	Let $n,m\in \NN$ and
		\begin{equation*}
			M=\left( \begin{array}{cc} A & B \\ B^{T} & C\end{array}\right)\in S_{n+m},
		\end{equation*} 
	where $A\in S_n$, $B\in \RR^{n\times m}$ and $C\in S_m$.
	If $\Rank M=\Rank A$, then the matrix equation
	\begin{equation}\label{140722-1055-eq}
		\begin{pmatrix}
			A\\
			B^T
		\end{pmatrix}
		W
		=
		\begin{pmatrix}
			B\\
			C
		\end{pmatrix},
	\end{equation}
	where $W\in \RR^{n\times m}$, is solvable and 
	the solutions are precisely the solutions of the matrix equation $AW=B$.
	In particular, $W=A^{\dagger}B$ satisfies \eqref{140722-1055-eq}.
\end{lemma}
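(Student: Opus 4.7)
The plan is to first use the rank condition to produce one particular solution $W_0$ of the full system, and then to show that any solution of $AW = B$ differs from $W_0$ by a matrix whose columns are annihilated by $A$, which by symmetry forces $B^T W = C$ automatically. This neatly reduces the two-block equation to the single block equation $AW = B$.

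First I would observe that the column space of $M$ coincides with the column space of its first block-column $\bigl(\begin{smallmatrix} A \\ B^T \end{smallmatrix}\bigr)$. On the one hand, $\bigl(\begin{smallmatrix} A \\ B^T \end{smallmatrix}\bigr)$ is a submatrix of $M$, so $\Rank\bigl(\begin{smallmatrix} A \\ B^T \end{smallmatrix}\bigr)\leq \Rank M$; on the other hand, restricting to the first $n$ rows already shows $\Rank\bigl(\begin{smallmatrix} A \\ B^T \end{smallmatrix}\bigr)\geq \Rank A = \Rank M$. Hence equality holds, and in particular the last $m$ columns $\bigl(\begin{smallmatrix} B \\ C \end{smallmatrix}\bigr)$ of $M$ lie in $\cC\bigl(\bigl(\begin{smallmatrix} A \\ B^T \end{smallmatrix}\bigr)\bigr)$. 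This produces some $W_0\in\RR^{n\times m}$ with $\bigl(\begin{smallmatrix} A \\ B^T \end{smallmatrix}\bigr)W_0=\bigl(\begin{smallmatrix} B \\ C \end{smallmatrix}\bigr)$, proving solvability and yielding the two identities $B=AW_0$ and $C=B^T W_0$.

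Next I would argue that the solution set equals that of $AW=B$. Every solution $W$ of \eqref{140722-1055-eq} plainly satisfies $AW=B$. Conversely, assume $AW=B$. Then $A(W-W_0)=0$, and using $B=AW_0$ together with the symmetry $A^T=A$, one has $B^T = W_0^T A$, so
\[
B^T(W-W_0)=W_0^T A(W-W_0)=0,
\]
i.e.\ $B^T W = B^T W_0 = C$. Therefore $W$ also solves the bottom block, hence all of \eqref{140722-1055-eq}.

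Finally, for the statement about $W=A^\dagger B$: the identity $B=AW_0$ shows $B\in\cC(A)$, and for any matrix in the column space of $A$ the Moore--Penrose inverse satisfies $A A^\dagger B=B$, so $W=A^\dagger B$ solves $AW=B$ and, by the previous paragraph, the full system. I do not anticipate a real obstacle; the only point requiring care is recognising that the symmetry of $M$ is precisely what converts the already-established relation $B=AW_0$ into $B^T=W_0^T A$, which is what makes the kernel-of-$A$ argument close.
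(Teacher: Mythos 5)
Your proof is correct. The paper states this lemma without proof (it is treated as a standard linear-algebra fact), so there is no argument in the paper to compare against, but your reasoning is sound and complete: the rank chain $\Rank A \le \Rank\!\bigl(\begin{smallmatrix}A\\B^T\end{smallmatrix}\bigr)\le \Rank M=\Rank A$ correctly forces the first block-column of $M$ to span all of $\cC(M)$, which yields a particular solution $W_0$; symmetry of $A$ then converts $B=AW_0$ into $B^T=W_0^TA$, so $\ker A$ kills both block rows, giving equality of solution sets; and $B=AW_0\in\cC(A)$ justifies $AA^\dagger B=B$. This is the natural argument one would expect.
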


The following theorem is a characterization of psd $2\times 2$ block matrices. 

\begin{theorem}[{\cite{Alb69}}]
    \label{block-psd} 
	Let 
		\begin{equation*}
			M=\left( \begin{array}{cc} A & B \\ B^{T} & C\end{array}\right)\in S_{n+m}
		\end{equation*} 
	be a real symmetric matrix where $A\in S_n$, $B\in \RR^{n\times m}$ and $C\in S_m$.
	Then: 
	\begin{enumerate}
		\item 
                \label{021123-1702}
                The following conditions are equivalent:
			\begin{enumerate}
			     \item 
                    \label{pt1-281021-2128} 
                        $M\succeq 0$.
                    \smallskip
			     \item 
                    \label{pt2-281021-2128} 
                        $C\succeq 0$,         
                    $\cC(B^T)\subseteq\cC(C)$ and $M/C\succeq 0$.
                    \smallskip
				 \item 
                    \label{pt3-281021-2128}
                        $A\succeq 0$,    
                        $\cC(B)\subseteq\cC(A)$ and $M/A\succeq 0$.
			\end{enumerate}
                \smallskip
		\item 
                \label{prop-2604-1140-eq2}
                If $M\succeq 0$, then 
		      $$\Rank M= \Rank A+\Rank M/A=\Rank C+\Rank M/C.$$
	\end{enumerate}
\end{theorem}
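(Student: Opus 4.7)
The plan is to base everything on a single algebraic identity, the generalized Schur complement congruence. Setting $P := \begin{pmatrix} I_n & -A^\dagger B \\ 0 & I_m \end{pmatrix}$, a direct block multiplication shows that whenever $AA^\dagger B = B$ (equivalently $\cC(B) \subseteq \cC(A)$) we have
$$P^{T} M P \;=\; \begin{pmatrix} A & 0 \\ 0 & M/A \end{pmatrix},$$
with $P$ invertible (it is unit upper triangular). By the symmetry $(A,B,C) \leftrightarrow (C,B^{T},A)$ the analogous identity produces $\operatorname{diag}(C, M/C)$. These two congruences drive both part (1) and part (2).

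For part (1), by the $(A,C)$ symmetry it suffices to prove the equivalence of (a) and (c); the equivalence of (a) and (b) then follows by applying the first equivalence to the matrix obtained from $M$ by swapping its two row/column blocks. The implication (c) $\Rightarrow$ (a) is immediate: the assumptions $A\succeq 0$, $\cC(B) \subseteq \cC(A)$, and $M/A \succeq 0$ permit the displayed congruence, and congruence to a psd block-diagonal matrix makes $M$ psd. For (a) $\Rightarrow$ (c): $A\succeq 0$ is automatic as a principal submatrix; to get $\cC(B) \subseteq \cC(A)$, take any $u \in \ker A$ and evaluate the quadratic form of $M$ at $(u,\, tw)^{T}$ for $w \in \RR^{m}$ and $t \in \RR$, which yields $2t\, u^{T} B w + t^{2} w^{T} C w \geq 0$ for all real $t$; this forces $u^{T} B w = 0$ for every $w$, whence $B^{T} u = 0$. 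Once $\cC(B)\subseteq\cC(A)$ is secured, applying the congruence $P^{T}(\cdot)P$ to $M$ preserves positive semidefiniteness, so the bottom-right block $M/A$ is psd.

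For part (2), since $P$ is invertible, the same congruence identity preserves rank, giving $\Rank(M) = \Rank(A) + \Rank(M/A)$; the symmetric construction yields $\Rank(M) = \Rank(C) + \Rank(M/C)$. The only subtle ingredient, and hence the main obstacle, is verifying that the Moore-Penrose identity $AA^{\dagger} B = B$ genuinely follows from $\cC(B) \subseteq \cC(A)$; this is standard, as $AA^{\dagger}$ is the orthogonal projector onto $\cC(A)$ and the columns of $B$ already lie in $\cC(A)$.
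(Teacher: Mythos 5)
Your proof is correct, and since the paper simply cites Albert (1969) for this result without reproducing a proof, there is no internal argument to compare against; the Schur-complement congruence $P^{T}MP=\operatorname{diag}(A,M/A)$ you use (valid once $AA^{\dagger}B=B$, i.e.\ $\cC(B)\subseteq\cC(A)$, together with $A^{\dagger}A=AA^{\dagger}$ for symmetric $A$) is the standard route and is exactly what Albert's original argument rests on. All the steps check out: the quadratic-form argument at $(u,tw)^{T}$ for $u\in\ker A$ correctly forces $B^{T}u=0$ before the congruence is invoked, rank is congruence-invariant so part (2) follows, and the $(A,C)$ block swap handles condition (b).
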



\subsection{Partially positive semidefinite matrices and their completions}\label{SubS2.4}

A \textbf{partial matrix} $A=(a_{i,j})_{i,j=1}^n$ is a matrix of real numbers $a_{i,j}\in \RR$, where some of the entries are not specified. 

A partial symmetric matrix $A=(a_{i,j})_{i,j=1}^n$ is 
\textbf{partially positive semidefinite (ppsd)} 
(resp.\ \textbf{partially positive definite (ppd)}) 
if the following two conditions hold:
\begin{enumerate} 
  \item $a_{i,j}$ is specified if and only if $a_{j,i}$ is specified and $a_{i,j}=a_{j,i}$.
  \item All fully specified principal minors of $A$ are psd (resp.\ pd). 
\end{enumerate}

For $n\in \NN$ write $[n]:=\{1,2,\ldots,n\}$.
We denote by 
$A_{Q_1,Q_2}$ 
the submatrix of 
$A\in \RR^{n\times n}$ 
consisting of the rows indexed by the elements of $Q_1\subseteq [n]$
and the columns indexed by the elements of $Q_2\subseteq [n]$. 
In case $Q:=Q_1=Q_2$, we write 
$A_{Q}:=A_{Q,Q}$
for short. 

\begin{lemma}[{\cite[Lemma 2.4]{YZ24}}]
	\label{psd-completion}
	Let 
		$A(\mathbf{x})$	
	be a partially positive semidefinite symmetric matrix of size $n\times n$ with the missing entries 
        in the positions $(i,j)$ and $(j,i)$, $1\leq i<j\leq n$.
	Let 
	\begin{align*}
		A_1 &= (A(\mathbf{x}))_{[n]\setminus \{i,j\}},\; 
            a=(A(\mathbf{x}))_{[n]\setminus \{i,j\},\{i\}},\; 
            b=(A(\mathbf{x}))_{[n]\setminus \{i,j\},\{j\}},\;
		\alpha=(A(\mathbf{x}))_{i,i},\;
		\gamma=(A(\mathbf{x}))_{j,j}.
	\end{align*}
	Let
		$$A_2=(A(\mathbf{x}))_{[n]\setminus \{j\}}	
			=\begin{pmatrix}
				A_1 & a \\
				a^T & \alpha
			\end{pmatrix}\in S_{n-1},\qquad
		A_3=(A(\mathbf{x}))_{[n]\setminus \{i\}}
			=\begin{pmatrix}
				A_1 & b \\
				b^T & \gamma
			\end{pmatrix}\in S_{n-1},$$
	and
		$$x_{\pm}:=b^TA_1^{\dagger}a\pm \sqrt{(A_2/A_1)(A_3/A_1)}\in \RR.$$
	Then: 
	\begin{enumerate}
		\item\label{psd-comp-pt1} $A(x_{0})$ is positive semidefinite if and only if $x_0\in [x_-,x_+]$. 
	 	\item\label{psd-comp-pt2} 
			$$\Rank A(x_0)=
			\left\{\begin{array}{rl}
			\max\big\{\Rank A_2, \Rank A_3\big\},& \text{for}\;x_0\in \{x_-,x_+\},\\[0.5em]
			\max\big\{\Rank A_2, \Rank A_3\big\}+1,& \text{for}\;x_0\in (x_-,x_+).
			\end{array}\right.$$
            \item 
            The following statements are equivalent:
	\begin{enumerate}
		\item $x_-=x_+$.\smallskip
		\item $A_2/A_1=0$ or $A_3/A_1=0$.\smallskip
		\item $\Rank A_2=\Rank A_1$ or $\Rank A_3=\Rank A_1$.
	\end{enumerate}
	\end{enumerate}
\end{lemma}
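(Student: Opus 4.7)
The plan is to treat $A(x_0)$ as a $2\times 2$ block matrix with the known block $A_1$ in the upper-left corner and then apply Theorem \ref{block-psd}. After a symmetric permutation sending the rows/columns indexed by $\{i,j\}$ to the bottom, $A(x_0)$ takes the form
$$
\widetilde A(x_0)=\begin{pmatrix} A_1 & B \\ B^{T} & C(x_0)\end{pmatrix},\qquad
B=\begin{pmatrix} a & b \end{pmatrix},\qquad
C(x_0)=\begin{pmatrix}\alpha & x_0 \\ x_0 & \gamma\end{pmatrix}.
$$
Since $A(\mathbf{x})$ is ppsd, the fully specified principal submatrices $A_1$, $A_2$, $A_3$ are all psd. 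Applying Theorem \ref{block-psd}\eqref{021123-1702} to $A_2$ and $A_3$ with block $A_1$ already yields $\cC(a)\subseteq\cC(A_1)$, $\cC(b)\subseteq \cC(A_1)$, $A_2/A_1\geq 0$ and $A_3/A_1\geq 0$. Consequently $\cC(B)\subseteq \cC(A_1)$, and by Theorem \ref{block-psd}\eqref{021123-1702} again, $A(x_0)\succeq 0$ reduces to the single condition $\widetilde A(x_0)/A_1\succeq 0$.

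Next I would compute the Schur complement, using that $A_1^{\dagger}$ is symmetric and $b^{T}A_1^{\dagger}a$ is a scalar (hence equals $a^{T}A_1^{\dagger}b$):
$$
\widetilde A(x_0)/A_1
=\begin{pmatrix} A_2/A_1 & x_0-b^{T}A_1^{\dagger}a \\ x_0-b^{T}A_1^{\dagger}a & A_3/A_1\end{pmatrix}.
$$
Part \eqref{psd-comp-pt1} then reduces to the psd-ness of this $2\times 2$ matrix: the diagonal entries are already nonnegative, so the only remaining requirement is nonnegativity of the determinant, i.e. $(x_0-b^{T}A_1^{\dagger}a)^{2}\leq (A_2/A_1)(A_3/A_1)$, which is exactly $x_0\in[x_-,x_+]$.

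For part \eqref{psd-comp-pt2} I would invoke the rank-additivity of Theorem \ref{block-psd}\eqref{prop-2604-1140-eq2}, which gives $\Rank A(x_0)=\Rank A_1+\Rank(\widetilde A(x_0)/A_1)$ and analogously $\Rank A_r=\Rank A_1+\Rank(A_r/A_1)$ for $r\in\{2,3\}$. Since $A_2/A_1$ and $A_3/A_1$ are scalars, each has rank $0$ or $1$, and a short case check on the $2\times 2$ block shows: at $x_0=x_{\pm}$ the determinant vanishes so the rank of $\widetilde A(x_0)/A_1$ equals $\max\{\Rank(A_2/A_1),\Rank(A_3/A_1)\}$, giving $\max\{\Rank A_2,\Rank A_3\}$; whereas for $x_0\in(x_-,x_+)$ the determinant is strictly positive, forcing $\widetilde A(x_0)/A_1$ to have rank $2$ and both of $A_2/A_1,A_3/A_1$ to be strictly positive, so the stated formula follows. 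For part (3), $x_-=x_+$ is equivalent to $(A_2/A_1)(A_3/A_1)=0$, which by the scalarity and nonnegativity of these Schur complements amounts to $A_2/A_1=0$ or $A_3/A_1=0$, and by the rank identity above this in turn is equivalent to $\Rank A_2=\Rank A_1$ or $\Rank A_3=\Rank A_1$.

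I do not expect any real obstacle: the argument is a direct Schur-complement bookkeeping once the ppsd assumption is exploited to transfer inclusion of column spaces from $A_2,A_3$ to $B$. The only place requiring mild care is the symmetry of $b^{T}A_1^{\dagger}a$ (ensuring $\widetilde A(x_0)/A_1$ is symmetric), and the case analysis in part \eqref{psd-comp-pt2} when $A_2/A_1$ or $A_3/A_1$ vanishes.
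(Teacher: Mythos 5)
Your proof is correct. The paper cites this lemma from \cite[Lemma 2.4]{YZ24} without reproducing a proof, and your argument is the standard one: permute the unknown entries to the bottom-right $2\times 2$ block, use the ppsd hypothesis and Theorem~\ref{block-psd} applied to $A_2$ and $A_3$ to get $\mathcal C(B)\subseteq\mathcal C(A_1)$ and nonnegativity of the diagonal Schur complements, so that $A(x_0)\succeq 0$ reduces to the $2\times 2$ condition $\widetilde A(x_0)/A_1\succeq 0$, and then read off parts (1)--(3) from the determinant and rank-additivity. One small point worth stating explicitly: in part (2) for $x_0\in\{x_-,x_+\}$ you need that a $2\times2$ psd matrix with zero determinant has rank $\max\{\Rank(A_2/A_1),\Rank(A_3/A_1)\}$; this uses that if a diagonal entry of a psd matrix vanishes then the corresponding off-diagonal entry vanishes too, which handles the boundary cases where exactly one of $A_2/A_1$, $A_3/A_1$ is zero. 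With that observation made explicit, the proof is complete.
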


\subsection{Extension principle}

\begin{proposition}[{\cite[Proposition 2.4]{Fia95} or \cite[Lemma 2.4]{Zal22a}}]
	\label{extension-principle}
	Let $\cA\in S_n$ be positive semidefinite, 
	$Q$ a subset of the set $\{1,\ldots,n\}$
	and	
	$\cA_Q$ the restriction of $\cA$ to the rows and columns from the set $Q$. 
	If $\cA_Qv=0$ for a nonzero vector $v$,
	then $\cA\widehat v=0$, where $\widehat{v}$ is a vector with the only nonzero entries in the rows from $Q$ and such that the restriction 
	$\widehat{v}_Q$ to the rows from $Q$ equals to $v$. 
\end{proposition}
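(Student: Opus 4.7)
The plan is to reduce to a block computation and then invoke the standard fact that for a positive semidefinite matrix $\cA$, the null space coincides with the isotropic cone of the associated quadratic form. Since permuting rows and columns of $\cA$ by the same permutation preserves positive semidefiniteness, and permutes the entries of any vector in the kernel correspondingly, I may assume without loss of generality that $Q=\{1,\ldots,q\}$ where $q=|Q|$. In this ordering $\cA$ has the block form
\[
    \cA=\begin{pmatrix} \cA_Q & B \\ B^T & C \end{pmatrix},\qquad B\in \RR^{q\times (n-q)},\; C\in S_{n-q},
\]
and $\widehat{v}=\begin{pmatrix} v \\ 0 \end{pmatrix}\in \RR^n$.

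First I would compute the quadratic form value
\[
    \widehat{v}^T\cA\widehat{v}
    =\begin{pmatrix} v^T & 0 \end{pmatrix}
        \begin{pmatrix} \cA_Q & B \\ B^T & C \end{pmatrix}
        \begin{pmatrix} v \\ 0 \end{pmatrix}
    =v^T\cA_Q v=0,
\]
using the hypothesis $\cA_Q v=0$ at the last step. Next I would use the fact that a psd matrix $\cA$ admits a factorization $\cA=L^TL$, whence $\widehat v^T\cA\widehat v=\|L\widehat v\|^2$, so the vanishing of this quadratic form forces $L\widehat v=0$ and therefore $\cA\widehat v=L^TL\widehat v=0$. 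This is exactly the desired conclusion.

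Alternatively (and equivalently), I could argue via Theorem \ref{block-psd}: from $\cA\succeq 0$ one has $\cC(B)\subseteq \cC(\cA_Q)$, so $B=\cA_Q W$ for some $W\in \RR^{q\times (n-q)}$; then $B^Tv=W^T\cA_Q v=0$, and together with $\cA_Q v=0$ this gives $\cA\widehat v=\begin{pmatrix}\cA_Q v\\ B^Tv\end{pmatrix}=0$. Either route is direct; the only real bookkeeping is the permutation reducing a general index set $Q$ to a leading block, and I expect no genuine obstacle beyond keeping that notation straight.
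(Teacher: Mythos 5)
Your proof is correct. Both routes are valid: the quadratic-form argument (vanishing of $\widehat v^T\cA\widehat v$ together with $\cA=L^TL$ forces $L\widehat v=0$, hence $\cA\widehat v=0$) and the block argument via Theorem \ref{block-psd} (column inclusion $\cC(B)\subseteq\cC(\cA_Q)$ gives $B^Tv=W^T\cA_Qv=0$). The permutation reduction to $Q=\{1,\ldots,q\}$ is harmless and handled correctly. The paper itself does not reproduce a proof but cites \cite[Proposition 2.4]{Fia95} and \cite[Lemma 2.4]{Zal22a}; your argument is the standard one found there, so this is essentially the same approach.
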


\subsection{(Strong) Hamburger TMP}\label{SubS2.2}
In this subsection we recall the solutions to the univariate TMP and its strong version, since it will be essentially used in the proofs of our main results.\\

Let $k\in \NN$
and
		$\gamma:=(\gamma_0,\ldots,\gamma_{2k} )\in \RR^{2k+1}$.
We say that $\gamma$ is \textbf{$\RR$--representable} if there is a positive Borel measure $\mu$ on $\RR$ such that $\gamma_i=\int_\RR x^i\;d\mu$ for $0\leq i\leq 2k$. 
Characterizing the existence of the $\RR$--rm for $\gamma$ is called the \textbf{truncated Hamburger moment problem (THMP)} or also the \textbf{$\RR$--TMP}.\\

We define the Hankel matrix corresponding to $\gamma$ by
	\begin{equation}
        \label{vector-v}
		A_{\gamma}:=\left(\gamma_{i+j} \right)_{i,j=0}^k
					=\left(\begin{array}{ccccc} 
							\gamma_0 & \gamma_1 & \gamma_2 & \cdots & \gamma_k\\
							\gamma_1 & \gamma_2 & \iddots & \iddots & \gamma_{k+1}\\
							\gamma_2 & \iddots & \iddots & \iddots & \vdots\\
							\vdots 	& \iddots & \iddots & \iddots & \gamma_{2k-1}\\
							\gamma_k & \gamma_{k+1} & \cdots & \gamma_{2k-1} & \gamma_{2k}
						\end{array}\right)
					\in S_{k+1}.
	\end{equation}
For $m\leq k$ we denote
the upper left--hand corner $\left(\gamma_{i+j} \right)_{i,j=0}^m\in S_{m+1}$ of $A_{\gamma}$ of size $m+1$ by $A_{\gamma}(m)$, while the lower right--hand corner of $A_\gamma$ of size $m+1$ by $A_\gamma[m]$.\\

The solution to the THMP is the following.

\begin{theorem} [{\cite[Theorems 3.9--3.10]{CF91}}]
    \label{Hamburger} 
	For $k\in \NN$ and $\gamma=(\gamma_0,\ldots,\gamma_{2k})\in \RR^{2k+1}$ with $\gamma_0>0$, the following statements are equivalent:
\begin{enumerate}	
	\item 
            There exists a $\RR$--representing measure for $\gamma$.\smallskip
        \item 
            There exists a $(\Rank A_\gamma)$--atomic $\RR$--representing measure for $\gamma$.\smallskip
	\item\label{pt5-v2206} 
            $A_\gamma$ is positive semidefinite
            and one of the following holds:
            \smallskip
            \begin{enumerate}
                \item 
                $A_\gamma(k-1)$ is positive definite.
                \smallskip
                \item 
                $\Rank A_\gamma(k-1)=\Rank A_\gamma$.
            \end{enumerate}
\end{enumerate}
\end{theorem}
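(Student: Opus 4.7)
The proof proceeds via the cycle $(2) \Rightarrow (1) \Rightarrow (3) \Rightarrow (2)$, with $(2) \Rightarrow (1)$ being immediate. For $(1) \Rightarrow (3)$, I would first observe that for any $v = (v_0, \ldots, v_k)^T \in \RR^{k+1}$ and $p_v(x) := \sum_{i=0}^{k} v_i x^i$, the identity $v^T A_\gamma v = \int p_v^2\, d\mu \ge 0$ yields $A_\gamma \succeq 0$. If $A_\gamma(k-1)$ fails to be positive definite, pick a nonzero $w \in \RR^k$ with $A_\gamma(k-1)w = 0$; the associated polynomial $p_w$ of degree at most $k-1$ satisfies $\int p_w^2\, d\mu = 0$, so $\supp \mu \subseteq \cZ(p_w)$ is a finite set of cardinality $r \le k-1$. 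A standard computation (Vandermonde factorization of Hankel matrices generated by a finitely atomic measure) shows $\Rank A_\gamma = r = \Rank A_\gamma(k-1)$, which is condition (b).

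The main direction $(3) \Rightarrow (2)$ is constructive: the goal is to produce a $(\Rank A_\gamma)$-atomic $\RR$-rm by passing to a Hankel matrix that is a \emph{flat} (rank-preserving) extension of a positive definite leading block, and then invoking the Favard/Jacobi-matrix construction, where the atoms are the eigenvalues of the associated symmetric tridiagonal matrix and the weights come from the squared first components of its normalized eigenvectors. In case (b), the hypothesis $\Rank A_\gamma(k-1) = \Rank A_\gamma$ already exhibits $A_\gamma$ as a flat extension of $A_\gamma(k-1)$, so one restricts to the principal block where the leading submatrix is positive definite and applies the construction directly. In case (a), where $A_\gamma(k-1) \succ 0$, I would enlarge $\gamma$ by choosing an arbitrary $\gamma_{2k+1} \in \RR$ and then defining $\gamma_{2k+2}$ by the Schur-complement formula that forces the extended Hankel matrix $A_{\gamma'}$ of size $k+2$ to be a flat extension of $A_\gamma$; solvability of the underlying linear system is guaranteed by Lemma \ref{140722-1055} applied to $A_\gamma$, using that $A_\gamma(k-1)$ is invertible. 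Invoking case (b) at level $k+1$ on $\gamma'$ then yields a $\Rank A_{\gamma'}$-atomic $\RR$-rm for $\gamma'$, which represents $\gamma$ in particular; the equality $\Rank A_{\gamma'} = \Rank A_\gamma$ follows from Theorem \ref{block-psd}(\ref{prop-2604-1140-eq2}).

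The principal obstacle is the construction in case (a): one must check that the chosen $\gamma_{2k+2}$ indeed yields a positive semidefinite flat extension $A_{\gamma'} \succeq 0$ with $\Rank A_{\gamma'} = \Rank A_\gamma$, independently of the free parameter $\gamma_{2k+1}$. This hinges on combining the Schur-complement equivalence in Theorem \ref{block-psd}(\ref{021123-1702}) with the invertibility of $A_\gamma(k-1)$, which together reduce the question to a finite linear-algebraic check. A secondary technical point is to confirm that the eigenvalues of the resulting Jacobi matrix are real and distinct so that they furnish a legitimate atomic support; this is automatic from its real symmetric tridiagonal structure with strictly positive off-diagonal entries, the strict positivity coming from the positive definiteness of the relevant leading principal submatrix.
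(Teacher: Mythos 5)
This theorem is quoted in the paper from \cite[Theorems 3.9--3.10]{CF91} without proof, so there is no internal argument to compare against; I evaluate your sketch on its own terms. Your direction $(1)\Rightarrow(3)$ is sound: the Vandermonde-rank argument for the singular case is correct, and you rightly observe that the two alternatives (a) and (b) exhaust the possibilities. Reducing the positive-definite subcase of (a) to a flat one-step extension on which case (b) applies, using Lemma \ref{140722-1055} and Theorem \ref{block-psd}\eqref{prop-2604-1140-eq2}, is also a reasonable plan.

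The genuine gap is in case (b), which you compress into ``restrict to the principal block where the leading submatrix is positive definite and apply the Favard/Jacobi construction directly.'' This sentence presupposes exactly the two nontrivial facts that constitute the content of \cite[Theorems 3.9--3.10]{CF91}: (i) that under the hypothesis $\Rank A_\gamma(k-1)=\Rank A_\gamma=:r$ the block $A_\gamma(r-1)$ is necessarily positive definite --- equivalently, that the leading principal ranks of a psd Hankel matrix cannot plateau and then re-increase once the final step is flat --- and (ii) that the $r$-atomic Gaussian quadrature built from $A_\gamma(r-1)$ and $\gamma_{2r-1}$ reproduces the \emph{remaining} moments $\gamma_{2r},\ldots,\gamma_{2k}$, which a Gaussian rule on its own guarantees only up to degree $2r-1$. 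Neither is automatic. For (i), ranks of psd Hankel matrices can plateau and re-increase in general: with $\gamma=(1,0,1,0,1,0,2)$, $k=3$, one has $A_\gamma\succeq 0$, $\gamma_0>0$, and ranks $1,2,2,3$; of course this $\gamma$ violates condition (b), but that is precisely the point --- the rank-stabilization-propagates lemma is a theorem that needs the full hypothesis and a genuine argument (via the extension principle and Hankel shift structure), not a self-evident fact. For (ii), one needs a recursiveness/flat-propagation argument showing that the higher Hankel data is forced by the lower data and therefore agrees with the quadrature's moments. As written, your sketch is circular: it invokes the theorem's conclusion to justify the construction. A secondary remark: you flag case (a) as ``the principal obstacle,'' but your flat-extension reduction handles (a) cleanly (the pd block sits at the top level, where the Jacobi construction applies without subtlety); the real difficulty is case (b) with $r<k$.
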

\medskip

Let $k_1,k_2\in \NN$
and
		\begin{equation} 
            \label{def:strong}
            \widetilde\gamma:=
            (
                \widetilde\gamma_{-2k_1},
                \widetilde\gamma_{-2k_1+1},
                \widetilde\gamma_{-2k_2+2},
                \ldots,
                \widetilde\gamma_{2k_2-1},
                \widetilde\gamma_{2k_2} 
            )
                \in \RR^{2k_1+2k_2+1}.
            \end{equation}
We say that $\widetilde\gamma$ is \textbf{strongly $\RR$--representable} if there is a positive Borel measure $\mu$ on $\RR\setminus \{0\}$ such that
$\widetilde\gamma_i=\int_\RR x^i\;d\mu$ for $-2k_1\leq i\leq 2k_2$.
Characterizing the existence of the $(\RR\setminus \{0\})$--rm for $\widetilde\gamma$ is called \textbf{the 
strong truncated Hamburger moment problem (STHMP)}.\\

The solution to the STHMP is the following.

\begin{theorem} 
    \label{thm:strong-Hamburger} 
	Let $k_1,k_2\in \NN$ and $\widetilde\gamma$ as in \eqref{def:strong} with $\gamma_{-2k_1}>0$.
    Define $\gamma:=(\gamma_{0},\gamma_1,\ldots,\gamma_{2k_1+2k_2})\in \RR^{2k_1+2k_2+1}$ by
    $\gamma_i:=\widetilde\gamma_{i-2k_1}$.
    The following statements are equivalent:
\begin{enumerate}	
	\item 
            There exists a $(\RR\setminus\{0\})$--representing measure for $\widetilde\gamma$.\smallskip
        \item 
            There exists a $(\Rank A_\gamma)$--atomic $(\RR\setminus \{0\})$--representing measure for $\gamma$.\smallskip
	\item 
            $A_\gamma$ is positive semidefinite
            and one of the following holds:
            \smallskip
            \begin{enumerate}
                \item 
                $A_\gamma$ is positive definite.
                \smallskip
                \item\label{thm:strong-Hamburger-pt2-b} 
                    $\Rank A_\gamma=\Rank A_\gamma(k_1+k_2-1)=\Rank A_\gamma[k_1+k_2-1]$.
            \end{enumerate}
\end{enumerate}
\end{theorem}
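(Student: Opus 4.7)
The plan is to reduce the strong problem to an ordinary truncated Hamburger problem for the shifted, nonnegatively-indexed sequence $\gamma$, and then to analyse when an rm of $\gamma$ on $\RR$ avoids the point $0$. By Richter's theorem (see \cite[Ch.~1]{Sch17}), all rms may be taken finitely atomic.

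The reduction is a change of variable at the level of measures. Given a finitely atomic rm $\mu$ of $\widetilde\gamma$ on $\RR\setminus\{0\}$, setting $d\nu:=x^{-2k_1}\,d\mu$ produces a finite positive Borel measure supported in $\RR\setminus\{0\}$ with $\int x^i\,d\nu=\widetilde\gamma_{i-2k_1}=\gamma_i$ for $0\le i\le 2k_1+2k_2$ and the same number of atoms; conversely $d\mu:=x^{2k_1}\,d\nu$ reverses the correspondence, and is automatically supported off $0$ since $0^{2k_1}=0$ annihilates any atom of $\nu$ at $0$. So it suffices to characterise when $\gamma$ admits an $(\Rank A_\gamma)$-atomic rm on $\RR\setminus\{0\}$.

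For $(1)\Rightarrow(3)$, writing such a measure as $\nu=\sum_{k=1}^{r}c_k\delta_{x_k}$ with $c_k>0$ and distinct $x_k\neq 0$ yields the Vandermonde factorization $A_\gamma=V^TDV$, with $V=(x_k^j)_{k,j}$ and $D=\mathrm{diag}(c_1,\ldots,c_r)$, so $A_\gamma\succeq 0$. Either $A_\gamma$ is pd (case (a)), or $r\le k_1+k_2$ and the analogous factorization
\[
A_\gamma[k_1+k_2-1]=V_1^T\,\mathrm{diag}(c_kx_k^2)\,V_1,\qquad V_1=(x_k^j)_{k;\,j=0,\ldots,k_1+k_2-1},
\]
has full rank $r$ since all $x_k\neq 0$; combined with the standard Hamburger necessity $\Rank A_\gamma=\Rank A_\gamma(k_1+k_2-1)$ from Theorem \ref{Hamburger}, this gives case (b). Conversely, for $(3)\Rightarrow(2)$: in case (b), Theorem \ref{Hamburger} furnishes the unique $r$-atomic rm of $\gamma$ on $\RR$, and the extra assumption $\Rank A_\gamma[k_1+k_2-1]=r$, plugged into the same factorization, forces $c_kx_k^2\neq 0$ for every $k$, so the rm avoids $0$. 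In case (a) the $(k_1+k_2+1)$-atomic rms of $\gamma$ form a one-parameter family parametrized by the free moment $\gamma_{2(k_1+k_2)+1}$ in a flat extension of $A_\gamma$, and the constant term of the resulting dependency polynomial (whose real roots are the atoms) is a nontrivial rational function of this parameter, so all but finitely many choices give an rm avoiding $0$. The implication $(2)\Rightarrow(1)$ is immediate. The main technical obstacle I anticipate is the last step in case (a): verifying by a Schur-complement identity that the constant term of the dependency polynomial of the flat extension genuinely varies with $\gamma_{2(k_1+k_2)+1}$ and hence is not identically zero.
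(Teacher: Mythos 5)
The reduction $d\nu = x^{-2k_1}d\mu$ is sound and is indeed the standard way to pass between the strong problem and the ordinary one on $\RR\setminus\{0\}$; it preserves atoms away from the origin, so it correctly translates statement (1) into a question about $\gamma$. The Vandermonde factorizations $A_\gamma=V^TDV$ and $A_\gamma[k_1+k_2-1]=V_1^T\,\mathrm{diag}(c_kx_k^2)\,V_1$ give a clean, correct proof of $(1)\Rightarrow(3)$ and of $(3)\Rightarrow(2)$ in case (b), since $c_kx_k^2=0$ exactly when $x_k=0$ and this drops the rank of the lower-right corner. Note that the paper itself does not prove this theorem inline but refers to \cite{Sim06} and \cite[Theorem 3.1]{Zal22b}; your reduction and Hankel-rank bookkeeping are in the same spirit as the latter.

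The genuine gap is in case (a), and it is exactly where you say you expect trouble, but the proposed fix does not work. Write $n=k_1+k_2$ and parametrize flat extensions by $\gamma_{2n+1}$; the dependency-polynomial coefficients are $\mathbf a=A_\gamma^{-1}(\gamma_{n+1},\ldots,\gamma_{2n+1})^T$, so the constant term $-a_0$ is an \emph{affine} function of $\gamma_{2n+1}$ with slope $(A_\gamma^{-1})_{0,n}$. That slope can vanish for a perfectly good pd Hankel matrix: with $n=2$ and $\gamma=(2,1,1,1,2)$ one gets $A_\gamma\succ 0$ while $(A_\gamma^{-1})_{0,2}=(\gamma_1\gamma_3-\gamma_2^2)/\det A_\gamma=0$. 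Hence the constant term does \emph{not} "genuinely vary" with $\gamma_{2n+1}$ in general, and no Schur-complement identity will prove that it does. What you actually need is that the constant term is not identically zero, which is true but requires a different argument. One clean way: if every flat extension placed an atom at $0$ with weight $c_0>0$, then $A_\gamma-c_0E_{1,1}^{(n+1)}$ would be psd of rank $n$; since its lower-right $n\times n$ corner $A_\gamma[n-1]$ is already positive definite, this forces $c_0=\det A_\gamma/\det A_\gamma[n-1]$ to be the same for every choice of $\gamma_{2n+1}$, and by Theorem \ref{Hamburger} the $n$-atomic representing measure of $(\gamma_0-c_0,\gamma_1,\ldots,\gamma_{2n})$ is then unique, hence $\gamma_{2n+1}=c_0\cdot 0^{2n+1}+\int x^{2n+1}\,d\nu_1$ would be determined by $\gamma_0,\ldots,\gamma_{2n}$ alone — contradicting the freedom of the parameter. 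With this replacement the proof goes through; as written, however, the step is incorrect, not merely unfinished.
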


Let $k\in \NN$. We say a sequence $\gamma=(\gamma_0,\gamma_1,\ldots,\gamma_{2k})\in \RR^{2k+1}$ is \textbf{$(\RR\setminus \{0\})$--representable} if there is a positive Borel measure $\mu$ on $\RR\setminus \{0\}$ such that
$\gamma_i=\int_{\RR\setminus\{0\}} x^i\;d\mu$ for $0\leq i\leq 2k$.

Note that Theorem \ref{thm:strong-Hamburger} above characterizes when a given sequence $\gamma$ is $(\RR\setminus \{0\})$--representable.

\begin{remark}
The matrix version of Theorem \ref{thm:strong-Hamburger} appears in \cite{Sim06} using involved operator theory as the main tool. A proof of the scalar version using linear algebra techniques is \cite[Theorems 3.1]{Zal22b}.
\end{remark}
\bigskip


\subsection{Hyperbolic TMP}
\label{subsection:hyperbolic-TMP}

We will need the following solution to the hyperbolic TMP (see \cite[Corollary 3.5]{Zal22b} and Remark
\ref{281023-1930} below).

\begin{theorem}
	\label{090622-1611}
	Let $p(x,y)=xy-1$ nd 
	$\beta:=\beta^{(2k)}=\{\beta_{i,j}\}_{i,j\in \ZZ_+,i+j\leq 2k}$, where $k\geq 2$. 
	Let
	\begin{equation}
		\label{300822-1412}
			\cB=\{Y^k,Y^{k-1},\ldots,Y,\mathit{1},X,X^2,\ldots,X^k\}.
	\end{equation}
	Then the following statements are equivalent:
	\begin{enumerate}
		\item
			\label{040722-1324-pt1} 
				$\beta$ has a $\mc Z(p)$--representing measure.
            \smallskip
		\item
			\label{040722-1324-pt2} 
				$\beta$ has a $(\Rank \mc M(k))$--atomic $\mc Z(p)$--representing measure.
            \smallskip
		\item
			\label{040722-1324-pt4} 
				$\mc M(k)$ is positive semidefinite, the relations $\beta_{i+1,j+1}=\beta_{i,j}$ hold for every $i,j\in \ZZ_+$ with $i+j\leq 2k-2$ 
				and one of the following statements holds:
            \smallskip
		\begin{enumerate}
			\item
				\label{040722-1403-pt1}  	
					$\mc M(k)_{\cB}$ is positive definite.
                \smallskip
			\item
				\label{040722-1403-pt2} 
					$\Rank \mc M(k)=\Rank \mc M(k)_{\cB\setminus\{Y^k\}}=\Rank \mc M(k)_{\cB\setminus\{X^k\}}$.
		\end{enumerate}
	\end{enumerate}
\end{theorem}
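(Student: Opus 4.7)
The plan is to reduce the problem to the strong Hamburger TMP of Theorem~\ref{thm:strong-Hamburger} via the standard parametrization of the hyperbola.

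\textbf{Setup and reduction.} A measure supported on $\cZ(p) = \{(x,1/x) : x \in \RR\setminus\{0\}\}$ has moments $\beta_{i,j} = \int x^{i-j}\,d\mu$, which depend only on $i-j$. Defining, for $n \in \{-2k,\ldots,2k\}$, the quantity $\widetilde\gamma_n := \beta_{i,j}$ whenever $i-j = n$ (this requires consistency, i.e., precisely the relations in \eqref{040722-1403-pt2}), the first necessary observation is that $\beta$ has a $\cZ(p)$--rm iff $\widetilde\gamma = (\widetilde\gamma_{-2k}, \ldots, \widetilde\gamma_{2k})$ is $(\RR\setminus\{0\})$--representable; the correspondence is the pushforward under $t \mapsto (t, 1/t)$ and is atom-preserving. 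The main identification is that, under the indexing $\cB = \{Y^k, \ldots, Y, \mathit{1}, X, \ldots, X^k\}$ relabeled as $\{-k, -k+1, \ldots, k\}$ (with $Y^a \leftrightarrow -a$ and $X^a \leftrightarrow a$), the submatrix $\mc M(k)_{\cB}$ is precisely the Hankel matrix $A_\gamma = (\widetilde\gamma_{i+j})_{i,j=-k}^{k}$ of the STHMP. In particular, $\mc M(k)_{\cB\setminus\{Y^k\}}$ (resp.\ $\mc M(k)_{\cB\setminus\{X^k\}}$) becomes the lower (resp.\ upper) principal submatrix $A_\gamma[2k-1]$ (resp.\ $A_\gamma(2k-1)$) of size $2k$.

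\textbf{Key rank equality.} Before invoking Theorem~\ref{thm:strong-Hamburger}, I need $\Rank \mc M(k) = \Rank \mc M(k)_{\cB}$. The relations $\beta_{i+1,j+1} = \beta_{i,j}$ (for $i+j \leq 2k-2$) imply that for any $(c,d)$ with $c,d \geq 1$, $c+d \leq k$, the columns $X^c Y^d$ and $X^{c-1}Y^{d-1}$ of $\mc M(k)$ coincide entrywise, because $\mc M(k)_{X^a Y^b,\, X^c Y^d} = \beta_{a+c, b+d} = \beta_{a+c-1, b+d-1}$ (the shifted indices lie in range since $a+b \leq k$). Iterating, each column $X^i Y^j$ reduces to $X^{i-j}$ (if $i \geq j$) or $Y^{j-i}$ (if $j \geq i$), which lies in $\cB$. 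An analogous statement for rows (by symmetry) yields $\Rank\mc M(k) = \Rank\mc M(k)_{\cB}$.

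\textbf{Assembling the equivalences.} The implication $\eqref{040722-1324-pt2} \Rightarrow \eqref{040722-1324-pt1}$ is trivial. For $\eqref{040722-1324-pt1} \Rightarrow \eqref{040722-1324-pt4}$: positivity of $\mc M(k)$ is automatic for any moment matrix, the recursive relations $\beta_{i+1,j+1}=\beta_{i,j}$ follow from the fact that $p(X,Y)$ is a column relation together with rg (alternatively, directly from $xy=1$ on $\supp\mu$), and the rank conditions follow by applying $\eqref{pt5-v2206}$ of Theorem~\ref{thm:strong-Hamburger} to $\widetilde\gamma$ and translating back via the identifications above. For $\eqref{040722-1324-pt4} \Rightarrow \eqref{040722-1324-pt2}$: the consistency given by the relations allows the definition of $\widetilde\gamma$; the psd assumption on $\mc M(k)$ gives $A_\gamma = \mc M(k)_{\cB} \succeq 0$; one of \eqref{040722-1403-pt1}, \eqref{040722-1403-pt2} translates to condition (a) or (b) in Theorem~\ref{thm:strong-Hamburger}, so that theorem supplies a $(\Rank A_\gamma)$--atomic $(\RR\setminus\{0\})$--rm, whose pushforward under $t \mapsto (t,1/t)$ is a $\cZ(p)$--rm for $\beta$ with $\Rank A_\gamma = \Rank\mc M(k)$ atoms.

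\textbf{Main obstacle.} The only nontrivial technical point is the rank collapse $\Rank \mc M(k) = \Rank \mc M(k)_{\cB}$ and the careful matching of principal submatrices of $A_\gamma$ with $\mc M(k)_{\cB\setminus\{X^k\}}$ and $\mc M(k)_{\cB\setminus\{Y^k\}}$; everything else is a direct translation through the correspondence $\widetilde\gamma_n = \beta_{i,j}$ for $i-j=n$, invoking Theorem~\ref{thm:strong-Hamburger}.
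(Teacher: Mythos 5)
Your proposal is correct and takes essentially the same route as the paper: the paper (in the remark following the theorem) explains that the result is a slight strengthening of a corollary from \cite{Zal22b}, obtained by observing that the pure recursively-generated relations $\beta_{i+1,j+1}=\beta_{i,j}$ already collapse the columns of $\cM(k)$ onto those indexed by $\cB$, so that $\cM(k)_\cB$ is identified with the Hankel matrix $A_v$ of a strong truncated Hamburger sequence and Theorem~\ref{thm:strong-Hamburger} applies. Your argument makes the rank collapse $\Rank\cM(k)=\Rank\cM(k)_\cB$ and the dictionary between $\cM(k)_{\cB\setminus\{Y^k\}}, \cM(k)_{\cB\setminus\{X^k\}}$ and the corner matrices $A_\gamma[2k-1], A_\gamma(2k-1)$ more explicit than the paper (which largely defers to \cite{Zal22b}), but the substance is the same.
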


\begin{remark}
    \label{281023-1930}
        The first solution to the hyperbolic TMP
 is \cite[Theorem 1.5]{CF05}, which
 contains a condition called \textit{variety condition}.
 To apply the solution to the hyperbolic TMP, when solving the TMP on a reducible cubic with an irreducible component equivalent to the hyperbola $xy=1$ after applying an invertible affine linear transformation,
 it is not easy to check the variety condition symbolically.
 Theorem \ref{090622-1611} does not contain the variety condition, but only linear algebraic conditions.
 Theorem \ref{090622-1611} is
 a slight improvement of \cite[Corollary 3.5]{Zal22b}.
 Namely, instead of \eqref{040722-1324-pt4}
 the statement in \cite[Corollary 3.5]{Zal22b}
 reads:
 \begin{enumerate}
 \item[(3$'$)]
 $\mc M(k)$ is positive semidefinite, recursively generated and if
				$\Rank \mc M(k)_\cB=2k$, then
				$$\Rank \mc M(k)_{\cB\setminus\{X^k\}}=\Rank \mc M(k)_{\cB\setminus\{Y^k\}}=2k.$$
 \end{enumerate}
 Furthermore, in the proof of \cite[Corollary 3.5]{Zal22b} it is shown
 that (3$'$) is equivalent to
 a variant of \eqref{040722-1324-pt4} (labelled (A) in the proof), in which all rg relations are assumed, not only \textit{pure ones}, 
 i.e., \begin{equation}\label{020822-1740}
		\beta_{i+1,j+1}=\beta_{i,j}\text{ holds for every }i,j\in \ZZ_+\text{ with }i+j\leq 2k-2,
	\end{equation}
 or equivalently
 \begin{equation}\label{281023-2036}
 X^{i+1}Y^{j+1}=X^iY^j
 \text{ hold
 for all }i,j\in \ZZ_+\text{ with }i+j\leq k-2.
 \end{equation}
 The improvement of Theorem \ref{090622-1611} compared to \cite[Corollary 3.5]{Zal22b}
 lies in the replacement of the assumption
 \textit{$\cM(k)$ is rg} by the seemingly weaker assumption that \textit{$\cM(k)$ satisties all pure rg relations} (i.e., \eqref{020822-1740}).
 We now explain why this can be done.
 Due to the existence of the relations \eqref{281023-2036},
 it is sufficient to assume that other relations are among columns and rows, indexed by
 $\cB$ (see \eqref{300822-1412}).
Defining
 $v=
 (
 \beta_{0,2k},
 \beta_{0,2k-1},
 \beta_{0,2k-2},
 \ldots,
 \beta_{0,0},
 \beta_{1,0},
 \beta_{2,0},
 \ldots,
 \beta_{2k,0}
 )\in \RR^{4k+1},$
 note that
 $\cM(k)_{\cB}=A_v$ (see \eqref{vector-v})
 is a singular psd Hankel matrix,
 which is rg in the sense of a univariate sequence \cite[Section 2]{Zal22b}
 by the assumptions in \eqref{040722-1403-pt2} and \cite[Proposition 2.1.(4),(5)]{Zal22b}.
        By \cite[Theorem 3.1]{Zal22b}, $v$ has a representing measure supported on 
        $\RR\setminus \{0\}$,
 where $\beta_{0,i}$ corresponds to the moment of $x^{-i}$ and $\beta_{j,0}$ corresponds to the moment of $x^j$.
 By \cite[Claim in the proof of Corollary 3.5]{Zal22b}, $\beta$ has a $\cZ(xy-1)$--rm.
\end{remark}


\begin{corollary}
	\label{cor:x+y+axy}
	Let $p(x,y)=x+y-xy$ and 
	$\beta:=\beta^{(2k)}=\{\beta_{i,j}\}_{i,j\in \ZZ_+,i+j\leq 2k}$, where $k\geq 2$. 
	Let
	\begin{equation}
		\label{basis:x+y+axy}
			\cB=\{Y^k,Y^{k-1},\ldots,Y,\mathit{1},X,X^2,\ldots,X^k\}.
	\end{equation}
	Then the following statements are equivalent:
	\begin{enumerate}
		\item
			\label{cor:x+y+axy-pt1} 
				$\beta$ has a $\mc Z(p)$--representing measure.
            \smallskip
		\item
			\label{cor:x+y+axy-pt2} 
				$\beta$ has a $(\Rank \mc M(k))$--atomic $\mc Z(p)$--representing measure.
            \smallskip
		\item
			\label{cor:x+y+axy-pt3} 
				$\mc M(k)$ is positive semidefinite, the relations $\beta_{i+1,j+1}=\beta_{i+1,j}+\beta_{i,j+1}=0$ hold for every $i,j\in \ZZ_+$ with $i+j\leq 2k-2$ 
				and one of the following statements holds:
            \smallskip
		\begin{enumerate}
			\item
				\label{cor:x+y+axy-pt3-1}  	
					$\mc M(k)_{\cB}$ is positive definite.
                \smallskip
			\item
				\label{cor:x+y+axy-pt3-2} 
					$\Rank \mc M(k)=\Rank \mc M(k)_{\cB\setminus\{Y^k\}}=\Rank \mc M(k)_{\cB\setminus\{X^k\}}$.
		\end{enumerate}
	\end{enumerate}
\end{corollary}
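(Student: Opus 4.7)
The strategy I would follow is to reduce Corollary \ref{cor:x+y+axy} to Theorem \ref{090622-1611} via an affine change of variables. Observing that $(x-1)(y-1) - 1 = xy - x - y = -p(x,y)$, the invertible affine transformation $\phi(x,y) := (x-1, y-1)$ maps $\cZ(p)$ bijectively onto the hyperbola $\cZ(uv-1)$. By Subsection \ref{sub:affine-linear-trans}, $\beta$ admits an $r$-atomic $\cZ(p)$-representing measure if and only if $\widetilde\beta := \phi(\beta)$ admits an $r$-atomic $\cZ(uv-1)$-representing measure, and moreover $\cM(k;\widetilde\beta) = P^T \cM(k;\beta) P$ for an invertible basis-change matrix $P$, so in particular positive semidefiniteness and rank of the full moment matrices coincide. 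I would then apply Theorem \ref{090622-1611} to $\widetilde\beta$ and translate each condition of its part (3) back into the corresponding condition on $\beta$.

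The translation of the pure column relations is direct. For $i + j \leq 2k - 2$,
\[
\widetilde\beta_{i+1,j+1} - \widetilde\beta_{i,j} = L_\beta\left((x-1)^i(y-1)^j \cdot [(x-1)(y-1) - 1]\right) = -L_\beta\left((x-1)^i(y-1)^j \cdot p(x,y)\right),
\]
and since $\{(x-1)^i(y-1)^j : i+j \leq 2k-2\}$ is a basis of $\RR[x,y]_{\leq 2k-2}$, the identities $\widetilde\beta_{i+1,j+1} = \widetilde\beta_{i,j}$ hold for all admissible $(i,j)$ if and only if $L_\beta(q\cdot p) = 0$ for every $q \in \RR[x,y]_{\leq 2k-2}$, which is equivalent to the relations $\beta_{i+1,j+1} = \beta_{i+1,j} + \beta_{i,j+1}$ for $i+j \leq 2k-2$ stated in \eqref{cor:x+y+axy-pt3}.

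The main obstacle I anticipate is translating the submatrix conditions \eqref{cor:x+y+axy-pt3-1} and \eqref{cor:x+y+axy-pt3-2}, since in general a principal submatrix of $P^T \cM(k;\beta) P$ is \emph{not} a congruence transform of the corresponding submatrix of $\cM(k;\beta)$. The key observation I would use is that the substitution $\tau \colon p(x,y) \mapsto p(x-1, y-1)$ preserves the subspace $V := \Span \cB$ of polynomials of the form $f(x) + g(y)$ with $\deg f, \deg g \leq k$, because $(x-1)^i$ lies in $\Span\{1, x, \ldots, x^i\}$ and symmetrically in $y$; for the same reason, $\tau$ preserves $\Span(\cB \setminus \{Y^k\})$ and $\Span(\cB \setminus \{X^k\})$. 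Consequently, the bilinear forms $B_{\widetilde\beta}|_V$ and $B_\beta|_V$ defined by $B_\gamma(p,q) := L_\gamma(pq)$ are congruent via the invertible endomorphism $\tau|_V$, so in the basis $\cB$ one obtains $\cM(k;\widetilde\beta)_\cB = T^T \cM(k;\beta)_\cB T$ for some invertible $T \in \RR^{(2k+1)\times(2k+1)}$, and analogously for the submatrices indexed by $\cB \setminus \{Y^k\}$ and $\cB \setminus \{X^k\}$. This transfers the positive-definiteness and rank equalities from $\widetilde\beta$ to $\beta$, completing the reduction.
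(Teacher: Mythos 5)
Your proposal follows the same route as the paper's proof: reduce to Theorem~\ref{090622-1611} by an affine substitution and translate the conditions back. Two features of your write-up deserve mention. First, your transformation $\phi(x,y)=(x-1,y-1)$ is the correct one; the paper's stated $\phi(x,y)=(x+1,y+1)$ is a sign error, since with that choice one gets $\widetilde\beta_{i+1,j+1}-\widetilde\beta_{i,j}=L_\beta\big((x+1)^i(y+1)^j(xy+x+y)\big)$, which does not vanish under the relations $\beta_{i+1,j+1}=\beta_{i+1,j}+\beta_{i,j+1}$, nor does it send $\cZ(x+y-xy)$ to $\cZ(uv-1)$ (it sends it to $\cZ(2u+2v-uv-3)$); one must take $(x-1,y-1)$ or $(1-x,1-y)$. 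Second, your explicit treatment of conditions \eqref{cor:x+y+axy-pt3-1} and \eqref{cor:x+y+axy-pt3-2} — resting on the observation that the substitution $\tau\colon p\mapsto p(x-1,y-1)$ and its inverse both preserve $\Span\cB$, $\Span(\cB\setminus\{X^k\})$ and $\Span(\cB\setminus\{Y^k\})$, so that the corresponding restrictions of $\cM(k;\widetilde\beta)$ and $\cM(k;\beta)$ are congruent by invertible matrices and hence share definiteness and rank — is exactly the point the paper's one-line proof leaves implicit, and it is indeed needed because a restriction of a congruence transform is in general not a congruence transform of the restriction.
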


\begin{proof}
Note that applying an affine linear transformation $\phi(x,y)=(x+1,y+1)$
(see Section \ref{sub:affine-linear-trans}) to $\beta$ we obtain a new sequence 
$\widetilde\beta^{(2k)}=\{\widetilde\beta_{i,j}\}_{i,j\in \ZZ_+,i+j\leq 2k}$
satisfying the relations $\widetilde\beta_{i+1,j+1}=\widetilde\beta_{i,j}$ for 
$i,j\in \ZZ_+$ with $i+j\leq 2k-2$.
Since the existence of a $\cZ(p)$--rm for $\beta$ is equivalent to the existence of a 
$\cZ(xy-1)$--rm for $\widetilde\beta$, Corollary \ref{cor:x+y+axy} follows by 
Theorem \ref{090622-1611}.
\end{proof}


\begin{corollary}
	\label{090622-1611-hyp-v3}
	Let $p(x,y)=ay+x^2-y^2$, $a\in \RR\setminus \{0\}$, and 
	$\beta:=\beta^{(2k)}=\{\beta_{i,j}\}_{i,j\in \ZZ_+,i+j\leq 2k}$, where $k\geq 2$. 
	Let
	\begin{equation}
		\label{300822-1412-v3}
			\cB'=\{YX^{k-1},YX^{k-2},\ldots,Y,\mathit{1},X,X^2,\ldots,X^k\}.
	\end{equation}
	Then the following statements are equivalent:
	\begin{enumerate}
		\item
			\label{040722-1324-pt1} 
				$\beta$ has a $\mc Z(p)$--representing measure.
            \smallskip
		\item
			\label{040722-1324-pt2} 
				$\beta$ has a $(\Rank \mc M(k))$--atomic $\mc Z(p)$--representing measure.
            \smallskip
		\item
			\label{040722-1324-pt4} 
				$\mc M(k)$ is positive semidefinite, the relations $\beta_{i,j+2}=\beta_{i+2,j}+a\beta_{i,j+1}$ hold for every $i,j\in \ZZ_+$ with $i+j\leq 2k-2$ 
				and one of the following statements holds:
            \smallskip
		\begin{enumerate}
			\item
				\label{040722-1403-pt1-v3}  	
					$\mc M(k)_{\cB'}$ is positive definite.
                \smallskip
			\item
				\label{040722-1403-pt2-v3} 
					$\Rank \mc M(k)=\Rank \mc M(k)_{\cB'\setminus\{YX^{k-1}\}}=\Rank \mc M(k)_{\cB'\setminus\{X^k\}}$.
		\end{enumerate}
	\end{enumerate}
\end{corollary}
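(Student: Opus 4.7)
The plan is to follow the strategy of the proof of Corollary \ref{cor:x+y+axy}, reducing Corollary \ref{090622-1611-hyp-v3} to Theorem \ref{090622-1611} via an invertible affine linear transformation carrying the curve $\cZ(ay + x^2 - y^2)$ onto the standard hyperbola $\cZ(xy - 1)$.

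I take $\phi(x,y) := \bigl(\tfrac{2}{a}(y - x) - 1,\ \tfrac{2}{a}(x + y) - 1\bigr)$. A direct computation gives
$$
\phi_1(x,y)\phi_2(x,y) - 1 = -\tfrac{4}{a^2}\bigl(ay + x^2 - y^2\bigr),
$$
so $\phi$ bijects $\cZ(ay + x^2 - y^2)$ onto $\cZ(xy - 1)$, and it is invertible with $x = \tfrac{a}{4}(\phi_2 - \phi_1)$ and $y = \tfrac{a}{4}(\phi_1 + \phi_2 + 2)$. Setting $\widetilde\beta := \phi(\beta)$, Section \ref{sub:affine-linear-trans} gives that $\beta$ admits a (minimal) $\cZ(ay + x^2 - y^2)$-rm iff $\widetilde\beta$ admits a (minimal) $\cZ(xy - 1)$-rm, and $\mc M(k;\beta)$ and $\mc M(k;\widetilde\beta)$ are congruent (hence share positive semidefiniteness and rank). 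The recursive relations $\beta_{i,j+2} = \beta_{i+2,j} + a\beta_{i,j+1}$ encoding the column relation $y^2 = x^2 + ay$ translate under $\phi$ into $\widetilde\beta_{i+1,j+1} = \widetilde\beta_{i,j}$ encoding $XY = 1$ for $\widetilde\beta$, matching the column-relation hypothesis of Theorem \ref{090622-1611}.

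The main obstacle is to verify that the submatrix conditions (3)(a), (3)(b) of the corollary, involving $\mc M(k;\beta)_{\cB'}$, correspond to conditions (3)(a), (3)(b) of Theorem \ref{090622-1611} applied to $\widetilde\beta$ with basis $\cB$. Assuming the column relations, $L_\beta$ vanishes on $(ay + x^2 - y^2)\,\RR[x,y]_{\leq 2k-2}$, so the bilinear form $(p,q)\mapsto L_\beta(pq)$ descends to the $(2k+1)$-dimensional quotient $V := \RR[x,y]_{\leq k}/\bigl((ay + x^2 - y^2)\,\RR[x,y]_{\leq k-2}\bigr)$. Iteratively applying $y^2 \equiv x^2 + ay$ shows that $\cB'$ is a basis of $V$, and by invertibility of $\phi$ the $\phi$-image of $\cB$ (i.e., the polynomials $\phi_1^i\phi_2^j$ reduced modulo $(ay + x^2 - y^2)$) is also a basis of $V$. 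Hence $\mc M(k;\beta)_{\cB'}$ and $\mc M(k;\widetilde\beta)_{\cB}$ are congruent via the change-of-basis matrix, yielding the equivalence of (3)(a) and the equality of their ranks. For (3)(b), one further traces the top-degree contributions of $\phi_1^k$ and $\phi_2^k$ in the $\cB'$-expansion: modulo lower-degree terms, these span $\operatorname{span}\{X^k, YX^{k-1}\}$, so the rank equalities $\Rank \mc M(k;\widetilde\beta) = \Rank \mc M(k;\widetilde\beta)_{\cB\setminus\{X^k\}} = \Rank \mc M(k;\widetilde\beta)_{\cB\setminus\{Y^k\}}$ translate into the analogous equalities for $\mc M(k;\beta)_{\cB'}$ with the columns indexed by $X^k$ and $YX^{k-1}$ removed. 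I expect this linear-algebraic bookkeeping of top-degree terms to be the most delicate step, since the change-of-basis matrix mixes columns nontrivially and the two rank conditions are not purely intrinsic to $V$.

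Combining the above, applying Theorem \ref{090622-1611} to $\widetilde\beta$ yields the desired equivalences for $\beta$ and completes the proof.
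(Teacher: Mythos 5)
Your overall route is the same as the paper's (reduce to Theorem \ref{090622-1611} by an invertible affine change of variables), and your choice of $\phi$ is in fact the correct one: $\phi_1\phi_2-1=-\tfrac{4}{a^2}(ay+x^2-y^2)$. (The $\phi$ displayed in the paper gives $\phi_1\phi_2-1=\tfrac{4}{a^2}(x^2+ax-y^2)$, i.e.\ the wrong conic; evidently a typo.) The congruence of the full moment matrices, the translation of the recursive relations, and the equivalence of condition (3)(a) with positive definiteness of $\mc M(k;\widetilde\beta)_{\cB}$ all work exactly as you say.

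The gap is in the step you flag as ``delicate'' and then only sketch. The top-degree parts of $\phi_1$ and $\phi_2$ are forced to be proportional to $y-x$ and $y+x$: if $\phi_1$ has linear part $\alpha_1 x+\beta_1 y$ and $\phi_2$ has linear part $\alpha_2 x+\beta_2 y$, the requirement that the leading form of $\phi_1\phi_2$ be proportional to $y^2-x^2$ forces $\alpha_1\beta_2+\alpha_2\beta_1=0$, $\alpha_1\alpha_2=-\beta_1\beta_2\neq 0$, so in particular $\beta_1,\beta_2\neq 0$. Reducing $\phi_1^k,\phi_2^k$ modulo $y^2\equiv x^2+ay$ and modulo terms of degree $\leq k-1$ one obtains
\begin{equation*}
\phi_1^k\equiv c_1\,(x^k-yx^{k-1}),\qquad \phi_2^k\equiv c_2\,(x^k+yx^{k-1}),
\end{equation*}
with $c_1,c_2\neq 0$, for \emph{any} admissible $\phi$. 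Consequently, writing $V:=\RR[x,y]_{\leq k}\big/\bigl((ay+x^2-y^2)\RR[x,y]_{\leq k-2}\bigr)$ and letting $V_{k-1}$ be the degree-$\leq k-1$ part, the pullbacks under $\phi^*$ of $\Span(\cB\setminus\{X^k\})$ and $\Span(\cB\setminus\{Y^k\})$ are $V_{k-1}+\RR(x^k+yx^{k-1})$ and $V_{k-1}+\RR(x^k-yx^{k-1})$, whereas $\Span(\cB'\setminus\{X^k\})=V_{k-1}+\RR\,yx^{k-1}$ and $\Span(\cB'\setminus\{YX^{k-1}\})=V_{k-1}+\RR\,x^k$. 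These are four pairwise distinct hyperplanes of $V$. When $\mc M(k)_{\cB'}$ is singular with (say) a one-dimensional radical $\RR v$, the condition (3)(b) for $\widetilde\beta$ says that the image of $v$ in $V/V_{k-1}\cong\RR^2$ (with coordinates in the basis $x^k,yx^{k-1}$) is not proportional to $(1,1)$ and not to $(1,-1)$, while the corollary's (3)(b) says it is not proportional to $(1,0)$ and not to $(0,1)$. These are genuinely inequivalent conditions; for instance, the image being $(1,0)$ satisfies the first pair but not the second. So the rank equalities in Theorem \ref{090622-1611}(3)(b) do \emph{not} transport to those in Corollary \ref{090622-1611-hyp-v3}(3)(b) by any affine change of variables, and your claim that the top-degree contributions of $\phi_1^k,\phi_2^k$ ``span $\Span\{X^k,YX^{k-1}\}$'' is true but insufficient: spanning the quotient $V/V_{k-1}$ is not the same as matching the individual hyperplanes.

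So the argument does not close (3)(b), which is precisely the nontrivial part of the corollary. Note that the paper's own proof also consists only of the affine-transformation remark and likewise does not address this; however the burden here was on you to carry out or justify the reduction. A correct treatment has to replace the affine reduction by something that respects the boundary rows: e.g.\ change basis to $X^k\pm YX^{k-1}$ (which under the rational parametrization of the hyperbola become the pure extreme Laurent degrees $s^{\pm k}$) and then invoke Theorem \ref{thm:strong-Hamburger} directly, or prove the intrinsic equivalence of the two rank conditions under the additional hypotheses of the corollary. As written, the proposal leaves a real gap rather than a bookkeeping exercise.
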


\begin{proof}
Note that applying an affine linear transformation 
    $\phi(x,y)=\big(\frac{2}{a}(x+\frac{a}{2}-y),\frac{2}{a}(x+\frac{a}{2}+y)\big)$
(see Section \ref{sub:affine-linear-trans}) to $\beta$ we obtain a new sequence 
$\widetilde\beta^{(2k)}=\{\widetilde\beta_{i,j}\}_{i,j\in \ZZ_+,i+j\leq 2k}$
satisfying the relations $\widetilde\beta_{i+1,j+1}=\widetilde\beta_{i,j}$ for 
$i,j\in \ZZ_+$ with $i+j\leq 2k-2$. 
Since the existence of a $\cZ(p)$--rm for $\beta$ is equivalent to the existence of a 
$\cZ(xy-1)$--rm for $\widetilde\beta$, Corollary \ref{090622-1611-hyp-v3} follows by 
Theorem \ref{090622-1611}.
\end{proof}

\section{Common approach to all cases}
\label{Section-common-approach}

In this section we recall the constructive approach to solving the TMP on reducible cubic curves in the canonical form $yc(x,y)=0$ developed in \cite[Section 4]{YZ24}.\\

Let  
\begin{equation}
\label{degree-lex}
    \cC=\{\mathit{1},X,Y,X^2,XY,Y^2,\ldots,X^k,X^{k-1}Y,\ldots,Y^k\}
\end{equation}
be the set of columns and rows of the moment matrix $\cM(k)$
in the degree-lexicographic order.
Let 
\begin{equation}
\label{form-of-p}
    p(x,y)=y\cdot c(x,y)\in \RR[x,y]_{\leq 3}
\end{equation}
be a polynomial of degree 3 in one of the canonical forms from \cite[Proposition 3.1]{YZ24}, were  $c(x,y)$ a polynomial of degree 2.
A given 2--dimensional sequence 
$\beta=\{\beta_{i,j}\}_{i,j\in \ZZ_+,i+j\leq 2k}$ 
of degree $2k$, $k\in \NN$,
will have a $\cZ(p)$--rm if and only if it can be decomposed as
\begin{equation}
\label{decomposition-v2}
	\beta=\beta^{(\ell)}+\beta^{(c)},
\end{equation} 
where
\begin{align*}
    \beta^{(\ell)}
    &:=
    \{\beta_{i,j}^{(\ell)}\}_{i,j\in \ZZ_+,i+j\leq 2k}
    \quad
    \text{has a measure on }y=0,\\
    \beta^{(c)}    
    &:=
    \{\beta_{i,j}^{(c)}\}_{i,j\in \ZZ_+,i+j\leq 2k}
    \quad
    \text{has a measure on the conic }c(x,y)=0,
\end{align*}
and the sum in \eqref{decomposition-v2} is the component-wise sum.
On the level of moment matrices, \eqref{decomposition-v2}
is equivalent to 
\begin{equation}
\label{decomposition-v3}
	\cM(k;\beta)=\cM(k;\beta^{(\ell)})+\cM(k;\beta^{(c)}).
\end{equation}
Note that if $\beta$ has a $\cZ(p)$--rm,
then the matrix $\mc M(k;\beta)$ satisfies the relation 
$p(X,Y)=\mathbf{0}$ and by rg also
\begin{equation}
    \label{071123-0657}
        (x^iy^jp)(X,Y)=\mathbf{0}
        \quad
        \text{for }i,j=0,\ldots,k-3\text{ such that }i+j\leq k-3.
\end{equation}

Let $\cT\subseteq \cC$ be a subset, such that $\{1,X,\ldots,X^k\}\subseteq \cT$ and the columns from $\cT$
span the column space $\cC(\cM(k;\beta))$.
We write 
    $\vec{X}^{(0,k)}:=(1,X,\ldots,X^k)$,
    $\cT_1=\cT\setminus \{1,X,\ldots,X^k\}$
and
    \begin{align}
    \label{071123-1939}
		\widetilde\cM(k;\beta)
		&:=
        Q\cM(k;\beta)Q^T
        =
		\kbordermatrix{
		& \vec{X}^{(0,k)}
            & \vec{\cT}_1
            & \overrightarrow{\cC\setminus\cT} \\
			(\vec{X}^{(0,k)})^T & A_{11} & A_{12} & A_{13}\\[0.3em]
		  (\vec{\cT}_1)^T & (A_{12})^T & A_{22} & A_{23}\\[0.3em]
            (\overrightarrow{\cC\setminus\cT})^T & (A_{13})^T & (A_{23})^T & A_{33}\\
		},
    \end{align}
where $\vec{\cT}_1$ and $\overrightarrow{\cC\setminus\cT}$ are tuples of elements of $\cT_1$ and $\cC\setminus\cT$, arranged in some order, and 
$Q$ is the appropriate permutation matrix.
In this new order of rows and columns, \eqref{decomposition-v3}
becomes equivalent to
\begin{equation}
\label{decomposition-v4}
	\widetilde\cM(k;\beta)=
 \widetilde\cM(k;\beta^{(\ell)})+
 \widetilde\cM(k;\beta^{(c)}),
\end{equation}
where 
$\widetilde\cM(k;\beta^{(\ell)}):=
Q\cM(k;\beta^{(\ell)})Q^T$
and
$\widetilde\cM(k;\beta^{(c)}):=
Q\cM(k;\beta^{(c)})Q^T.$
By the form of the atoms we know that 
$\widetilde\cM(k;\beta^{(c)})$ 
    and 
$\widetilde\cM(k;\beta^{(\ell)})$ 
will have forms
    \begin{align}
    \label{071123-0642}
    \begin{split}
		\widetilde\cM(k;\beta^{(c)})
		&=
		\kbordermatrix{
		& \vec{X}^{(0,k)}
            & \vec{\cT}_1
            & \overrightarrow{\cC\setminus\cT} \\
			(\vec{X}^{(0,k)})^T & A & A_{12} & A_{13}\\[0.3em]
		  (\vec{\cT}_1)^T & (A_{12})^T & A_{22} & A_{23}\\[0.3em]
            (\overrightarrow{\cC\setminus\cT})^T & (A_{13})^T & (A_{23})^T & A_{33}\\
		},\\[0.5em]
		\widetilde\cM(k;\beta^{(\ell)})
		&=
		\kbordermatrix{
		& \vec{X}^{(0,k)}
            & \vec{\cT}_1
            & \overrightarrow{\cC\setminus\cT} \\
			(\vec{X}^{(0,k)})^T & A_{11}-A & \mathbf{0} & \mathbf{0}\\[0.3em]
		  (\vec{\cT}_1)^T & \mathbf{0} & \mathbf{0} & \mathbf{0}\\[0.3em]
            (\overrightarrow{\cC\setminus\cT})^T & \mathbf{0} & \mathbf{0} & \mathbf{0}\\
		}
    \end{split}
    \end{align}
for some Hankel matrix $A\in S_{k+1}.$
    Define matrix functions 
    $$
    \cF:S_{k+1}\to S_{\frac{(k+1)(k+2)}{2}}
    \qquad\text{and}\qquad
    \cH:S_{k+1}\to S_{k+1}
    $$
    by
    \begin{align}
    \label{071123-1940}
    \begin{split}
    \cF(\mathbf{A})
		&=
		\kbordermatrix{
		& \vec{X}^{(0,k)}
            & \vec{\cT}_1
            & \overrightarrow{\cC\setminus\cT} \\
			(\vec{X}^{(0,k)})^T & \mathbf{A} & A_{12} & A_{13}\\[0.3em]
		  (\vec{\cT}_1)^T & (A_{12})^T & A_{22} & A_{23}\\[0.3em]
            (\overrightarrow{\cC\setminus\cT})^T & (A_{13})^T & (A_{23})^T & A_{33}\\
		},\\[0.5em]
    \cH(\mathbf{A})
        &=
        \kbordermatrix{
        & \vec{X}^{(0,k)}\\
        (\vec{X}^{(0,k)})^T & A_{11}-\mathbf{A}
        }.
    \end{split}
    \end{align}
    Using \eqref{071123-0642}, \eqref{decomposition-v4}
    becomes equivalent to
    \begin{equation}
        \label{decomposition-v5}
        \widetilde{\mc M}(k;\beta)
        =
        \cF(A)+\cH(A)\oplus \mathbf{0}_{\frac{k(k+1)}{2}}
    \end{equation}
    for some Hankel matrix $A\in S_{k+1}$.

 	\begin{lemma}[{\cite[Lemma 4.1]{YZ24}}]
		\label{071123-0646}
            Let $k\in \NN, k\geq 3$. Assume the notation above.
            Then the sequence 
		$\beta=\{\beta_{i,j}\}_{i,j\in \ZZ_+,i+j\leq 2k}$
            has a $\cZ(p)$--representing measure if and only if there exist a Hankel matrix 
		$A\in S_{k+1}$,
		such that:
		\begin{enumerate}
			\item
                \label{301023-1756-pt1}
                The sequence 
				with the moment matrix $\cF(A)$ has a $\cZ(c)$--representing measure.
			\item
                \label{301023-1756-pt2}
   The sequence with the moment matrix $\cH(A)$ has a $\RR$--representing measure.
		\end{enumerate}
	\end{lemma}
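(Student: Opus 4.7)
The plan is to translate the existence of a $\cZ(p)$--rm into a measure--level decomposition $\mu=\mu_\ell+\mu_c$ along the two components of $\cZ(p)=\{y=0\}\cup\cZ(c)$, and then to read this decomposition off \eqref{071123-1939} as a single Hankel parameter $A\in S_{k+1}$. Concretely, $A$ will play the role of the pure--$X$ block of the conic part $\widetilde\cM(k;\beta^{(c)})$, while $\cH(A)=A_{11}-A$ will play the role of the only nonzero block of the line part $\widetilde\cM(k;\beta^{(\ell)})$.

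For the forward direction, starting from a $\cZ(p)$--rm $\mu$, I would split $\mu=\mu_\ell+\mu_c$ with $\supp\mu_\ell\subseteq\{y=0\}$ and $\supp\mu_c\subseteq\cZ(c)$ (any atoms lying in the intersection are assigned to either summand). The corresponding moment sequences yield the decomposition $\beta=\beta^{(\ell)}+\beta^{(c)}$ in \eqref{decomposition-v2}. The crucial observation is that $\beta^{(\ell)}_{i,j}=0$ whenever $j\geq 1$, which forces $\cM(k;\beta^{(\ell)})$ to vanish outside the principal submatrix indexed by $\{\mathit{1},X,\ldots,X^k\}$. Since $\{\mathit{1},X,\ldots,X^k\}\subseteq\cT$, after the permutation $Q$ this submatrix lands precisely in the $(\vec X^{(0,k)},\vec X^{(0,k)})$ block of $\widetilde\cM(k;\beta^{(\ell)})$, producing the second matrix in \eqref{071123-0642}. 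Defining $A$ as the pure--$X$ block of $\widetilde\cM(k;\beta)-\widetilde\cM(k;\beta^{(\ell)})$ then gives $\widetilde\cM(k;\beta^{(c)})=\cF(A)$, and $A$ is Hankel because both $A_{11}$ and $A_{11}-A$ are. By construction $\cF(A)$ admits $\mu_c$ as a $\cZ(c)$--rm and $\cH(A)$ admits the pushforward of $\mu_\ell$ under the projection onto the $x$--axis as an $\RR$--rm.

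For the converse, given a Hankel $A$ satisfying \eqref{301023-1756-pt1}--\eqref{301023-1756-pt2}, I would take a $\cZ(c)$--rm $\mu_c$ for the sequence with moment matrix $\cF(A)$ and an $\RR$--rm $\widetilde\mu_\ell$ for the univariate Hankel matrix $\cH(A)$, and then lift $\widetilde\mu_\ell$ to a measure $\mu_\ell$ on $\{y=0\}\subseteq\RR^2$ via $t\mapsto(t,0)$. The moment matrix of $\mu_\ell$ in the permuted basis is exactly $\cH(A)\oplus\mathbf{0}_{k(k+1)/2}$, so adding this to $\cF(A)$ recovers $\widetilde\cM(k;\beta)$ by \eqref{decomposition-v5}. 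Consequently $\mu_\ell+\mu_c$ is a positive Borel measure on $\cZ(p)$ whose moments reproduce $\beta$, i.e., a $\cZ(p)$--rm.

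The main obstacle I anticipate is the bookkeeping behind \eqref{071123-0642}: one must check that a measure supported on $y=0$ contributes only to the $(\vec X^{(0,k)},\vec X^{(0,k)})$ block of the permuted moment matrix, and that the remaining blocks $A_{12},A_{13},A_{22},A_{23},A_{33}$ are entirely determined by moments $\beta_{i,j}$ with $j\geq 1$ and hence are unchanged by the splitting. Both facts rest on the inclusion $\{\mathit{1},X,\ldots,X^k\}\subseteq\cT$ together with the identification of row/column indices with monomials in $\cC$. Once this is verified, the lemma follows from the observation that \eqref{decomposition-v3} and the matrix equation \eqref{decomposition-v5} are equivalent descriptions of the same decomposition $\beta=\beta^{(\ell)}+\beta^{(c)}$ into sequences representable on $y=0$ and on $\cZ(c)$, respectively.
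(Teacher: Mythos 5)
Your proposal is correct and matches the paper's framework exactly: it is precisely the decomposition $\beta=\beta^{(\ell)}+\beta^{(c)}$ outlined in \eqref{decomposition-v2}--\eqref{decomposition-v5}, with the line part concentrated in the $\vec X^{(0,k)}$ block because $\beta^{(\ell)}_{i,j}=0$ for $j\geq 1$, so that $\widetilde\cM(k;\beta^{(c)})=\cF(A)$ and $\widetilde\cM(k;\beta^{(\ell)})=\cH(A)\oplus\mathbf 0$. The paper defers the proof to \cite[Lemma 4.1]{YZ24}, but Section 3 of this paper sets up the argument in the same way you do, including the reconstruction of a $\cZ(p)$--rm from $\mu_\ell+\mu_c$ in the converse direction.
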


\begin{lemma}[{\cite[Lemma 4.2]{YZ24}}]
    \label{071123-1942}
    Let $k\in \NN, k\geq 3$. Assume the notation above
    and the sequence 
	$\beta=\{\beta_{i,j}\}_{i,j\in \ZZ_+, i+j\leq 2k}$ admits a $\cZ(p)$--representing measure.
    Let $$A:=A_{\big(\beta_{0,0}^{(c)},\beta_{1,0}^{(c)},\ldots,\beta_{2k,0}^{(c)}\big)}\in S_{k+1}$$ be a Hankel matrix (see \eqref{vector-v})
    such that $\cF(A)$ admits a $\cZ(c)$--representing measure and $\cH(A)$ admits a $\RR$--representing measure.
    Let $c(x,y)$ be of the form
        \begin{align}
        \label{081123-1004}
        \begin{split}
        &c(x,y)
        =a_{00}+a_{10}x+a_{20}x^2+a_{01}y+a_{02}y^2+a_{11}xy\quad
        \text{with }a_{ij}\in \RR\\
        &\text{and exactly one of the coefficients }a_{00},a_{10},a_{20}\text{ is nonzero}.
        \end{split}
        \end{align}
    If:
    \begin{enumerate}
        \item
        \label{071123-1942-pt1}
        $a_{00}\neq 0$, then 
            $$
            \beta_{i,0}^{(c)}
            =
            -\frac{1}{a_{0,0}}
            (a_{01}\beta_{i,1}+a_{0,2}\beta_{i,2}+a_{11}\beta_{i+1,1})
            \quad
            \text{for }i=0,\ldots,2k-2.
            $$
        \item 
        \label{071123-1942-pt2}
        $a_{10}\neq 0$, then 
            $$
            \beta_{i,0}^{(c)}
            =
            -\frac{1}{a_{1,0}}
            (a_{01}\beta_{i,1}+a_{0,2}\beta_{i,2}+a_{11}\beta_{i+1,1})
            \quad
            \text{for }i=1,\ldots,2k-1.
            $$
        \item 
        \label{071123-1942-pt3}
        $a_{20}\neq 0$, then 
            $$
            \beta_{i,0}^{(c)}
            =
            -\frac{1}{a_{2,0}}
            (a_{01}\beta_{i,1}+a_{0,2}\beta_{i,2}+a_{11}\beta_{i+1,1})
            \quad
            \text{for }i=2,\ldots,2k.
            $$
    \end{enumerate}
\end{lemma}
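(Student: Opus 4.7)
My approach will combine two elementary observations to recover $\beta^{(c)}_{i,0}$ from the known moments of $\beta$. The hypotheses make this almost automatic once the right identity is isolated.

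First, I will use that $\beta^{(\ell)}$ admits a measure supported on the line $y=0$, so that every moment $\beta^{(\ell)}_{i,j}$ with $j\geq 1$ vanishes. Hence the decomposition $\beta=\beta^{(\ell)}+\beta^{(c)}$ forces
$$\beta^{(c)}_{i,j}=\beta_{i,j}\qquad\text{for all }i,j\in\ZZ_+\text{ with }i+j\leq 2k\text{ and }j\geq 1,$$
so the only entries of $\beta^{(c)}$ that are not already fixed by $\beta$ are the row-zero entries $\beta^{(c)}_{0,0},\beta^{(c)}_{1,0},\ldots,\beta^{(c)}_{2k,0}$ that the lemma targets.

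Next, since $\beta^{(c)}$ has a representing measure $\mu^{(c)}$ supported on $\cZ(c)$, the polynomial $c$ is $\mu^{(c)}$-a.e.\ zero. Therefore for every $a\in\ZZ_+$ with $a+\deg c\leq 2k$ we have $\int x^a\,c(x,y)\,d\mu^{(c)}=0$, and expanding via \eqref{081123-1004} gives the scalar identity
$$
a_{00}\,\beta^{(c)}_{a,0}+a_{10}\,\beta^{(c)}_{a+1,0}+a_{20}\,\beta^{(c)}_{a+2,0}+a_{01}\,\beta^{(c)}_{a,1}+a_{02}\,\beta^{(c)}_{a,2}+a_{11}\,\beta^{(c)}_{a+1,1}=0
$$
for $a=0,1,\ldots,2k-2$.

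I then split according to which of $a_{00},a_{10},a_{20}$ is the unique nonzero coefficient. In case \eqref{071123-1942-pt1} the identity immediately solves for $\beta^{(c)}_{a,0}$; in case \eqref{071123-1942-pt2} it solves for $\beta^{(c)}_{a+1,0}$; in case \eqref{071123-1942-pt3} it solves for $\beta^{(c)}_{a+2,0}$. Substituting the first-step equalities $\beta^{(c)}_{i,j}=\beta_{i,j}$ (valid for $j\geq 1$) eliminates the remaining $\beta^{(c)}$-entries on the right, and a relabeling of the running index $a$ yields the stated formulas with the three index ranges $\{0,\ldots,2k-2\}$, $\{1,\ldots,2k-1\}$, $\{2,\ldots,2k\}$, respectively. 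The argument is essentially bookkeeping; the only point that requires care is checking that the admissible range $a\in\{0,\ldots,2k-2\}$ translates to precisely the stated range for $i$ after the case-dependent relabeling, which will be the main obstacle only in the sense of clerical attention rather than conceptual difficulty.
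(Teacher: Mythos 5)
Your proof plan is essentially the right one, and it is almost certainly the same argument that \cite[Lemma 4.2]{YZ24} gives: the two observations you isolate (that a measure supported on $y=0$ kills all moments $\beta^{(\ell)}_{i,j}$ with $j\geq 1$, hence $\beta^{(c)}_{i,j}=\beta_{i,j}$ for $j\geq 1$; and that $\int x^a c\,d\mu^{(c)}=0$ for $a\leq 2k-2$) are exactly what is needed, and the case-split on which of $a_{00},a_{10},a_{20}$ is nonzero then hands you the entries $\beta^{(c)}_{i,0}$.

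However, there is a genuine slip at the very last step, precisely at the place you flagged as ``only clerical.'' After you solve the identity
$a_{00}\beta^{(c)}_{a,0}+a_{10}\beta^{(c)}_{a+1,0}+a_{20}\beta^{(c)}_{a+2,0}+a_{01}\beta_{a,1}+a_{02}\beta_{a,2}+a_{11}\beta_{a+1,1}=0$
for $\beta^{(c)}_{a+1,0}$ (case \eqref{071123-1942-pt2}) or $\beta^{(c)}_{a+2,0}$ (case \eqref{071123-1942-pt3}) and then relabel $i=a+1$ or $i=a+2$, the \emph{right-hand side} indices shift too: you actually obtain
$\beta^{(c)}_{i,0}=-\tfrac{1}{a_{10}}\bigl(a_{01}\beta_{i-1,1}+a_{02}\beta_{i-1,2}+a_{11}\beta_{i,1}\bigr)$ for $i=1,\ldots,2k-1$ and
$\beta^{(c)}_{i,0}=-\tfrac{1}{a_{20}}\bigl(a_{01}\beta_{i-2,1}+a_{02}\beta_{i-2,2}+a_{11}\beta_{i-1,1}\bigr)$ for $i=2,\ldots,2k$,
not the formulas with $\beta_{i,1},\beta_{i,2},\beta_{i+1,1}$ printed in the lemma. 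Your claim that the relabeling ``yields the stated formulas'' is therefore not literally correct for cases (2) and (3). The good news is that your derived formulas are the right ones: one can confirm this against how the paper actually uses the lemma. For instance, in the hyperbolic type~2 section the sequence $\gamma_2$ in \eqref{def:gammas} has $i$-th entry $\beta_{i,1}-\beta_{i-1,1}$, which agrees with your formula for $c(x,y)=x+y-xy$ (where $a_{10}=1,a_{01}=1,a_{11}=-1$) but not with the printed lemma; likewise for the hyperbolic type~3 sequence $\gamma$ in \eqref{def:gamma-hyp3}. So the lemma as reproduced in this paper appears to carry a transcription error in cases (2) and (3), and your proof, carried out carefully, produces the correct version. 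The moral is that the ``clerical'' step is exactly where you should have slowed down: if you had written out the relabeled formula explicitly, you would have caught the mismatch yourself.
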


Lemma \ref{071123-1942} states that if $c$ is as in \eqref{081123-1004}, all but two entries of the Hankel matrix $A$ from Lemma \ref{071123-0646} are uniquely determined by $\beta$.
The following lemma gives the smallest candidate for $A$ in Lemma \ref{071123-0646} with respect to the usual Loewner order of matrices.

\begin{lemma}[{\cite[Lemma 4.3]{YZ24}}]
	\label{071123-2008}
    Assume the notation above
    and let
	$\beta=\{\beta_{i,j}\}_{i,j\in \ZZ_+,i+j\leq 2k}$, where $k\geq 3$, be a sequence of degree $2k$.
    Assume that $\widetilde{\mc{M}}(k;\beta)$ is positive semidefinite and 
    satisfies the column relations \eqref{071123-0657}.
		Then:
		\begin{enumerate}	
                \item 
                    \label{071123-2008-pt0}
                    $\cF(A)\succeq 0$ for some $A\in S_{k+1}$
                    if and only if
                    $A\succeq A_{12}(A_{22})^{\dagger} (A_{12})^T$.
                \smallskip
			\item
				\label{071123-2008-pt1}
					$\cF\big(A_{12}(A_{22})^{\dagger} (A_{12})^T\big)\succeq 0$ 
					and 	
					$\cH\big(A_{12}(A_{22})^{\dagger} (A_{12})^T\big) \succeq 0$.
                \smallskip
			\item
				\label{071123-2008-pt2} 
					$\cF\big(A_{12}(A_{22})^{\dagger} (A_{12})^T\big)$ satisfies the column relations
                    $(x^iy^jc)(X,Y)=0$
                    for $i,j\in \ZZ_+$ such that $i+j\leq k-2$.
                \smallskip
			\item 	
				\label{071123-2008-pt3}
                    We have that
                        \begin{align*}
                        \Rank \widetilde{\mc M}(k;\beta)
                        &=
                        \Rank A_{22}+
                        \Rank \big(A_{11}-A_{12}(A_{22})^{\dagger} (A_{12})^T\big)\\
                        &=\Rank \cF\big(A_{12}(A_{22})^{\dagger} (A_{12})^T\big)+
                        \Rank \cH\big(A_{12}(A_{22})^{\dagger} (A_{12})^T\big).
                        \end{align*}
		\end{enumerate}
\end{lemma}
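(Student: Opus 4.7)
The plan is to prove the four parts in a linked order: first the forward direction of (1), then the core of (2) via a factorization, and finally deduce (1)'s converse, (3), and (4) as consequences.

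For the forward direction of (1), observe that $M_\cT(A) := \begin{pmatrix} A & A_{12} \\ A_{12}^T & A_{22} \end{pmatrix}$ is a principal submatrix of $\cF(A)$. Since $M_\cT := M_\cT(A_{11})$ is a principal submatrix of $\widetilde\cM(k;\beta)$, it is psd, so by Theorem \ref{block-psd} we have $A_{22} \succeq 0$ and $\cC(A_{12}^T) \subseteq \cC(A_{22})$. Applying Theorem \ref{block-psd} to $M_\cT(A) \succeq 0$ then gives $A - A_{12} A_{22}^\dagger A_{12}^T \succeq 0$, i.e., $A \succeq A^*$ with $A^* := A_{12}A_{22}^\dagger A_{12}^T$.

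For (2), the statement $\cH(A^*) \succeq 0$ is immediate: $\cH(A^*) = A_{11} - A^*$ is the Schur complement of $A_{22}$ in $M_\cT$, hence psd by Theorem \ref{block-psd}. The heart of the proof is $\cF(A^*) \succeq 0$, where I would establish the key identities
\[
A_{13} = A_{12} A_{22}^\dagger A_{23}, \qquad A_{33} = A_{23}^T A_{22}^\dagger A_{23}.
\]
The argument: the rg relations \eqref{071123-0657} are of the form $(x^i y^{j+1} c)(X,Y) = \mathbf{0}$, and since every such relation is divisible by $y$, the associated column-relation vectors in $\widetilde\cM(k;\beta)$ have zero coordinates in the $\vec{X}^{(0,k)}$-block. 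Combining this with the assumption that $\cT$ spans $\cC(\cM(k;\beta))$, and invoking Lemma \ref{140722-1055} on the partition $\widetilde\cM(k;\beta) = \begin{pmatrix} M_\cT & M_{\cT,\cC\setminus\cT} \\ M_{\cT,\cC\setminus\cT}^T & A_{33}\end{pmatrix}$, one shows that the $\cC\setminus\cT$-columns can be expressed using only $\cT_1$-columns, which is equivalent to the displayed identities (the coefficients being read off via $A_{22}^\dagger$). Once these are in hand, direct computation gives
\[
\cF(A^*) = \begin{pmatrix} A_{12}A_{22}^\dagger \\ I \\ A_{23}^T A_{22}^\dagger \end{pmatrix} A_{22} \begin{pmatrix} A_{22}^\dagger A_{12}^T & I & A_{22}^\dagger A_{23} \end{pmatrix},
\]
using $A_{22}A_{22}^\dagger A_{12}^T = A_{12}^T$ and $A_{22}A_{22}^\dagger A_{23} = A_{23}$; since $A_{22} \succeq 0$, this exhibits $\cF(A^*) \succeq 0$.

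The converse in (1) then follows from \eqref{decomposition-v5}: $\cF(A) = \cF(A^*) + (A - A^*) \oplus \mathbf{0}_{\frac{k(k+1)}{2}}$, so $\cF(A^*) \succeq 0$ combined with $A \succeq A^*$ gives $\cF(A) \succeq 0$. For (3), the relations with $j \geq 1$ coincide (after reindexing) with \eqref{071123-0657} and hold in $\cF(A^*)$ since the $(\cT_1 \cup (\cC\setminus\cT))$-blocks are unchanged from $\widetilde\cM(k;\beta)$; the $j = 0$ relations $(x^i c)(X,Y) = \mathbf{0}$ touch the replaced $\vec{X}^{(0,k)}$-block and are verified column-by-column by expanding with $A^* = A_{12}A_{22}^\dagger A_{12}^T$ together with the identity $A_{13} = A_{12}A_{22}^\dagger A_{23}$. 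For (4), Theorem \ref{block-psd}(2) applied to $M_\cT$ yields $\Rank M_\cT = \Rank A_{22} + \Rank \cH(A^*)$; the factorization above gives $\Rank \cF(A^*) = \Rank A_{22}$; and since $\cT$ spans, $\Rank \widetilde\cM(k;\beta) = \Rank M_\cT$. Chaining these gives the claimed equalities.

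The main obstacle is establishing the identities $A_{13} = A_{12}A_{22}^\dagger A_{23}$ and $A_{33} = A_{23}^T A_{22}^\dagger A_{23}$ in (2). They do not follow from the rg relations alone nor from $\cT$ spanning alone; one must exploit their interaction, using the crucial fact that rg relations supply a family of kernel vectors of $\widetilde\cM(k;\beta)$ living in the $(\cT_1 \cup (\cC\setminus\cT))$-subspace to deduce, together with the full column-space-spanning property of $\cT$, that the $\cC\setminus\cT$-columns admit a presentation by $\cT_1$-columns alone.
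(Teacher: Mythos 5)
Your proof is correct in its essentials, and the route you take — establishing the identities $A_{13}=A_{12}A_{22}^\dagger A_{23}$, $A_{33}=A_{23}^TA_{22}^\dagger A_{23}$, factoring $\cF(A^*)=U A_{22} U^T$ with $U=\bigl(\begin{smallmatrix}A_{12}A_{22}^\dagger\\ I\\ A_{23}^TA_{22}^\dagger\end{smallmatrix}\bigr)$, and deducing the converse of (1), (3) and (4) from that — is the natural proof and matches \cite[Lemma 4.3]{YZ24} in spirit.

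Two places are gestural and worth spelling out more carefully. First, your central claim that the $\cC\setminus\cT$ columns lie in the span of the $\cT_1$ columns alone: the individual kernel vectors produced by \eqref{071123-0657} generally relate \emph{several} $\cC\setminus\cT$ columns at once (e.g.\ $Y^{a}X^{b}=Y^{a-1}X^{b-1}$ may have both sides in $\cC\setminus\cT$), so it is not a single relation but an iterated chain, with a triangular structure in the degree-lexicographic order, that yields a kernel vector of the form $(0,w_\ell,e_\ell)^T$ for each standard basis vector $e_\ell$ of $\RR^{\cC\setminus\cT}$; one should verify this termination for each canonical $c$ and also remark that the degree constraints $i+j\leq k-3$ are respected along the chain (they are, since total degree never grows). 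Once such kernel vectors exist, the passage to $W=A_{22}^\dagger A_{23}$ uses, as you note implicitly, that $\ker A_{22}\subseteq \ker A_{12}\cap\ker A_{23}^T$, which follows from $\widetilde{\mc M}(k;\beta)\succeq 0$. Second, for the $j=0$ relations in (3), the fact that $\ker\cF(A^*)=\ker\bigl(A_{12}^T\ A_{22}\ A_{23}\bigr)$ (the $\cT_1$-block-row of $\widetilde{\mc M}(k;\beta)$) reduces matters to checking $L_\beta(\mu'\, x^i p)=0$ for every monomial $\mu=y\mu'$ indexing $\cT_1$; writing $\mu' x^i p=(\mu' x^{\max(i-k+3,0)})(x^{\min(i,k-3)}p)$ keeps both factors within degree $k$ and lets one use \eqref{071123-0657} directly. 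Your remark that the identities ``do not follow from the rg relations alone nor from $\cT$ spanning alone'' overstates the point slightly: the spanning is itself a consequence of \eqref{071123-0657} together with positivity, but the psd hypothesis genuinely is needed to replace the ad-hoc reduction coefficients with $A_{22}^\dagger A_{23}$.
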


\begin{remark}
\label{general-procedure}
By Lemmas \ref{071123-0646}--\ref{071123-2008}, 
solving the $\cZ(p)$--TMP for 
    $\beta=\{\beta_{i,j}\}_{i,j\in \ZZ_+,i+j\leq 2k}$, 
where $k\geq 3$, 
with $p$ being of the form $yc(x,y)$ and $c$
as in \eqref{081123-1004}, 
the natural procedure is the following:
\begin{enumerate}
    \item\label{general-procedure-pt1} First compute $A_{\min}:=A_{12}(A_{22})^{\dagger}(A_{12})^T$.
        By Lemma \ref{071123-2008}.\eqref{071123-2008-pt2}, there is one entry
        of $A_{\min}$,
        which might need to be changed to obtain a Hankel structure. 
        Namely, in the notation \eqref{081123-1004},
        if:
        \begin{enumerate}
            \item $a_{00}\neq 0$, then the value of $(A_{\min})_{k,k}$ must be changed to $(A_{\min})_{k-1,k+1}$.
            \item $a_{10}\neq 0$, then the value of $(A_{\min})_{1,k+1}$   
                must be changed to $(A_{\min})_{2,k}$.
            \item $a_{20}\neq 0$, then the value of $(A_{\min})_{2,2}$ must be changed to $(A_{\min})_{3,1}$.
        \end{enumerate}
        Let $\widehat A_{\min}$ be  the matrix obtained from $A_{\min}$ after performing the change described above.
    \item Study if $\cF(\widehat A_{\min})$ and $\cH(\widehat A_{\min})$ admit a $\cZ(c)$--rm 
        and a $\RR$--rm, respectively.
        If the answer is yes, $\beta$ admits a $\cZ(p)$--rm. 
        Otherwise by Lemma \ref{071123-1942},
        there are two antidiagonals of the Hankel matrix $\widehat A_{\min}$, 
        which can by varied so that the matrices $\cF(\widehat A_{\min})$ and $\cH(\widehat A_{\min})$
        will admit the corresponding measures. 
        Namely, in the notation \eqref{081123-1004},
        if:
        \begin{enumerate}
            \item $a_{00}\neq 0$, then the last two antidiagonals of $\widehat A_{\min}$ 
            can be changed.
            \item $a_{10}\neq 0$, then the left--upper and the right--lower corner of $\widehat A_{\min}$ 
            can be changed.
            \item $a_{20}\neq 0$, then the first two antidiagonals of $\widehat A_{\min}$ 
            can be changed.
        \end{enumerate}
        To solve the $\cZ(p)$--TMP for $\beta$ one needs to characterize, when it is possible to change these antidiagonals in such a way
        to obtain a matrix $\breve A_{\min}$, such that 
        $\cF(\breve A_{\min})$ and $\cH(\breve A_{\min})$ admit a $\cZ(c)$--rm 
        and a $\RR$--rm, respectively.
\end{enumerate}
\end{remark}

\section{Hyperbolic type 1 relation: 
            $p(x,y)=y(1-xy)$.
        }
\label{Hyperbolic-type-1}

In this section we solve constructively the $\cZ(p)$--TMP for 
the sequence $\beta=\{\beta_{i,j}\}_{i,j\in \ZZ_+,i+j\leq 2k}$ 
of degree $2k$, $k\geq 3$,
where $p(x,y)$ is as in the title of the section.
The main result is Theorem \ref{100622-2204}, which characterizes concrete numerical conditions for the existence of a $\cZ(p)$--rm
for $\beta$ and also the number of atoms needed in a minimal $\cZ(p)$--rm. 
A numerical example demonstrating the main result is presented in Subsection \ref{ex:hyperbolic-type-1}.\\

Assume the notation from Section \ref{Section-common-approach}.
If $\beta$ admits a $\cZ(p)$--TMP, then $\mc M(k;\beta)$ 
must satisfy the relations
\begin{equation}
    \label{110622-0029-v2}
	Y^{2+j}X^{i+1}=Y^{1+j}X^{i}
        \quad
        \text{for }
        i,j\in \ZZ_+
        \text{ such that }
        i+j\leq k-3.
\end{equation}
On the level of moments the relations \eqref{110622-0029-v2} mean that
	\begin{equation}\label{100622-1739}	
		\beta_{i+1,j+2}=\beta_{i,j+1}
  \quad
  	\text{for }
   i,j\in \ZZ_+
   \text{ such that }i+j\leq 2k-3.
	\end{equation}
In the presence of all column relations \eqref{110622-0029-v2}, the column space $\cC(\mc M(k;\beta))$ is spanned by the columns in the tuple
    \begin{equation}
    \label{171023-2028-v2}
    \vec\cT:=
    (
    \underbrace{Y^{k},Y^{k-1},\ldots,Y}_{
    \vec Y^{(k,1)}}, 
    \underbrace{YX,YX^2,\ldots,YX^{k-1}}_{Y\vec{X}^{(1,k-1)}},
    \underbrace{\textit 1,X,\ldots,X^k}_{\vec{X}^{(0,k)}}).
    \end{equation}
	
     Let
    \begin{align}
    \label{171023-1707}
    \begin{split}
    &P
    \text{ be a permutation matrix such that moment matrix }
    \widehat{\mc M}(k):=P\mc M(k; \beta)P^T\text{ has rows}\\
    &\text{and columns indexed in the order }
	\vec Y^{(k,1)}, Y\vec{X}^{(1,k-1)},\vec{X}^{(0,k)},Y^2\vec{X}^{(1,k-2)},\ldots,Y^{k-1}X,
    \end{split}
    \end{align}
    where $Y^j\vec{X}^{(1,k-j)}:=(Y^jX,Y^jX^2,\ldots,Y^jX^{k-j})$
    for $1\leq j\leq k-1$.
	Let $\widehat{\mc{M}}(k)_{\vec \cT}$
    be the restriction of the moment matrix 
    $\widehat{\mc{M}}(k)$
    to the rows and columns in the tuple $\vec\cT$
    and write
	\begin{align*}
		\widehat{\mc{M}}(k)_{\vec\cT}
		&=
		\kbordermatrix{
		& \vec{Y}^{(k,1)} &  Y \vec{X}^{(1,k-1)} &   \vec{X}^{(0,k)}\\
			(\vec{Y}^{(k,1)})^T & B_{11} &  B_{12} &  B_{13}\\[0.2em]	
			(Y \vec{X}^{(1,k-1)})^T & (B_{12})^T & B_{22} &  B_{23}\\[0.2em]
			(\vec{X}^{(0,k)})^T & (B_{13})^T & (B_{23})^T & B_{33}}\\[0.5em]
		&=		
		\kbordermatrix{
		& \vec{Y}^{(k,1)} & Y \vec{X}^{(1,k-1)} & \vec{X}^{(0,k-1)} & X^k &  \\
			(\vec{Y}^{(k,1)})^T & B_{11} & B_{12} & B^{(0,k-1)}_{13} & {b}^{(k)}_{13} \\[0.2em]
			(Y \vec{X}^{(1,k-1)})^T & ({B}_{12})^T & {B}_{22} & {B}^{(0,k-1)}_{23} & {b}^{(k)}_{23} \\[0.2em]
			(\vec{X}^{(0,k-1)})^T & ({B}_{13}^{(0,k-1)})^T & ({B}^{(0,k-1)}_{23})^T & 
                B^{(0,k-1)}_{33}
                 & {b}^{(k)}_{33} \\[0.2em]
			X^{k} & (b^{(k)}_{13})^T & ({b}^{(k)}_{23})^T & (b^{(k)}_{33})^T & \beta_{2k,0}}
		\in  S_{3k}.
	\end{align*}
	We also write
	\begin{align}
		\label{def:M-with-R}
            \begin{split}
		 \widehat{\mc{M}}(k)_{\vec\cT} 
		&=:
            \begin{pmatrix}
                R & m_{12} \\[0.2em]
                (m_{12})^T & \beta_{2k,0}
            \end{pmatrix}.
            \end{split}
	\end{align}

Next we define the matrix $\cN(k)$,
which extends the restriction of 
    $\widehat \cM(k)$ to 
rows and columns in $(\vec{Y}^{(k,1)}, Y\vec{X}^{(1,k-1)},\vec{X}^{(0,k-1)})$, with a row
and a column $YX^k$.
Namely, it contains the only candidates for the corresponding moments, which are generated by the atoms 
in any $\cZ(p)$--rm.
The reason for introducing precisely $\cN(k)$ is the fact, that together with $\cM(k)$, they contain crucial information needed
to characterize the existence of the solution to the $\cZ(p)$--TMP
(see Theorem 
\ref{100622-2204} below).
So,
	\begin{align}
        \label{def-N-k}
        \begin{split}
		\mc N(k)
		&=
		\kbordermatrix{
		& \vec{Y}^{(k,1)}& Y \vec{X}^{(1,k-1)} & \vec{X}^{(0,k-1)}  & YX^{k}\\
			(\vec{Y}^{(k,1)})^T & B_{11} & B_{12} & B^{(0,k-1)}_{13} & a\\[0.2em] 
			 (Y\vec{X}^{(1,k-1)})^T & (B_{12})^T &   B_{22} &   B^{(0,k-1)}_{23} & b \\[0.2em]
			(\vec{X}^{(0,k-1)})^T& (B^{(0,k-1)}_{13})^T &  (B^{(0,k-1)}_{23})^T & B^{(0,k-1)}_{33} & c \\[0.2em]
			YX^{k}& a^T & b^T & c^T & \beta_{2k-1,1}		
		}\in S_{3k},
        \end{split}
	\end{align}
	where
	\begin{align*}
	a
		&=
		\begin{pmatrix} \beta_{0,1} & \beta_{1,1} & \cdots & \beta_{k-1,1}\end{pmatrix}^T,\qquad\qquad
	b
		=
		\begin{pmatrix} \beta_{k,1} & \beta_{k+1,1} & \cdots & \beta_{2k-2,1} \end{pmatrix}^T,\\
	c	
		&=
		\begin{pmatrix} \beta_{k,1} & \beta_{k+1,1} & \cdots & \beta_{2k-1,1}  \end{pmatrix}^T.
\end{align*}
In the definition of $a,b,c$ we used the fact that if the $\cZ(p)$--rm for $\beta$ exists, then the relations \eqref{100622-1739}	
hold also for $i+j\leq 2k-1$. In particular, we used the relations
    $\beta_{2k-1,2}=\beta_{2k-2,1}$,
    $\beta_{k,k+1}= \beta_{k-1,k}$
and 
    $\beta_{2k,2}= \beta_{2k-1,1}$.
These relations also imply that 
\begin{equation}
    \label{relations-260625-0628}
    \begin{pmatrix} B_{12} & a\end{pmatrix}=B^{(0,k-1)}_{13},
    \quad\quad
        \begin{pmatrix} B_{22} & b\end{pmatrix}=B^{(0,k-1)}_{23}
    \quad\quad
    \text{and}
    \quad\quad
        \begin{pmatrix} b^T & \beta_{2k-1,1}\end{pmatrix}=c^T.    
\end{equation}
We also write
\begin{equation}
\label{def:N-with-R}
    \cN(k)=:
        \begin{pmatrix}
            R & n_{12} \\ 
            (n_{12})^T & \beta_{2k-1,1}
        \end{pmatrix},
\end{equation}
where note that $R$ is the same as in \eqref{def:M-with-R} above.
    
    Next we define two additional matrices $F_1$ and $F_2$, needed in the statement of the solution to the $\cZ(p)$--TMP:
    \begin{itemize}
    \item 
    $F_1$ the restriction of $\widetilde\cM(k;\beta^{(\ell)})$ (see \eqref{071123-0642}) to rows and columns in $\vec{X}^{(0,k-1)}$.
    Namely, it contains the only candidates for the corresponding moments, which are generated by the atoms in any $\cZ(p)$--rm, that are supported on the line $y=0$.
    \item  
    $F_2$ is the restriction of $\mc N(k)$ to the rows and columns in 
    $(\vec{Y}^{(k,1)},Y\vec{X}^{(1,k)})$.
    \end{itemize}
    So
    \begin{equation}
    \label{def-F1-F2}
    F_1	
					:=
						B^{(0,k-1)}_{33}-\begin{pmatrix} (B^{(0,k-1)}_{23})^T & c \end{pmatrix},\qquad
					F_2
					:=
						\begin{pmatrix}
							B_{11} & B^{(0,k-1)}_{13}\\[0.3em]         
                                (B^{(0,k-1)}_{13})^T & \begin{pmatrix} (B^{(0,k-1)}_{23})^T & c \end{pmatrix}   
						\end{pmatrix},
      \end{equation}
      where we used \eqref{relations-260625-0628} in the equalities.
Define real numbers
				\begin{align}
					\label{200922-1158}
					\begin{split}
					t'
					&=(n_{12})^TR^{\dagger}\; m_{12},\\
					u'
					&=\beta_{2k,0}-(w_1)^T(F_1)^{\dagger}\; w_1,\\
					u''
					&=(w_2)^T(F_2)^\dagger\; w_2,
					\end{split}
				\end{align}
				where 
				\begin{align*}
                        w_1
					&=
						\begin{pmatrix} \beta_{k,0}-\beta_{k+1,1} & \beta_{k+1,0}-\beta_{k+2,1}  & \cdots & \beta_{2k-2,0}-\beta_{2k-1,1} & \beta_{2k-1,0}-t'\end{pmatrix}^T,\\
					w_2
					&=	
						\begin{pmatrix} \beta_{1,1} & \beta_{2,1} & \cdots        &\beta_{2k-1,1} & t'\end{pmatrix}^T.
				\end{align*}
Note that: 
\begin{itemize} 
    \item 
$w_1$ is the difference of the restriction of the column $X^k$ of $\widehat \cM(k)$ to the rows $\vec{X}^{(0,k-1)}$
 and a vector that is the restriction of the only (up to the choice of $t'$) potential column $YX^{k+1}$ of the extension of $\widehat\cM(k)$ to the rows $\vec{X^{(0,k-1)}}$, if the $\cZ(p)$--rm for $\beta$ exists.
    \item 
$w_2$ is the restriction to the rows $(\vec{Y}^{(k,1)},\vec{X}^{(0,k-1)})$ of the only (up to the choice of $t'$) potential column $X^{k}$ of the matrix $\widetilde \cM(k;\beta^{(c)})$ (see \eqref{071123-0642}),
 which is generated by the atoms lying on $xy=1$ if a $\cZ(p)$--rm for $\beta$ exists.
    \item 
$t'$ is the only candidate for the moment of $x^{2k-1}$
 coming from the atoms in any $\cZ(p)$--rm, that are supported on the conic part $\cZ(xy-1)$ of $\cZ(p)$, if one of
 $\widehat{\mc{M}}(k)_{\vec\cT}$ or $\mc N(k)$ is not positive definite.
 Furthermore,
 $u'$ and $u''$ are the only two candidates for the moment of $x^{2k}$ from the conic part that need to be checked when deciding on the existence of a $\cZ(p)$--rm for $\beta$.
\end{itemize}

Define the sequences
					\begin{align}
                        \label{def:gammas-v2}
                        \begin{split}
						\gamma_1(\mathbf{t},\mathbf u)
							&:=(\beta_{0,0}-\beta_{1,1},\beta_{1,0}-\beta_{2,1},\ldots,\beta_{2k-2,0}-\beta_{2k-1,1},\beta_{2k-1,0}-\mathbf{t},\beta_{2k,0}-\mathbf u),\\
						\gamma_2(\mathbf{t},\mathbf u)
							&:= (\beta_{0,2k},\beta_{0,2k-1},\ldots,\beta_{0,1},\beta_{0,0},\beta_{1,0},\ldots,\beta_{2k-2,0},\mathbf t,\mathbf u).
                        \end{split}
					\end{align}
Let
    $\cF(\mathbf{A})$ and $\cH(\mathbf{A})$ 
    be as in 
    \eqref{071123-1940} with     
    $\vec\cT_1
        :=
            (\vec{Y}^{(k,1)},
            Y\vec{X}^{(1,k)})
    $.
    Define the matrix function 
    \begin{equation}
        \label{301023-1930}
        \mc G:\RR^2\to S_{k+1},\qquad 
	\mc G(\mathbf{t},\mathbf{u})=
        \widehat A_{\min}
        +\mathbf{t}\big(E_{k,k+1}^{(k+1)}+E_{k+1,k}^{(k+1)}\big)
        +\mathbf{u}E_{k+1,k+1}^{(k+1)},
    \end{equation}
where $\widehat A_{\min}$ is as in Remark \ref{general-procedure}.\eqref{general-procedure-pt1}.
We will prove that 
    $A_{\gamma_1(t,u)}$,
    $A_{\gamma_2(t,u)}$
(see \eqref{vector-v}) are equal to 
    $\cH(\cG(t,u))$,
    $\cF(\cG(t,u))P^T\big)_{\vec{Y}^{(k,1)}\cup \vec{X}^{(0,k)}}$,
which represent 
possible restrictions of 
$\widetilde\cM(k;\beta^{(\ell)})$
and $\widetilde \cM(k;\beta^{(c)})$
(see \eqref{071123-0642})
to the rows and columns in $\vec{X}^{(0,k)}$ and $(\vec{Y}^{(k,1)},\vec{X}^{(0,k)})$, respectively.\\

The solution to the $\cZ(p)$--TMP is the following. 
      
	\begin{theorem}
		\label{100622-2204} 
	Let $p(x,y)=y(xy-1)$
	and $\beta:=\beta^{(2k)}=(\beta_{i,j})_{i,j\in \ZZ_+,i+j\leq 2k}$, where $k\geq 3$.
	Assume the notation above.
  
	Then the following statements are equivalent:
	\begin{enumerate}	
		\item\label{100622-2206-pt1} $\beta$ has a $\cZ(p)$--representing measure.
        \smallskip
		\item\label{100622-2206-pt2}  
  $\mc{M}(k;\beta)$ and $\cN(k)$ are positive semidefinite, the relations \eqref{100622-1739}
  hold
    and 
	one of the following statements holds:
        \smallskip
        
		\begin{enumerate}
			\item
				\label{100622-2207-pt1}
				$\widehat{\mc{M}}(k)_{\vec\cT}$ 
                and 
                $\mc{N}(k)$ are positive definite.
                \smallskip
                \item
				\label{200922-1200-pt2-alt}
                    $\gamma_1(t',u)$ and $\gamma_2(t',u)$ are $\RR$--representable and $(\RR\setminus \{0\})$--representable, respectively, for some $u\in \{u',u''\}$.
			  \end{enumerate}
	\end{enumerate}

	Moreover, assume a $\mc Z(p)$--representing measure for $\beta$ exists. If the rank inequality $$\Rank \mc N(k)\leq \Rank \mc{M}(k;\beta)$$ holds, then there is a $(\Rank \mc{M}(k;\beta))$--atomic $\mc Z(p)$--representing measure;
    otherwise there is a $(\Rank \mc{M}(k;\beta)+1)$--atomic one.
\end{theorem}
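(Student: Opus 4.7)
The plan is to apply the common framework of Lemmas \ref{071123-0646}--\ref{071123-2008} with $c(x,y)=1-xy$. Since $a_{00}=1\neq 0$, Lemma \ref{071123-1942} forces $\beta^{(c)}_{i,0}=\beta_{i+1,1}$ for $i=0,\ldots,2k-2$, so only $t:=\beta^{(c)}_{2k-1,0}$ and $u:=\beta^{(c)}_{2k,0}$ remain free, giving the candidate Hankel matrix $\cG(t,u)$. By Lemma \ref{071123-0646}, the existence of a $\cZ(p)$--rm for $\beta$ is equivalent to selecting $(t,u)$ such that $\cH(\cG(t,u))$ admits an $\RR$--rm and $\cF(\cG(t,u))$ admits a $\cZ(xy-1)$--rm. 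A direct identification using the relation $\beta^{(c)}_{i,0}=\beta_{i+1,1}$ shows that $\cH(\cG(t,u))$ equals the Hankel matrix $A_{\gamma_1(t,u)}$, while the reduction of the hyperbolic TMP to the strong Hamburger TMP described in Theorem \ref{090622-1611} and Remark \ref{281023-1930} identifies the solvability for $\cF(\cG(t,u))$ with the $(\RR\setminus\{0\})$--representability of $\gamma_2(t,u)$ via Theorem \ref{thm:strong-Hamburger}.

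For the direction \ref{100622-2206-pt1}$\Rightarrow$\ref{100622-2206-pt2}, decompose a given $\cZ(p)$--rm as $\mu=\mu_\ell+\mu_c$ with $\mu_\ell$ supported on $y=0$ and $\mu_c$ on $xy=1$; then $\beta=\beta^{(\ell)}+\beta^{(c)}$ yields $\mc M(k;\beta)\succeq 0$, and the rg relations \eqref{110622-0029-v2} produce \eqref{100622-1739}. The matrix $\mc N(k)$ arises as a restriction of $\mc M(k;\beta^{(c)})$ after using the conic column relation $YX=\mathit{1}$ to evaluate the column $YX^k$, so $\mc N(k)\succeq 0$. If both $\widehat{\mc M}(k)_{\vec\cT}$ and $\mc N(k)$ are positive definite we are in case \ref{100622-2207-pt1}; otherwise the extension principle (Proposition \ref{extension-principle}) applied to the block structures \eqref{def:M-with-R} and \eqref{def:N-with-R} forces $t=(n_{12})^TR^\dagger m_{12}=t'$, while the Schur complements in the definitions of $u'$ and $u''$ confine $u$ to $\{u',u''\}$, putting us in case \ref{200922-1200-pt2-alt}.

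For the converse \ref{100622-2206-pt2}$\Rightarrow$\ref{100622-2206-pt1}, in case \ref{100622-2207-pt1} take $A=\widehat A_{\min}$; Lemma \ref{071123-2008} supplies the hyperbolic column relations for $\cF(\widehat A_{\min})$, positive definiteness of $\widehat{\mc M}(k)_{\vec\cT}$ and $\mc N(k)$ activates Theorems \ref{090622-1611} and \ref{Hamburger}, and Lemma \ref{071123-0646} assembles a $\cZ(p)$--rm. In case \ref{200922-1200-pt2-alt} take $A=\cG(t',u)$ for the working $u\in\{u',u''\}$ and apply Theorems \ref{Hamburger} and \ref{thm:strong-Hamburger}. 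For cardinality, the minimal atom counts from Theorems \ref{Hamburger} and \ref{090622-1611} equal $\Rank A_{\gamma_1(t,u)}=\Rank\cH(\cG(t,u))$ and $\Rank A_{\gamma_2(t,u)}=\Rank\cF(\cG(t,u))$, which by Lemma \ref{071123-2008} sum to $\Rank\mc M(k;\beta)$ exactly when the column $YX^k$ in $\mc N(k)$ contributes no new rank direction (i.e.\ when $\Rank\mc N(k)\leq\Rank\mc M(k;\beta)$), and to $\Rank\mc M(k;\beta)+1$ otherwise. The main obstacle is the non-generic half of case \ref{200922-1200-pt2-alt}: verifying that the forced value $t=t'$ is compatible with at least one of $u',u''$ making the $\cF$-- and $\cH$--branches simultaneously representable requires a coordinated Schur-complement and extension-principle analysis of both $\widehat{\mc M}(k)_{\vec\cT}$ and $\mc N(k)$, along with the bookkeeping that pinpoints when the rank total jumps by one.
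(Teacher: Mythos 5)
Your proposal correctly identifies the common framework (Lemmas \ref{071123-0646}--\ref{071123-2008} with $c(x,y)=1-xy$, $a_{00}\neq 0$) and the translations $\cH(\cG(t,u))=A_{\gamma_1(t,u)}$, $\cF(\cG(t,u))_{(\vec Y^{(k,1)},\vec X^{(0,k)})}=A_{\gamma_2(t,u)}$; this matches the paper's route. However, there are two genuine gaps that it does not resolve and that occupy most of the paper's proof.

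First, in the direction \eqref{100622-2206-pt1}$\Rightarrow$\eqref{100622-2206-pt2} you claim the Schur complements ``confine $u$ to $\{u',u''\}$,'' but positivity alone only confines $u$ to the interval $[u'',u']$. Establishing that the $\cZ(xy-1)$-- and $\RR$--branches can be satisfied simultaneously at one of the two endpoints is the nontrivial content here; the paper spends Cases 2.1.1, 2.1.2, 2.2 on this, splitting on the invertibility of $F_2$ and of $A_{\gamma_2(t',u_0)}$ and using the monotonicity $\cF(\cG(t',u'))\succeq\cF(\cG(t',u_0))$ together with Theorem \ref{thm:strong-Hamburger}. You acknowledge this as ``the main obstacle'' but leave it unaddressed. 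Also, the forced value $t=t'$ is not a consequence of the extension principle (Proposition \ref{extension-principle}); the paper deduces it from the partial-psd-completion Lemma \ref{psd-completion} together with a case analysis (invertibility of $\cN(k)$, then singularity of $F_1$ or $F_2$ via Lemma \ref{200922-2010}.\eqref{200922-2010-pt3} and \cite[Theorem 2.4(ii)]{CF91}).

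Second, in Case \eqref{100622-2207-pt1} of \eqref{100622-2206-pt2}$\Rightarrow$\eqref{100622-2206-pt1}, ``take $A=\widehat A_{\min}$'' does not work as stated. Positive definiteness of $\widehat{\mc M}(k)_{\vec\cT}$ and $\mc N(k)$ gives psd-ness and the rg column relations for $\cF(\widehat A_{\min})$, but not the extra condition required by Theorem \ref{090622-1611}/Theorem \ref{thm:strong-Hamburger} that the conic measure live on $\RR\setminus\{0\}$. Indeed $\Rank\cF(\widehat A_{\min})=\Rank F_2=2k$, so $\mc M(k)_\cB$ is not pd and one must verify \eqref{040722-1403-pt2}. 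The paper handles this by working at the two boundary values $t_1,t_2$ of the psd interval, showing each $\gamma_1(t_\ell,u(t_\ell))$ has a $k$-atomic $\RR$--rm and each $\gamma_2(t_\ell,u(t_\ell))$ is $\RR$--representable, and then invoking \cite[Proposition 2.5]{Zal22b} to conclude that \emph{at least one} of the two is $(\RR\setminus\{0\})$--representable. This hyperbola-specific step — ensuring the conic measure avoids the origin — is exactly what makes this case harder than the parallel-lines or parabolic/circular cases, and your sketch does not supply it. Your cardinality bookkeeping is directionally correct but likewise needs the detailed rank tracking the paper does in Cases 2.1, 2.2 to justify the $\Rank\mc M(k;\beta)$ vs.\ $\Rank\mc M(k;\beta)+1$ dichotomy.
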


\begin{remark}
The implication 
    $\eqref{100622-2207-pt1}\Rightarrow\eqref{100622-2206-pt1}$
of Theorem \ref{100622-2204} 
already follows from 
    \cite[Theorem 8.9]{KZ25+}, which characterizes all positive
polynomials on $\cZ(p)$, 
    together with 
    \cite[Proposition 2 and Corollary 6]{dDS18},
    which states that strictly positive  Riesz functional implies the existence of a $\cZ(p)$--rm.
    However, here we present a constructive proof for this implication, which also shows that a minimal measure is $3k$--atomic. 
\end{remark}

        As explained in Remark \ref{general-procedure}, the existence of a $\cZ(p)$--rm for $\beta$ is equivalent to the existence of a pair $(t_0,u_0)\in \RR^2$, such that
        $\cH(\cG(t_0,u_0))$ and $\cF(\mathcal G(t_0,u_0))$ admit a $\cZ(xy-1)$--rm and 
        $\RR$--rm, respectively.
        Note that
        the solutions to the
        $\cZ(xy-1)$--TMP
        and 
        $\RR$--TMP 
        are
        Theorems \ref{Hamburger} and \ref{090622-1611},
        respectively.
        For the forms of  $\cH(\cG(t_0,u_0))$, $\cF(\mathcal G(t_0,u_0))$
        and also in other  parts of  the proof of Theorem \ref{100622-2204},
        we need the following lemma.

\begin{lemma}
	\label{200922-2010}
    Assume the notation above.
    Let 
    $\vec\cT_1
        :=
            (\vec{Y}^{(k,1)},
            Y\vec{X}^{(1,k)})
    $
    and
    $\cM(\mathbf{Z},\mathbf t)$ be a function on $S_{k+1}\times \RR$ defined by
    \begin{equation}
        \label{def-M-Z-t}
        \cM(\mathbf{Z},\mathbf t)=
        \kbordermatrix{
            & \vec{\cT}_1 & \vec{X}^{(0,k)}\\
            (\vec{\cT}_1)^T & F_2  & 
                        \left(\begin{array}{cc}
                        B^{(0,k-1)}_{13} & b^{(k)}_{13} \\ 
                        B^{(0,k-1)}_{23} & b^{(k)}_{23} \\ 
                        c^T & \mathbf{t}
                \end{array}\right)
                \\[0.2em]
            (\vec{X}^{(0,k)})^T  &  \left(\begin{array}{ccc} 
                    (B^{(0,k-1)}_{13})^T & (B^{(0,k-1)}_{23})^T & c\\
                   (b^{(k)}_{13})^T & (b^{(k)}_{23})^T & \mathbf{t}
                    \end{array}\right)
                    & \mathbf{Z}}.
    \end{equation}
    Assume that there  exists $t_0\in \RR$ such that
        $\cM(B_{33},t_0)\succeq 0$.
    Let 
        \begin{equation}
            \label{def:Z0}
                    Z_0
                    :=
                    \begin{pmatrix}
                    (B^{(0,k-1)}_{13})^T & (B^{(0,k-1)}_{23})^T & c\\
                    (b^{(k)}_{13})^T & (b^{(k)}_{23})^T & t_0
                    \end{pmatrix}
                    (F_2)^{\dagger}
                    \left(\begin{array}{cc}
                        B^{(0,k-1)}_{13} & b^{(k)}_{13} \\ 
                        B^{(0,k-1)}_{23} & b^{(k)}_{23} \\ 
                        c^T & t_0
                \end{array}\right).
        \end{equation}
    Then the following statements hold:
		\begin{enumerate}	
			\item
				\label{200922-2010-pt1}
					$\cM\big(Z_0,t_0\big)\succeq 0$ 
					and 	
					$B_{33}-Z_0\succeq 0$.
			\item
				\label{200922-2010-pt2} 
					$\cM(Z_0,t_0)$
                    satisfies the column relations $YX^{i+1}=X^{i}$
                    for $i=0,\ldots,k-1$.
					Hence, 
				\begin{align*}
					Z_0
						&=\cG(t_0,u_0)\qquad \text{for some }u_0\in \RR.
                    \end{align*}    
			\item 	
				\label{200922-2010-pt3} 
					$\Rank \mc N(k)=\Rank F_{1}+\Rank F_2.$ 
		\end{enumerate}
\end{lemma}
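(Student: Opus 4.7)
Each part reduces to a direct application of Theorem \ref{block-psd} combined with the Hankel identities \eqref{100622-1739} governing the structure of $F_2$. Throughout, write $W$ for the off-diagonal block of $\cM(\mathbf{Z},\mathbf{t})$ in \eqref{def-M-Z-t}, so that the hypothesis reads $\cM(B_{33},t_0)=\begin{pmatrix}F_2 & W\\ W^T & B_{33}\end{pmatrix}\succeq 0$. Applying Theorem \ref{block-psd}.\eqref{021123-1702} with $F_2$ in the upper--left position yields $F_2\succeq 0$, $\cC(W)\subseteq \cC(F_2)$ and $B_{33}-W^T F_2^{\dagger}W\succeq 0$; the last inequality is exactly $B_{33}-Z_0\succeq 0$. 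Since by definition $Z_0=W^T F_2^{\dagger}W$, the Schur complement of $F_2$ in $\cM(Z_0,t_0)$ vanishes, whence a second application of Theorem \ref{block-psd}.\eqref{021123-1702} gives $\cM(Z_0,t_0)\succeq 0$. This settles part \eqref{200922-2010-pt1}.

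\emph{Part \eqref{200922-2010-pt2}.} I first verify the column relation $YX^{i+1}=X^{i}$ in $\cM(Z_0,t_0)$ for each $i=0,\ldots,k-1$, and then read off $Z_0=\cG(t_0,u_0)$ from it. Let $e_{YX^{i+1}}$ denote the standard basis vector indexed by $YX^{i+1}$ in the row/column space of $F_2$. An entry-by-entry inspection using \eqref{100622-1739} shows that the $X^{i}$--column of $W$ equals $F_2\,e_{YX^{i+1}}$; call this common vector $g^{(i)}$. On the upper block (rows in $\vec\cT_1$) the columns $X^{i}$ and $YX^{i+1}$ of $\cM(Z_0,t_0)$ therefore coincide. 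On the lower block (rows in $\vec X^{(0,k)}$) they read $W^T F_2^{\dagger}g^{(i)}$ and $W^T e_{YX^{i+1}}$ respectively, so their difference equals $W^T(F_2^{\dagger}F_2-I)e_{YX^{i+1}}$. Since $(F_2^{\dagger}F_2-I)e_{YX^{i+1}}\in\ker F_2$, and $\cC(W)\subseteq\cC(F_2)$ is equivalent to $\ker F_2\subseteq\ker W^T$, this difference vanishes, completing the relation. The $k$ relations obtained in this way force $Z_0$ to be a Hankel matrix whose antidiagonals of indices $0,\ldots,2k-2$ are dictated by the moments of $\beta$ (hence match those of $\widehat A_{\min}$), whose antidiagonal of index $2k-1$ equals $t_0$ (read off the $(YX^{k},X^{k})$--entry of $W$), and whose sole remaining free entry at the lower-right corner we denote by $u_0$; comparison with \eqref{301023-1930} yields $Z_0=\cG(t_0,u_0)$.

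\emph{Part \eqref{200922-2010-pt3}.} After permuting the rows and columns of $\cN(k)$ into the order $(\vec Y^{(k,1)},Y\vec X^{(1,k-1)},YX^{k},\vec X^{(0,k-1)})$ the leading principal submatrix becomes exactly $F_2$, with some off-diagonal block $W'$. Another application of \eqref{100622-1739} yields the factorisation $W'=F_2 E$, where $E$ is the $2k\times k$ 0--1 selection matrix that picks out the columns of $F_2$ indexed by $Y\vec X^{(1,k)}$; consequently $(W')^T F_2^{\dagger}W'=E^T F_2 E$ equals the lower-right $k\times k$ block $\begin{pmatrix}(B^{(0,k-1)}_{23})^T & c\end{pmatrix}$ of $F_2$. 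Hence the Schur complement of $F_2$ in $\cN(k)$ coincides with $F_1$ (compare \eqref{def-F1-F2}). As $\cN(k)$ is a principal submatrix of the psd matrix $\cM(B_{33},t_0)$, it is itself psd, and Theorem \ref{block-psd}.\eqref{prop-2604-1140-eq2} delivers $\Rank \cN(k)=\Rank F_2+\Rank F_1$.

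The delicate step is part \eqref{200922-2010-pt2}: the columns $YX^{i+1}$ and $X^{i}$ live in different blocks of $\cM(Z_0,t_0)$, so the column relation is not internal to $F_2$ and Proposition \ref{extension-principle} does not apply directly. The bridge is the factorisation $W=F_2 E$ forced by \eqref{100622-1739}, combined with the kernel inclusion $\ker F_2\subseteq \ker W^T$ coming from $\cC(W)\subseteq \cC(F_2)$; once these two ingredients are isolated, what remains is the bookkeeping between the two parametrisations of the same matrix entry by the labels $YX^{j}$ and $X^{j-1}$.
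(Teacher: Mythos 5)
Your proof is correct and follows essentially the same route as the paper: both parts \eqref{200922-2010-pt1} and \eqref{200922-2010-pt3} are handled by Theorem \ref{block-psd}, and part \eqref{200922-2010-pt2} rests on observing the Hankel relations in the top block of $\cM(Z_0,t_0)$ and then propagating them to the lower block. The only real difference is presentational: where the paper cites Lemma \ref{140722-1055} to transfer the relations $YX^{i+1}=X^i$ from the $\vec\cT_1$-rows to the $\vec X^{(0,k)}$-rows, you unwind that lemma into the explicit kernel computation $W^T(F_2^\dagger F_2 - I)e_{YX^{i+1}}=0$ (using $\cC(W)\subseteq\cC(F_2)\Leftrightarrow\ker F_2\subseteq\ker W^T$ and $F_2$ symmetric), which is exactly what Lemma \ref{140722-1055} encapsulates. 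Similarly, in part \eqref{200922-2010-pt3} you make explicit the factorisation $W'=F_2E$ that justifies the paper's assertion $(W')^T F_2^\dagger W'=\begin{pmatrix}(B_{23}^{(0,k-1)})^T & c\end{pmatrix}$ via $E^T F_2 F_2^\dagger F_2 E=E^T F_2 E$; the paper states this Schur-complement identity \eqref{exp-F1} more tersely. Your closing remark that Proposition \ref{extension-principle} cannot be invoked directly (because the two columns live in different blocks) is an accurate observation that motivates why Lemma \ref{140722-1055} (or your direct kernel argument) is the right tool.
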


\begin{proof}
    By the equivalence between \eqref{pt1-281021-2128} and \eqref{pt3-281021-2128} of Theorem \ref{block-psd}
	used for the pair $(M,A)=(\cM(\mathbf{Z},t_0),F_2)$, Lemma \ref{200922-2010}.\eqref{200922-2010-pt1} follows.

	Relations \eqref{110622-0029-v2} and definitions of $\beta_{2k-1,2}$,
	$\beta_{k,k+1}$ and $\beta_{2k,2}$
	imply that the restriction 
   $$
        \big(\cM(Z_0,t_0)\big)_{
            \vec{\cT}_1,(\vec{\cT}_1, \vec{X}^{(0,k)})}=
        \kbordermatrix{
            & \vec{\cT}_1 & \vec{X}^{(0,k)}\\
            (\vec{\cT}_1)^T &  F_2 & 
                        \left(\begin{array}{cc}
                        B^{(0,k-1)}_{13} & b^{(k)}_{13} \\ 
                        B^{(0,k-1)}_{23} & b^{(k)}_{23} \\ 
                        c^T & t_0
                \end{array}\right)}
    $$
	satisfies the relations $YX^{i+1}=X^{i}$
                    for $i=0,\ldots,k-1$. 
	By Lemma \ref{140722-1055}, the restriction 
   $$
        \big(\cM(Z_0,t_0)\big)_{
            \vec X^{(0,k)},(\vec{\cT}_1,\vec{X}^{(0,k)})}=
        \kbordermatrix{
            & \vec{\cT}_1 & \vec{X}^{(0,k)}\\
            (\vec{X}^{(0,k)})^T & \left(\begin{array}{ccc} 
                    (B^{(0,k-1)}_{13})^T & (B^{(0,k-1)}_{23})^T & c\\
                   (b^{(k)}_{13})^T & (b^{(k)}_{23})^T & t_0
                    \end{array}\right) & Z_0}.
    $$
	also satisfies the relations $YX^{i+1}=X^{i}$
                    for $i=0,\ldots,k-1$, 
	which proves Lemma \ref{200922-2010}.\eqref{200922-2010-pt2}.

     Permuting the rows and columns of $\cN(k)$ to the order
        $(\vec{\cT}_1,\vec{X}^{(0,k-1)})$,
    with a permutation matrix $P_1$,
    we get
        \begin{align}
        \label{def-N-k-perm}
		P_1\mc N(k)(P_1)^T
		&=
            \begin{pmatrix}
                F_2 & \left(\begin{array}{c} B^{(0,k-1)}_{13} \\ B^{(0,k-1)}_{23} \\ c^T\end{array}\right)\\[0.2em]
                \left(\begin{array}{ccc} (B^{(0,k-1)}_{13})^T & (B^{(0,k-1)}_{23})^T & c\end{array}\right)
                    & B^{(0,k-1)}_{33}
            \end{pmatrix}.
	\end{align}
    By Theorem \ref{block-psd}.\eqref{prop-2604-1140-eq2},  used for the pair $(M,A)=(P_1\mc N(k)(P_1)^T,F_2)$,
    noticing that 
	\begin{equation}
        \label{exp-F1}
			\big(P_1\mc N(k)(P_1)^T\big)/F_2=
                B^{(0,k-1)}_{33}-
                \begin{pmatrix}(B^{(0,k-1)}_{23})^T & c\end{pmatrix}
                =
                F_1,
        \end{equation}
    Lemma \ref{200922-2010}.\eqref{200922-2010-pt3} follows.
\end{proof}

\begin{remark}
\label{meaning-of-M-Z-t}
    Note that 
    the restriction of $\cM(B_{33},\mathbf{t})$
    to 
        $(\vec{Y}^{(k,1)}, Y\vec{X}^{(1,k-1)}, \vec{X}^{(0,k)})$
    is $\widehat\cM(k)_{\vec{\cT}}$, while to
        $(\vec{Y}^{(k,1)}, Y\vec{X}^{(1,k)}, \vec{X}^{(0,k-1)})$
    it is $P_1\cN(k)P_1^T$ with $P_1$ as in \eqref{def-N-k-perm}.
\end{remark}

Using Lemma \ref{200922-2010}, the existence of $t_0\in \RR$
such that $\cM(B_{33},t_0)\succeq 0$, implies that
	\begin{align}
        \label{def-H-G}
        \begin{split}
		\cH(\cG(t,u))
		&=
		B_{33}-
        \left(\begin{array}{cc}
            \left(\begin{array}{cc}(B^{(0,k-1)}_{23})^T & c\end{array}\right) & 
            \left(\begin{array}{c} b' \\ t\end{array}\right)\\ 
            \left(\begin{array}{cc}(b')^T & t\end{array}\right)  & u\end{array}\right)
            \\[0.5em]
            &=
		\kbordermatrix{
		& \vec{X}^{(0,k-1)} & X^{k}\\
		(\vec{X}^{(0,k-1)})^T & B^{(0,k-1)}_{33}-\left(\begin{array}{cc}(B^{(0,k-1)}_{23})^T & c\end{array}\right) &
			b^{(k)}_{33}-\left(\begin{array}{c} b' \\ t \end{array}\right)\\[0.5em]
			X^{k} & b^{(k)}_{33}-\left(\begin{array}{cc} (b')^T & t \end{array}\right) & \beta_{2k,0}-u}\\[0.5em]
		&=
		\kbordermatrix{
		& \vec{X}^{(0,k-1)} & X^{k}\\
		(\vec{X}^{(0,k-1)})^T & F_1 &
			b^{(k)}_{33}-\left(\begin{array}{c} b' \\ t \end{array}\right)\\[0.5em]
			X^{k} & b^{(k)}_{33}-\left(\begin{array}{cc} (b')^T & t \end{array}\right) & \beta_{2k,0}-u},
        \end{split}
	\end{align}	
	where
	\begin{equation}
        \label{181023-0931}
	b'=\begin{pmatrix} \beta_{k+1,1} & \beta_{k+2,1} & \cdots & \beta_{2k-1,1} \end{pmatrix}^T,
        \end{equation}
	and
	\begin{align*}
            \cF(\cG(t,u))_{(\vec Y^{(k,1)}, \vec{X}^{(0,k)})}	
            &=
        \kbordermatrix{
		& \vec{Y}^{(k,1)} & \vec X^{(0,k-1)} & X^k\\
		(\vec{Y}^{(k,1)})^T & B_{11} & B^{(0,k-1)}_{13} & b^{(k)}_{13}\\[0.5em]
		(\vec X^{(0,k-1)})^T & (B^{(0,k-1)}_{13})^T & \left(\begin{array}{cc}(B^{(0,k-1)}_{23})^T & c\end{array}\right) & \left(\begin{array}{c} b' \\ t \end{array}\right)\\
		X^k & (b^{(k)}_{13})^T & \left(\begin{array}{cc} (b')^T & t\end{array}\right) & u}\\[0.5em]
	&=
            \kbordermatrix{
            & (\vec{Y}^{(k,1)},\vec X^{(0,k-1)}) &  X^k \\
		(\vec{Y}^{(k,1)},\vec X^{(0,k-1)})^T &
			F_2 &  \left(\begin{array}{c} b^{(k)}_{13} \\b' \\ t \end{array}\right)\\
            X^k & 
			\left(\begin{array}{ccc}(b^{(k)}_{13})^T & (b')^T & t\end{array}\right) &  u}.
	\end{align*}
\medskip


\subsection{Proof of the implication $\eqref{100622-2206-pt1}\Rightarrow\eqref{100622-2206-pt2}$ of Theorem \ref{100622-2204}}
We denote by $\mc M^{(\mu)}(k+1)$ the moment matrix 
	associated to the sequence generated by some finitely atomic $\mc Z(p)$--rm $\mu$ for $\beta$, which exists by \cite{Ric57}.
	The following statements hold:
	\begin{itemize}
		\item The moment matrix $\mc M^{(\mu)}(k+1)$ is psd.
		\item The extension of ${\widehat{\mc M}(k)}_{\vec\cT\setminus \{X^k\}}$ with a row and column $YX^{k}$ is equal to the matrix $\mc N(k)$ due to the relation
			$Y^2X^k=YX^{k-1}$, which is satisfied by the moment matrix $\mc M^{(\mu)}(k+1)$. 
		\item The matrix $\mc N(k)$ is psd as the restriction of $\mc M^{(\mu)}(k+1)$.
	\end{itemize}
    \smallskip
    
    We separate two subcases.\\

    \noindent\textbf{Case 1: $\widehat{\mc M}(k)_{\vec\cT}$ and $\mc N(k)$ are positive definite.}
    This is Theorem \ref{100622-2204}.\eqref{100622-2207-pt1}.\\

    \noindent\textbf{Case 2: At least one of $\widehat{\mc M}(k)_{\vec\cT}$ and $\mc N(k)$ is not positive definite.}
    The restriction $\mc M^{(\mu)}(k+1)_{(\vec\cT,YX^k)}$ is of the form (see \eqref{def:M-with-R}, \eqref{def:N-with-R}) 
    $$
    \mc M^{(\mu)}(k+1)_{(\vec\cT, YX^k)}
    =
    \begin{pmatrix}
        R & m_{12} & n_{12}\\[0.2em]
        (m_{12})^T & \beta_{2k,0} & \beta_{2k,1}(\mu)\\[0.2em]
        (n_{12})^T & \beta_{2k,1}(\mu) & \beta_{2k-1,1}
    \end{pmatrix}.
    $$ 

	\noindent \textbf{Claim 1.} $\beta_{2k,1}(\mu)=t'$,
        where $t'$ is as in \eqref{200922-1158}.\\

	\noindent \textit{Proof of Claim 1.}
	By definition of $t'$ and 
        by Lemma \ref{psd-completion},
    used for 
    $$
    A(\mathbf{x})
    :=
    \begin{pmatrix}
        R & m_{12} & n_{12}\\[0.2em]
        (m_{12})^T & \beta_{2k,0} & \mathbf{x}\\[0.2em]
        (n_{12})^T & \mathbf{x} & \beta_{2k-1,1}
    \end{pmatrix},
    $$
    we have $A(t')\succeq 0$.
    We separate two cases according to invertibility of $\cN(k)$.\\

    \noindent \textbf{Case (i): $\mc N(k)$ is invertible.} 
    It follows that 
    $\Rank \widehat{\mc M}(k)_{\vec\cT}<\Rank \mc N(k)$ and hence by Lemma \ref{psd-completion}, there is no other $t\in \RR$ except $t'$ such that $A(t)\succeq 0$.
	Hence,  $\beta_{2k,1}(\mu)$ must be equal to $t'$.\\

    \noindent \textbf{Case (ii): $\mc N(k)$ is singular.}
    The singularity of $\cN(k)$ implies,
    by Lemma \ref{200922-2010}.\eqref{200922-2010-pt3},
    that
    \begin{equation}
        \label{singularity-F1-F2}
        \text{at least one of the matrices }F_1 \text{ and } F_2 \text{  is singular.}
    \end{equation}
    Let $\cM(\mathbf{Z},\mathbf t)$ be as in \eqref{def-M-Z-t}.
    Note that $\cM(B_{33},\mathbf{t})$ is obtained by permuting rows and columns of $A(\mathbf{t})$.
    In particular, $\cM(B_{33},t')\succeq 0$.
    Now let $t_0$ be any real number such that $\cM(B_{33},t_0)\succeq 0$.
    By Lemma \ref{200922-2010}, it follows that
    \begin{equation}
        \label{positivity-260625-0748}
            \cM(Z_0,t_0)\succeq 0\quad\text{and}\quad B_{33}-Z_0\succeq 0
    \end{equation}
    for $Z_0$ as in \eqref{def:Z0}, and
    \begin{equation}
    \label{form-of-F1}
        B_{33}-Z_0=
        \cH(\cG(t_0,u_0))=
            \begin{pmatrix}
            F_1 & \left(\begin{array}{c}\ast \\ t_0 \end{array}\right)\\
            \left(\begin{array}{cc}\ast & t_0 \end{array}\right) & \ast
            \end{pmatrix}.
    \end{equation}
    Now we separate possible cases in \eqref{singularity-F1-F2}.\\
    
    \noindent \textbf{Case (ii).(I): $F_1$ is singular.} 
	Since $B_{33}-Z_0$ is psd by \eqref{positivity-260625-0748} and has the form \eqref{form-of-F1},
    it follows, by \cite[Theorem 2.4(ii)]{CF91}, 
    that $t_0$ is uniquely determined by $F_1$. Hence, Claim 1 holds in this case.\\
	
    \noindent \textbf{Case (ii).(II): $F_2$ is singular.} 
	The restriction of $\cM(Z_0,t_0)$ to the rows and columns in 
    $(\vec{\cT}_1,X^k)$ is a psd Hankel matrix of the form
    \begin{equation}
    \label{form-of-F2}
        \cM(Z_0,t_0)_{(\vec{\mathcal T}_1,X^k)}=
            \begin{pmatrix}
            F_2 & \left(\begin{array}{c}\ast \\ t_0 \end{array}\right)\\
            \left(\begin{array}{cc}\ast & t_0 \end{array}\right) & \ast
            \end{pmatrix}.
    \end{equation}
    As in the Case (ii).(I), \cite[Theorem 2.4(ii)]{CF91} implies 
    that $t_0$ is uniquely determined by $F_2$. Hence, Claim 1 holds also in this case.\hfill$\blacksquare$\\

As explained in the paragraph following the statement of Theorem \ref{100622-2204},
    there exist $t_0,u_0\in \RR$ such that $\cF(\cG(t_0,u_0))$
    and $\cH(\cG(t_0,u_0))$ admit a $\cZ(xy-1)$--rm and a $\RR$--rm, respectively.
    By Claim 1, we have that $t_0=t'$. 
    Note that the right-lower corner of $Z_0$ is precisely $u''$ (see \eqref{200922-1158}).
    By definitions \eqref{200922-1158} of $u'$ and $u''$, $u'$ is the largest number such that $\cH(\cG(t',u'))\succeq 0$ and
    $u''$ is the smallest number such that $\cF(\cG(t',u''))\succeq 0$.
    In particular, $u''\leq u'$ and $u_0\in [u'',u']$.
    Note that $\cH(\cG(t',u'))=A_{\gamma_1(t',u')}$ 
    admits a $\RR$--rm by 
    Theorem \ref{Hamburger}, 
    since the last column is in the span of the previous ones.
    We have 
    \begin{align}
        \label{181023-1307}
    \begin{split}
    \cF(\cG(t',u'))_{\vec \cT}
    &=\cF\big(\cG(t',u_0)+(u'-u_0)E_{1,1}^{(k+1)}\big)_{\vec{\cT}}\\
    &=\cF(\cG(t',u_0))_{\vec\cT}+(u'-u_0)E_{2k+1,2k+1}^{(3k)}\\
    &\succeq \cF(\cG(t',u_0))_{\vec \cT}.
    \end{split}
    \end{align}
    Since $\cF(\cG(t',u_0))$  
	admits a $\cZ(xy-1)$--rm
    and 
    ${\cF}(\cG(t',u_0))_{(\vec Y^{(k,1)}, \vec{X}^{(0,k)})}
            =
            A_{\gamma_{2}(t',u_0)},$
    it follows that
    $\gamma_{2}(t',u_0)$ is $(\RR\setminus\{0\})$--representable.
    From now on we separate two cases according to the invertibility of $F_2$.\\

\noindent\textbf{Case 2.1: $F_2$ is invertible.}  
    We separate two cases according to the invertibility of 
    $A_{\gamma_{2}(t',u_0)}$.\\

\noindent\textbf{Case 2.1.1: $A_{\gamma_{2}(t',u_0)}\succ 0$.} 
    It follows that 
    $
        \Rank \cF(\cG(t',u_0))_{(\vec{Y}^{(k,1)},\vec{X}^{(0,k)})}=2k+1
    $ 
    and hence
    $
        \Rank \cF(\cG(t',u'))_{(\vec{Y}^{(k,1)}, \vec{X}^{(0,k)})}=2k+1
    $ 
    by \eqref{181023-1307}. 
    By Theorem \ref{thm:strong-Hamburger},
    $\gamma_2(t',u')$ is $(\RR\setminus \{0\})$--representable.\\

\noindent\textbf{Case 2.1.2: $A_{\gamma_{2}(t',u_0)}\succeq 0$ and 
$A_{\gamma_{2}(t',u_0)}\not\succ 0$ .} 
    It follows that $A_{\gamma_{2}(t',u_0)}$  satisfies 
        Theorem \ref{thm:strong-Hamburger}.\eqref{thm:strong-Hamburger-pt2-b}  
    and hence
    \begin{align}
    \label{281023-1738}
    \begin{split}
    2k
    &=\Rank \cF(\cG(t',u_0))_{(\vec{Y}^{(k,1)}, \vec{X}^{(0,k-1)})}
    =\Rank\cF(\cG(t',u_0))_{(\vec{Y}^{(k-1,1)},\vec{X}^{(0,k)})},
    \end{split}
    \end{align}
    where we also used invertibility of $F_2$ in the first equality.
    If $u_0=u'$, \eqref{281023-1738} implies that
    $\gamma_2(t',u')$ is $(\RR\setminus \{0\})$--representable. 
    Otherwise $u'>u_0$ and \eqref{181023-1307}, \eqref{281023-1738}
    imply that 
        $$\Rank \cF(\cG(t',u'))_{(\vec{Y}^{(k,1)}, \vec{X}^{(0,k)})}=2k+1,$$
    which again implies that $\gamma_2(t',u')$ is $(\RR\setminus \{0\})$--representable 
    by Theorem \ref{thm:strong-Hamburger}.\\

\noindent\textbf{Case 2.2: $F_2$ is singular.}  
    If $\gamma_2(t',u_0)$ is $(\RR\setminus \{0\})$--representable,
    then in particular it is $\RR$--representable.
	But due to singularity of $F_2$, $u''$ is the only candidate
    for $u_0$
    by Theorem \ref{Hamburger}.
    Hence, $\gamma_1(t',u'')$ is also $\RR$--representable.\\

    This concludes the proof of the implication $\eqref{100622-2206-pt1}\Rightarrow\eqref{100622-2206-pt2}$ of Theorem \ref{100622-2204}.


\subsection{Proof of the implication $\eqref{100622-2206-pt2}\Rightarrow\eqref{100622-2206-pt1}$ of Theorem \ref{100622-2204}}
	We separate two cases according to the assumptions in 
		\eqref{100622-2206-pt2}.
\bigskip

\noindent 
	\textbf{Case 1: \eqref{100622-2207-pt1} of Theorem \ref{100622-2204} holds.} By Lemma \ref{psd-completion}, used for $A(\mathbf{x})=\cM(B_{33},\mathbf x)$ (as in \eqref{def-M-Z-t}), there exist $t_\ell\in\RR$, $\ell=1,2$, such that (see also Remark \ref{meaning-of-M-Z-t}):
	\begin{align}
        \label{120622-0905}
        \begin{split}
			&\cM(B_{33},t)\succeq 0\quad \text{for every } t\in [t_1,t_2],\\
			&\Rank \cM(B_{33},t_\ell)=\Rank {\mc M}(k)=\Rank {\mc N}(k)\quad \text{for } \ell=1,2,\\			
			&\Rank \cM(B_{33},t)=\Rank 
   {\mc M}(k)+1=\Rank \mc N(k)+1\quad \text{for }t\in (t_1,t_2).
        \end{split}
	\end{align}
	Let $t_0\in [t_1,t_2]$. 
    By Lemma \ref{200922-2010}.\eqref{200922-2010-pt1}, 
	we have $\cM(Z_0,t_0)\succeq 0$,
    where $Z_0$ is as in \eqref{def:Z0}.
    By Theorem \ref{block-psd}.\eqref{prop-2604-1140-eq2},
	used for the pair 
    $(M,A)=
    \big(
    \cM(Z_0,t_0),F_2
    \big),
    $
    we have  
	\begin{equation}\label{161121-1403}
		\Rank \cM(Z_0,t_0)=\Rank F_2.
	\end{equation}
 	By Lemma \ref{200922-2010}.\eqref{200922-2010-pt2}, 
	$$
        \cC\big(\cM(Z_0,t_0)\big)
        =
        \cC\big(\cM(Z_0,t_0)_{(\vec{Y}^{(k,1)}, Y\vec{X}^{(1,k)})}\big)
        =
        \cC\big(\cM(Z_0,t_0)_{(\vec{Y}^{(k,1)},\vec{X}^{(0,k-1)})}\big),
        $$
	which implies that
	\begin{equation}\label{110622-2114}
		\Rank \big(\cM(Z_0,t_0)_{(\vec{Y}^{(k,1)},\vec{X}^{(0,k-1)})}\big)
            =
            \Rank \cM(Z_0,t_0).
	\end{equation}
	By Lemma \ref{200922-2010}.\eqref{200922-2010-pt2}, it follows that 
	$Z_0=\cG(t_0,u(t_0))$ for some $u(t_0)\in \RR$
	and
    by Lemma \ref{200922-2010}.\eqref{200922-2010-pt1},
	$B_{33}\succeq Z_0$. Hence, $\cH(Z_0)\succeq 0$. 
    By \eqref{exp-F1}, 
		$$
        F_1=B^{(0,k-1)}_{33} - \begin{pmatrix}(B^{(0,k-1)}_{23})^T & c\end{pmatrix}
        \succ 0.$$
	By the equivalence between \eqref{pt1-281021-2128} and \eqref{pt3-281021-2128} of Theorem \ref{block-psd},
	used for the pair 
    $
    \big(\cH(\cG(t_0,u(t_0))),F_1\big)
    $
    (see \eqref{def-H-G} for the $2\times 2$ block decomposition of $\cH(\cG(t_0,u(t_0)))$,
    it follows that
	\begin{equation}\label{110622-2125}
		\delta_0:=(\beta_{2k,0}-u(t_0))-
		\Big(b^{(k)}_{33}-
            \begin{pmatrix} b' \\ t_0 \end{pmatrix}\Big)^T
            (F_1)^{-1}\Big(b^{(k)}_{33}-\begin{pmatrix} b' \\ t_0 \end{pmatrix}\Big)\geq 0
	\end{equation}
	and
	\begin{align}\label{110622-2128}
        \begin{split}
		\Rank \cH\big(\cG(t_0,u(t_0))\big)
            &=
			\left\{\begin{array}{rl} 
				\Rank F_1,&	\text{if }\delta_0=0,\\[0.3em]
				\Rank F_1+1,&	\text{if }\delta_0>0.
			\end{array} \right.
            =
            \left\{
            \begin{array}{rl}
                 k,& \text{if }\delta_0=0,\\[0.2em]
                 k+1,& \text{if }
                 \delta_0>0.
            \end{array}
            \right.
        \end{split}
	\end{align}
	By 
        Theorem \ref{block-psd}.\eqref{prop-2604-1140-eq2}, used for the pair
        $(M,A)=\big(\cM(B_{33},t_0),F_2\big)$,
	we have 
	\begin{equation}\label{110622-2129}
            \Rank \cM(B_{33},t_0)=
            \Rank F_2
            +
			\Rank \cH\big(\cG((t_0,u(t_0)))\big).
	\end{equation}	
	Since $\Rank F_2=2k$ by the invertibility of $\mc N(k)$ and $\Rank \cM(B_{33},t_\ell)=3k$, $\ell=1,2$, by \eqref{120622-0905},
	it follows that 
	\begin{equation}
		\label{200922-1347}
			\Rank \cH\big(\cG(t_\ell,u(t_\ell))\big)=k,\quad \ell=1,2.
	\end{equation}
	Note that 
	$A_{\gamma_1(t_\ell,u(t_{\ell}))}=\cH\big(\cG(t_\ell,u(t_\ell))\big)$, $\ell=1,2$, 
	where $\gamma_1(\mathbf t,\mathbf u)$ is as in \eqref{def:gammas-v2}.
	By \eqref{110622-2128} and \eqref{200922-1347},
		$\Rank A_{\gamma_1(t_\ell,u(t_\ell))}(k-1)=\Rank A_{\gamma_1(t_\ell,u(t_\ell))}$, $\ell=1,2$.
	By Theorem \ref{Hamburger}, 
    ${\gamma_1(t_\ell,u(t_\ell))}$, $\ell=1,2$, 
	has a $k$--atomic $\RR$--rm. 
	Note that in $\cF\big(\cG(t_\ell,u(t_\ell))\big)$, $\ell=1,2$, 
	\begin{equation}
		\label{200922-1402} 
			X^k\in \Span\{
                Y^k,Y^{k-1},\ldots,Y,\textit{1},X,X^2,\ldots,X^{k-1}\}.
	\end{equation}
    Further,
	$$
\cF(\cG(t_{\ell},u(t_{\ell})))_{(\vec{Y}^{(k,1)},\vec{X}^{(0,k)})}=
        A_{\gamma_2(t_\ell,u(t_\ell))},
        $$
        where $\gamma_2(\mathbf t,\mathbf u)$ is as in \eqref{def:gammas-v2}. 
	Since \eqref{200922-1402} holds, the sequences $\gamma_2(t_\ell,u(t_\ell))$, $\ell=1,2$, are $\RR$--representable.
	Since 
		$$F_2=A_{\gamma_2(t_1,u(t_1))}(2k-1)=A_{\gamma_2(t_2,u(t_2))}(2k-1)$$ 
	is invertible, 
	by \cite[Proposition 2.5]{Zal22b}, at least one of $\gamma_2(t_1,u(t_1))$ or $\gamma_2(t_2,u(t_2))$ 
	is $(\RR\setminus\{0\})$--representable.
	By Lemma \ref{071123-0646}, $\beta$ has a $(3k)$--atomic $\cZ(p)$--rm, which concludes the proof of the implication $\eqref{100622-2206-pt2}\Rightarrow\eqref{100622-2206-pt1}$ in this case.\\

\noindent
	\textbf{Case 2: \eqref{200922-1200-pt2-alt} of Theorem \ref{100622-2204} holds.} By Lemma \ref{psd-completion}, used for $A(\mathbf{x})=\cM(B_{33},\mathbf{x})$, for $t'$ as in \eqref{200922-1158}, we have $\cM(B_{33},t')\succeq 0$ and 
		$$\Rank \cM(B_{33},t')=\max\big(\Rank \widehat{\mc M}(k),\Rank \mc N(k)\big).$$
	By Lemma \ref{200922-2010}, it follows that $A_{\gamma_2(t',u'')}=\cF(\cG(t',u''))_{(\vec{Y}^{(k,1)},\vec{X}^{(0,k)})}$
	and $A_{\gamma_1(t',u'')}=\cH\big(\cG(t',u'')\big)$ are psd, and
	\begin{equation}
        \label{181023-1316}
		\Rank \cM(B_{33},t')=\Rank A_{\gamma_2(t',u'')}+\Rank A_{\gamma_{1}(t',u'')}.
	\end{equation}
	We separate two subcases according to the invertibility of $F_2$.\\

 \noindent\textbf{Case 2.1: $F_2$ is invertible.}
 Note that $\cG(t',u'')$ is equal to $Z_0$ from \eqref{def:Z0} with $t_0=t'$.
    By definition \eqref{200922-1158}, $u'$ is the largest such that 
    $\cH(\cG(t',u'))\succeq 0$.
    Thus, $u'\geq u''$. 
    We have
    $\cF(\cG(t',u'))\succeq \cF(\cG(t',u''))$ (see the inequality \eqref{181023-1307} above) and
    \begin{align}
    \label{181023-1319}
    \begin{split}
    \Rank\cF(\cG(t',u'))
    &=
    \left\{
        \begin{array}{rl}
            \Rank\cF(\cG(t',u'')),& \text{if }u'=u'',\\[0.3em]
            \Rank\cF(\cG(t',u''))+1,& \text{if }u'>u'',
        \end{array}
    \right.
    \\[0.3em]
    &=
    \left\{
        \begin{array}{rl}
            2k,& \text{if }u'=u'',\\[0.3em]
            2k+1,& \text{if }u'>u'',
        \end{array}
    \right.
    \end{split}
    \end{align}
    where we used the fact that $F_2$ is invertbile in the second equality.
    Note that
    $$A_{\gamma_2(t',u)}=\cF(\cG(t',u))_{(\vec{Y}^{(k,1)},\vec{X}^{(0,k)})}.$$
    If $u'=u''$, then by definition of $u''$, we also have 
    $\Rank A_{\gamma_2(t',u')}(2k-1)=\Rank A_{\gamma_2(t',u')}$.
    Otherwise $u'>u''$ and $\Rank A_{\gamma_2(t',u')}=2k+1$.
	Since $A_{\gamma_1(t',u')}=\cH(\cG(t',u'))$ satisfies the equality $\Rank A_{\gamma_1(t',u')}=\Rank A_{\gamma_1(t',u')}(k-1)$ by definition of $u'$,
	it admits a $(\Rank A_{\gamma_1(t',u')})$--atomic $\RR$--rm
    by Theorem \ref{Hamburger}. 
    Using \eqref{181023-1316} and in the case $u'>u''$ also 
    rank equalities
    $\Rank A_{\gamma_2(t',u'')}=\Rank A_{\gamma_2(t',u')}-1$ (by \eqref{181023-1319})
    and
    $\Rank A_{\gamma_{1}(t',u'')}=\Rank A_{\gamma_{1}(t',u')}+1$ (by definition of $u'$),
    it follows that
	$\beta$ admits a $(\Rank \cM(B_{33},t'))$--atomic $\cZ(p)$--rm.
    This proves the implication $\eqref{100622-2206-pt2}\Rightarrow\eqref{100622-2206-pt1}$ in this case.\\
    
 \noindent\textbf{Case 2.2: $F_2$ is singular.}
    Note that
    $A_{\gamma_2(t',u'')}=\cF(\cG(t',u''))_{(\vec{Y}^{(k,1)},\vec{X}^{(0,k)})}$ satisfies 
    the equality $\Rank A_{\gamma_2(t',u'')}(2k-1)=\Rank A_{\gamma_2(t',u'')}$
    by definition of $u''$.
Moreover, $\gamma_1(t',u'')$ 
admits a 
    $(\Rank A_{\gamma_1(t',u'')})$--atomic $\RR$--rm. 
    Since also
    $\Rank \cM(B_{33},t')=\Rank\cF(\cG(t',u''))+\Rank A_{\gamma_1(t',u'')}$,
    it follows that $\beta$ admits a
	$(\Rank \cM(B_{33},t'))$--atomic $\mc Z(p)$--rm.\\

This concludes the proof of the implication $\eqref{100622-2206-pt2}\Rightarrow\eqref{100622-2206-pt1}$
of Theorem \ref{100622-2204}. Note also that
the moreover part of the theorem follows from the proof of this implication.

\subsection{Example}{\footnote{The \textit{Mathematica} file with numerical computations can be found on the link \url{https://github.com/ZalarA/TMP_cubic_reducible}.}} 
\label{ex:hyperbolic-type-1}
The sequence $\beta$ is said to be \textbf{$p$--purely pure}, if it is $p$--pure and also the matrix $\mc N(k)$ is invertible.
By Theorem \ref{100622-2204}.\eqref{100622-2207-pt1}, a $p$--purely pure sequence $\beta$, such that $\mc M(k;\beta)$ and $\mc N(k)$ are psd,
admits a $\cZ(p)$--rm. The following example shows that, in contrary to the TMP on the union of three parallel lines \cite{Zal22a}, 
in this hyperbolic type case, a $p$--pure sequence $\beta,$
such that $\mc M(k;\beta)$ and $\mc N(k)$ are psd, does not necessarily admit a $\cZ(p)$--rm.

Let $\beta$ be a bivariate degree 6 sequence given by
\begin{spacing}{1.3}

$\beta_{00} = 1$,

$\beta_{10} =\frac{3}{4}$,
$\beta_{01}  = 0$

$\beta_{20}  = 3,$
$\beta_{11} = \frac{1}{2},$
$\beta_{02}  =\frac{5}{16},$

$\beta_{30}  =\frac{9}{2},$
$\beta_{21}  =0,$
$\beta_{12}  =0,$
$\beta_{03}  =0,$

$\beta_{40}  =\frac{17}{64},$
$\beta_{31}  =\frac{5}{4},$
$\beta_{22}  =\frac{1}{2},$
$\beta_{13}  =\frac{5}{16},$
$\beta_{04}  =\frac{17}{64},$

$\beta_{50} =\frac{69}{2},$
$\beta_{41} =0,$
$\beta_{32} =0,$
$\beta_{23} =0,$
$\beta_{14} =0,$
$\beta_{05} =0,$

$\beta_{60}=\frac{231}{2},$
$\beta_{51}=\frac{17}{4},$
$\beta_{42}=\frac{5}{4},$
$\beta_{33}=\frac{1}{2},$
$\beta_{24}=\frac{5}{16},$
$\beta_{15}=\frac{17}{64},$
$\beta_{06}=\frac{81}{256}.$
\end{spacing}
	We will prove below that $\beta$ does not have a $\RR^2$--rm.
	It is easy to check that $\widehat{\mc M}(3)$ is psd
	and satisfies only one column relation $Y^2X=Y$, while the matrix $\mc N(3)$ 
    is 
    psd 
	and has only one column relation $YX^3=5YX-4Y^2.$
The sequences 
    $\gamma_1(\mathbf t,\mathbf u)$ and 
    $\gamma_2(\mathbf t,\mathbf u)$
    (see \eqref{def:gammas-v2}) are equal to
    \begin{align*}
    \gamma_1(\mathbf t,\mathbf u)
    &=
        \Big(
        \frac{1}{2},\frac{3}{4},\frac{7}{4},\frac{9}{2},
        \frac{49}{2},\frac{69}{2}-\mathbf{t},\frac{231}{2},-\mathbf{u}
        \Big),\\
    \gamma_2(\mathbf t,\mathbf u)
    &=
        \Big(
        \frac{81}{256},0,\frac{17}{64},0,\frac{5}{16},0,\frac{1}{2},
        0,\frac{5}{4},0,\frac{17}{4},\mathbf{t},\mathbf{u}
        \Big).
    \end{align*}
Computing $t',u',u''$ (see \eqref{200922-1158}) we get
$t'=0$, $u'=\frac{659}{40}$ and $u''=\frac{65}{4}$.
Since $A_{\gamma_{2}(t',u')}$ satisfies 
$\Rank A_{\gamma_{2}(t',u')}=6$ and
$\Rank A_{\gamma_{2}(t',u')}(6)=\Rank A_{\gamma_{2}(t',u')}[6]=5$,
Theorem \ref{thm:strong-Hamburger} implies that $\gamma_{2}(t',u')$ is not $(\RR\setminus\{0\})$--representable.
Since $A_{\gamma_{2}(t',u'')}$ satisfies 
$\Rank A_{\gamma_{2}(t',u'')}=5$ and
$\Rank A_{\gamma_{2}(t',u'')}[6]=4$,
Theorem \ref{thm:strong-Hamburger} implies that 
$\gamma_{2}(t',u'')$ is not $(\RR\setminus\{0\})$--representable.
So neither of $\gamma_{2}(t',u')$ or $\gamma_{2}(t',u'')$
is $(\RR\setminus\{0\})$--representable,
which implies, 
    by Theorem \ref{100622-2204}, 
that $\beta$ does not admit a $\mc Z(p)$--rm.

\section{Hyperbolic type 2 relation: 
            $p(x,y)=y(x+y-xy)$
        }
\label{sec:hyperbolic-type-2}

In this section we solve constructively the $\cZ(p)$--TMP for 
the sequence $\beta=\{\beta_{i,j}\}_{i,j\in \ZZ_+,i+j\leq 2k}$ 
of degree $2k$, $k\geq 3$,
where $p(x,y)$ is as in the title of the section.
The main results are Theorem \ref{sol:x-y-axy}, which characterizes concrete numerical conditions for the existence of a $\cZ(p)$--rm
for $\beta$
and Theorem \ref{thm:hyperbolic-2-minimal-measures},
which characterizes the number of atoms needed in a minimal $\cZ(p)$--rm. 
A numerical example demonstrating the main results is presented in Subsection \ref{ex:hyperbolic-type-2}.

\begin{remark}
In the classification from
\cite[Proposition 3.1]{YZ24}, in the hyperbolic type 2 relation,  $c(x,y)$ is equal to $x+y+axy$, $a\in \RR\setminus \{0\}$. However, after applying an affine linear transformation (see Subsection \ref{sub:affine-linear-trans}) $\phi(x,y)=(-ax,-ay)$ we can assume that $a=-1$.
\end{remark}

\subsection{Existence of a representing measure}
Assume the notation from Section \ref{Section-common-approach}.
If $\beta$ admits a $\cZ(p)$--TMP, then $\mc M(k;\beta)$ 
must satisfy the relations
\begin{equation}
    \label{130623-0758-v2}
	Y^{2+j}X^{1+i}=Y^{1+j}X^{1+i}+Y^{2+j}\quad
        \text{for }i,j\in \ZZ_+\text{ such that }i+j\leq k-3.
\end{equation}
On the level of moments the relations \eqref{130623-0758-v2} mean that
	\begin{equation}\label{130623-0758-v2}
		\beta_{i+1,j+2}=\beta_{i+1,j+1}+\beta_{i,2+j}
  \quad
  	\text{for }
   i,j\in \ZZ_+
   \text{ such that }i+j\leq 2k-3.
	\end{equation}
In the presence of all column relations \eqref{130623-0758-v2}, the column space $\cC(\mc M(k;\beta))$ is spanned by the columns in the tuple
    \begin{equation}
    \label{171023-2028-v3}
    \vec\cT:=(
        \underbrace{Y^k,Y^{k-1},\ldots,Y}_{\vec Y^{(k,1)}},
        \underbrace{
        YX-Y,YX^2-YX,\ldots,YX^{k-1}-YX^{k-2}}_{\vec{\cT}_2},
        \vec{X}^{(0,k)}).
    \end{equation}
    where 
    $\vec{X}^{(i,j)}:=(X^i,X^{i+1},\ldots,X^j)$, $0\leq i\leq j\leq k$ and $X^0:=\textit{1}$.
      Let
    \begin{align*}
    \label{150925-2258}
    \begin{split}
    &P
    \text{ be a permutation matrix such that moment matrix }
    \widehat{\mc M}(k):=P\mc M(k; \beta)P^T\text{ has rows}\\
    &\text{and columns indexed in the order }
	\vec{\cT},\vec{\cC}\setminus\vec{\cT}.
    \end{split}
    \end{align*}
Let $\widehat{\mc{M}}(k)_{\vec \cT}$
    be the restriction of the moment matrix 
    $\widehat{\mc{M}}(k)$
    to the rows and columns in the tuple $\vec\cT$:
	\begin{align*}
		\widehat{\mc{M}}(k)_{\vec\cT}
		&:=
		\kbordermatrix{
		& \vec{Y}^{(k,1)} &  \vec{\cT}_2 &   \vec{X}^{(0,k)}\\
			(\vec{Y}^{(k,1)})^T & B_{11} &  B_{12} &  B_{13}\\[0.2em]	
			(\vec{\cT}_2)^T & (B_{12})^T & B_{22} &  B_{23}\\[0.2em]
			(\vec{X}^{(0,k)})^T & (B_{13})^T & (B_{23})^T & B_{33}}\\[0.5em]
		&=		
		\kbordermatrix{
		& \vec{Y}^{(k,1)} & \vec{\cT}_2 & \vec{X}^{(0,k-1)} & X^k   \\[0.5em]
			(\vec{Y}^{(k,1)})^T & B_{11} & B_{12} & B^{(0,k-1)}_{13} & b^{(k)}_{13} \\[0.5em]
			(\vec{\cT}_2)^T & ({B}_{12})^T & {B}_{22} & {B}^{(0,k-1)}_{23} & {b}^{(k)}_{23} \\[0.5em]
			(\vec{X}^{(0,k-1)})^T & ({B}^{(0,k-1)}_{13})^T & ({B}^{(0,k-1)}_{23})^T & 
                (B_{33}^{(0,k-1)})
                 & {b}^{(k)}_{33} \\[0.5em]
			X^{k} & (b_{13}^{(k)})^T & ({b}^{(k)}_{23})^T & (b^{(k)}_{33})^T & \beta_{2k,0}}\\[0.5em]
		&=
		\kbordermatrix{
		& \vec{Y}^{(k,1)} & \vec{\cT}_2 & \textit{1} &\vec{X}^{(1,k-1)} & X^k  \\[0.5em]
			(\vec{Y}^{(k,1)})^T & B_{11} & B_{12} & b^{(0)}_{13} & B_{13}^{(1,k-1)} & b_{13}^{(k)} \\[0.5em]
			(\vec{\cT}_2)^T & ({B}_{12})^T & {B}_{22} & {b}^{(0)}_{23} & B_{23}^{(1,k-1)} & {b}^{(k)}_{23} \\[0.5em]
            \textit{1} & (b^{(0)}_{13})^T & (b_{23}^{(0)})^T & \beta_{0,0} & (b^{(1,k-1)}_{33;0})^T & \beta_{k,0} \\[0.5em]
			(\vec{X}^{(1,k-1)})^T & ({B}^{(1,k-1)}_{13})^T & ({B}^{(1,k-1)}_{23})^T & 
                b_{33;0}^{(1,k-1)} 
                 & B^{(1,k-1)}_{33} & b_{33;k}^{(1,k-1)} \\[0.5em]
			X^{k} & (b_{13}^{(k)})^T & ({b}^{(k)}_{23})^T & \beta_{k,0} & (b_{33;k}^{(1,k-1)})^T & \beta_{2k,0}}.
	\end{align*}
Let $\widetilde \cM(k;\beta)$ 
    be as in 
    \eqref{071123-1939} with 
    $\vec{\cT}_1:=(\vec{Y}^{(k,1)},\vec{\cT}_2)$
    and define
    \begin{equation}
        \label{091123-0719-v2}
            A_{\min}:=A_{12}(A_{22})^{\dagger} (A_{12})^T
        \quad\text{and}\quad
            \widehat A_{\min}
            :=A_{\min}+\eta \big(E_{1,k+1}^{(k+1)}+E_{k+1,1}^{(k+1)}\big),
    \end{equation}
    where 
    $
    \eta:=(A_{\min})_{2,k}-(A_{\min})_{1,k+1}.
    $
    See Remark \ref{general-procedure} for the explanation of these definitions.    
    Let
    $\cF(\mathbf{A})$ and $\cH(\mathbf{A})$ 
    be as in 
    \eqref{071123-1940}.
  Define the matrix function 
    \begin{equation}
        \label{301023-1930-v2}
        \mc G:[0,\infty)^2\to S_{k+1},\qquad 
	\mc G(\mathbf{t},\mathbf{u})=
        \widehat A_{\min}
        +\mathbf{t}E_{1,1}^{(k+1)}
        +\mathbf{u}E_{k+1,k+1}^{(k+1)}.
    \end{equation}
 Next we define the sequences $\gamma_1(\mathbf{t},\mathbf{u})$,
 $\gamma_2(\mathbf{t},\mathbf{u})$:
					\begin{align}
                        \label{def:gammas}
                        \begin{split}
						\gamma_1(\mathbf{t},\mathbf u)
						&:=
                            \big(
                                \beta_{0,0}-(A_{\min})_{1,1}-\mathbf{t},
                                \beta_{1,0}-\beta_{1,1}+\beta_{0,1},
                                \beta_{2,0}-\beta_{2,1}+\beta_{1,1},
                                \ldots,\\
                            &\hspace{3cm}
                                \beta_{2k-1,0}-\beta_{2k-1,1}+\beta_{2k-2,1},
                                \beta_{2k,0}-(A_{\min})_{k+1,k+1}-\mathbf{u}
                            \big),\\
						\gamma_2(\mathbf{t},\mathbf u)
						&:= 
                            \big(
                                (A_{\min})_{1,1}+\mathbf{t},
                                \beta_{1,1}-\beta_{0,1},
                                \beta_{2,1}-\beta_{1,1},
                                \ldots,
                                \beta_{2k-1,1}-\beta_{2k-2,1},\\
                        &\hspace{3cm}
                                (A_{\min})_{k+1,k+1}+\mathbf{u}
                            \big).
                        \end{split}
					\end{align}
Observe that
	\begin{align}
        \label{def-H-G-v2}
        \begin{split}
		&\cH(\cG(\mathbf t,\mathbf u))=\\
		=&
        \kbordermatrix{
            & \textit{1} & \vec{X}^{(1,k-1)} & X^k \\
            \textit{1} & \beta_{0,0}-(A_{\min})_{1,1}-\mathbf t & (b_{33;0}^{(1,k-1)})^T-(b_{23}^{(0)})^T & \beta_{k,0}-\beta_{k,1}+\beta_{k-1,1} \\[0.5em]
            (\vec X^{(1,k-1)})^T & b_{33;0}^{(1,k-1)}-b_{23}^{(0)} & B_{33}^{(1,k-1)}-B_{23}^{(1,k-1)} & b_{33;k}^{(1,k-1)}-b_{23}^{(k)} \\[0.5em]
            X^k & \beta_{k,0}-\beta_{k,1}+\beta_{k-1,1} & (b_{33;k}^{(1,k-1)})^T-(b_{23}^{(k)})^T & \beta_{2k,0}-(A_{\min})_{k+1,k+1}-\mathbf u
        }\\[0.3em]
        =&
            A_{\gamma_1{(\mathbf t,\mathbf u)}}
        \end{split}
	\end{align}	
	and
	\begin{align}
        \label{def-F-v2}
        \begin{split}
		&{\cF}(\cG(\mathbf t,\mathbf u))_{(\vec Y^{(k,1)},\vec{X}^{(0,k)})}	\\[0.5em]
        =&
        \kbordermatrix{
		& \vec{Y}^{(k,1)} & \textit{1} & \vec X^{(1,k-1)} & X^k\\
		(\vec{Y}^{(k,1)})^T & B_{11} & b_{13}^{(0)} & B_{13}^{(1,k-1)} & b_{13}^{(k)}\\[0.5em]
        \textit{1} & (b_{13}^{(0)})^T & (A_{\min})_{1,1}+\mathbf t & (b_{23}^{(0)})^T & \beta_{k,1}-\beta_{k-1,1}\\[0.5em]
		(\vec X^{(1,k-1)})^T & (B_{13}^{(1,k-1)})^T & b_{23}^{(0)}  & B_{23}^{(1,k-1)} & b_{23}^{(k)}\\[0.5em]
		X^k & (b_{13}^{(k)})^T & \beta_{k,1}-\beta_{k-1,1} &  (b_{23}^{(k)})^T & (A_{\min})_{k+1,k+1}+\mathbf u}\\[0.5em]
	   =&
        \kbordermatrix{
		& \vec{Y}^{(k,1)} & \vec X^{(0,k)} \\
		(\vec{Y}^{(k,1)})^T & B_{11} & B_{13}\\[0.5em]
		(\vec X^{(0,k)})^T & (B_{13})^T & A_{\gamma_{2}(\mathbf t,\mathbf u)}}.
        \end{split}
	\end{align}

By Lemmas \ref{071123-0646}--\ref{071123-2008} and Remark \ref{general-procedure}, the existence of a $\mc Z(p)$--rm for $\beta$ is equivalent to: 
\begin{align}
\label{measure-cond-hyp-2}
\begin{split}
&\widetilde{\cM}(k;\beta)\succeq 0, 
\text{ the relations }
\eqref{130623-0758-v2}\text{ hold 
and }\\
&\text{there exists } (\tilde t_0,\tilde u_0)\in \RR^2
\text{ such that }
\cF(\mc G(\tilde t_0,\tilde u_0)) 
\text{ and } \cH(\mc G(\tilde t_0,\tilde u_0))\\
&
\text{ admit a }
\cZ(x+y-xy)\text{--rm and a }\RR\text{--rm, respectively.}
\end{split}
\end{align}

We also write
        \begin{align}
        \label{141123-0720}
        \begin{split}
        &\cH(\widehat A_{\min})=\\
        =&
        \kbordermatrix{
            & \textit{1} & \vec{X}^{(1,k-1)}& X^k \\
            \textit{1} & \beta_{0,0}-(A_{\min})_{1,1}& (b_{33;0}^{(1,k-1)})^T-(b_{23}^{(0)})^T & \beta_{k,0}-(A_{\min})_{k+1,1}\\[0.5em]
            \vec X^{(1,k-1)} & b_{33;0}^{(1,k-1)}-b_{23}^{(0)} & B_{33}^{(1,k-1)}-B_{23}^{(1,k-1)} & b_{33;k}^{(1,k-1)}-b_{23}^{(k)} \\[0.5em]
            X^k & \beta_{k,0}-(A_{\min})_{k+1,1} & (b_{33;k}^{(1,k-1)})^T-(b_{23}^{(k)})^T & \beta_{2k,0}-(A_{\min})_{k+1,k+1}
        }\\[0.3em]
        =:&
        \kbordermatrix{
            & \mathit{1} & \vec{X}^{(1,k-1)} & X^k\\
           \mathit{1} & \beta_{0,0}-(A_{\min})_{1,1} & (h_{12})^T & \beta_{k,0}-(A_{\min})_{k+1,1}\\[0.2em]
          (\vec{X}^{(1,k-1)})^T &  h_{12} & H_{22} & h_{23} \\[0.2em]
          X^k & \beta_{k,0}-(A_{\min})_{k+1,1} & (h_{23})^T & \beta_{2k,0}-(A_{\min})_{k+1,k+1}
        },
        \end{split}
        \end{align}
and
\begin{align}
\label{definition-of-K}
\begin{split}
K
&:=
\cH(\widehat A_{\min})\big/H_{22}\\[0.3em]
&=
\begin{pmatrix}
                \beta_{0,0}-(A_{\min})_{1,1} &  \beta_{k,0}-(A_{\min})_{2,k}\\[0.2em]
            \beta_{k,0}-(A_{\min})_{2,k} & \beta_{2k,0}-(A_{\min})_{k+1,k+1}.
\end{pmatrix}
-
\begin{pmatrix}
    (h_{12})^T\\
    (h_{23})^T
\end{pmatrix}
(H_{22})^\dagger
\begin{pmatrix}
    h_{12} & h_{23}
\end{pmatrix}\\[0.3em]
&:=
\begin{pmatrix}
                \beta_{0,0}-(A_{\min})_{1,1}-(h_{12})^T(H_{22})^\dagger h_{12}&  \beta_{k,0}-(A_{\min})_{2,k}-(h_{12})^T(H_{22})^\dagger h_{23}\\[0.3em]
            \beta_{k,0}-(A_{\min})_{2,k}-(h_{23})^T(H_{22})^\dagger h_{12} & \beta_{2k,0}-(A_{\min})_{k+1,k+1}-(h_{12})^T(H_{22})^\dagger h_{12}
\end{pmatrix}.
\end{split}
\end{align}
Let
\begin{align}
\begin{split}
\label{def:t-max-u-max}
t_{\max}&:= \beta_{0,0}-(A_{\min})_{1,1}-(h_{12})^T(H_{22})^\dagger h_{12},\\
u_{\max}&:=\beta_{2k,0}-(A_{\min})_{k+1,k+1}-(h_{12})^T(H_{22})^\dagger h_{12},\\
k_{12}&:=
\beta_{k,0}-(A_{\min})_{2,k}-(h_{23})^T(H_{22})^\dagger h_{12}.
\end{split}
\end{align}
Note that
\begin{equation}
\label{K-eta-no-eta}
    K=
    \begin{pmatrix}
            t_{\max} &  k_{12}\\
            k_{12} & u_{\max}
\end{pmatrix}=
    \cH(A_{\min})\big/H_{22}
    +
    \begin{pmatrix}
    0 & \eta \\
    \eta & 0
    \end{pmatrix}.
\end{equation}


\bigskip

Write
$$
    \cB:=\{Y^k,Y^{k+1},\ldots,Y,\textit{1},X,X^2,\ldots,X^k\}.
$$
We say the matrix $A\in S_{k+1}$ satisfies 
\textbf{the property (Hyp)} if $\cF(A)$ is positive semidefinite and one of the following holds: 
\begin{equation}
\label{Hyp-cond}
\underbrace{\Rank \cF(A)=
    \Rank \cF(A)_{\cB\setminus\{X^{k}\}}=
    \Rank \cF(A)_{\cB\setminus\{Y^{k}\}}}_{\text{(Hyp)}_1}
\quad\text{or}\quad
\underbrace{\Rank \cF(A)=2k+1}_{\text{(Hyp)}_2}.
\end{equation}
\medskip



The solution to the $\cZ(p)$--TMP is the following. 
      
	\begin{theorem}
		\label{sol:x-y-axy} 
	Let $p(x,y)=y(x+y-xy)$
	and $\beta:=\beta^{(2k)}=\{\beta_{i,j}\}_{i,j\in \ZZ_+,i+j\leq 2k}$, where $k\geq 3$.
	Assume the notation above.
	Then the following statements are equivalent:
	\begin{enumerate}	
		\item\label{sol:x-y-axy-pt1} $\beta$ has a $\cZ(p)$--representing measure.
        \smallskip
		\item\label{sol:x-y-axy-pt2}  
  $\mc{M}(k;\beta)$ is positive semidefinite, the relations \eqref{130623-0758-v2}
  hold
  and there exists a pair $(\tilde t,\tilde u)$
  such that 
   $\gamma_{1}(\tilde t,\tilde u)$ is
        $\RR$--representable
        and 
        $A_{\gamma_{2}(\tilde t,\tilde u)}$
        satisfies (Hyp),
        where:
\begin{enumerate}
\item If $u_{\max}=0$,
    $$(\tilde t,\tilde u)\in \{(0,0),
    (t_{\max},0)\}.$$
\item If $u_{\max}>0$ and $k_{12}=0$,
    $$(\tilde t,\tilde u)\in \left\{(0,0),
    \Big(\frac{\eta^2}{u_{\max}},u_{\max}\Big),(t_{\max},u_{\max})\right\}.$$
\item If $u_{\max}>0$ and  $k_{12}\neq 0$,
    $$(
    \tilde t,\tilde u)\in \left\{
    (t_{-,\eta^2},u_{-,\eta^2}),(t_{+,\eta^2},u_{+,\eta^2}),
    \Big(t_{\max}-\frac{|k_{12}|\sqrt{t_{\max}}}{\sqrt{u_{\max}}},
            u_{\max}-\frac{|k_{12}|\sqrt{u_{\max}}}{\sqrt{t_{\max}}}\Big)\right\},$$
    where writing $B:=k_{12}^2-t_{\max}u_{\max}-\eta^2$ we have
    \begin{align*}
        u_{\pm,\eta^2}
             &=
             \frac{-B\pm \sqrt{B^2-4t_{\max}u_{\max}\eta^2}}{2t_{\max}}
        \qquad\text{and}\qquad
        t_{\pm,\eta^2}=\frac{\eta^2}{u_{\pm,\eta^2}}.
    \end{align*}
\end{enumerate}
\end{enumerate}
\end{theorem}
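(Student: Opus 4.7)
The plan is to translate the question into the decomposition framework \eqref{measure-cond-hyp-2} and then apply Corollary \ref{cor:x+y+axy} for the conic part together with Theorem \ref{Hamburger} for the line part. For the $(\Leftarrow)$ direction: given a candidate $(\tilde t, \tilde u)$ with $\gamma_1(\tilde t, \tilde u)$ being $\RR$-representable and $A_{\gamma_{2}(\tilde t, \tilde u)}$ satisfying (Hyp), Theorem \ref{Hamburger} yields a $\RR$-rm for $\cH(\cG(\tilde t, \tilde u)) = A_{\gamma_1(\tilde t, \tilde u)}$ (see \eqref{def-H-G-v2}), and Corollary \ref{cor:x+y+axy} yields a $\cZ(x+y-xy)$-rm for $\cF(\cG(\tilde t, \tilde u))$ (via the identification $\cF(\cG(\tilde t, \tilde u))_{\vec X^{(0,k)}} = A_{\gamma_2(\tilde t, \tilde u)}$ in \eqref{def-F-v2}, with the required relations among conic moments following from Lemma \ref{071123-2008}.\eqref{071123-2008-pt2}). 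Lemma \ref{071123-0646} then assembles these into a $\cZ(p)$-rm for $\beta$.

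For $(\Rightarrow)$, the content is to reduce from an arbitrary admissible pair $(t_0, u_0)$ to one in the enumerated finite list. By Lemma \ref{071123-2008}.\eqref{071123-2008-pt0} the condition $\cF(\cG(t,u)) \succeq 0$ is equivalent to $\cG(t,u) \succeq A_{\min}$, and since $\cG(t,u) - A_{\min} = \eta(E_{1,k+1}^{(k+1)} + E_{k+1,1}^{(k+1)}) + t E_{1,1}^{(k+1)} + u E_{k+1,k+1}^{(k+1)}$ is supported only on the four corner entries, this collapses to positivity of the $2\times 2$ corner block $\bigl(\begin{smallmatrix} t & \eta \\ \eta & u \end{smallmatrix}\bigr)$, i.e., $t \geq 0$, $u \geq 0$, $tu \geq \eta^2$. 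Similarly, computing the Schur complement of $\cH(\cG(t,u))$ against $H_{22}$ via \eqref{definition-of-K}--\eqref{K-eta-no-eta} shows $\cH(\cG(t,u)) \succeq 0$ is controlled by $t \leq t_{\max}$, $u \leq u_{\max}$, and $(t_{\max}-t)(u_{\max}-u) \geq k_{12}^2$. The admissible region $\Omega$ for $(t,u)$ is therefore the intersection of two planar semialgebraic regions in $[0,\infty)^2$, and the three theorem-cases (a)--(c) correspond to degenerations of this intersection.

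Within each case, the candidates are the extremal points where rank-drops of $\cF$ or $\cH$ occur: in (a) the collapse $u_{\max} = 0$ forces $u = 0$ and $k_{12} = 0$, leaving the segment $[0, t_{\max}] \times \{0\}$ with endpoints $(0,0), (t_{\max}, 0)$; in (b) the constraints decouple (rectangle versus hyperbola) and the distinguished points are the global corners $(0,0), (t_{\max}, u_{\max})$ plus the meeting $(\eta^2/u_{\max}, u_{\max})$ of $tu = \eta^2$ with the upper edge; in (c) the quadratic intersection of the two conic boundaries $tu = \eta^2$ and $(t_{\max}-t)(u_{\max}-u) = k_{12}^2$ produces $(t_{\pm,\eta^2}, u_{\pm,\eta^2})$, and the third candidate is the balanced point on $tu = \eta^2$ where $(t_{\max} - t)$ and $(u_{\max} - u)$ scale symmetrically. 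From a generic $(t_0, u_0) \in \Omega$ one slides along $\Omega$ toward its boundary while preserving both the (Hyp) condition (via Theorem \ref{thm:strong-Hamburger}-style rank preservation on $\cF$) and the $\RR$-representability of $\gamma_1$ (via Theorem \ref{Hamburger}), until a candidate is reached.

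The main obstacle is case (c), where the two conic boundaries truly interact and $k_{12} \neq 0$ rules out trivial simplifications. Here one must verify that among the three listed candidates at least one simultaneously satisfies either (Hyp)$_1$ or (Hyp)$_2$ together with $\RR$-representability of $\gamma_1(\tilde t, \tilde u)$. Technically, the hardest step is tracking the rank pattern of the $2k \times 2k$ block of $\cF(\cG(\tilde t, \tilde u))$ on the curve $tu = \eta^2$, where $\cF$ has a single rank deficit: this rank determines whether (Hyp)$_1$ (rank equality at both $\cB \setminus \{X^k\}$ and $\cB \setminus \{Y^k\}$) or (Hyp)$_2$ (full rank $2k+1$) is realized, and the sign of $k_{12}$ (reflected by $|k_{12}|$ in the formulas) selects which branch of the intersection supplies a working candidate inside $[0,\infty)^2$.
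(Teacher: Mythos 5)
Your $(\Leftarrow)$ direction is correct and matches the paper, which treats it as immediate since Theorem~\ref{sol:x-y-axy}.\eqref{sol:x-y-axy-pt2} literally is an instance of \eqref{measure-cond-hyp-2}. Your identification of $\mc R_1 \cap \mc R_2$ as the admissible region, with $\mc R_1$ cut out by $t \geq 0$, $u \geq 0$, $tu \geq \eta^2$ and $\mc R_2$ by $(t_{\max}-t)(u_{\max}-u) \geq k_{12}^2$, is also correct (these are Lemmas~\ref{lemma:claim-1} and~\ref{lemma:claim-2}).

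However, the $(\Rightarrow)$ direction has a real gap. You describe the reduction from an arbitrary admissible $(\tilde t_0, \tilde u_0)$ to one of the finitely many listed candidates as ``slide along $\Omega$ toward its boundary while preserving both the (Hyp) condition $\ldots$ and the $\RR$-representability of $\gamma_1$,'' but that sliding is precisely the content that needs proving, and it is not automatic. The (Hyp) condition is a pair of \emph{rank equalities} on restrictions of $\cF$, and these can break under perturbation: increasing $t$ or $u$ generically increases $\Rank \cF(\cG(t,u))_{\cB\setminus\{X^k\}}$ and $\Rank \cF(\cG(t,u))_{\cB\setminus\{Y^k\}}$ by different amounts. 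The paper's Case~2.3 handles exactly this: it first derives, from the existence of some working $(\tilde t_0, \tilde u_0)$, the rank equalities $\Rank\cF(A_{\min}) = \Rank\cF(A_{\min})_{\cB\setminus\{Y^k\}} = \Rank\cF(A_{\min})_{\cB\setminus\{X^k\}}$ via the chain \eqref{rank-inequality-chain-7}, and then argues by a subtraction-of-column-relations trick \eqref{relations-in-F-v2}--\eqref{linear-dependence-Xk-column} that if (Hyp)$_1$ fails at \emph{both} $(t_{-,\eta^2},u_{-,\eta^2})$ and $(t_{+,\eta^2},u_{+,\eta^2})$, then the $X^k$ and $\textit 1$ columns of $\cF(A_{\min})$ lie in the span of the interior columns, which in turn forces (Hyp)$_1$ to hold at those points after all. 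None of this is captured by a generic sliding heuristic. You also misidentify the third candidate in case (c): the point $\bigl(t_{\max}-\tfrac{|k_{12}|\sqrt{t_{\max}}}{\sqrt{u_{\max}}},\ u_{\max}-\tfrac{|k_{12}|\sqrt{u_{\max}}}{\sqrt{t_{\max}}}\bigr)$ is $(t_{p_{\max}},u_{p_{\max}})$ from Lemma~\ref{lemma:claim-3}, where the product $tu$ attains its maximum $p_{\max}$ over $\mc R_2 \cap (\RR_+)^2$; it sits on $\partial\mc R_2$ and is generically \emph{not} on the hyperbola $tu=\eta^2$, whereas you describe it as ``the balanced point on $tu=\eta^2$.'' It is the candidate that works when (Hyp)$_2$ (full rank $2k+1$) can be realized, complementing the boundary points $(t_{\pm,\eta^2},u_{\pm,\eta^2})$ that serve the (Hyp)$_1$ case.
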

\medskip

Before we prove Theorem \ref{sol:x-y-axy} we need few 
lemmas.
Their statements and the proofs coincide verbatim with \cite[Theorem 6.1, Claims 1--3]{YZ24}, 
but we state them for easier readability.\\

    Let 
\begin{align*}
    \mc R_1
    &=\big\{(t,u)\in \RR^2\colon \cF(\mc G(t,u))\succeq 0\big\}
    \quad\text{and}\quad
    \mc R_2
    =\big\{(t,u)\in \RR^2\colon \cH(\mc G(t,u))\succeq 0\big\}.
\end{align*}
Claims 1 and 2 below describe ranks of 
$\cF(\cG(t,u))$ and $\cH(\cG(t,u))$
for various choices of $(t,u)$ in $\mc R_1$ and $\mc R_2$.

\begin{lemma}[{\cite[Theorem 6.1, Claim 1]{YZ24}}]
\label{lemma:claim-1}
    Assume that $\widetilde{\mc{M}}(k;\beta)\succeq 0$.
    Then
    \begin{equation}
    \label{form-of-R1-parabolic}    
    \mc R_1
    =
    \big\{
    (t,u)\in \RR^2\colon 
    t\geq 0, u\geq 0, tu\geq \eta^2
    \big\}.
    \end{equation}
If $(t,u)\in \mc R_1$, we have
\begin{align}
    \label{rank-R1-parabolic}
    \Rank \cF(\mc G(t,u))=
    \left\{
    \begin{array}{rl}
            \Rank \cF(A_{\min}),&    
            \text{if }
                \eta=t=u=0, 
                \\[0.3em]
            \Rank \cF(A_{\min})+1,&    
            \text{if }
                (\eta=t=0, u>0)
                \text{ or }\\
                &
                (\eta=u=0, t>0)
                \text{ or }(\eta\neq 0, tu=\eta^2),\\[0.3em]
            \Rank \cF(A_{\min})+2,&    
            \text{if }
                tu>\eta^2.
    \end{array}
    \right.
\end{align}
\end{lemma}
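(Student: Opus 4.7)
The plan is to apply Theorem \ref{block-psd} to reduce the positive semidefiniteness of $\cF(\cG(t,u))$ to that of the $2\times 2$ matrix $T(t,u):=\bigl(\begin{smallmatrix} t & \eta \\ \eta & u \end{smallmatrix}\bigr)$. The key observation is that $\cF(\cG(t,u))$ and $\cF(A_{\min})$ agree everywhere except in the four entries at the positions $\{\mathit 1,X^k\}\subseteq \vec X^{(0,k)}$, where they differ precisely by $T(t,u)$. I would partition rows and columns of both matrices according to Part A $:=\{\mathit 1,X^k\}$ versus Part B $:=$ everything else and write
$$\cF(A_{\min})=\begin{pmatrix} \alpha & \beta \\ \beta^T & \gamma \end{pmatrix},\qquad
\cF(\cG(t,u))=\begin{pmatrix} \alpha+T(t,u) & \beta \\ \beta^T & \gamma \end{pmatrix}.$$
Since $\cF(A_{\min})\succeq 0$ by Lemma \ref{071123-2008}.\eqref{071123-2008-pt1}, Theorem \ref{block-psd} gives $\gamma\succeq 0$, $\cC(\beta)\subseteq\cC(\gamma)$, and $\tilde S:=\alpha-\beta\gamma^\dagger\beta^T\succeq 0$.

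The main step, which I expect to be the main obstacle, is to establish $\tilde S=0$. For this, I would first observe that the Schur complement of $A_{22}$ in $\cF(A_{\min})$ has its $(1,1)$ block (corresponding to $\vec X^{(0,k)}$) equal to $A_{\min}-A_{12}(A_{22})^\dagger A_{12}^T=0$ by the very definition of $A_{\min}$. Since this Schur complement is itself positive semidefinite, its off-diagonal blocks must also vanish by Theorem \ref{block-psd}, forcing $A_{13}=A_{12}(A_{22})^\dagger A_{23}$. Using this identity together with $\cC(A_{12}^T)\subseteq\cC(A_{22})$, I would then check that for each $i\in\{0,k\}$ the column of $\cF(A_{\min})$ indexed by $X^i$ equals $\cF(A_{\min})(0,(A_{22})^\dagger A_{12}^T e_i,0)^T$, i.e.\ a linear combination of columns indexed by $\vec\cT_1$. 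Since $\vec\cT_1\subseteq$ Part B, this places both Part A columns of $\cF(A_{\min})$ in the column span of Part B, which is equivalent to $\tilde S=0$.

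Once $\tilde S=0$ is in hand, Theorem \ref{block-psd} will yield $\cF(\cG(t,u))\succeq 0\iff T(t,u)\succeq 0\iff t\geq 0,\ u\geq 0,\ tu\geq \eta^2$, proving \eqref{form-of-R1-parabolic}. The rank additivity from Theorem \ref{block-psd}.\eqref{prop-2604-1140-eq2} will then give
$$\Rank \cF(\cG(t,u))=\Rank \gamma + \Rank T(t,u)=\Rank \cF(A_{\min}) + \Rank T(t,u),$$
where the second equality again uses $\tilde S=0$. A short case analysis of $\Rank T(t,u)\in\{0,1,2\}$, distinguishing the single point $\eta=t=u=0$ (rank $0$), the locus where $T(t,u)$ is singular but nonzero (rank $1$, which decomposes into the three subcases listed), and the open region $tu>\eta^2$ (rank $2$), will then complete the proof of \eqref{rank-R1-parabolic}.
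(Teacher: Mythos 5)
Your argument is correct and follows the same mechanism as the cited source: $\cF(\cG(t,u))$ differs from $\cF(A_{\min})$ only on the $\{\mathit{1},X^k\}$ block, by $T(t,u)=\bigl(\begin{smallmatrix}t&\eta\\\eta&u\end{smallmatrix}\bigr)$, and once $\tilde S:=\cF(A_{\min})/\gamma=0$ is established, Theorem \ref{block-psd} reduces both positivity and rank of $\cF(\cG(t,u))$ to $T(t,u)$ alone. (The paper itself takes exactly this rank-additivity for granted in the analogous identity \eqref{rank-conditions} for hyperbolic type~3.)

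Two smaller remarks. First, the step you flag as the main obstacle, $\tilde S=0$, follows at once from Lemma~\ref{071123-2008}.\eqref{071123-2008-pt3}, which gives $\Rank\cF(A_{\min})=\Rank A_{22}$; since $A_{22}$ is a principal submatrix of your $\gamma$, which is in turn a principal submatrix of $\cF(A_{\min})$, all three ranks coincide, and Theorem~\ref{block-psd}.\eqref{prop-2604-1140-eq2} then forces $\tilde S=0$ without the explicit column-span verification (which is nonetheless correct as you wrote it, modulo $e_i$ meaning $e_{i+1}$). Second, invoking Lemma~\ref{071123-2008} tacitly assumes the column relations \eqref{130623-0758-v2}, not just $\widetilde{\mc M}(k;\beta)\succeq 0$; the claim's hypothesis lists only the psd condition, but the relations are part of the surrounding context of Section~\ref{sec:hyperbolic-type-2}, so this is a presentational point rather than a gap.
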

\medskip

Define the matrix function
\begin{align}
\label{K(G(t,u))}
\begin{split}
\mc K(\mathbf{t},\mathbf{u})
:=
\cH(\mc G(\mathbf{t},\mathbf{u}))\big/H_{22}
&=
\cH(\widehat A_{\min})\big/H_{22}
-
\begin{pmatrix}
    \mathbf{t} & 0 \\ 0 & \mathbf{u}
\end{pmatrix}\\
&=
K-
\begin{pmatrix}
    \mathbf{t} & 0 \\ 0 & \mathbf{u}
\end{pmatrix}
=
\begin{pmatrix}
    t_{\max}-\mathbf{t} & k_{12} \\ k_{12} & u_{\max}-\mathbf{u}
\end{pmatrix}.
\end{split}
\end{align}

\begin{lemma}[{\cite[Theorem 6.1, Claim 2]{YZ24}}]
\label{lemma:claim-2}
    Assume that $\widetilde{\mc{M}}(k;\beta)\succeq 0$.
    Then
    \begin{align}
    \label{form-of-R2-parabolic}    
    \begin{split}
    \mc R_2
    &=
    \big\{
    (t,u)\in \RR^2\colon 
    \mc K(t,u)\succeq 0
    \big\}\\
    &=\big\{
    (t,u)\in \RR^2\colon 
    t\leq t_{\max}, u\leq u_{\max}, (t_{\max}-t)(u_{\max}-u)\geq k_{12}^2
    \big\}.
    \end{split}
    \end{align}
If $(t,u)\in \mc R_2$, we have
\begin{align}
    \label{rank-R2-parabolic}
    \Rank \cH(\mc G(t,u))=
    \left\{
    \begin{array}{rl}
            \Rank H_{22},&    
            \text{if }
                k_{12}=0, t=t_{\max}, u=u_{\max}, 
                \\[0.3em]
            \Rank H_{22}+1,&    
            \text{if }
                (t_{\max}-t)(u_{\max}-u)=k_{12}^2, (t\neq t_{\max}\text{ or }u\neq u_{\max}),\\[0.3em]
            \Rank H_{22}+2,&    
            \text{if }
                (t_{\max}-t)(u_{\max}-u)>k_{12}^2.
    \end{array}
    \right.
\end{align}
\end{lemma}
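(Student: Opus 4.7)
The plan is to reduce the psd question $\cH(\cG(t,u)) \succeq 0$ to the positivity of the explicit $2 \times 2$ Schur complement $\mc K(t,u)$ from \eqref{K(G(t,u))}. The starting observation, visible from \eqref{def-H-G-v2}, \eqref{141123-0720} and \eqref{K(G(t,u))}, is that $\cH(\cG(t,u))$ differs from $\cH(\widehat A_{\min})$ only in its $(1,1)$ and $(k+1,k+1)$ entries, while $\cH(\widehat A_{\min})$ differs from $\cH(A_{\min})$ only in positions $(1,k+1)$ and $(k+1,1)$. After permuting rows and columns to view each of these as a $2\times 2$ block matrix with $H_{22}$ as one diagonal block, the block $H_{22}$ itself together with the off-diagonal vectors $h_{12}, h_{23}$ coincide in all three matrices; only the remaining $2 \times 2$ diagonal block depends on $(t,u)$.

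Next, by Lemma \ref{071123-2008}\eqref{071123-2008-pt1}, the hypothesis $\widetilde \cM(k;\beta) \succeq 0$ yields $\cH(A_{\min}) \succeq 0$. Applying Theorem \ref{block-psd}\eqref{021123-1702} to $\cH(A_{\min})$ then gives $H_{22} \succeq 0$ together with $h_{12}, h_{23} \in \cC(H_{22})$. Since these two conditions are independent of $(t,u)$, Theorem \ref{block-psd}\eqref{021123-1702} applied to $\cH(\cG(t,u))$ produces the desired equivalence
$$\cH(\cG(t,u)) \succeq 0 \iff \mc K(t,u) \succeq 0.$$
As $\mc K(t,u)$ is the explicit $2 \times 2$ symmetric matrix from \eqref{K(G(t,u))}, its positivity is equivalent to $t \leq t_{\max}$, $u \leq u_{\max}$ and $(t_{\max}-t)(u_{\max}-u) \geq k_{12}^2$, which is exactly the description \eqref{form-of-R2-parabolic} of $\mc R_2$.

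For the rank formula, Theorem \ref{block-psd}\eqref{prop-2604-1140-eq2} gives $\Rank \cH(\cG(t,u)) = \Rank H_{22} + \Rank \mc K(t,u)$, and the rank of a $2 \times 2$ psd matrix is $0$, $1$, or $2$ according to whether it is the zero matrix, singular but nonzero, or nonsingular. Applied to $\mc K(t,u)$, this reproduces the three cases of \eqref{rank-R2-parabolic}: rank $0$ exactly when $k_{12}=0$, $t=t_{\max}$, $u=u_{\max}$; rank $2$ exactly when $(t_{\max}-t)(u_{\max}-u) > k_{12}^2$; rank $1$ otherwise. No genuine obstacle is anticipated, as this is a direct instance of the Schur complement machinery of Theorem \ref{block-psd}; the only point needing care is the observation that the column-space condition there must be verified just once, via $\cH(A_{\min}) \succeq 0$, because the blocks entering that condition are precisely those unaffected by the parameters $(t,u)$.
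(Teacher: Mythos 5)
Your proof is correct and takes the natural Schur-complement route via Theorem~\ref{block-psd}, which is the standard way to establish such rank formulas and coincides with what the cited reference does. The only nicety is that $\cH(A_{\min})\succeq 0$ (and hence $H_{22}\succeq 0$ and $h_{12},h_{23}\in\cC(H_{22})$) already follows directly from $\widetilde{\cM}(k;\beta)\succeq 0$, since $\cH(A_{\min})=A_{11}-A_{12}(A_{22})^\dagger(A_{12})^T$ is a generalized Schur complement of a psd restriction of $\widetilde{\cM}(k;\beta)$; you need not invoke the full hypotheses of Lemma~\ref{071123-2008}, which formally also requires the column relations \eqref{071123-0657}, though of course they hold in the intended context.
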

\medskip

\begin{lemma}[{\cite[Theorem 6.1, Claim 3]{YZ24}}]
\label{lemma:claim-3}
If $(t,u)\in \mc R_2\cap (\RR_+)^2$, then 
        $$tu\leq (\sqrt{t_{\max}u_{\max}}-\sign(k_{12})k_{12})^2=:p_{\max}.$$
    The equality is achieved if:
    \begin{itemize}
        \item $k_{12}=0$, only in the point $(t,u)=(t_{\max},u_{\max})$.
        \smallskip
        \item $k_{12}\neq 0$, only in point 
            $(t_{p_{\max}},u_{p_{\max}})=
            (
            t_{\max}-\frac{|k_{12}|\sqrt{t_{\max}}}{\sqrt{u_{\max}}},
            u_{\max}-\frac{|k_{12}|\sqrt{u_{\max}}}{\sqrt{t_{\max}}}
            )$.
    \end{itemize}
    Moreover, if $k_{12}\neq 0$, then for every 
    $p\in [0,p_{\max}]$
    there exists a point 
    $(\tilde t,\tilde u)\in \mc R_2\cap (\RR_+)^2$ such that 
    $\tilde t \tilde u=p$ and $(t_{\max}-\tilde t)(u_{\max}-\tilde u)=k_{12}^2$.
\end{lemma}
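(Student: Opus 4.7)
The plan is to read the lemma as a constrained optimization of $tu$ subject to the explicit description of $\mc R_2\cap(\RR_+)^2$ furnished by Lemma \ref{lemma:claim-2}, namely the box constraints $0\le t\le t_{\max}$, $0\le u\le u_{\max}$ together with the hyperbolic constraint $(t_{\max}-t)(u_{\max}-u)\ge k_{12}^2$. First I would note that since the set is assumed nonempty, there exist $t,u\ge 0$ with $K-\operatorname{diag}(t,u)=\mc K(t,u)\succeq 0$, which upon adding $\operatorname{diag}(t,u)\succeq 0$ gives $K\succeq 0$; in particular $t_{\max},u_{\max}\ge 0$ and $t_{\max}u_{\max}\ge k_{12}^2$. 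This estimate is the only nontrivial feasibility fact I will need below, and it traces back directly to $\widetilde{\mc M}(k;\beta)\succeq 0$ via the definition of $K$.

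Next I would dispose of the easy case $k_{12}=0$: the hyperbolic constraint is vacuous, and since $t,u\ge 0$ one has $tu\le t_{\max}u_{\max}=p_{\max}$ with equality only at $(t_{\max},u_{\max})$. For $k_{12}\ne 0$, I argue that any maximizer of $tu$ must satisfy $(t_{\max}-t)(u_{\max}-u)=k_{12}^2$; indeed if the inequality were strict at an interior point of the box, one could strictly increase $t$ or $u$ while remaining in $\mc R_2\cap(\RR_+)^2$, raising the product. Setting $a=t_{\max}-t$, $b=u_{\max}-u$, so $a\in[0,t_{\max}]$, $b\in[0,u_{\max}]$ and $ab=k_{12}^2$, I rewrite
\[
tu=(t_{\max}-a)(u_{\max}-b)=t_{\max}u_{\max}-(t_{\max}b+u_{\max}a)+k_{12}^2,
\]
reducing the problem to minimizing $t_{\max}b+u_{\max}a$ subject to $ab=k_{12}^2$.

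By the AM--GM inequality, $t_{\max}b+u_{\max}a\ge 2\sqrt{t_{\max}u_{\max}\,ab}=2|k_{12}|\sqrt{t_{\max}u_{\max}}$, with equality iff $t_{\max}b=u_{\max}a$, which together with $ab=k_{12}^2$ forces $a=|k_{12}|\sqrt{t_{\max}/u_{\max}}$ and $b=|k_{12}|\sqrt{u_{\max}/t_{\max}}$. The admissibility conditions $a\le t_{\max}$ and $b\le u_{\max}$ both amount to $|k_{12}|\le\sqrt{t_{\max}u_{\max}}$, which was verified above. Substituting back yields
\[
tu\le t_{\max}u_{\max}-2|k_{12}|\sqrt{t_{\max}u_{\max}}+k_{12}^2=\bigl(\sqrt{t_{\max}u_{\max}}-|k_{12}|\bigr)^2=p_{\max},
\]
with equality exactly at the advertised point $(t_{p_{\max}},u_{p_{\max}})$ (using $|k_{12}|=\sign(k_{12})k_{12}$).

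For the moreover clause, still with $k_{12}\ne 0$, parametrize the boundary curve by $t\in\bigl[0,\,t_{\max}-k_{12}^2/u_{\max}\bigr]$ via $u(t)=u_{\max}-k_{12}^2/(t_{\max}-t)$; this range is precisely the portion of the curve lying in $(\RR_+)^2$, since the endpoints give $u(0)=u_{\max}-k_{12}^2/t_{\max}\ge 0$ and $u\bigl(t_{\max}-k_{12}^2/u_{\max}\bigr)=0$. The function $t\mapsto t\,u(t)$ is continuous on this interval, vanishes at both endpoints, and attains the value $p_{\max}$ at the interior maximizer $t_{p_{\max}}$, so by the intermediate value theorem every $p\in[0,p_{\max}]$ is realized on the curve. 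No serious obstacle is expected; the only delicate step is the feasibility check for the AM--GM optimizer, which is precisely handled by the PSD-ness of $K$ established at the outset.
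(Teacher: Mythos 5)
Your proof is correct and follows essentially the route one expects (and that the cited proof of \cite[Theorem 6.1, Claim 3]{YZ24} takes): using the description of $\mc R_2$ from Lemma \ref{lemma:claim-2}, push any maximizer of $tu$ onto the boundary hyperbola $(t_{\max}-t)(u_{\max}-u)=k_{12}^2$, locate the tangency point via the AM--GM equality case, and get the ``moreover'' part by the intermediate value theorem along that boundary arc. Your preliminary observation that a point of $\mc R_2\cap(\RR_+)^2$ forces $K\succeq 0$, hence $t_{\max},u_{\max}\geq 0$ and $t_{\max}u_{\max}\geq k_{12}^2$, is exactly the feasibility fact needed (also implicitly for the ``moreover'' clause, where the lemma is only ever invoked in situations with $\mc R_1\cap\mc R_2\neq\varnothing$), so flagging it as an assumption is appropriate rather than a gap.
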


\medskip
\begin{proof}[Proof of Theorem \ref{sol:x-y-axy}]
The implication  $\eqref{sol:x-y-axy-pt2}\Rightarrow\eqref{sol:x-y-axy-pt1}$ is trivial, since \eqref{sol:x-y-axy-pt2} immediately implies \eqref{measure-cond-hyp-2}.
It remains to prove the implication $\eqref{sol:x-y-axy-pt1}\Rightarrow\eqref{sol:x-y-axy-pt2}$. 
    By \eqref{measure-cond-hyp-2}, there exists $(\tilde t_0,\tilde u_0)$,
    such that 
    $\gamma_{1}(\tilde t_0,\tilde u_0)$ is $\RR$--representable
    and
    $A_{\gamma_{2}(\tilde t_0,\tilde u_0)}$ satisfies (Hyp).
    We separate two cases according to $H_{22}$ (see \eqref{141123-0720}) being positive definite or not.\\

\noindent \textbf{Case 1: $H_{22}$ is not positive definite.}
By Theorem \ref{Hamburger}, it follows that the only option for $\tilde u_0$ is $u_{\max}$.
By Lemma \ref{lemma:claim-2}, we have $k_{12}=0$. 
Applying Theorem \ref{Hamburger} again, for any $t\in [0,t_{\max}]$,
the sequence $\gamma_1(t,u_{\max})$ is $\RR$--representable. We separate two cases according to the value of $u_{\max}$.
\\

\noindent \textbf{Case 1.1: $u_{\max}=0$.} By 
Lemma \ref{lemma:claim-1}, $\eta=0$. 
This and the definition of $A_{\min}$ implies that for any $t\in [0,t_{\max}]$,
$\Rank \cF(\cG(t,0))_{\cB\setminus \{X^k\}}=\Rank \cF(\cG(t,0))$.
If $\tilde t_0>0$, then 
\begin{align}
\label{rank-inequality-chain}
\begin{split}
    \Rank \cF(A_{\min})+1
    &\underbrace{=}_{\eqref{rank-R1-parabolic}}\Rank \cF(\cG(\tilde t_0,0))
    =\Rank \cF(\cG(\tilde t_0,0))_{\cB\setminus \{Y^k\}}\\
    &\leq \Rank \cF(A_{\min})_{\cB\setminus \{Y^k\}}+1
    \leq \Rank \cF(A_{\min})+1,
\end{split}
\end{align}
where 
in the second equality
the assumption that $A_{\gamma_{2}(\tilde t_0,0)}$ satisfies (Hyp) was used.
It follows that all inequalities in \eqref{rank-inequality-chain} must be equalities.
In particular, we have $\Rank \cF(A_{\min})_{\cB\setminus \{Y^k\}}=\Rank \cF(A_{\min})$.
It follows that $A_{\min}=A_{\gamma_2(0,0)}$ satisfies (Hyp) and $(0,0)$ is a good choice for $(\tilde t,\tilde u)$ in Theorem \ref{sol:x-y-axy}.\eqref{sol:x-y-axy-pt2}.\\

\noindent \textbf{Case 1.2: $u_{\max}>0$.} 
By Lemma \ref{lemma:claim-1}, $\tilde t_0\tilde u_0\geq \eta^2$. If $\eta\neq 0$, then $\tilde t_0>0$.
If $\eta=0$, then 
\begin{align*}
2k+1>
\Rank \cF(A_{\min})+1=
\Rank \cF(\cG(0,0))+1
&\underbrace{=}_{\eqref{rank-R1-parabolic}}\Rank \cF(\cG(0,u_{\max}))\\
&\underbrace{>}_{u_{\max}>0}\Rank \cF(\cG(0,u_{\max}))_{\cB\setminus \{X^k\}}.
\end{align*}
Hence, $(0,u_{\max})$ cannot satisfy (Hyp) and thus $\tilde t_0>0$. We separate two cases according to the product $\tilde t_0u_{\max}$, which must be at least $\eta^2$, by Lemma \ref{lemma:claim-1}.\\

\noindent \textbf{Case 1.2.1: $\tilde t_0u_{\max}=\eta^2$.}
In this case $(\frac{\eta^2}{u_{\max}},u_{\max})$
is a good choice for $(\tilde t,\tilde u)$ in Theorem \ref{sol:x-y-axy}.\eqref{sol:x-y-axy-pt2}.\\

\noindent \textbf{Case 1.2.2: $\tilde t_0u_{\max}>\eta^2$.}
We separate two cases according to the rank of $\cF(\cG(\tilde t_0,u_{\max}))$.\\

\noindent \textbf{Case 1.2.2.1: $\Rank \cF(\cG(\tilde t_0,u_{\max}))=2k+1$.} 
The inequality
$\cF(\cG(\tilde t_0,u_{\max}))\preceq \cF(\cG(t_{\max},u_{\max}))$ implies that
    $\Rank \cF(\cG(t_{\max},u_{\max}))=2k+1$
and thus $(t_{\max},u_{\max})$ satisfies (Hyp).
Therefore $(t_{\max},u_{\max})$
is a good choice for $(\tilde t,\tilde u)$ in Theorem \ref{sol:x-y-axy}.\eqref{sol:x-y-axy-pt2}.\\

\noindent \textbf{Case 1.2.2.2: $\Rank \cF(\cG(\tilde t_0,u_{\max}))<2k+1$.}
Then
\begin{align}
\label{rank-inequality-chain-2}
\begin{split}
    \Rank \cF(A_{\min})+2
    &\underbrace{=}_{\eqref{rank-R1-parabolic}}\Rank \cF(\cG(\tilde t_0,u_{\max}))
    =\Rank \cF(\cG(\tilde t_0,u_{\max}))_{\cB\setminus \{Y^k\}}\\
    &\leq \Rank \cF(A_{\min})_{\cB\setminus \{Y^k\}}+2
    \leq \Rank \cF(A_{\min})+2,
\end{split}
\end{align}
where in the second 
equality we used 
the assumption that $A_{\gamma_{2}(\tilde t_0,u_{\max})}$ satisfies (Hyp).
It follows that all inequalities in \eqref{rank-inequality-chain-2} must be equalities.
Since 
$$
    \cF(\cG(\tilde t_0,u_{\max}))_{\cB\setminus \{Y^k\}}
    \preceq 
    \cF(\cG(t_{\max},u_{\max}))_{\cB\setminus \{Y^k\}},
$$
it follows that
$$
    \Rank \cF(\cG(t_{\max},u_{\max}))=
    \Rank \cF(A_{\min})+2=
    \Rank \cF(\cG(t_{\max},u_{\max}))_{\cB\setminus \{Y^k\}}.
$$
Similarly,
$
    \Rank \cF(\cG(t_{\max},u_{\max}))
    =
    \Rank \cF(\cG(t_{\max},u_{\max}))_{\cB\setminus \{X^k\}}.
$
Therefore $(t_{\max},u_{\max})$ satisfies (Hyp) and 
is a good choice for $(\tilde t,\tilde u)$ in Theorem \ref{sol:x-y-axy}.\eqref{sol:x-y-axy-pt2}.\\

\noindent \textbf{Case 2: $H_{22}$ is positive definite.} We separate three cases according to the value of
the pair $(k_{12},\eta)$.\\

\noindent \textbf{Case 2.1: $k_{12}=\eta=0$.} 
By definition of $t_{\max}, u_{\max}$ and Theorem \ref{Hamburger}, 
    $\gamma_1(t,u)$ is $\RR$--representable for every 
\begin{equation}
\label{R-representability}
    (t,u)\in [0,t_{\max})\times [0,u_{\max}]
    \quad\text{and}\quad
    (t,u)=(t_{\max},u_{\max}).
\end{equation}
We separate two cases acccording to the rank of $\cF(A_{\min})$.\\

\noindent \textbf{Case 2.1.1: $\Rank \cF(A_{\min})<2k-1$.} 
    If $t_{\max}=0$, by Lemma \ref{lemma:claim-1}, 
    $\Rank \cF(\cG(\tilde t_0,\tilde u_0))\leq \Rank \cF(A_{\min})+2$.
    Since $(\tilde t_0,\tilde u_0)$ satisfies (Hyp) and $\Rank \cF(A_{\min})<2k-1$,
    it follows that $(\tilde t_0,\tilde u_0)$ satisfies (Hyp)$_1$.
    We separate four options depending on the sign of 
    $\tilde t_0$ and $\tilde u_0$.\\

\noindent \textbf{Case 2.1.1.1: $\tilde t_0=\tilde u_0=0$.}
This means $(0,0)$ is a good choice for $(\tilde t,\tilde u)$ in Theorem \ref{sol:x-y-axy}.\eqref{sol:x-y-axy-pt2}. \\
    
\noindent \textbf{Case 2.1.1.2: $\tilde t_0>0$ and $\tilde u_0>0$.} 
Then
\begin{align}
\label{rank-inequality-chain-3}
\begin{split}
    \Rank \cF(A_{\min})+2
    &\underbrace{=}_{\eqref{rank-R1-parabolic}}\Rank \cF(\cG(\tilde t_0,\tilde u_0))
    =\Rank \cF(\cG(\tilde t_0,\tilde u_0))_{\cB\setminus \{Y^k\}}\\
    &\leq \Rank \cF(A_{\min})_{\cB\setminus \{Y^k\}}+2
    \leq \Rank \cF(A_{\min})+2,
\end{split}
\end{align}
where in the second
equality we used 
the assumption that $A_{\gamma_{2}(\tilde t_0,\tilde u_0)}$ satisfies (Hyp).
It follows that all inequalities in \eqref{rank-inequality-chain-3} must be equalities.
Since 
$$
    \cF(\cG(\tilde t_0,\tilde u_0))_{\cB\setminus \{Y^k\}}
    \preceq 
    \cF(\cG(t_{\max},u_{\max}))_{\cB\setminus \{Y^k\}},
$$
it follows that
$$
    \Rank \cF(\cG(t_{\max},u_{\max}))=
    \Rank \cF(A_{\min})+2=
    \Rank \cF(\cG(t_{\max},u_{\max}))_{\cB\setminus \{Y^k\}}.
$$
Similarly,
$$
    \Rank \cF(\cG(t_{\max},u_{\max}))
    =
    \Rank \cF(\cG(t_{\max},u_{\max}))_{\cB\setminus \{X^k\}}.
$$
Therefore $(t_{\max},u_{\max})$ satisfies (Hyp)$_{1}$ and 
is a good choice for $(\tilde t,\tilde u)$ in Theorem \ref{sol:x-y-axy}.\eqref{sol:x-y-axy-pt2}. \\

\noindent \textbf{Case 2.1.1.3: $\tilde t_0=0$ and $\tilde u_0>0$.} 
Then
    $$
    \Rank \cF(\cG(0,0))_{\cB\setminus \{X^k\}}
    =
    \Rank \cF(\cG(0,\tilde u_0))_{\cB\setminus \{X^k\}}
    \underbrace{<}_{\tilde u_0>0}
    \Rank \cF(\cG(0,\tilde u_0)),
    $$
where in the equality we used the observation $\tilde u_0$ occurs only in the column $X^k$. 
Hence, $\gamma_{2}(0,\tilde u_0)$ cannot satisfy (Hyp). So this case does not occur.\\

\noindent \textbf{Case 2.1.1.4: $\tilde t_0>0$ and $\tilde u_0=0$.} 
Then
\begin{align}
\label{rank-inequality-chain-4}
\begin{split}
    \Rank \cF(A_{\min})+1
    &\underbrace{=}_{\eqref{rank-R1-parabolic}}
    \Rank \cF(\cG(\tilde t_0,0))
    =\Rank \cF(\cG(\tilde t_0,0))_{\cB\setminus \{Y^k\}}\\
    &\leq \Rank \cF(A_{\min})_{\cB\setminus \{Y^k\}}+1
    \leq \Rank \cF(A_{\min})+1,
\end{split}
\end{align}
where in the 
second equality we used 
the assumption that $A_{\gamma_{2}(\tilde t_0,0)}$ satisfies (Hyp).
It follows that all inequalities in \eqref{rank-inequality-chain-4} must be equalities.
In particular, $\Rank \cF(A_{\min})=\Rank \cF(A_{\min})_{\cB\setminus \{Y^k\}}$.
Similarly,
$\Rank \cF(A_{\min})=\Rank \cF(A_{\min})_{\cB\setminus \{X^k\}}$.
Therefore $(0,0)$ satisfies (Hyp)$_{1}$ and 
is a good choice for $(\tilde t,\tilde u)$ in Theorem \ref{sol:x-y-axy}.\eqref{sol:x-y-axy-pt2}.\\

\noindent \textbf{Case 2.1.2: $\Rank \cF(A_{\min})=2k-1$.} 
    The reasoning in the case $\tilde t_0=\tilde u_0=0$
    is the same as in Case 2.1.1.1, 
    in the case $\tilde t_0=0$ and $\tilde u_0>0$
    the same as in Case 2.1.1.3,
    and
    in the case $\tilde t_0>0$ and $\tilde u_0=0$
    the same as in Case 2.1.1.4 above.
    Assume that $\tilde t_0>0$ and $\tilde u_0>0$.
    By Lemma \ref{lemma:claim-1}, 
    $\Rank \cF(\cG(t_{\max},u_{\max}))=2k+1$ and 
    $A_{\gamma_2}(t_{\max},u_{\max})$ satisfies (Hyp).
    Together with \eqref{R-representability}, it follows that 
     $(t_{\max},u_{\max})$
    is a good choice for $(\tilde t,\tilde u)$ in Theorem \ref{sol:x-y-axy}.\eqref{sol:x-y-axy-pt2}.\\

\noindent \textbf{Case 2.2: $k_{12}=0$ and $\eta\neq 0$.}
Since $\eta\neq 0$, it follows that $u_{\max}>0$ (using Lemma \ref{lemma:claim-1}). But then as in Case 1.2
above, one of $(\frac{\eta^2}{u_{\max}},u_{\max})$, $(t_{\max},u_{\max})$
is a good choice for $(\tilde t,\tilde u)$ in Theorem \ref{sol:x-y-axy}.\eqref{sol:x-y-axy-pt2}.\\

\noindent \textbf{Case 2.3: $k_{12}\neq 0$ and $\eta\neq 0$.}
            Let $p_{\max}$ be as in Lemma \ref{lemma:claim-3}.
            We separete two cases according to the value of $p_{\max}$.\\

    \noindent \textbf{Case 2.3.1: $p_{\max}=\eta^2$.} 
    In this case 
    $\mathcal R_1\cap \mathcal R_2=\left\{(t_{p_{\max}},u_{p_{\max}})\right\}$,
    where $(t_{p_{\max}},u_{p_{\max}})$ is as in Lemma \ref{lemma:claim-3}.
    and thus $(t_{p_{\max}},u_{p_{\max}})$ 
    is the only candidate for
            $(\tilde t_0,\tilde u_0)$ in \eqref{measure-cond-hyp-2}.\\

    \noindent \textbf{Case 2.3.2: $p_{\max}>\eta^2$.} 
            We separate two cases depending on 
            $\Rank \cF(A_{\min})$.\\
            
            \noindent \textbf{Case 2.3.2.1: $\Rank \cF(A_{\min})=2k-1$.} 
            Then by Lemma \ref{lemma:claim-1}, 
    $\Rank \cF(\cG(t',u'))=2k+1$ 
    for every $(t',u')$ such that $t'>0,u'>0$, $t'u'>\eta^2$
    and hence $A_{\gamma_2}(t',u')$ satisfies (Hyp)$_2$.
    By Lemma \ref{lemma:claim-3},
    $(t',u')$ equal to 
    $(t_{p_{\max}},u_{p_{\max}})$
    satisfies (Hyp)$_2$, 
    and
    $\gamma_1(t',u')$ is $\RR$--representable, since
            $A_{\gamma_1(t',u'}(k)\succ 0$ (by $t'<t_{\max}$).
    Hence, this $(t',u')$
    is a good choice for $(\tilde t,\tilde u)$ in Theorem \ref{sol:x-y-axy}.\eqref{sol:x-y-axy-pt2}. \\
     
    \noindent \textbf{Case 2.3.2.2: $\Rank \cF(A_{\min})<2k-1$.}
    By Lemma \ref{lemma:claim-1}, 
    $\Rank \cF(\cG(t',u'))<2k+1$ 
    for every $(t',u')\in \mc R_1$
    and hence $A_{\gamma_2}(t',u')$ cannot satisfy (Hyp)$_2$ for any $(t',u')\in \mc R_1$.
    Thus $(\tilde t_0,\tilde u_0)$ satisfies (Hyp)$_1$. 
    We have
    \begin{align}
\label{rank-inequality-chain-7}
\begin{split}
    \Rank \cF(A_{\min})
    +
    \Rank 
    \begin{pmatrix}
    \tilde t_0 & \eta \\ \eta & \tilde u_0    
    \end{pmatrix}
    &\underbrace{=}_{\eqref{rank-R1-parabolic}}
    \Rank \cF(\cG(\tilde t_0,\tilde u_0))\\
    &=\Rank \cF(\cG(\tilde t_0,\tilde u_0))_{\cB\setminus \{Y^k\}}\\
    &\leq \Rank \cF(A_{\min})_{\cB\setminus \{Y^k\}}+
    \Rank 
    \begin{pmatrix}
    \tilde t_0 & \eta \\ \eta & \tilde u_0    
    \end{pmatrix}\\
    &\leq \Rank \cF(A_{\min})
    +
        \Rank 
    \begin{pmatrix}
    \tilde t_0 & \eta \\ \eta & \tilde u_0    
    \end{pmatrix},
\end{split}
\end{align}
where in the 
second equality we used 
the assumption that $A_{\gamma_{2}(\tilde t_0,\tilde u_0)}$ satisfies (Hyp).
It follows that equalities hold for all inequalities in
\eqref{rank-inequality-chain-7}.
In particular, 
\begin{equation}
    \label{141025-1430}
        \Rank \cF(A_{\min})=\Rank \cF(A_{\min})_{\cB\setminus \{Y^k\}}.
\end{equation}
Similarly,
\begin{equation}
    \label{rank-equality-v8}
        \Rank \cF(A_{\min})=\Rank \cF(A_{\min})_{\cB\setminus \{X^k\}}.
\end{equation}

By Lemma \ref{lemma:claim-3}, there is a point $(\tilde t,\tilde u)\in \mc R_2\cap (\RR_+)^2$,
             such that 
             \begin{equation}  
             \label{141025-1431}
             \tilde t\tilde u=\eta^2
             \quad\text{and}\quad 
             (t_{\max}-\tilde t)(u_{\max}-\tilde u)=k_{12}^2.
             \end{equation}
             Moreover, there are exactly two such points 
             $(\tilde t,\tilde u)$ satisfying \eqref{141025-1431}:
             \begin{align*}
             \Big(t_{\max}-\frac{\eta^2}{u}\Big)\Big(u_{\max}-u\Big)=k_{12}^2
             &\quad\Leftrightarrow\quad
             (t_{\max}u-\eta^2)(u_{\max}-u)=k_{12}^2u\\
             &\quad\Leftrightarrow\quad
             t_{\max}u^2+(k_{12}^2-t_{\max}u_{\max}-\eta^2)u+\eta^2 u_{\max}=0
             \\
            &\quad\Leftrightarrow\quad
             u_{\pm,\eta^2}
             =
             \frac{-B\pm \sqrt{B^2-4t_{\max}u_{\max}\eta^2}}{2t_{\max}},
             \end{align*}
             where 
                $B=k_{12}^2-t_{\max}u_{\max}-\eta^2$.
            Clearly, $t_{\max}u_{\max}\geq k_{12}^2$, and
            since $\eta^2> 0$, it follows that $B<0$. 
            A short computation shows that
            \begin{equation}
            \label{141025-1513}
            0=B^2-4t_{\max}u_{\max}\eta^2
            \quad \Leftrightarrow \quad
            \eta^2\in 
            \{
            (\sqrt{t_{\max}}\sqrt{u_{\max}}+k_{12})^2,
            (\sqrt{t_{\max}}\sqrt{u_{\max}}-k_{12})^2
            \}.
            \end{equation}
        We have 
        $$p_{\max}= 
        (\sqrt{t_{\max}}\sqrt{u_{\max}}-|k_{12}|)^2<
        (\sqrt{t_{\max}}\sqrt{u_{\max}}+|k_{12}|)^2.$$
        Since $\eta^2<p_{\max}$, \eqref{141025-1513} implies that $B^2-4t_{\max}u_{\max}\eta^2\neq 0$.
        Therefore 
            $$0<u_{-,\eta^2}<u_{+,\eta^2}.$$
    Let $t_{\pm,\eta^2}:=\frac{\eta^2}{u_{\pm,\eta^2}}$.
Note that 
\begin{align*}
\Rank \cF(\cG(t_{\pm,\eta^2},u_{\pm,\eta^2}))_{\cB\setminus \{X^k\}}
\underbrace{=}_{t_{\pm,\eta^2}>0}
\Rank \cF(A_{\min})_{\cB\setminus \{X^k\}}+1
&\underbrace{=}_{\eqref{rank-equality-v8}}
\Rank \cF(\cG(A_{\min}))+1\\
&\underbrace{=}_{\eqref{rank-R1-parabolic}}
\Rank \cF(\cG(t_{\pm,\eta^2},u_{\pm,\eta^2})).
\end{align*}
So $A_{\gamma_2(t_{\pm,\eta^2},u_{\pm,\eta^2})}$ satisfies (Hyp)$_1$ if and only if 
\begin{equation}
    \label{rank-equality-v9}
        \Rank \cF(\cG(t_{\pm,\eta^2},u_{\pm,\eta^2}))_{\cB\setminus \{Y^k\}}
        =
        \Rank \cF(\cG(t_{\pm,\eta^2},u_{\pm,\eta^2})).
\end{equation}
Since $t_{\pm,\eta^2}>0$ and $u_{\pm,\eta^2}>0$, it follows that  
\begin{align*}
    \Rank \cF(\cG(t_{\pm,\eta^2},u_{\pm,\eta^2}))_{\cB\setminus \{X^k,Y^k\}}
    &>\Rank\cF(\cG(t_{\pm,\eta^2},u_{\pm,\eta^2}))_{\cB\setminus \{\textit{1},X^k,Y^k\}},\\
    \Rank \cF(\cG(t_{\pm,\eta^2},u_{\pm,\eta^2}))_{\cB\setminus \{\textit{1},Y^k\}}
    &>\Rank\cF(\cG(t_{\pm,\eta^2},u_{\pm,\eta^2}))_{\cB\setminus \{\textit{1},X^k,Y^k\}}.
\end{align*}
If
\begin{align}
    \label{rank-equality-case-1}
    \Rank \cF(\cG(t_{\pm,\eta^2},u_{\pm,\eta^2}))_{\cB\setminus \{Y^k\}}
    &=\Rank\cF(\cG(t_{\pm,\eta^2},u_{\pm,\eta^2}))_{\cB\setminus \{\textit{1},X^k,Y^k\}}+2,
\end{align}
then \eqref{rank-equality-v9} holds. 
Indeed, in this case 
$$
(A_{\min})_{\{1,X^k\}}=
\left(\cF(A_{\min})_{\{\textit{1},X^k\},\cB\setminus \{\textit{1},X^k,Y^k\}}\right)
\left(\cF(A_{\min})_{\cB\setminus \{\textit{1},X^k,Y^k\}}\right)^\dagger
\left(\cF(A_{\min})_{\cB\setminus \{\textit{1},X^k,Y^k\},\{\textit{1},X^k\}}\right),
$$
whence
\begin{align*}
\Rank \cF(\cG(t_{\pm,\eta^2},u_{\pm,\eta^2}))_{\cB\setminus \{Y^k\}}
&=
\Rank \cF(A_{\min})_{\cB\setminus \{Y^k\}}+1\\
&\underbrace{=}_{\eqref{141025-1430}}
\Rank \cF(A_{\min})+1
\underbrace{=}_{\eqref{rank-R1-parabolic}}
\Rank \cF(\cG(t_{\pm,\eta^2},u_{\pm,\eta^2})).
\end{align*}
Write
\begin{equation}
    \label{def:T2}
    \cT_3:=\{Y^{k-1},Y^{k-2},\ldots,Y,X,X^2,\ldots,X^{k-1}\}.
\end{equation}
Assume now that \eqref{rank-equality-case-1} does not hold for both 
$(t_{-,\eta^2},u_{-,\eta^2})$ and $(t_{+,\eta^2},u_{+,\eta^2})$. 
Therefore there are relations 
            \begin{align}
            \label{relations-in-F-v2}
            \begin{split}
            \cF(\cG(t_{+,\eta^2},u_{+,\eta^2}))_{\cB,\{\textit{1}\}}
            +\alpha_{+} \cF(\cG(t_{+,\eta^2},u_{+,\eta^2}))_{\cB,\{X^k\}}+
            \cF(\cG(t_{+,\eta^2},u_{+,\eta^2}))_{\cB,\cT_3}v_+&=0,\\
            \cF(\cG(t_{-,\eta^2},u_{-,\eta^2}))_{\cB,\{\textit{1}\}}
            +\alpha_{-} \cF(\cG(t_{-,\eta^2},u_{-,\eta^2}))_{\cB,\{X^k\}}+
            \cF(\cG(t_{-,\eta^2},u_{-,\eta^2}))_{\cB,\cT_3}v_-&=0,
            \end{split}
            \end{align}
            for some 
            $\alpha_{+},\alpha_-\in \RR$, $v_{+},v_-\in \RR^{2k-1}$
            in 
            $\cF(\cG(t_{+,\eta^2},u_{+,\eta^2}))$
            and
            $\cF(\cG(t_{-,\eta^2},u_{-,\eta^2}))$,
            respectively.
            Since
            $
            \cF(\cG(t_{\pm,\eta^2},u_{\pm,\eta^2}))
            \succeq
            \cF(A_{\min})\succeq 0,
            $
            the relations \eqref{relations-in-F-v2} must hold also in $\cF(A_{\min})$.
            Subtracting these relations we get
            \begin{equation}
                \label{linear-dependence-Xk-column}
                    (\alpha_{+}-\alpha_-) 
                    \cF(A_{\min})_{\cB,\{X^k\}}
                    +
                    \cF(A_{\min})_{\cB,\cT_3}(v_+-v_-)=0.
            \end{equation}
            If $\alpha_+=\alpha_-$, then $v_+-v_-\in \ker \cF(A_{\min})_{\cB,{\cT}_3}$
            and hence $\cF(A_{\min})_{\cB,{\cT}_3}v_+=\cF(A_{\min})_{\cB,{\cT}_3}v_-$.
            But observing the first entries of the left hand side vectors in \eqref{relations-in-F-v2},
            this cannot hold since 
            $$
            \cF(\cG(t_{+,\eta^2},u_{+,\eta^2}))_{\{\textit 1\}}=
            t_{+,\eta^2}
            \neq
            t_{-,\eta^2}
            =
            \cF(\cG(t_{-,\eta^2},u_{-,\eta^2}))_{\{\textit 1\}}.
            $$ 
            So $\alpha_+\neq \alpha_-$
            and from \eqref{linear-dependence-Xk-column} it follows that in $\cF(A_{\min})$, the column $X^k$ is linearly dependent from the columns in ${\cT}_3$. Using one of the 
            relations \eqref{relations-in-F-v2} for $\cF(A_{\min})$, the same holds for the column \textit{1}. But then 
    \begin{align}
    \label{rank-conditions-v4.2}
    \begin{split}
    \Rank \cF(\cG(t_{\pm,\eta^2},u_{\pm,\eta^2}))_{\cB\setminus \{Y^{k}\}}
    &=
    \Rank \cF(A_{\min})_{\cB\setminus \{Y^{k}\}}
    +
    1   
    =
    \Rank \cF(\cG(t_{\pm,\eta^2},u_{\pm,\eta^2})),
    \end{split}
    \end{align}
    and \eqref{rank-equality-v9} holds for both points $(t_{\pm,\eta^2},u_{\pm,\eta^2})$.
             Note that $\gamma_1(t_{\pm,\eta^2}, u_{\pm,\eta^2})$ is $\RR$--representable, since
            $A_{\gamma_1(t_{\pm,\eta^2}, u_{\pm,\eta^2})}(k)\succ 0$ (by $t_{\pm,\eta^2}<t_{\max}$).
           So at least one of $(t_{\pm,\eta^2},u_{\pm,\eta^2})$ 
    is a good choice for 
            $(\tilde t_0,\tilde u_0)$ in \eqref{measure-cond-hyp-2}.\\

    This concludes the proof of the implication $\eqref{sol:x-y-axy-pt1}\Rightarrow\eqref{sol:x-y-axy-pt2}$.            
\end{proof}

\subsection{Cardinality of a minimal representing measure}

It remains to characterize the cardinality of a minimal $\cZ(p)$--rm 
for $\beta$ in Theorem \ref{sol:x-y-axy}.\\

Let $\cT_4:=\{Y^k,\ldots,Y,X,\ldots,X^{k-1}\}$,
$\vec\cT_4:=(\vec{Y}^{(k,1)},\vec{X}^{(1,k-1)})$ and
$\cT_5=\{1,X^k\}\cup \cT_4$. 
Write
\begin{align}
\label{widehat-F}
\begin{split}
&\cF(\mc G(\mathbf{t},\mathbf{u}))_{(1,\vec{\cT}_4,X^k,\overrightarrow{\cC\setminus \cT_5})}\\
&=
\kbordermatrix
{
& \mathit{1} & \vec{\cT}_4 & X^k & \overrightarrow{\cC\setminus \cT_5}\\[0.2em]
\mathit{1} & (A_{\min})_{1,1}+\mathbf{t} & (f_{12})^T & (A_{\min})_{2,k} & (f_{14})^T\\[0.2em]
(\vec{\cT}_4)^T
& f_{12} & F_{22} & f_{23} & F_{24}\\[0.2em]
X^k & (A_{\min})_{2,k} & (f_{23})^T & (A_{\min})_{k+1,k+1}+\mathbf{u} & (f_{34})^T\\[0.2em]
\overrightarrow{\cC\setminus\cT_5} & f_{14} & (F_{24})^T & f_{34} & F_{44}
}.
\end{split}
\end{align}
Note that
\begin{align*}
    f_{12}^T
    &=
        \begin{pmatrix}
            (b_{13}^{(0)})^T & (b_{23}^{(0)})^T
        \end{pmatrix},
    \quad
    (A_{\min})_{2,k}
    =\beta_{k,1}-\beta_{k-1,1},
    \quad
    F_{22}
    =
        \begin{pmatrix}
            B_{11} & B_{13}^{(1,k-1)}\\[0.3em]
            (B_{13}^{(1,k-1)})^T & B_{23}^{(1,k-1)}
        \end{pmatrix}.
\end{align*}
\smallskip

The following theorem characterizes the cardinality of a minimal measure in case 
$\beta$ admits a $\cZ(p)$--rm.

\begin{theorem}
\label{thm:hyperbolic-2-minimal-measures}
    Let $p(x,y)=y(x+y-xy)$
	and $\beta=(\beta_{i,j})_{i,j\in \ZZ_+,i+j\leq 2k}$, where $k\geq 3$,
        admits a $\cZ(p)$--representing measure.
        Assume the notation above.
        The following statements hold:
\begin{enumerate}
        \item There exists at most $(\Rank \widetilde{\mc{M}}(k;\beta)+2)$--atomic $\cZ(p)$--representing 
            measure.
        \smallskip
        \item There is no
        $\cZ(p)$--representing measure with less than $\Rank \widetilde{\mc{M}}(k;\beta)+2$ atoms
	if and only if 
                $\eta\neq 0$,
                $k_{12}\neq 0$,
                $\Rank \cH(A_{\min})=k$
                and
                $A_{\min}$
                does not satisfy (Hyp).
        \smallskip
        \item There does not exist a $(\Rank \widetilde{\mc{M}}(k;\beta))$--atomic $\cZ(p)$--representing measure
	   if and only if any of the following holds:
        \smallskip
        \begin{enumerate}
            \item 
                $F_{22}\succ 0$,
                $H_{22}\not \succ 0$,
                $\Rank \cH(A_{\min})=\Rank H_{22}+1$
                and 
                $t_{\max}u_{\max}>\eta^2>0$.
                \smallskip
            \item 
                $F_{22}\succ 0$,
                $H_{22}\succ 0$
                and one of the following holds:
                \smallskip
                \begin{enumerate}
                \item 
                $\eta=0$,
                $k_{12}\neq 0$,
                $\Rank \cH(A_{\min})=k+1$
                and
                $A_{\min}$ does not satisfy (Hyp)$_1$.
                \smallskip
                \item 
                $\eta\neq 0$,
                $k_{12}\neq 0$,
                $A_{\min}$
                satisfies (Hyp)$_1$
                and $\Rank \cH(A_{\min})=k$.
                \smallskip
                \item 
                $\eta\neq 0$,
                $k_{12}\neq 0$
                and
                $A_{\min}$
                does not satisfy (Hyp)$_1$.
                \end{enumerate}
    \end{enumerate}
\end{enumerate}

        In particular, a $p$--pure sequence $\beta$ with a 
            measure
admits at most $(3k+1)$--atomic $\cZ(p)$--representing 
            measure.
\end{theorem}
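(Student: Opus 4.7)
The plan is to combine Lemma~\ref{071123-0646} with the explicit rank formulas of Lemmas~\ref{lemma:claim-1} and~\ref{lemma:claim-2}. By Lemma~\ref{071123-0646}, every $\cZ(p)$--representing measure for $\beta$ is obtained from some Hankel matrix $A=\cG(\tilde t,\tilde u)$ for an admissible pair $(\tilde t,\tilde u)\in\mathcal R_1\cap\mathcal R_2$ as described in Theorem~\ref{sol:x-y-axy}. Applying Corollary~\ref{cor:x+y+axy} to the conic block $\cF(\cG(\tilde t,\tilde u))$ and Theorem~\ref{Hamburger} to the line block $\cH(\cG(\tilde t,\tilde u))$ produces a $\cZ(p)$--representing measure with exactly
\[
N(\tilde t,\tilde u):=\Rank\cF(\cG(\tilde t,\tilde u))+\Rank\cH(\cG(\tilde t,\tilde u))
\]
atoms, and $N(\tilde t,\tilde u)$ is the minimal atom count achievable once $(\tilde t,\tilde u)$ is fixed. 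Consequently, the minimal cardinality over all $\cZ(p)$--representing measures equals $\min_{(\tilde t,\tilde u)}N(\tilde t,\tilde u)$, the minimum ranging over the discrete candidate list supplied by Theorem~\ref{sol:x-y-axy} together with the interior region of $\mathcal R_1\cap\mathcal R_2$ where $\cF(\cG(\tilde t,\tilde u))$ satisfies $(\mathrm{Hyp})_2$.

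By Lemma~\ref{071123-2008}.\eqref{071123-2008-pt3}, $\Rank\widetilde{\cM}(k;\beta)=\Rank\cF(A_{\min})+\Rank\cH(A_{\min})$. For Part~(1), I would pick a suitable admissible pair depending on the subcase, typically one of $(0,0)$, $(t_{\max},u_{\max})$, $(t_{\pm,\eta^2},u_{\pm,\eta^2})$, or $(t_{p_{\max}},u_{p_{\max}})$, and bound $N(\tilde t,\tilde u)-\Rank\widetilde{\cM}(k;\beta)$. Since the perturbation $\cG(\tilde t,\tilde u)-A_{\min}$ has rank at most $2$ and is localized to the $\{\mathit 1,X^k\}$-corner, Lemmas~\ref{lemma:claim-1}--\ref{lemma:claim-2} yield an explicit value in $\{0,1,2\}$ for $\Rank\cF(\cG(\tilde t,\tilde u))-\Rank\cF(A_{\min})$, and an analogous controlled behaviour for the $\cH$-summand (tracking also how $\Rank\cH(A_{\min})$ relates to $\Rank H_{22}$ via the Schur-complement identity). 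A judicious choice of candidate keeps the total increment at most $2$.

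For Parts~(2) and~(3), the plan is to enumerate the configurations of the tuple $(\eta,k_{12},\text{invertibility of }F_{22},\text{invertibility of }H_{22},\Rank\cH(A_{\min}),(\mathrm{Hyp})\text{-status of }A_{\min})$. In each configuration, Lemmas~\ref{lemma:claim-1}--\ref{lemma:claim-2} compute $N(\tilde t,\tilde u)$ at every candidate from Theorem~\ref{sol:x-y-axy}, so $\min N$ is determined and compared with $\Rank\widetilde{\cM}(k;\beta)$, $+1$, and $+2$ to match the conditions stated in~(2) and~(3). The $p$--pure final claim then follows at once: $p$--purity forces $\Rank\cF(A_{\min})=2k+1$, so $A_{\min}$ automatically satisfies $(\mathrm{Hyp})_2$; by~(2) the obstruction to the $+1$ bound is violated, and Part~(1) together with $\Rank\widetilde{\cM}(k;\beta)\leq|\vec\cT|=3k$ gives the asserted $3k+1$ bound.

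The main obstacle will be the detailed bookkeeping of the case split, especially on the two boundary curves $tu=\eta^2$ and $(t_{\max}-t)(u_{\max}-u)=k_{12}^2$ where the candidate pairs of Theorem~\ref{sol:x-y-axy} live. On these curves one must simultaneously track whether $\cF(\cG(\tilde t,\tilde u))$ satisfies $(\mathrm{Hyp})_1$ versus $(\mathrm{Hyp})_2$ and by how much the Schur-complement rank of $\cH(\cG(\tilde t,\tilde u))$ with respect to $H_{22}$ changes (by $0$, $1$, or $2$), as the parameters $\eta,k_{12},t_{\max},u_{\max}$ pass through their degenerate and non-degenerate configurations.
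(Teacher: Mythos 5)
The overall strategy you sketch (Lemma~\ref{071123-0646}, the rank formulas of Lemmas~\ref{lemma:claim-1}--\ref{lemma:claim-2}, and the decomposition \eqref{181123-0849}) is indeed the route the paper takes, so the skeleton is sound. But the proposal is almost entirely at the plan level: the actual content of the theorem lives in the case-by-case bookkeeping (the paper's Cases~1--5), and your paragraph for Parts~(2)--(3) just promises to do it without doing it, so as a proof it is incomplete rather than merely terse.

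More importantly, your argument for the final $p$--pure claim is wrong. You assert that ``$p$--purity forces $\Rank\cF(A_{\min})=2k+1$, so $A_{\min}$ automatically satisfies $(\mathrm{Hyp})_2$.'' This is impossible: by Lemma~\ref{071123-2008}.\eqref{071123-2008-pt3}, $\Rank\cF(A_{\min})=\Rank A_{22}$, and $A_{22}$ has size $(2k-1)\times(2k-1)$ since $\vec\cT_1=(\vec Y^{(k,1)},\vec\cT_2)$ has $k+(k-1)=2k-1$ entries. Hence $\Rank\cF(A_{\min})\leq 2k-1$ always, and $A_{\min}$ can never satisfy $(\mathrm{Hyp})_2$ (which demands rank $2k+1$). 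The correct observation is the dual one: if $\beta$ is $p$--pure, then $\Rank\widetilde{\cM}(k;\beta)=3k$, so \eqref{181123-0849} forces $\Rank\cH(A_{\min})=3k-\Rank\cF(A_{\min})\geq 3k-(2k-1)=k+1$, i.e.\ $\cH(A_{\min})$ is positive definite. Then the $\Rank\cH(A_{\min})=k$ hypothesis in Part~(2) fails, so the $+2$ obstruction cannot occur and the bound of $\Rank\widetilde{\cM}(k;\beta)+1=3k+1$ atoms follows from Part~(1). Also be careful with the claim that the minimum of $N(\tilde t,\tilde u)$ may be taken only over the discrete candidate list of Theorem~\ref{sol:x-y-axy}: the candidates there were chosen for the \emph{existence} characterization, and for a minimality lower bound one must argue (as the paper does) that \emph{every} admissible $(t',u')$ gives $N(t',u')$ at least the claimed value, not just the candidates.
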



\begin{proof}
    By Lemma \ref{071123-2008}.\eqref{071123-2008-pt3},
    \begin{equation}
        \label{181123-0849}
        \Rank \widetilde{\mc M}(k;\beta)
        =\Rank \cF(A_{\min})+
        \Rank \cH(A_{\min}).
    \end{equation}
    By \eqref{measure-cond-hyp-2}, 
    there exists a pair 
    $(\tilde t_0,\tilde u_0)\in \RR^2$ such that 
    $\cF(\mc G(\tilde t_0,\tilde u_0))$
    and 
    $\cH(\mc G(\tilde t_0,\tilde u_0))$
    admit a $\cZ(x+y-xy)$--rm and a 
    $\RR$--rm, respectively. 
    In the proof we will separate the following cases:
\begin{itemize}
\item \textbf{Case 1: $F_{22}$ is not pd.}
\item \textbf{Case 2: $F_{22}$ is pd, $H_{22}$ is not pd and $u_{\max}=0$.}
\item \textbf{Case 3: $F_{22}$ is pd, $H_{22}$ is not pd and $u_{\max}>0$.}
\item \textbf{Case 4: $F_{22}$ and $H_{22}$ are pd, $\eta=0$.}
\item \textbf{Case 5: $F_{22}$ and $H_{22}$ are pd, $\eta\neq 0$.}
\end{itemize}

    \noindent \textbf{Case 1: $F_{22}$ is not pd.}
    Note that the matrix $A_{\gamma_2(t,u)}$ can satisfy 
    (Hyp) only if it satisfies (Hyp)$_1$:
    \begin{equation}
    \label{hyp-real1}
        \Rank \cF(A_{\gamma_2(t,u)})=
        \Rank \cF(A_{\gamma_2(t,u)})_{\cB\setminus\{X^{k}\}}=
        \Rank \cF(A_{\gamma_2(t,u)})_{\cB\setminus\{Y^{k}\}}.
    \end{equation}
        Since $F_{22}$ is not pd,
        $\cF(\cG(\tilde t_0,\tilde u_0))$ satisfies a nontrivial column relation of the form 
    \begin{equation}
    \label{relation-hyp-2}
            \sum_{i=1}^{k}\delta_i Y^{i}
            +
            \sum_{j=1}^{k-1}\xi_j X^j=0
            \qquad
            \text{for some }\delta_i,\xi_j\in \RR,\text{ not all zero}.
    \end{equation}
    Since 
    $\cF(\cG(\tilde t_0,\tilde u_0))$ satisfies the column relation 
    $XY=X+Y$, it follows by recursive generation, that its extensions, generated by a representing measure, must satisfy column relations
    \begin{align}
    \label{column-relation-hyp2}
    \begin{split}
        Y^2X&=Y(YX)=Y(X+Y)=Y^2+YX=Y^2+Y+X,\\
        Y^3X&=Y(Y^2X)=Y(Y^2+Y+X)=Y^3+Y^2+YX=Y^3+Y^2+Y+X,\\
        &\vdots\\
        Y^iX&=Y^{i}+Y^{i-1}+\ldots+Y+X\quad \text{for }i\geq 1.
    \end{split}
    \end{align}
    Multiplying \eqref{relation-hyp-2} with $X$ and using \eqref{column-relation-hyp2}, we get a column relation
    \begin{equation}
    \label{relation-hyp-2-v2}
            \sum_{i=1}^{k}\Big(\sum_{j=i}^{k} \delta_j\Big) Y^{i}
            +
            \Big(\sum_{j=1}^k \delta_j\Big) X
            +
            \sum_{j=2}^{k}\xi_{j-1} X^j=0,
    \end{equation}
    We separate two subcases according to the values of
    $\xi_j$ and $\sum_{j=1}^k \delta_j$.
    \medskip
    
    \noindent \textbf{Case 1.1:
    $\xi_j\neq 0$ for some $j$ or $\sum_{j=1}^k \delta_j\neq 0$.} 
    Multiplying \eqref{relation-hyp-2} with $X^\ell$ for $\ell$ large enough,
    we will eventually get a column relation \eqref{relation-hyp-2-v2} with a nonzero coefficient at $X^k$. But this means $X^k$ must be in the span of the columns $Y^{k},\ldots,Y,X,\ldots,X^{k-1}$. In particular,
    $\tilde u_0=0$.
    By \eqref{form-of-R1-parabolic}, this implies that 
    $\eta=0$ and thus $\widehat A_{\min}=A_{\min}$.
    Moreover, since $\gamma_2(\tilde t_0,0)$ satisfies (Hyp),
    it follows that $\gamma_2(0,0)$ also satisfies (Hyp).
    The $\RR$--representability of $\gamma_{1}(\tilde t_0,0)$ implies that
    $\gamma_{1}(0,0)$ is also $\RR$--representable. 
    So
    $\cF(A_{\min})$ and $\cH(A_{\min})$
    admit a $\cZ(x+y-xy)$--rm and a $\RR$--rm, respectively.
    By Theorem \ref{Hamburger} and Corollary \ref{cor:x+y+axy}, there also exist
    a $(\Rank\cF(A_{\min}))$--atomic and a $(\Rank\cH(A_{\min}))$--atomic rm\textit{s}. 
    By \eqref{181123-0849}, $\beta$ has a $(\Rank \widetilde{\mc M}(k;\beta))$--atomic $\cZ(p)$--rm.
    \medskip
    
    \noindent \textbf{Case 1.2: $\xi_j=0$ for all $j$ and $\sum_{j=1}^k \delta_j=0$.}
    \eqref{relation-hyp-2-v2} implies that there is a column relation
    of the form $\sum_{i=2}^k \delta_i^{(2)} Y^i=0$ in 
    $\cF(\cG(\tilde t_0,\tilde u_0))$ for some $\delta_i^{(2)}\in \RR$, not all zero (observe that the coefficients at $Y$ and $X$ in \eqref{relation-hyp-2-v2} are both $\sum_{j=1}^k \delta_j$).
    Mutliplying $\sum_{i=2}^k \delta_i^{(2)} Y^i=0$ with $X$ we get a relation of the form \eqref{relation-hyp-2-v2} with $\xi_j=0$ for all $j$
    and $\delta_j^{(2)}$ instead of $\delta_j$. If 
    $\sum_{j=1}^k\delta_j^{(2)}=\sum_{j=2}^k\delta_j^{(2)}\neq 0$, then the coefficient at $X$ is nonzero and we can proceed as in Case 1.1 above. 
    Otherwise the coefficients at $X$, $Y$ and $Y^2$ are all zero. Hence, we got a new column relation
    of the form $\sum_{i=3}^k \delta_i^{(3)} Y^i=0$ in 
    $\cF(\cG(\tilde t_0,\tilde u_0))$ for some $\delta_i^{(3)}\in \RR$, not all zero. Proceeding with this procedure inductively we either eventually come into Case 1.1 or end with a relation of the form $\alpha Y^i=0$, $\alpha\neq 0$, $i>0$, which holds in
    $\cF(\cG(\tilde t_0,\tilde u_0))$. But this means all atoms in the conic part of $\cZ(p)$ also lie on the line $y=0$. So a $\cZ(p)$-rm for $\beta$ is a $\cZ(y)$--rm for $\beta$ and, by Theorem \ref{Hamburger},
    $\beta$ has a $(\Rank \widetilde{\mc M}(k;\beta))$--atomic $\cZ(p)$--rm.
    \\

    \noindent 
    \textbf{Case 2: $F_{22}$ is pd, $H_{22}$ is not pd and $u_{\max}=0$.}
    Since $u_{\max}=0$, it follows that $\tilde u_0=0$.
    By \eqref{form-of-R1-parabolic}, this implies that $\eta=0$.
    Analogously as in Case 1.1 above we conclude that $\gamma_2(0,0)$
    satisfies (Hyp) and $\gamma_1(0,0)$ is $\RR$--representable, which implies the existence of a $(\Rank \widetilde{\mc M}(k;\beta))$--atomic $\cZ(p)$--rm for $\beta$.\\

    \noindent{\textbf{Case 3:
        $F_{22}$ is pd,
        $H_{22}$ is not pd,
        $u_{\max}>0$.
    }}
    Since $H_{22}$ is not pd, Theorem \ref{Hamburger} implies that $\tilde u_0=u_{\max}$. But then \eqref{form-of-R2-parabolic} implies that $k_{12}=0$.
    We separate two subcases according to the value of $\eta$.
    \medskip

    \noindent{\textbf{Case 3.1: $\eta=0$.}
    Since $\gamma_{2}(\tilde t_0,u_{\max})$ satisfies (Hyp) and $u_{\max}>0$,
    it follows that $\tilde t_0>0$. But then also 
    $\gamma_{2}(t_{\max},u_{\max})$ satisfies (Hyp). Since 
    $0<\tilde t_0\leq t_{\max}$, \eqref{rank-R1-parabolic} implies that
    \begin{equation}
    \label{rank-equality-v3}
        \Rank \cF(\cG(t_{\max},u_{\max}))=\Rank \cF(A_{\min})+2.
    \end{equation}
    Moreover, $\gamma_{1}(t_{\max},u_{\max})$ is also $\RR$--representable and 
    \eqref{rank-R2-parabolic} implies that
    \begin{equation}
    \label{rank-equality-v2}
        \Rank \cH(\cG(t_{\max},u_{\max}))=\Rank H_{22}=
        \Rank \cH(\cG(0,0))-2=\Rank \cH(A_{\min})-2.
    \end{equation}
    By \eqref{181123-0849}, \eqref{rank-equality-v3} and 
    \eqref{rank-equality-v2},
    there exists a $(\Rank \widetilde{\mc M}(k;\beta))$--atomic $\cZ(p)$--rm for $\beta$.
    \medskip

    \noindent{\textbf{Case 3.2: $\eta\neq 0$.}}
    Since $\eta\neq 0$, it follows, by \eqref{form-of-R1-parabolic},
    that $\tilde t_0>0$ and hence $t_{\max}>0$.
                 Since $k_{12}=0$, \eqref{form-of-R2-parabolic} implies that
                 $\cH(\cG(t_{\max},u_{\max}))$ is psd 
              and
             $\gamma_1(t_{\max},u_{\max})$ is $\RR$--representable.
             Moreover, since $A_{\gamma_2(\tilde t_0,u_{\max})}$
             satisfies (Hyp), also
             $A_{\gamma_2(t_{\max},u_{\max})}$ satisfies (Hyp). Indeed, either $\tilde t_0=t_{\max}$
             or $\tilde t_0<t_{\max}$.
             In the latter case,
             $t_{\max}u_{\max}>\tilde t_0u_{\max}\geq \eta^2$ and $(t_{\max},u_{\max})$ satisfies 
             (Hyp)$_2$.
             Hence, $(t_{\max},u_{\max})$ is a good choice for 
             $(\tilde t_0,\tilde u_0)$ in \eqref{measure-cond-hyp-2}.
     By \eqref{rank-R2-parabolic}, $t_{\max}>0$, $u_{\max}>0$ and $k_{12}=0$,
    imply that 
        $$\Rank\cH(\cG(0,0))=\Rank\cH(\widehat A_{\min})=\Rank H_{22}+2.$$
     We have            
            \begin{align}
            \label{rank-equalities-1609}
            \begin{split}
            &\Rank \cH(\mc G(t_{\max},u_{\max}))
            +
            \Rank \cF(\mc G(t_{\max},u_{\max}))=\\
            =&
            \left\{
            \begin{array}{rl}
                \Rank H_{22}+\Rank \cF(A_{\min})+1,&
                \text{if }t_{\max}u_{\max}=\eta^2,\\[0.3em]
                \Rank H_{22}+\Rank \cF(A_{\min})+2,&
                \text{if }t_{\max}u_{\max}>\eta^2,
            \end{array}
            \right.
            \end{split}
            \end{align}
    where we used 
    \eqref{rank-R1-parabolic} and \eqref{rank-R2-parabolic}
    in the equality.
    By \eqref{K-eta-no-eta}, 
        $$\cH(A_{\min})\big/H_{22}
        =
        \begin{pmatrix}
            t_{\max} & -\eta \\
            -\eta & u_{\max}
        \end{pmatrix}\neq 
        \begin{pmatrix}
            0 & 0 \\ 0 & 0
        \end{pmatrix}  
        .$$
    Hence, 
    $\Rank\cH(A_{\min})=
    \Rank H_{22}+i$ for some $i\in \{1,2\}$.
    We separate these two cases.
    \medskip

    \noindent{\textbf{Case 3.2.1: $\Rank \cH(A_{\min})=\Rank H_{22}+2$.}}
            We have             
            \begin{align*}
            &\Rank \cH(\mc G(t_{\max},u_{\max}))
            +
            \Rank \cF(\mc G(t_{\max},u_{\max}))
             \\
             =&
            \left\{
            \begin{array}{rl}
                \Rank \widetilde {\mc M}(k;\beta)-1,&
                \text{if }t_{\max}u_{\max}=\eta^2,\\[0.3em]
                \Rank \widetilde {\mc M}(k;\beta),&
                \text{if }t_{\max}u_{\max}>\eta^2,
            \end{array}
            \right.
            \end{align*}
    where we used 
    \eqref{rank-equalities-1609}
    and
    \eqref{181123-0849}
    in the equality.
    The case $t_{\max}u_{\max}=\eta^2$ cannot happen,
    since this would imply $\beta$ has a $(\Rank \widetilde{\mc M}(k;\beta)-1)$--atomic $\cZ(p)$--rm, which is not possible.
    Hence, there
             is a $(\Rank \widetilde{\mc M}(k;\beta))$--atomic $\cZ(p)$--rm for $\beta$.
    \medskip

    \noindent{\textbf{Case 3.2.2: $\Rank \cH(A_{\min})=\Rank H_{22}+1$.}}
            In this case we have
            \begin{align}
            \label{rank-computation-v2}
            \begin{split}
            &\Rank \cH(\mc G(t_{\max},u_{\max}))
            +
            \Rank \cF(\mc G(t_{\max},u_{\max}))
            \\
             &\underbrace{=}_{\eqref{181123-0849}}
            \left\{
            \begin{array}{rl}
                \Rank \widetilde {\mc M}(k;\beta),&
                \text{if }t_{\max}u_{\max}=\eta^2,\\[0.3em]
                \Rank \widetilde {\mc M}(k;\beta)+1,&
                \text{if }t_{\max}u_{\max}>\eta^2,
            \end{array}
            \right..
            \end{split}
            \end{align}
    where we used 
    \eqref{rank-R1-parabolic} and \eqref{rank-R2-parabolic}
    in the equality.
            Hence, $\beta$ has a $(\Rank \widetilde {\mc M}(k;\beta))$--atomic rm if $t_{\max}u_{\max}=\eta^2$
            and $(\Rank \widetilde {\mc M}(k;\beta)+1)$--atomic rm if $t_{\max}u_{\max}>\eta^2$.
            It remains to show that in the case $t_{\max}u_{\max}>\eta^2$,
            there does not exist a 
            $(\Rank \widetilde {\mc M}(k;\beta))$--atomic rm.
            Since $H_{22}$ is not pd and $u_{\max}>0$, if 
            $\cH(\mc G(t',u'))$ has a $\RR$--rm, then $u'=u_{\max}$.
            Since 
            $\eta\neq 0$, we know 
            $\cF(\mc G(t',u_{\max}))$ with a $\cZ(x+y-xy)$--rm
            is at least $(\Rank \cF(A_{\min})+1)$--atomic (see \eqref{rank-R1-parabolic}).
            If $t'\neq t_{\max}$, then, by \eqref{rank-R2-parabolic}, 
            $\Rank \cH(\mc G(t',u_{\max}))=\Rank H_{22}+1$. Hence, 
            \begin{align*}
            \Rank \cH(\mc G(t',u_{\max}))+
            \Rank \cF(\mc G(t',u_{\max}))
            &\geq 
            (\Rank H_{22}+1)
            +
            (\Rank \cF(A_{\min})+1)\\
            &\underbrace{=}_{\eqref{181123-0849}}\Rank \widetilde{\mc M}(k;\beta)+1.
            \end{align*}
            
    \noindent \textbf{Case 4:
    $F_{22}$ and $H_{22}$ are pd,
    $\eta=0$.}
    We separate two cases according to the value of $u_{\max}$.
    \medskip
    
    \noindent \textbf{Case 4.1: $u_{\max}=0$.} 
        Since $u_{\max}=0$, it follows that $\tilde u_0=0$ and by \eqref{form-of-R2-parabolic}, $k_{12}=0$. But then $\gamma_{1}(0,0)$ is 
        $\RR$--representable. Similarly, since 
        $\gamma_2(\tilde t_0,0)$ satisfies (Hyp),
        it follows that $\gamma_2(0,0)$ also satisfies (Hyp). 
        So
        $\cF(A_{\min})$ and $\cH(A_{\min})$
        admit a $\cZ(x+y-xy)$--rm and a $\RR$--rm, respectively.
        Then \eqref{181123-0849}
        implies the existence of a $(\Rank \widetilde{\mc M}(k;\beta))$--atomic $\cZ(p)$--rm for $\beta$.
    \medskip
        
    \noindent\textbf{Case 4.2: $u_{\max}>0$.} 
        If $t_{\max}=0$, then $\tilde t_0=0$. Since $\eta=0$,
        $\gamma_{2}(0,\tilde u_0)$ can satisfy (Hyp) only if 
        $\tilde u_0=0$ (see \eqref{rank-R1-parabolic}).
        But $\gamma_{1}(0,0)$ cannot be $\RR$--representable if $t_{\max}=0$,
        since then also $u_{\max}$ should be 0 (see Theorem \ref{Hamburger}).
        It follows that $t_{\max}>0$.
        We separate two subcases according to the value of $k_{12}$.
        \smallskip
    
    \noindent \textbf{Case 4.2.1: $k_{12}=0$}.
        In this case $\gamma_{1}(t_{\max},u_{\max})$ is $\RR$--representable by Theorem \ref{Hamburger}
        and 
    $\gamma_{2}(t_{\max},u_{\max})$ satisfies (Hyp).
    We have
            \begin{align}
            \label{rank-computation}
            \begin{split}
            \Rank \cH(\mc G(t_{\max},u_{\max}))
            +
            \Rank \cF(\mc G(t_{\max},u_{\max}))
            &=
            \Rank H_{22}+(\Rank \cF(A_{\min})+2)\\
            &=
            \Rank \cH(A_{\min})+\Rank \cF(A_{\min})\\
            &=
            \Rank \widetilde {\mc M}(k;\beta),
            \end{split}
            \end{align}
        where we used \eqref{rank-R1-parabolic}, \eqref{rank-R2-parabolic} in the first, \eqref{rank-R1-parabolic} in the second and \eqref{181123-0849} in the third equality.
    So there exists a $(\Rank \widetilde{\mc M}(k;\beta))$--atomic $\cZ(p)$--rm for $\beta$.
        \medskip
    
    \noindent \textbf{Case 4.2.2: $k_{12}\neq 0$.}
     We separate two cases according to $\Rank \cH(A_{\min})$, which can be either $k$ or $k+1$ (using \eqref{rank-R2-parabolic} and 
     $H_{22}$ is pd, $k_{12}\neq 0$).
     \medskip

    \noindent \textbf{Case 4.2.2.1: $\Rank \cH(A_{\min})=k$.}
    By \eqref{rank-R2-parabolic}, it follows that $t_{\max}u_{\max}=k_{12}^2$
    (plug $t=0$ and $u=0$ in \eqref{rank-R2-parabolic}) and hence 
    $\mc R_2=\{(0,0)\}$. This implies that under the assumptions of this case $(0,0)$ is the only candidate for $(\tilde t_0,\tilde u_0)$, which means that there exists a $(\Rank \widetilde{\mc M}(k;\beta))$--atomic $\cZ(p)$--rm for $\beta$.
    \medskip

    \noindent \textbf{Case 4.2.2.2: $\Rank \cH(A_{\min})=k+1$.}
     By \eqref{rank-R2-parabolic}, it follows that $t_{\max}u_{\max}>k_{12}^2$
    (plug $t=0$ and $u=0$ in \eqref{rank-R2-parabolic}).
    We separate two cases according to whether 
        $(0,0)$ is a good choice for $(\tilde t_0,\tilde u_0)$
        in \eqref{measure-cond-hyp-2} or not.
    \medskip

    \noindent \textbf{Case 4.2.2.2.1: $(0,0)$ is a good choice for $(\tilde t_0,\tilde u_0)$ in \eqref{measure-cond-hyp-2}.} In this case there exists a $(\Rank \widetilde{\mc M}(k;\beta))$--atomic $\cZ(p)$--rm for $\beta$. 
    \medskip

    \noindent \textbf{Case 4.2.2.2.2: $(0,0)$ is not a good choice for $(\tilde t_0,\tilde u_0)$ in \eqref{measure-cond-hyp-2}.}
    Note that $A_{\gamma_2(t,u)}$ satisfies (Hyp) precisely for $t>0$ and $u>0$. 
     By Lemma \ref{lemma:claim-3}, there is a point $(\tilde t,\tilde u)\in \mc R_2\cap (\RR_+)^2$,
             such that $0<\tilde t\tilde u$ (since $p_{\max}>0$) and 
             $(t_{\max}-\tilde t)(u_{\max}-\tilde u)=k_{12}^2$.
            By Theorem \ref{Hamburger}, 
            $\gamma_{1}(\tilde t,\tilde u)$ is $\RR$--representable,
            since $A_{\gamma_{1}(\tilde t,\tilde u)}$ is psd and
            $A_{\gamma_{1}(\tilde t,\tilde u)}(k)$ is pd.
    Note also that
    $\gamma_{2}(\tilde t,\tilde u)$ satisfies (Hyp).
    Hence, $(\tilde t,\tilde u)$ 
    is a good choice for $(\tilde t_0,\tilde u_0)$ in \eqref{measure-cond-hyp-2}.
    We have
            \begin{align*}
            \Rank \cH(\mc G(\tilde t,\tilde u))
            +
            \Rank \cF(\mc G(\tilde t,\tilde u))
            &\underbrace{=}_{\substack{\eqref{rank-R1-parabolic},\\\eqref{rank-R2-parabolic}}}
            (\Rank H_{22}+1)+(\Rank \cF(A_{\min})+2)\\
            &=
            \Rank \cH(A_{\min})+\Rank \cF(A_{\min})+1\\
            &\underbrace{=}_{\eqref{181123-0849}}
            \Rank \widetilde {\mc M}(k;\beta)+1,
            \end{align*}
        where in the second equality we used 
        the assumption $\Rank \cH(A_{\min})=k+1$. 
    So there exists a $(\Rank \widetilde{\mc M}(k;\beta)+1)$--atomic $\cZ(p)$--rm for $\beta$.
            It remains to show that
            there does not exist a 
            $(\Rank \widetilde {\mc M}(k;\beta))$--atomic rm.
            By the first sentence of this case above, note that
            if $(t',u')$ is a good choice for $(\tilde t_0,\tilde u_0)$
            in \eqref{measure-cond-hyp-2},
            then
            $\Rank \cF(\mc G(t',u'))=\Rank \cF(A_{\min})+2$. 
            Since $k_{12}\neq 0$, it follows by \eqref{rank-R2-parabolic}
            that $\Rank \cH(\mc G(t',u'))\geq \Rank H_{22}+1=\Rank\cH(A_{\min})-1$. 
            Hence, 
            \begin{align*}
            \Rank \cH(\mc G(t',u'))+
            \Rank \cF(\mc G(t',u'))
            &\geq 
            (\Rank \cH(A_{\min})-1)
            +
            (\Rank \cF(A_{\min})+2)\\
            &\underbrace{=}_{\eqref{181123-0849}}\Rank \widetilde{\mc M}(k;\beta)+1, 
            \end{align*}
            
\noindent\textbf{Case 5: $F_{22}$ and $H_{22}$ are pd, 
        $\eta\neq 0$.}
           We separate two cases according to the value of 
    $k_{12}$.
\medskip

\noindent{\textbf{Case 5.1: $k_{12}=0$.}}
            As in Case 4.2.1 above, $(t_{\max},u_{\max})$
            is a good choice for $(\tilde t_0,\tilde u_0)$ in \eqref{measure-cond-hyp-2}.
            Since $\eta\neq 0$, \eqref{form-of-R1-parabolic} implies that 
            $t_{\max}>0$ and $u_{\max}>0$. Then \eqref{rank-R2-parabolic} implies that 
            $\Rank \cH(\widehat A_{\min})=k+1$.
            By \eqref{K-eta-no-eta}, 
            $\cH(A_{\min})\big/{H_{22}}=
            \begin{pmatrix}
            t_{\max} & -\eta \\ -\eta & u_{\max}     
            \end{pmatrix}$ and hence
            $\Rank \cH(A_{\min})=
            \{k,k+1\}$. 
            We separate two cases according to $\Rank \cH(A_{\min})$.
        \medskip

\noindent{\textbf{Case 5.1.1: $\Rank \cH(A_{\min})=k+1$.}}
        By the same computation as in \eqref{rank-computation}, there is a 
             $(\Rank \widetilde{\mc M}(k;\beta))$--atomic $\cZ(p)$--rm for $\beta$ in this case.
         \medskip
   
\noindent{\textbf{Case 5.1.2: $\Rank \cH(A_{\min})=k$.}}
    Since $H_{22}$ is pd and $\Rank \cH(A_{\min})=k$, it follows that 
    $t_{\max}u_{\max}=\eta^2$. Hence, $(t_{\max},u_{\max})$ is the only candidate for $(\tilde t_0,\tilde u_0)$ in \eqref{measure-cond-hyp-2}.
    By the same computation as in \eqref{rank-computation-v2},
    $\beta$ has a $(\Rank \widetilde {\mc M}(k;\beta))$--atomic $\cZ(p)$--rm.\\

{
\noindent{\textbf{Case 5.2: $k_{12}\neq 0$.}}
            We separate two cases according to whether $A_{\min}$ satisfies (Hyp)$_1$ or not.\\

\noindent{\textbf{Case 5.2.1: 
    $A_{\min}$ satisfies (Hyp)$_1$.}}
            By analogous reasoning as in 
            Case 2.3.2.2 of the proof of Theorem \ref{sol:x-y-axy}, one of $(t_{\pm,\eta^2},u_{\pm,\eta^2})$
            is a good choice for $(\tilde t_0,\tilde u_0)$ in \eqref{measure-cond-hyp-2}.
            By \eqref{form-of-R2-parabolic}, $t_{\max}>0$ and $u_{\max}>0$ 
            (since $k_{12}\neq 0$).
            By \eqref{K-eta-no-eta}, 
            $
            \cH(A_{\min})\big/{H_{22}}=
            \begin{pmatrix}
            t_{\max} & k_{12}-\eta \\ k_{12}-\eta & u_{\max}     
            \end{pmatrix}.
            $
            Since $\cH(A_{\min})\big/{H_{22}}$ is not a zero matrix,
            we have $\Rank \cH(A_{\min})\in \{k,k+1\}$.
            Let us define the number
            \begin{equation}
            \label{def:R}
            R:=
            \left\{
            \begin{array}{rl}
                \Rank \widetilde {\mc M}(k;\beta),&
                \text{if }\Rank \cH(A_{\min})=k+1,\\[0.5em]
                \Rank \widetilde {\mc M}(k;\beta)+1,&
                \text{if }\Rank \cH(A_{\min})=k.
            \end{array}
            \right.
            \end{equation}
            We have
            \begin{align}
            \label{rank-computation-v3}
            \begin{split}
            &\Rank \cH(\mc G(t_{\pm,\eta^2},u_{\pm,\eta^2}))
            +
            \Rank \cF(\mc G(t_{\pm,\eta^2},u_{\pm,\eta^2}))\\
            \underbrace{=}_{\substack{\eqref{rank-R1-parabolic},\\\eqref{rank-R2-parabolic}}}&
            (\Rank H_{22}+1)
            +
            (\Rank \cF(A_{\min})+1)
            \underbrace{=}_{\eqref{181123-0849}}R.
            \end{split}
            \end{align}
            So there is an 
            $R$--atomic 
            $\cZ(p)$--rm for $\beta$.

\medskip

\noindent \textbf{Case 5.2.2: $A_{\min}$ does not satisfy (Hyp)$_1$.}
            Let $R$ be as in \eqref{def:R}.
            We will show that in this case
            there is a $(R+1)$--atomic $\cZ(p)$--rm
            and 
            there does not exist an 
            $R$--atomic $\cZ(p)$--rm.
            Since $\eta\neq 0$, if $\cF(\mc G(t',u'))$ is psd, 
            it follows that $t'u'\geq \eta^2$ by \eqref{form-of-R1-parabolic}.
            By the same argument as in the first paragraph
            of Case 2.3.2.2 of the proof of Theorem \ref{sol:x-y-axy},
            if one of $(t_{\pm,\eta^2},u_{\pm,\eta^2})$
            is a good choice for $(\tilde t_0,\tilde u_0)$ in \eqref{measure-cond-hyp-2},
            then $A_{\min}$ satisfies (Hyp)$_1$, which is a contradiction.
            If $\eta^2=p_{\max}$ from Lemma \ref{lemma:claim-3}, then $(t_{\pm,\eta^2},u_{\pm,\eta^2})$
            are the only candidates for a good choice for $(\tilde t_0,\tilde u_0)$ in \eqref{measure-cond-hyp-2} and hence $\beta$ does not have a $\cZ(p)$--rm. It follows that $\eta^2<p_{\max}$. By Lemma \ref{lemma:claim-3}, there exists
            $(\breve t,\breve u)\in \mc R_2\cap (\RR_+)^2$,
             such that $\breve t\breve u=\frac{\eta^2+p_{\max}}{2}$ and 
             $(t_{\max}-\breve t)(u_{\max}-\breve u)=k_{12}^2$.
            But then $\Rank \cF(\mc G(\breve t,\breve u))=2k+1$,
            and $\gamma_2(\breve t,\breve u)$
            satisfies (Hyp). Also, 
            $\gamma_{1}(\breve t,\breve u)$ is $\RR$--representable,
            since $A_{\gamma_{1}(\breve t,\breve u)}\succeq 0$ and
            $A_{\gamma_{1}(\breve t,\breve u)}(k)$ is pd (since $\breve t<t_{\max}$). 
            Repeating the calculation \eqref{rank-computation-v3}
            and using that 
            $\Rank \cF(\cG(\breve t,\breve u))=\Rank \cF(A_{\min})+2$
            instead of $\Rank \cF(\cG(t_{\pm,\eta^2}, u_{\pm,\eta^2}))=\Rank \cF(A_{\min})+1$,
            we get 
            $\Rank \cH(\cG(\breve t,\breve u))+\Rank\cF(\cG(\breve t,\breve u))=R+1$
            and $\beta$ admits an
            $(R+1)$--atomic $\cZ(p)$--rm.
            It remains to show that
            there does not exist an
            $R$--atomic rm.
            As above, if $\cF(\mc G(t',u'))$ is psd and has a $\cZ(x+y-xy)$--rm,  
            it follows that $t'u'> \eta^2$, which means that
            $\Rank \cF(\mc G(t',u'))=\Rank \cF(A_{\min})+2$.
            Since $k_{12}\neq 0$, $\Rank \cH(\mc G(t',u'))\geq \Rank H_{22}+1$
            by \eqref{rank-R2-parabolic}. Hence, 
            \begin{align*}
            \Rank \cH(\mc G(t',u'))+
            \Rank \cF(\mc G(t',u'))
            &\geq 
            (\Rank H_{22}+1)
            +
            (\Rank \cF(A_{\min})+2)\\
            &\underbrace{=}_{\eqref{181123-0849}} R+1.
            \end{align*}

    It remains to establish the moreover part.
    Note that in the case where $\Rank \widetilde{\mc M}(k;\beta)+2$ atoms might be needed, $\cH(A_{\min})$ is not pd.
    Since for a $p$--pure sequence $\beta$
    with $\widetilde{\mc M}(k;\beta)\succeq 0$, \eqref{181123-0849} implies that $F_{22}$ and $\cH(A_{\min})$ are pd, 
    the existence of a $\cZ(p)$--rm
    implies the existence of at most
    $(\Rank \widetilde{\mc M}(k;\beta)+1)$--atomic $\cZ(p)$--rm.
    }
}
\end{proof}

\subsection{Example}{\footnote{The \textit{Mathematica} file with numerical computations can be found on the link \url{https://github.com/ZalarA/TMP_cubic_reducible}.}} 
\label{ex:hyperbolic-type-2} In this subsection we demonstrate 
the use of Theorems \ref{sol:x-y-axy} and 
    \ref{thm:hyperbolic-2-minimal-measures}
on a numerical example.

Let $\beta$ be a bivariate degree 6 sequence given by
\begin{spacing}{1.3}
$\beta_{00} = 1$,

$\beta_{10} =\frac{11}{50}$,
$\beta_{01}  = -\frac{13}{100}$

$\beta_{20}  = \frac{12397}{18000},$
$\beta_{11} = -\frac{11}{100},$
$\beta_{02}  =\frac{2947}{18000},$

$\beta_{30}  =\frac{1001}{1250},$
$\beta_{21}  =-\frac{383}{18000}$
$\beta_{12}  =\frac{967}{18000},$
$\beta_{03}  =-\frac{1117}{10000},$

$\beta_{40}  =\frac{117670993}{64800000},$
$\beta_{31}  =-\frac{1843}{90000},$
$\beta_{22}  =\frac{73}{2250},$
$\beta_{13}  =-\frac{2609}{45000},$
$\beta_{04}  =\frac{7105993}{64800000},$

$\beta_{50} =\frac{100001}{31250},$
$\beta_{41} =-\frac{295967}{64800000},$
$\beta_{32} =\frac{359}{30000},$
$\beta_{23} =-\frac{383}{15000},$
$\beta_{14} =\frac{3349033}{64800000},$
$\beta_{05} =-\frac{103093}{1000000},$

$\beta_{60}=\frac{1540453883617}{233280000000},$
$\beta_{51}=-\frac{1469467}{324000000},$
$\beta_{42}=\frac{479473}{64800000},$
$\beta_{33}=-\frac{407}{30000},$
$\beta_{24}=\frac{1694473}{64800000},$

\hfill 
$\beta_{15}=-\frac{16656967}{324000000},$
$\beta_{06}=\frac{23769383617}{233280000000}.$
\end{spacing}
	We will prove below that $\beta$ 
    admits a $9$--atomic $\cZ(p)$--rm by applying Theorems \ref{sol:x-y-axy} and 
    \ref{thm:hyperbolic-2-minimal-measures}.
	It is easy to check that $\widehat{\mc M}(3)$ is psd
	and satisfies only one column relation 
    $YX+Y^2-XY^2=\mathbf{0}$.
        It turns out that $\eta=-\frac{51255911}{6577059124404}$,
        $t_{\max}=\frac{1827880655851}{20096569546790}$,
        $u_{\max}=\frac{272763812083768883}{833444932244474880}$
        and $k_{12}=-\frac{9}{55}$.
        Computing $t_{-,\eta^2},u_{-,\eta^2}$ we get
        \begin{tiny}
        \begin{align*}
            u_{-,\eta^2}
            &=-\frac{49 (-18583967869070689172740711 + 
    1644264781101 \sqrt{127741799953693985969528905})}{55397740704244472768199800832},\\
            t_{-,\eta^2}
            &=\frac{49 (18583967869070689172740711 + 
   1644264781101 \sqrt{
    127741799953693985969528905)}}{199331524341418907147142346748}.
        \end{align*}
        \end{tiny}
    It is easy to check that $\cH(\cG(t_{-,\eta^2},u_{-,\eta^2}))$
    is psd of rank 3 and $\cH(\cG(t_{-,\eta^2},u_{-,\eta^2}))(2)\succ 0$.
    Hence, $\cH(\cG(t_{-,\eta^2},u_{-,\eta^2}))$ admits a 3--atomic $\RR$--rm.
    Moreover, $\cF(\cG(t_{-,\eta^2},u_{-,\eta^2}))$
    satisfies (Hyp)$_1$ and has rank 6. So it admits a $6$--atomic $\cZ(x+y-xy)$--rm, whence $\beta$ has a 9--atomic $\cZ(p)$--rm.
    This also follows from Theorem \ref{thm:hyperbolic-2-minimal-measures}, since $F_{22}\succ 0$, $H_{22}\succ 0$, $A_{\min}$ satisfies (Hyp)$_{1}$ 
    with
    $\Rank\cF(A_{\min})=\Rank\cF(A_{\min})_{\cB\setminus \{X^3\}}=\Rank\cF(A_{\min})_{\cB\setminus \{Y^3\}}=5$
    and $\Rank \cH(A_{\min})=4$.

\section{Hyperbolic type 3 relation: 
            $p(x,y)=y(ay+x^2-y^2)$, 
            $a\notin \RR\setminus\{0\}$.
        }
\label{sec:hyperbolic-type-3}

In this section we solve constructively the $\cZ(p)$--TMP for 
the sequence $\beta=\{\beta_{i,j}\}_{i,j\in \ZZ_+,i+j\leq 2k}$ 
of degree $2k$, $k\geq 3$,
where $p(x,y)$ is as in the title of the section.
The main results are Theorem \ref{221023-1854}, which characterizes concrete numerical conditions for the existence of a $\cZ(p)$--rm
for $\beta$, and Theorem \ref{thm:hyperbolic-3-minimal-measures},
which characterizes the number of atoms needed in a minimal $\cZ(p)$--rm. 
A numerical example demonstrating the main results is presented in Subsection \ref{ex:hyperbolic-type-3}.

\subsection{Existence of a representing measure}
Assume the notation from Section \ref{Section-common-approach}.
If $\beta$ admits a $\cZ(p)$--TMP, then $\mc M(k;\beta)$ 
must satisfy the relations
\begin{equation}
    \label{221023-1844}
	aY^{2+j}X^{i}
        +
        Y^{1+j}X^{2+i}
        =
        Y^{3+j}X^{i}\quad
        \text{for }i,j\in \ZZ_+\text{ such that }i+j\leq k-3.
\end{equation}
	In the presence of all column relations \eqref{221023-1844}, the column space $\cC(\mc M(k;\beta))$ is spanned by the columns in the tuple
    \begin{equation}
    \label{221023-1848}
    \vec{\cT}:=
        (
        \vec{X}^{(0,k)}, 
        Y\vec{X}^{(0,k-1)},
        Y^2\vec{X}^{(0,k-2)}),
    \end{equation}
    where 
    $$
        Y^i\vec{X}^{(j,\ell)}:=(Y^iX^j,Y^iX^{j+1},\ldots,Y^iX^{\ell})
        \quad\text{with }i,j,\ell\in \ZZ_+,\; j\leq \ell,\; i+\ell\leq k.
    $$
     Let $\widetilde \cM(k;\beta)$ 
    be as in 
    \eqref{071123-1939}
    and define
    \begin{equation}
        \label{091123-0719}
            A_{\min}:=A_{12}(A_{22})^{\dagger} (A_{12})^T
        \quad\text{and}\quad
            \widehat A_{\min}
            :=A_{\min}+\eta E_{2,2}^{(k+1)},
    \end{equation}
    where 
    $$
    \eta:=(A_{\min})_{1,3}-(A_{\min})_{2,2}.
    $$
    See Remark \ref{general-procedure} for the explanation of these definitions.
    Let $\cF(\mathbf{A})$ and $\cH(\mathbf{A})$ 
    be as in 
    \eqref{071123-1940}.
Write
\begin{align}
\label{081123-1936}
\begin{split}
        \cH(\widehat A_{\min})
        &:=
            \kbordermatrix{ 
                & \mathit{1} & X& \vec{X}^{(2,k)}\\
            \mathit{1}  & \beta_{0,0}-(A_{\min})_{1,1} & \beta_{1,0}-(A_{\min})_{1,2} & (h_{12}^{(1)})^T\\[0.2em]
            X
                & \beta_{1,0}-(A_{\min})_{1,2}& \beta_{2,0}-(A_{\min})_{1,3} & (h_{12}^{(2)})^T\\[0.2em]
            (\vec{X}^{(2,k)})^T & 
            h_{12}^{(1)} &
            h_{12}^{(2)} &
            H_{22}},\\[0.5em]
H_1&:=\cH(\widehat A_{\min})_{\{\mathit 1\}\cup\vec{X}^{(2,k)}}=
\kbordermatrix{
    & \mathit{1} & \vec{X}^{(2,k)}\\[0.2em]
    \mathit{1} & \beta_{0,0}-(A_{\min})_{1,1} & (h_{12}^{(1)})^T\\[0.2em]
    (\vec{X}^{(2,k)})^T & h_{12}^{(1)} & H_{22}},\\[0.5em]
H_2&:=\cH(\widehat A_{\min})_{\vec{X}^{(1,k)}}=
\kbordermatrix{
    & X & \vec{X}^{(2,k)}\\[0.2em]
    X & \beta_{2,0}-(A_{\min})_{1,3} & (h_{12}^{(2)})^T\\[0.2em]
    (\vec{X}^{(2,k)})^T & h_{12}^{(2)} & H_{22}}.
\end{split}
\end{align}
We define also the matrix function 
    \begin{equation}
        \label{301023-1930}
        \mc G:\RR^2\to S_{k+1},\qquad 
	\mc G(\mathbf{t},\mathbf{u})=
        \widehat A_{\min}
        +\mathbf{t}E_{1,1}^{(k+1)}
        +\mathbf{u}\big(E_{1,2}^{(k+1)}+E_{2,1}^{(k+1)}\big).
    \end{equation}
Let 
\begin{align}
\label{def:R1-and-R2}
    \mc R_1
    &=\big\{(t,u)\in \RR^2\colon \cF(\mc G(t,u))\succeq 0\big\}
    \quad\text{and}\quad
    \mc R_2
    =\big\{(t,u)\in \RR^2\colon \cH(\mc G(t,u))\succeq 0\big\}.
\end{align}
Further, we introduce real numbers
                    \begin{align}
                        \label{101123-1548}
                         \begin{split}
                        t_0
                        &:=\beta_{0,0}-(A_{\min})_{1,1}-
                            (h_{12}^{(1)})^T (H_{22})^{\dagger} h_{12}^{(1)},\\
                        u_0
                        &:=
                        \beta_{1,0}-(A_{\min})_{1,2}
                        -(h_{12}^{(1)})^T (H_{22})^{\dagger} h_{12}^{(2)},
                        \end{split}
                    \end{align}
                    and a function
                    \begin{equation}
                        \label{101123-1550}
                        h(\mathbf{t})=
                        \sqrt{
                        (H_1/H_{22}-\mathbf{t})
                        (H_2/H_{22})
                        }.
                    \end{equation}
It turns out \cite[Theorem 5.1, Claims 1--2]{YZ24} that
\begin{align}
    \label{forms-of-R1-R2}   
    \begin{split}
    \mc R_1
    &=
    \left\{
    \begin{array}{rl}
    \big\{
    (t,u)\in \RR^2\colon 
    t\geq 0, 
    u\in \left[-\sqrt{\eta t},\sqrt{\eta t}\right]
    \big\},&
    \text{if }\eta\geq 0,\\[0.3em]
    \varnothing,&
    \text{if }\eta<0,
    \end{array}
    \right.\\
    \mc R_2
    &=
    \left\{
    \begin{array}{rl}
    \big\{
    (t,u)\in \RR^2\colon 
    t\leq t_0, 
    u\in [u_0-h(t),u_0+h(t)]
    \big\},&\text{if }H_2\succeq 0,\\[0.3em]
    \varnothing,&\text{if }H_2\not\succeq 0.
    \end{array}
    \right.
    \end{split}
\end{align}
Therefore $\mc R_1$ has one of the following forms:
\begin{center}
    \begin{tabular}{lr}
    \includegraphics[width=5cm]{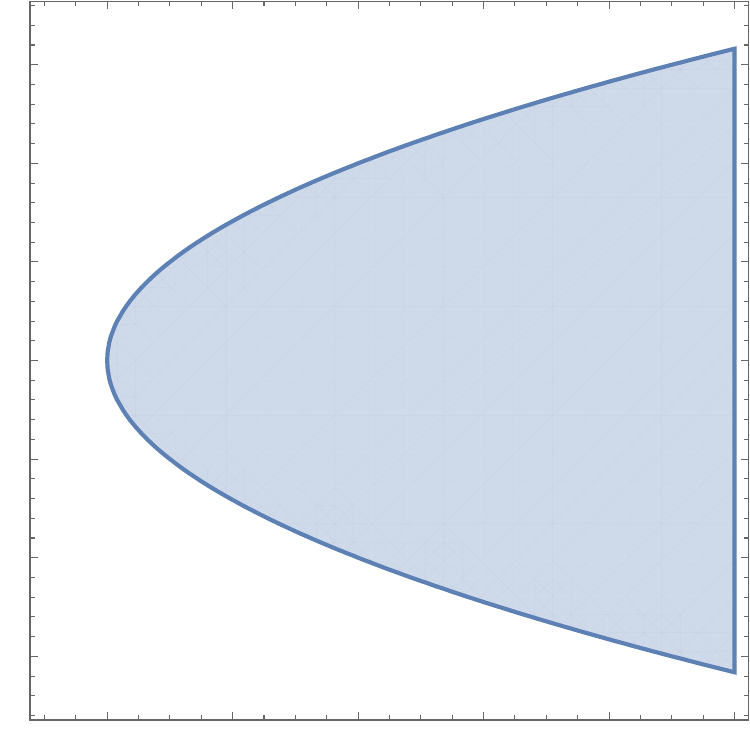}
    &
    \includegraphics[height=4cm]{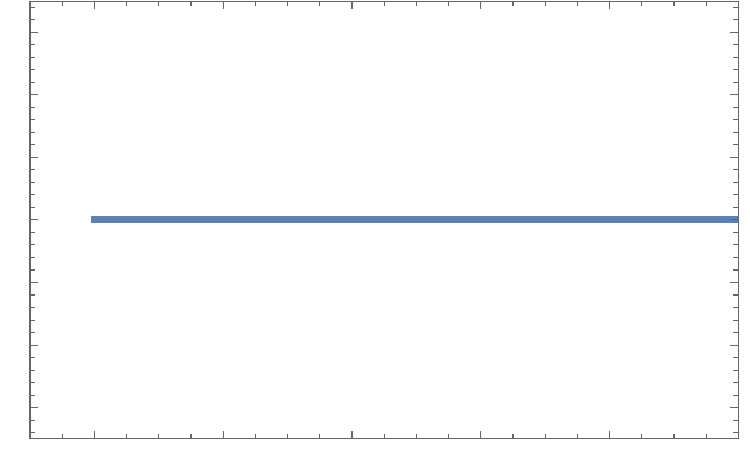}
    \end{tabular}
    \end{center}
where the left case occurs if $\eta>0$, the right if $\eta=0$, while the case $\eta<0$ gives an empty set;
and $\mc R_2$ can be one of the following:
    \begin{center}
    \begin{tabular}{lr}
    \includegraphics[width=5cm]{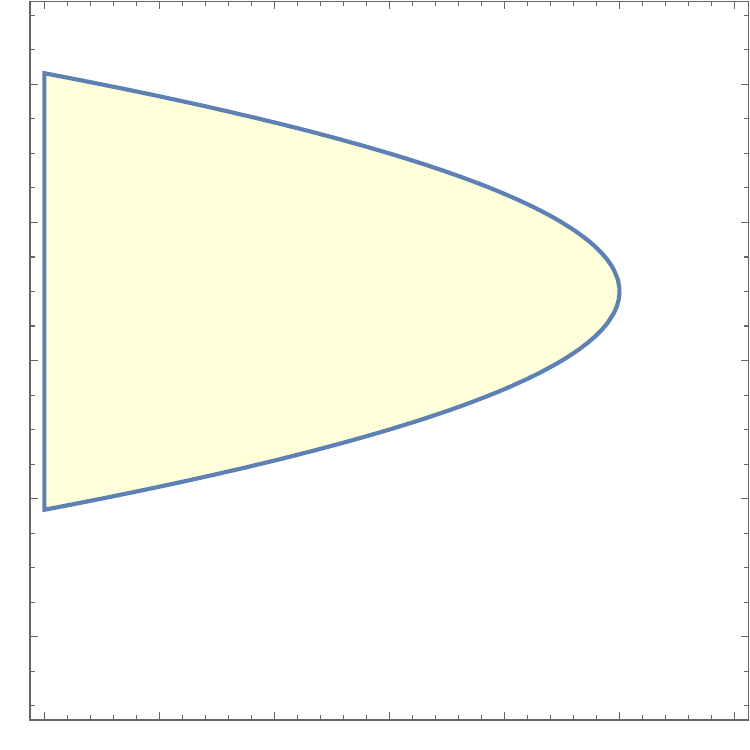}
    &
    \includegraphics[height=4cm]{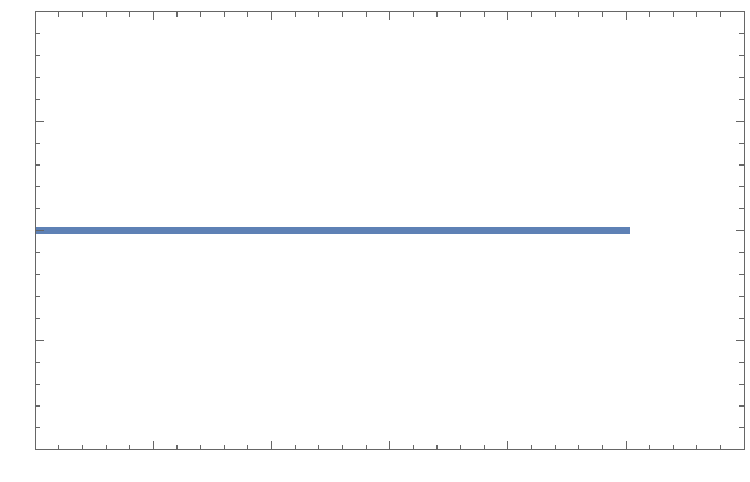}
    \end{tabular}
    \end{center}
where the left case occurs if $H_2/H_{22}>0$, the right if $H_2/H_{22}=0$, while the case 
$H_2/H_{22}<0$ gives an empty set.

By Lemmas \ref{071123-0646}--\ref{071123-2008} and Remark \ref{general-procedure}, the existence of a $\mc Z(p)$--rm for $\beta$ is equivalent to: 
\begin{align}
\label{251023-1603-v2}
\begin{split}
&\widetilde{\cM}(k;\beta)\succeq 0, 
\text{ the relations }
\eqref{221023-1844}\text{ hold 
and }\\
&\text{there exists } (\tilde t_0,\tilde u_0)\in \mc R_1\cap\mc R_2 \text{ such that }
\cF(\mc G(\tilde t_0,\tilde u_0)) 
\text{ and } \cH(\mc G(\tilde t_0,\tilde u_0))\\
&
\text{ admit a }
\cZ(ay+x^2-y^2)\text{--rm and a }\RR\text{--rm, respectively.}
\end{split}
\end{align}

Define the sequence
					\begin{align}
                        \label{def:gamma-hyp3}
                        \begin{split}
						\gamma(\mathbf{t},\mathbf u)
							:=\;&\big(
                                \beta_{0,0}-(A_{\min})_{1,1}-\mathbf t,
                                \beta_{1,0}-(A_{\min})_{1,2}-\mathbf u,
                                \beta_{2,0}-\beta_{0,2}+a\beta_{0,1},\\
                                &
                                \;\;\beta_{3,0}-\beta_{1,2}+a\beta_{1,1},
                                \ldots,
                                \beta_{2k,0}-\beta_{2k-2,2}+a\beta_{2k-2,1}\big).
                        \end{split}
					\end{align}
Note that 
$\cH(\cG(\mathbf t,\mathbf u))=A_{\gamma(t,u)}$
(see \eqref{vector-v} and Remark \ref{general-procedure}.\eqref{general-procedure-pt1}).\\

Write
$$
    \cB:=\{\textit{1},X,X^2,\ldots,X^k,Y,YX,\ldots,YX^{k-1}\}.
$$
We say the matrix $A\in S_{k+1}$ satisfies \textbf{the property 
$(\widetilde{\text{Hyp}})$} if
\begin{equation}
    \underbrace{\Rank \cF(A)=
    \Rank \cF(A)_{\cB\setminus\{X^{k}\}}=
    \Rank \cF(A)_{\cB\setminus\{YX^{k-1}\}}}_{(\widetilde{\text{Hyp}})_1}
    \quad
    \text{or}
    \quad
    \underbrace{\Rank \cF(A)=2k+1}_{(\widetilde{\text{Hyp}})_2}.
\end{equation}


We denote by $\partial \mc R_i$ and $\interior{\mc R}_i$ the topological boundary and the interior of the set $\mc R_i$, respectively.\\

The solution to the $\cZ(p)$--TMP is the following.

    \begin{theorem}
		\label{221023-1854}
	Let $p(x,y)=y(ay+x^2-y^2)$,
        $a\in \RR\setminus\{0\}$,
	and $\beta=(\beta_{i,j})_{i,j\in \ZZ_+,i+j\leq 2k}$, where $k\geq 3$.
	Assume also the notation above.
	Then the following statements are equivalent:
	\begin{enumerate}	
		\item\label{221023-1854-pt1} 
                $\beta$ has a $\cZ(p)$--representing measure.
                \smallskip
		\item\label{221023-1854-pt3}  
  $\widetilde{\mc{M}}(k;\beta)$ is positive semidefinite,
  the relations \eqref{221023-1844} hold,
        $A_{\min}$ either satisfies ($\widetilde{\text{Hyp}}$)$_1$
        or the rank equality $\Rank \cF(A_{\min})=2k-1$,
	and 
	one of the following statements is true:
		\begin{enumerate}
			\item
				\label{221023-1857-pt3.1}
                    $\eta=0$, $A_{\min}$ satisfies ($\widetilde{\text{Hyp}}$)$_1$ and 
                    $\gamma(0,0)$
                    is $\RR$--representable.
                    \smallskip
                \item
			\label{221023-1857-pt3.2}
                    $\eta>0$
                    and
                    one of the following holds:
                    \smallskip
                    \begin{enumerate}
                    \item 
                    \label{221023-1857-pt3.2.1}
                    The set $\partial\mc R_1\cap\partial\mc R_2$ has two elements and $H_2$ is positive definite.
                    \smallskip
                    \item
                    \label{221023-1857-pt3.2.2}
                    $\partial\mc R_1\cap\partial\mc R_2=\{(\tilde t,\tilde u)\}$
                    and 
                    there exist 
                    $(\hat t,\hat u)\in \{((\tilde t,\tilde u),(t_0,\tilde u))\}$
                    such that $\gamma(\hat t,\hat u)$ is $\RR$--representable
                    and $\cF(\cG(\hat t, \hat u))$ satisfies $(\widetilde{\text{Hyp}})$.
                    \end{enumerate}
	\end{enumerate}
 \end{enumerate}
        \smallskip
\end{theorem}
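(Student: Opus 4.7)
The plan is to follow the scheme of Theorems \ref{100622-2204} and \ref{sol:x-y-axy}. By Lemmas \ref{071123-0646}--\ref{071123-2008} together with Remark \ref{general-procedure}, compiled into the criterion \eqref{251023-1603-v2}, a $\cZ(p)$--representing measure for $\beta$ exists if and only if $\widetilde{\cM}(k;\beta) \succeq 0$, the relations \eqref{221023-1844} hold, and there is a pair $(\tilde t_0, \tilde u_0) \in \mc R_1 \cap \mc R_2$ for which $\cF(\cG(\tilde t_0, \tilde u_0))$ admits a $\cZ(ay+x^2-y^2)$--rm and $\cH(\cG(\tilde t_0, \tilde u_0)) = A_{\gamma(\tilde t_0, \tilde u_0)}$ admits an $\RR$--rm. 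I would recast the first representability via Corollary \ref{090622-1611-hyp-v3}, which, once the column relations \eqref{221023-1844} and positive semidefiniteness are in force, reduces exactly to property $(\widetilde{\text{Hyp}})$ on $\cF(\cG(\tilde t_0, \tilde u_0))$; and the second via Theorem \ref{Hamburger} applied to the Hankel matrix $A_{\gamma(\tilde t_0, \tilde u_0)}$.

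For the direction $\eqref{221023-1854-pt3}\Rightarrow\eqref{221023-1854-pt1}$ the strategy is direct verification. In case \eqref{221023-1857-pt3.1} ($\eta=0$ so $\widehat A_{\min}=A_{\min}$) the pair $(0,0)$ lies in $\mc R_1\cap\mc R_2$: $(0,0)\in\mc R_1$ is immediate from \eqref{forms-of-R1-R2}, while the $\RR$--representability of $\gamma(0,0)$ puts $\cH(\cG(0,0))=A_{\gamma(0,0)}$ inside $\mc R_2$. Property $(\widetilde{\text{Hyp}})_1$ on $\cF(A_{\min})$ and Corollary \ref{090622-1611-hyp-v3} then yield the conic rm. In case \eqref{221023-1857-pt3.2}, the point $(\hat t,\hat u)$ lies in $\mc R_1\cap\mc R_2$ by construction (for \eqref{221023-1857-pt3.2.1} at a transversal boundary intersection; for \eqref{221023-1857-pt3.2.2} either at the tangency or along the horizontal $t$--extension inside $\mc R_2$), and the explicit hypotheses then supply both rm's.

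The direction $\eqref{221023-1854-pt1}\Rightarrow\eqref{221023-1854-pt3}$ is the technical heart. The preamble on $A_{\min}$ would be obtained by a rank-tracking argument: writing $\cG(\tilde t_0,\tilde u_0)=A_{\min}+P$, where $P=\eta E_{2,2}^{(k+1)}+\tilde t_0 E_{1,1}^{(k+1)}+\tilde u_0(E_{1,2}^{(k+1)}+E_{2,1}^{(k+1)})$ has rank at most $2$, the Loewner monotonicity $\cF(\cG(\tilde t_0,\tilde u_0))\succeq\cF(A_{\min})$ and the hyperbolic rank dichotomy in Corollary \ref{090622-1611-hyp-v3} forces $\cF(A_{\min})$ to inherit $(\widetilde{\text{Hyp}})_1$ or to already have rank $2k-1$. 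The case $\eta<0$ is vacuous by $\mc R_1=\varnothing$. For $\eta=0$, the parametrization of $\mc R_1$ forces $\tilde u_0=0$; since decreasing $t$ to $0$ only increases the $(1,1)$--entry of $\cH(\cG(t,0))$, one has $(0,0)\in\mc R_2$, and a rank-descent argument analogous to Case~2.1 of the proof of Theorem \ref{sol:x-y-axy} transfers $(\widetilde{\text{Hyp}})_1$ from $\cF(\cG(\tilde t_0,0))$ down to $\cF(A_{\min})$, yielding \eqref{221023-1857-pt3.1}. For $\eta>0$, I would exploit the explicit parametrizations in \eqref{forms-of-R1-R2}: $\partial\mc R_1$ is the parabola $u^2=\eta t$, and $\partial\mc R_2$ is the ellipse-like curve $(u-u_0)^2=(H_1/H_{22}-t)(H_2/H_{22})$ truncated to $t\leq t_0$; their intersection is generically two points (case \eqref{221023-1857-pt3.2.1}, with $H_2\succ 0$ read off from the nondegeneracy of $\mc R_2$), degenerates to a tangency (case \eqref{221023-1857-pt3.2.2}), or becomes empty.

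The main obstacle is the single-point subcase \eqref{221023-1857-pt3.2.2}. There the natural witness $(\tilde t,\tilde u)$ may fail $(\widetilde{\text{Hyp}})$ on $\cF(\cG(\tilde t,\tilde u))$, and one must show the alternative candidate $(t_0,\tilde u)$ -- obtained by sliding horizontally inside $\mc R_2$ to the $t$--extremum -- recovers it. The argument will mirror Case~2.3.2.2 of the proof of Theorem \ref{sol:x-y-axy}: the difference $\cG(t_0,\tilde u)-\cG(\tilde t,\tilde u)=(t_0-\tilde t)E_{1,1}^{(k+1)}$ is rank one, so the columns of $\cF$ involved split into those carrying the coordinate-$\textit{1}$ dependence and those untouched. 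A careful comparison of $\Rank\cF(\cdot)_{\cB\setminus\{X^k\}}$ and $\Rank\cF(\cdot)_{\cB\setminus\{YX^{k-1}\}}$ between the two candidates, combined with the preamble rank bound on $\cF(A_{\min})$ and the fact that any $\cZ(p)$--rm already witnesses $(\widetilde{\text{Hyp}})$ for \emph{some} pair in $\mc R_1\cap\mc R_2$, closes the dichotomy and identifies which of the two candidates $(\hat t,\hat u)$ works.
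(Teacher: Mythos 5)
There is a genuine gap in the direction $\eqref{221023-1854-pt3}\Rightarrow\eqref{221023-1854-pt1}$ for case \eqref{221023-1857-pt3.2.1}. You describe this direction as a ``direct verification'' and claim that ``the explicit hypotheses then supply both rm's,'' but the hypotheses in \eqref{221023-1857-pt3.2.1} are purely geometric ($\partial\mc R_1\cap\partial\mc R_2$ has two elements and $H_2\succ 0$): they do not name a candidate $(\tilde t_0,\tilde u_0)$, nor assert that $\cF(\cG(\tilde t_0,\tilde u_0))$ satisfies $(\widetilde{\text{Hyp}})$ or that $\gamma(\tilde t_0,\tilde u_0)$ is $\RR$--representable. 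What is actually required is a nontrivial argument: writing $\partial\mc R_1\cap\partial\mc R_2=\{(t_1,u_1),(t_2,u_2)\}$, one must show that when $A_{\min}$ satisfies $(\widetilde{\text{Hyp}})_1$, \emph{both} $\cG(t_1,u_1)$ and $\cG(t_2,u_2)$ satisfy $(\widetilde{\text{Hyp}})_1$ — the paper does this by a kernel argument (if $\Rank\cF(\cG(t_i,u_i))_{\cB\setminus\{X^k\}}$ failed to increase by 1 then $\ker\big((A_{\min})_{\{1,X\}}-\breve A_{\min}\big)\subseteq \ker\begin{pmatrix}t_i & u_i \\ u_i & \eta\end{pmatrix}$ for $i=1,2$, forcing the left kernel to be all of $\RR^2$, a contradiction) — and then invoke Lemma \ref{Hyp3-claim5} (using $H_2\succ 0$) to obtain $\RR$--representability at at least one of the two points. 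When instead $\Rank\cF(A_{\min})=2k-1$ one passes to a point in $\interior{\mc R}_1\cap\interior{\mc R}_2$ where $(\widetilde{\text{Hyp}})_2$ holds by the rank formula \eqref{rank-R1}. Neither subcase is a matter of reading off hypotheses.

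A secondary issue is in the necessity direction for $\eta>0$: you assert $H_2\succ 0$ is ``read off from the nondegeneracy of $\mc R_2$,'' but the form of $\mc R_2$ in \eqref{forms-of-R1-R2} only yields $H_2/H_{22}>0$, which is strictly weaker than $H_2\succ 0$. To promote this to positive definiteness the paper argues by contradiction: a nontrivial column relation among $X^2,\ldots,X^k$ in $H_2$ lifts, via the extension principle and the variety-theoretic fact from \cite{CF96}, to a relation involving lower-degree columns, and iterating forces $H_2/H_{22}=0$, contradicting $\Card(\partial\mc R_1\cap\partial\mc R_2)=2$. This argument needs to be spelled out; the outline as written does not supply it. The rest of your plan — the preamble rank-tracking using $\cF(\cG(\tilde t_0,\tilde u_0))\succeq\cF(A_{\min})$ to transfer $(\widetilde{\text{Hyp}})_1$ or deduce $\Rank\cF(A_{\min})=2k-1$, the case $\eta=0$, and the single-point slide $(\tilde t,\tilde u)\leadsto(t_0,\tilde u)$ — follows the paper's strategy correctly.
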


Before we prove Theorem \ref{221023-1854} we need few 
lemmas.
Their statements and the proofs coincide verbatim with \cite[Theorem 5.1, Claims 1--5]{YZ24}, 
but we state them for easier readability.\\

Let $\mc R_1, \mc R_2$ be as in \eqref{def:R1-and-R2}.
Claims 1 and 2 below describe ranks of 
    $\cF(\cG(t,u))$ and $\cH(\cG(t,u))$
for various choices of $(t,u)$ in $\mc R_1$ and $\mc R_2$,
respectively. 

\begin{lemma}[{\cite[Theorem 5.1, Claim 1]{YZ24}}]
\label{Hyp3-claim1}
    Assume that $\widetilde{\mc{M}}(k;\beta)\succeq 0$.
    Then $\mc R_1$ is as in \eqref{forms-of-R1-R2} above.
If $\eta\geq 0$, then we have
\begin{align}
    \label{rank-R1}
    \Rank \cF(\mc G(t,u))=
    \left\{
    \begin{array}{rl}
            \Rank \cF(A_{\min}),&    
            \text{if }
                t=0, 
                \eta=0,\\[0.3em]
            \Rank \cF(A_{\min})+1,&    
            \text{if }
                (t>0 \text{ or }\eta>0)
                \text{ and }
                u\in \{-\sqrt{\eta t},\sqrt{\eta t}\},\\[0.3em]
            \Rank \cF(A_{\min})+2,&    
            \text{if }
                t>0,\eta>0,
                u\in \left(-\sqrt{\eta t},\sqrt{\eta t}\right),
    \end{array}
    \right.
\end{align}
where $A_{\min}$ is as in \eqref{091123-0719}.
\end{lemma}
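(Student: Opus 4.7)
The plan is to reduce everything to an analysis of a single $2\times 2$ block, since $\mc{G}(\mathbf{t},\mathbf{u}) - A_{\min}$ is supported only in the top-left $2\times 2$ corner of $S_{k+1}$. Concretely, from \eqref{091123-0719} and \eqref{301023-1930} one has
\begin{equation*}
\mc{G}(t,u) - A_{\min} \;=\; t\,E^{(k+1)}_{1,1} + u\bigl(E^{(k+1)}_{1,2}+E^{(k+1)}_{2,1}\bigr) + \eta\,E^{(k+1)}_{2,2},
\end{equation*}
which is the matrix supported on the top-left $2\times 2$ block by $\bigl(\begin{smallmatrix} t & u \\ u & \eta\end{smallmatrix}\bigr)$ and zero elsewhere.

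The first step is to apply Lemma~\ref{071123-2008}.\eqref{071123-2008-pt0} to the matrix $\cF(\mc{G}(t,u))$: it states that $\cF(\mc{G}(t,u))\succeq 0$ if and only if $\mc{G}(t,u)\succeq A_{\min}$, i.e.\ $\mc{G}(t,u)-A_{\min}\succeq 0$. Since this perturbation is block-diagonal with a single nontrivial $2\times 2$ summand, positivity is equivalent to $\bigl(\begin{smallmatrix} t & u \\ u & \eta\end{smallmatrix}\bigr)\succeq 0$. This forces $t\geq 0$, $\eta\geq 0$ and $t\eta-u^2\geq 0$. Splitting on the sign of $\eta$ yields the claimed description of $\mc{R}_1$: empty when $\eta<0$, and $\{(t,u):t\geq 0,\ u\in[-\sqrt{\eta t},\sqrt{\eta t}]\}$ when $\eta\geq 0$ (where for $\eta=0$ this collapses to the ray $\{(t,0):t\geq 0\}$).

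For the rank formula, I would invoke Lemma~\ref{071123-2008}.\eqref{071123-2008-pt3} applied to $\cF(\mc{G}(t,u))$ (which has the same $A_{12},A_{13},A_{22},A_{23},A_{33}$ blocks as $\widetilde{\mc{M}}(k;\beta)$, only the $(1,1)$ block is replaced by $\mc{G}(t,u)$). This gives
\begin{equation*}
\Rank \cF(\mc{G}(t,u)) \;=\; \Rank A_{22} + \Rank\bigl(\mc{G}(t,u)-A_{\min}\bigr).
\end{equation*}
Specializing at $(t,u)=(0,0)$ with $\eta=0$ one finds $\Rank \cF(A_{\min})=\Rank A_{22}$, so in general
\begin{equation*}
\Rank \cF(\mc{G}(t,u)) \;=\; \Rank \cF(A_{\min}) + \Rank\bigl(\mc{G}(t,u)-A_{\min}\bigr).
\end{equation*}
It remains to read off the rank of the $2\times 2$ corner. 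It is $0$ precisely when $t=u=\eta=0$; it is $1$ precisely when the corner is nonzero and $t\eta-u^2=0$, which (under the psd constraint) means $(t>0$ or $\eta>0)$ and $u=\pm\sqrt{\eta t}$; and it is $2$ precisely when $t\eta-u^2>0$, which (under the psd constraint) forces $t>0$, $\eta>0$ and $u\in(-\sqrt{\eta t},\sqrt{\eta t})$. Matching these three cases to \eqref{rank-R1} completes the proof.

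The entire argument is bookkeeping around Lemma~\ref{071123-2008}; the only place to be careful is ensuring that the Schur complement identity used in part \eqref{071123-2008-pt3} of that lemma applies to $\cF(\mc{G}(t,u))$ and not merely to the original $\widetilde{\mc{M}}(k;\beta)$, but this is immediate since $\cF(\mc{G}(t,u))$ differs from $\widetilde{\mc{M}}(k;\beta)$ only in the $(1,1)$ block and the other blocks $A_{12},A_{13},A_{22},A_{23},A_{33}$ (which drive both the Schur complement and the definition of $A_{\min}$) are unchanged. No step looks like a genuine obstacle.
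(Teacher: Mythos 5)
Your proof is correct and follows the same route that the paper (and its source \cite[Theorem 5.1]{YZ24}) takes: isolate the perturbation $\mc G(t,u)-A_{\min}$ as the $2\times 2$ corner $\bigl(\begin{smallmatrix} t & u\\ u & \eta\end{smallmatrix}\bigr)\oplus\mathbf 0$, use Lemma \ref{071123-2008}.\eqref{071123-2008-pt0} to reduce positivity of $\cF(\mc G(t,u))$ to positivity of that corner, and use the Schur-complement rank identity (Lemma \ref{071123-2008}.\eqref{071123-2008-pt3}) to reduce $\Rank\cF(\mc G(t,u))-\Rank\cF(A_{\min})$ to the rank of that corner. One cosmetic remark: the step where you ``specialize at $(t,u)=(0,0)$ with $\eta=0$'' to deduce $\Rank\cF(A_{\min})=\Rank A_{22}$ is a little awkwardly phrased, since $\eta$ is not a free parameter but is fixed by $\beta$; the cleaner way to get that identity is to note it already sits inside Lemma \ref{071123-2008}.\eqref{071123-2008-pt3} (compare $\Rank A_{22}+\Rank(A_{11}-A_{\min})$ with $\Rank\cF(A_{\min})+\Rank\cH(A_{\min})$ and use $\cH(A_{\min})=A_{11}-A_{\min}$). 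You also correctly flag, and correctly dismiss, the only point that needs care: that Lemma \ref{071123-2008}.\eqref{071123-2008-pt3} can be read off for $\cF(\mc G(t,u))$ because all blocks other than the $(1,1)$ block, and hence both the Schur complement and $A_{\min}$, are unchanged.
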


\begin{lemma}[{\cite[Theorem 5.1, Claim 2]{YZ24}}]
\label{Hyp3-claim2}
    Assume that $\widetilde{\mc{M}}(k;\beta)\succeq 0$.
    Let $t_0, u_{0}$, $h(\mathbf{t})$ be as in
    \eqref{101123-1548}, \eqref{101123-1550}.
If $H_2\succeq 0$, then we have 
\begin{align}
    \label{rank-R2}
    \Rank \cH(\mc G(t,u))=
    \left\{
    \begin{array}{rl}
            \Rank H_{2},&    
            \text{for }
                t=t_0, 
                u=u_0,\\[0.3em]
            \Rank H_{22}+1,&    
            \text{for }
                t<t_0, 
                u\in \{u_0-h(t),u_0+h(t)\},\\[0.3em]
            \Rank H_{22}+2,&    
            \text{for }
                t<t_0, 
                u\in (u_0-h(t),u_0+h(t)).
    \end{array}
    \right.
\end{align}
\end{lemma}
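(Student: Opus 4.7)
The plan is to reduce the rank computation to the $2\times 2$ Schur complement of $\cH(\mc G(t,u))$ with respect to the invariant bottom-right block $H_{22}$. Since $\mc G(t,u)$ modifies only the $(1,1)$ and $(1,2)/(2,1)$ entries of $\widehat A_{\min}$, the matrix $\cH(\mc G(t,u))$ admits the block form
$$\cH(\mc G(t,u))=\begin{pmatrix} \Lambda(t,u) & L \\ L^T & H_{22} \end{pmatrix},\qquad L=\begin{pmatrix} h_{12}^{(1)} & h_{12}^{(2)} \end{pmatrix},$$
where only the top-left $2\times 2$ block $\Lambda(t,u)$ depends on the parameters; the off-diagonal block $L$ and the bottom-right block $H_{22}$ coincide with those of $\cH(\widehat A_{\min})$ displayed in \eqref{081123-1936}.

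For any $(t,u)$ in the region where $\cH(\mc G(t,u))\succeq 0$, the equivalence \eqref{pt1-281021-2128}$\Leftrightarrow$\eqref{pt2-281021-2128} of Theorem \ref{block-psd}.\eqref{021123-1702} yields $H_{22}\succeq 0$ and $\cC(L)\subseteq \cC(H_{22})$, and then Theorem \ref{block-psd}.\eqref{prop-2604-1140-eq2} delivers the additive rank identity
$$\Rank \cH(\mc G(t,u))=\Rank H_{22}+\Rank \big(\cH(\mc G(t,u))/H_{22}\big).$$
A direct computation using the definitions \eqref{101123-1548} of $t_0,u_0$ together with $H_2/H_{22}=\beta_{2,0}-(A_{\min})_{1,3}-(h_{12}^{(2)})^T H_{22}^{\dagger}h_{12}^{(2)}$ gives
$$\cH(\mc G(t,u))/H_{22}=\begin{pmatrix} t_0-t & u_0-u \\ u_0-u & H_2/H_{22}\end{pmatrix},$$
so the problem reduces to reading off the rank of this $2\times 2$ psd matrix in each case listed in the statement.

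Three subcases then remain. If $(t,u)=(t_0,u_0)$, the Schur complement equals $\operatorname{diag}(0,\,H_2/H_{22})$, of rank $\Rank(H_2/H_{22})$; applying Theorem \ref{block-psd}.\eqref{prop-2604-1140-eq2} to $H_2$ itself gives $\Rank H_2=\Rank H_{22}+\Rank(H_2/H_{22})$, which handles the first case. If $t<t_0$, the $(1,1)$ entry is strictly positive so the rank of the Schur complement is at least $1$; its determinant $(t_0-t)(H_2/H_{22})-(u_0-u)^2$ vanishes exactly when $u\in\{u_0\pm h(t)\}$, giving rank $1$, and is strictly positive when $u\in(u_0-h(t),u_0+h(t))$, giving rank $2$. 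Combined with the additive identity above, this yields the remaining two cases.

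The only delicate point is verifying the column inclusion $\cC(L)\subseteq \cC(H_{22})$ necessary for the Schur complement identity to hold when $H_{22}$ is singular; this is automatic from the standing hypothesis that $\cH(\mc G(t,u))\succeq 0$ via Theorem \ref{block-psd}.\eqref{021123-1702}, so no extra work is required. Once the reduction to the $2\times 2$ block $\cH(\mc G(t,u))/H_{22}$ has been made, everything else is a straightforward $2\times 2$ psd-determinant analysis.
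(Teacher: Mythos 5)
Your reduction is the right one, and it is surely the same route as the argument this paper only cites (the paper gives no proof here, referring verbatim to \cite[Theorem 5.1, Claim 2]{YZ24}): complement $\cH(\cG(t,u))$ with respect to the parameter-independent block $H_{22}$, observe that
$\cH(\cG(t,u))/H_{22}=\begin{pmatrix} t_0-t & u_0-u\\ u_0-u & H_2/H_{22}\end{pmatrix}$
by the definitions \eqref{101123-1548} and \eqref{081123-1936}, and read off the rank of this $2\times 2$ matrix; your case analysis (including $\Rank H_2=\Rank H_{22}+\Rank (H_2/H_{22})$ from $H_2\succeq 0$) is correct. The one step that is not justified as written is the additive identity $\Rank\cH(\cG(t,u))=\Rank H_{22}+\Rank\big(\cH(\cG(t,u))/H_{22}\big)$: you obtain the needed column inclusions $\cC(h_{12}^{(1)}),\cC(h_{12}^{(2)})\subseteq\cC(H_{22})$ from ``the standing hypothesis that $\cH(\cG(t,u))\succeq 0$'', but the lemma does not assume this. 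Positive semidefiniteness of $\cH(\cG(t,u))$ at exactly the listed parameter values is the description of $\mc R_2$ in \eqref{forms-of-R1-R2}, which belongs to the same cited claim you are proving, so as phrased the argument is circular.

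The gap is local and closes in two lines without invoking \eqref{forms-of-R1-R2}. The inclusion $\cC(h_{12}^{(2)})\subseteq\cC(H_{22})$ follows from the hypothesis $H_2\succeq 0$ via the equivalence of \eqref{pt1-281021-2128} and \eqref{pt2-281021-2128} in Theorem \ref{block-psd}. For $h_{12}^{(1)}$, note that $H_1$ omits the row and column $X$, hence avoids the only entry in which $\widehat A_{\min}$ differs from $A_{\min}$, so $H_1$ is a principal submatrix of $\cH(A_{\min})$; since $\cH(A_{\min})\succeq 0$ by Lemma \ref{071123-2008}.\eqref{071123-2008-pt1} (under the standing relations \eqref{221023-1844}), we get $H_1\succeq 0$ and therefore $\cC(h_{12}^{(1)})\subseteq\cC(H_{22})$ as well. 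With both inclusions in hand, your computation that the $2\times 2$ Schur complement is positive semidefinite precisely on the listed set, together with Theorem \ref{block-psd}.\eqref{021123-1702}, gives $\cH(\cG(t,u))\succeq 0$ there, and then Theorem \ref{block-psd}.\eqref{prop-2604-1140-eq2} yields the additive rank identity; the rest of your determinant analysis goes through unchanged. (The interchange of $L$ and $L^T$ in your block display is immaterial.)
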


\begin{lemma}[{\cite[Theorem 5.1, Claim 3]{YZ24}}]
\label{Hyp3-claim3}
    Assume that $\widetilde{\mc{M}}(k;\beta)\succeq 0$
    and $\eta=0$. Then $(0,0)\in \partial\mc R_1\cap\mc R_2$.
\end{lemma}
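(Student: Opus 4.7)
The plan is to read off both required memberships directly from tools already established in the excerpt. Once $\eta=0$, the definition \eqref{091123-0719} gives $\widehat A_{\min}=A_{\min}$, so $\cG(0,0)=A_{\min}$, and therefore
\[
\cF(\cG(0,0))=\cF(A_{\min}),\qquad \cH(\cG(0,0))=\cH(A_{\min}).
\]
Since $\widetilde{\mc M}(k;\beta)\succeq 0$ by hypothesis and the column relations \eqref{221023-1844} are in force in the framework of Section \ref{Section-common-approach}, Lemma \ref{071123-2008}.\eqref{071123-2008-pt1} applies and yields both $\cF(A_{\min})\succeq 0$ and $\cH(A_{\min})\succeq 0$. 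This already places $(0,0)$ simultaneously in $\mc R_1$ and in $\mc R_2$.

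It then remains to upgrade $(0,0)\in \mc R_1$ to $(0,0)\in \partial \mc R_1$. For this I would invoke the explicit description \eqref{forms-of-R1-R2}: when $\eta=0$, the first formula collapses to
\[
\mc R_1=\{(t,u)\in \RR^2\colon t\geq 0,\; u\in [-\sqrt{0\cdot t},\sqrt{0\cdot t}]\}=\{(t,0)\colon t\geq 0\},
\]
which is a one-dimensional subset of $\RR^2$ and hence has empty interior. Consequently $\partial\mc R_1=\mc R_1$, so in particular $(0,0)\in \partial\mc R_1$. Combined with $(0,0)\in \mc R_2$ from the previous paragraph, this gives $(0,0)\in \partial\mc R_1\cap \mc R_2$, as required.

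I expect no real obstacle: the statement is a short synthesis of a psd fact (Lemma \ref{071123-2008}) and a shape computation (the $\eta=0$ case of \eqref{forms-of-R1-R2}), both already at our disposal. The only bookkeeping worth double-checking is that the ambient hypotheses of Lemma \ref{071123-2008} are indeed active here, namely $\widetilde{\mc M}(k;\beta)\succeq 0$ together with the column relations \eqref{221023-1844} induced by the cubic $p$; both are either directly assumed in Lemma \ref{Hyp3-claim3} or built into the setup of the section.
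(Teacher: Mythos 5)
Your argument is correct and is the natural route: with $\eta=0$ one has $\cG(0,0)=\widehat A_{\min}=A_{\min}$, Lemma \ref{071123-2008}.\eqref{071123-2008-pt1} (whose hypotheses are indeed in force in this section) gives $\cF(A_{\min})\succeq 0$ and $\cH(A_{\min})\succeq 0$, hence $(0,0)\in\mc R_1\cap\mc R_2$, and the $\eta=0$ form of $\mc R_1$ in \eqref{forms-of-R1-R2} collapses it to the ray $\{(t,0)\colon t\geq 0\}$, which has empty interior in $\RR^2$, so $\partial\mc R_1=\mc R_1\ni(0,0)$. This coincides in substance with the proof the paper imports verbatim from \cite[Theorem 5.1, Claim 3]{YZ24}.
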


\begin{lemma}[{\cite[Theorem 5.1, Claim 4]{YZ24}}]
\label{Hyp3-claim4}
    Assume that $\widetilde{\mc{M}}(k;\beta)\succeq 0$
    and $\eta>0$. 
    Then:
\begin{itemize}
\item
The set $\partial\mc R_1\cap\partial \mc R_2$ has at most 2 elements. 
\smallskip
\item 
$\mc R_1\cap \mc R_2\neq \varnothing$ 
if and only if 
$\partial\mc R_1\cap\partial \mc R_2\neq \varnothing.$ 
\smallskip
\item 
If $\partial\mc R_1\cap\partial \mc R_2$ has two elements, then $H_2/H_{22}>0$. 
\smallskip
\item 
If $\partial\mc R_1\cap\partial \mc R_2$ has one element, which we denote by $(\tilde t,\tilde u)$, then one of the following holds:\smallskip
\begin{itemize}
    \item 
        $
        \mc R_1 \cap \mc R_2=
        \partial\mc R_1\cap\partial \mc R_2$.
        \smallskip
    \item
    $\partial \mc R_2=\mc R_2=\{(t,u_0)\colon t\leq t_0\}$
    and 
    $\partial\mc R_1\cap \partial\mc R_2\subsetneq \mc R_1\cap \mc R_2= 
    \{(t,u_0)\colon \tilde t\leq t\leq t_0\}$.
\end{itemize}
\end{itemize}
\end{lemma}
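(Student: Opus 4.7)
The plan is to exploit the explicit descriptions of $\mc R_1$ and $\mc R_2$ from \eqref{forms-of-R1-R2}, working in the $(t,u)$-plane with the geometry of the bounding curves. If $H_2\not\succeq 0$ then $\mc R_2=\varnothing$ and every assertion holds vacuously, so throughout one may assume $H_2\succeq 0$. Under this assumption, $\partial \mc R_1$ is the right-opening parabola $u^2=\eta t$, while $\partial \mc R_2$ is either the left-opening parabola $(u-u_0)^2=(t_0-t)(H_2/H_{22})$ (when $H_2/H_{22}>0$) or the horizontal half-line $\{(t,u_0):t\le t_0\}$ (when $H_2/H_{22}=0$).

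First I would handle the first and third bullets by direct computation. Substituting $t=u^2/\eta$ into the defining equation of $\partial\mc R_2$ yields
\[
\Big(1+\tfrac{H_2/H_{22}}{\eta}\Big)u^2-2u_0 u+\Big(u_0^2-\tfrac{H_1}{H_{22}}\cdot\tfrac{H_2}{H_{22}}\Big)=0,
\]
a polynomial of degree at most $2$ in $u$, hence at most two real roots — giving the first bullet. For the third bullet, if $H_2/H_{22}=0$ then $\partial\mc R_2$ is a horizontal half-line and $u=u_0$ meets the parabola $u^2=\eta t$ at the single point $(u_0^2/\eta,u_0)$ provided $u_0^2/\eta\le t_0$; so two intersection points force $H_2/H_{22}>0$.

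Next I would address the second bullet. The reverse implication is trivial. For the forward implication, I use that both $\mc R_1$ (the sublevel set of the convex function $u^2-\eta t$) and $\mc R_2$ (a spectrahedron $\{\cH(\cG(t,u))\succeq 0\}$) are closed convex sets. Their asymptotic behavior is incompatible: $\mc R_1\subseteq\{t\ge 0\}$ but $\mc R_2$ always contains points with $t\to-\infty$ (either along the parabola $u=u_0\pm h(t)$ or along the half-line $u=u_0$), while $\mc R_2\subseteq\{t\le t_0\}$ but $\mc R_1$ contains the whole ray $\{(t,0):t\ge 0\}$. So neither set is contained in the interior of the other. Given $(t^*,u^*)\in \mc R_1\cap \mc R_2$, moving horizontally to the right one exits $\mc R_2$ while remaining in $\mc R_1$, and a standard continuity/sliding argument on the closed convex sets produces a point in $\partial\mc R_1\cap \partial\mc R_2$.

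Finally, the fourth bullet follows by splitting cases. If $H_2/H_{22}>0$, a unique intersection point means the quadratic above has a double root, so the two parabolas are tangent; by strict convexity of the two regions on opposite sides of the common tangent line, $\mc R_1\cap\mc R_2$ reduces to that single tangency point. If instead $H_2/H_{22}=0$, then $\mc R_2=\partial\mc R_2=\{(t,u_0):t\le t_0\}$, and the unique boundary intersection is $(\tilde t,\tilde u)=(u_0^2/\eta,u_0)$; the segment $\{(t,u_0):\tilde t\le t\le t_0\}$ lies in $\mc R_1$ (since $u_0^2\le\eta t$ there) and coincides with $\mc R_1\cap \mc R_2$, giving the second subcase exactly as stated. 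The main obstacle is the forward direction of the second bullet, where the geometric reasoning has to be turned into a rigorous convex-analysis argument; the rest reduces to checking a single quadratic and the degenerate linear case.
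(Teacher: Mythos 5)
Your argument is correct in substance and follows essentially the same route as the proof the paper relies on (the paper only cites \cite[Theorem 5.1, Claim 4]{YZ24}, whose argument is exactly this planar analysis of the two regions in \eqref{forms-of-R1-R2}): reduce common boundary points to the quadratic $q(u)=\bigl(1+\tfrac{H_2/H_{22}}{\eta}\bigr)u^2-2u_0u+u_0^2-t_0\,(H_2/H_{22})$, treat the degenerate case $H_2/H_{22}=0$ (half-line) separately, and use tangency plus convexity in the one-point case. Two spots deserve tightening, though neither is a fatal gap. First, the forward direction of the second bullet is only sketched; instead of the ``sliding'' argument you can close it algebraically: for any $(t,u)\in\mc R_1\cap\mc R_2$ one has $u^2\le\eta t$ and $(u-u_0)^2\le(t_0-t)(H_2/H_{22})$, and adding these gives $q(u)\le 0$, so $q$ (which has positive leading coefficient) has a real root; when $H_2/H_{22}>0$ every real root automatically yields a point with $0\le t\le t_0$ lying on both parabolas, and in the case $H_2/H_{22}=0$ the point $(u_0^2/\eta,u_0)$ lies on the half-line because $u_0^2/\eta\le t\le t_0$ for the chosen intersection point. (Alternatively, your sliding idea is completable by the intermediate value theorem applied to $u^2-\eta t$ along the connected curve $\partial\mc R_2$.) Second, in the half-line case your conclusion ``giving the second subcase exactly as stated'' overlooks the degenerate possibility $\tilde t=u_0^2/\eta=t_0$, where the segment collapses to the single point, the strict inclusion $\partial\mc R_1\cap\partial\mc R_2\subsetneq\mc R_1\cap\mc R_2$ fails, and it is the first alternative of the fourth bullet that holds; your computation of $\mc R_1\cap\mc R_2$ still yields the stated disjunction, so only the phrasing needs adjusting.
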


\begin{lemma}[{\cite[Theorem 5.1, Claim 5]{YZ24}}]
\label{Hyp3-claim5}
    Assume that $\widetilde{\mc{M}}(k;\beta)\succeq 0$.
Let $H_2$ (see \eqref{081123-1936}) be positive definite, $(t_1,u_1)\in \partial \mc R_2, (t_2,u_2)\in \partial \mc R_2$
and $u_1\neq u_2$.
Then at least one of
$\cH(\mc G(t_1,u_1))$ or
$\cH(\mc G(t_2,u_2))$ 
admits a $\RR$--representing measure.
\end{lemma}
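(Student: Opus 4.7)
My plan is to compute the unique (up to scaling) kernel vector of $\cH(\cG(t_i,u_i))$ at each boundary point and reduce $\RR$-representability via Theorem \ref{Hamburger} to a simple linear condition on its last entry, which I will then show is a nonconstant affine function of $u$. First, since $H_2\succ 0$ its principal submatrix $H_{22}$ is also positive definite, and by Lemma \ref{Hyp3-claim2}, $\Rank \cH(\cG(t,u))=k$ for every $(t,u)\in\partial \mc R_2$. Using the $2\times 2$ block decomposition of $\cH(\cG(t,u))$ with diagonal blocks indexed by $\{\mathit{1},X\}$ and $\vec X^{(2,k)}$, one finds
\begin{align*}
\cH(\cG(t,u))/H_{22}=\begin{pmatrix}t_0-t & u_0-u\\ u_0-u & B\end{pmatrix},\qquad B:=H_2/H_{22}>0,
\end{align*}
which on $\partial \mc R_2$ has rank $1$ with kernel spanned by $(B,\,u-u_0)^T$. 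Using the standard correspondence between kernels of a positive semidefinite block matrix and of its Schur complement with respect to an invertible diagonal block, the unique (up to scaling) kernel vector of $\cH(\cG(t,u))$ is
\begin{align*}
v(t,u)=\left(B,\;u-u_0,\;-H_{22}^{-1}\left(Bh_{12}^{(1)}+(u-u_0)h_{12}^{(2)}\right)\right)^T\in\RR^{k+1}.
\end{align*}

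Since $A_{\gamma(t_i,u_i)}=\cH(\cG(t_i,u_i))$ is positive semidefinite of rank $k$, by Theorem \ref{Hamburger} it admits a $\RR$-representing measure if and only if its upper-left $k\times k$ principal block $A_{\gamma(t_i,u_i)}(k-1)$ is positive definite. Applying Proposition \ref{extension-principle}, any nonzero kernel vector of this upper-left block extends by zero to a kernel vector of $A_{\gamma(t_i,u_i)}$, so the block is singular exactly when $v(t_i,u_i)$ has vanishing last ($X^k$) entry. This entry equals
\begin{align*}
-B\alpha_1-(u_i-u_0)\alpha_2,\qquad \alpha_\ell:=\left(H_{22}^{-1}h_{12}^{(\ell)}\right)_{k-1},\;\;\ell=1,2,
\end{align*}
which is affine in $u_i$ and independent of $t_i$.

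The main step is to rule out $(\alpha_1,\alpha_2)=(0,0)$ using positive definiteness of $H_2$. Assume toward contradiction that $\alpha_1=\alpha_2=0$; then both $h_{12}^{(1)}$ and $h_{12}^{(2)}$ lie in the span of the first $k-2$ columns of $H_{22}$ (those indexed by $X^2,\ldots,X^{k-1}$). Let $N$ be the $(k-1)\times k$ matrix whose columns are $h_{12}^{(1)}$, $h_{12}^{(2)}$, and these $k-2$ columns of $H_{22}$. A direct check using the Hankel entries shows $N_{i,j}=\gamma_{i+j+2}$ for $i=0,\ldots,k-2$ and $j=0,\ldots,k-1$, so $N$ coincides with $H_2$ after deletion of its last row. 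Since $H_2\succ 0$ has rank $k$, the resulting $(k-1)\times k$ matrix has rank exactly $k-1$; but the assumption places all $k$ columns of $N$ inside a subspace of dimension at most $k-2$, forcing $\Rank N\leq k-2$, a contradiction.

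Consequently $u\mapsto -B\alpha_1-(u-u_0)\alpha_2$ is either a nonzero constant (when $\alpha_2=0$) or strictly affine in $u$ (when $\alpha_2\neq 0$), and in either case it vanishes for at most one value of $u$. Since $u_1\neq u_2$, at least one of $\cH(\cG(t_1,u_1))$, $\cH(\cG(t_2,u_2))$ has kernel vector with nonvanishing last entry, and for that one an $\RR$-representing measure exists by Theorem \ref{Hamburger}. The main obstacle is the Hankel rank argument ruling out $(\alpha_1,\alpha_2)=(0,0)$; the remaining manipulations are routine Schur-complement and extension-principle computations.
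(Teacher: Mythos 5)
Your argument is correct: on $\partial\mc R_2$ with $H_2\succ 0$ the matrices $\cH(\cG(t_i,u_i))$ are positive semidefinite Hankel matrices of rank $k$, Theorem \ref{Hamburger} reduces $\RR$--representability to nonsingularity of the leading $k\times k$ block, i.e.\ to nonvanishing of the last entry of the (unique up to scaling) kernel vector, and your Hankel rank count — identifying $N$ with the first $k-1$ rows of $H_2$ — correctly rules out $(\alpha_1,\alpha_2)=(0,0)$, so the affine-in-$u$ last entry can vanish for at most one of the two distinct values $u_1\neq u_2$. This is essentially the same route as the proof the paper defers to in \cite[Theorem 5.1, Claim 5]{YZ24} (the failure at both boundary points being played off against positive definiteness of $H_2$ through the common rows indexed by $X^2,\ldots,X^k$), only made more explicit via the Schur-complement formula for the kernel vector.
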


\begin{proof}[Proof of Theorem \ref{221023-1854}]
First we prove the implication
$\eqref{221023-1854-pt1}\Rightarrow\eqref{221023-1854-pt3}$.
There exists $(\tilde t_0,\tilde u_0)\in \mc R_1\cap \mc R_2$
satisfying \eqref{251023-1603-v2}.
In particular,
    $\mc R_1\neq \varnothing$
    and 
    by \eqref{forms-of-R1-R2}, $\eta\geq 0$.
    Since $\cF(\mc G(\tilde t_0,\tilde u_0))$ has a $\cZ(ay+x^2-y^2)$--rm,
    it follows, by Theorem \ref{090622-1611}, that
    $\cG(\tilde t_0,\tilde u_0)$ satisfies ($\widetilde{\text{Hyp}}$).
Note that 
    $$
    \cF(\cG(t,u))
    =
    \cF(A_{\min})
    +
    \begin{pmatrix}
        t & u  \\ u & \eta
    \end{pmatrix}
    \oplus 
    \mathbf{0},
    $$
    where $\mathbf{0}$ is a zero matrix of the appropriate size.
    Moreover, by definition of $A_{\min}$, we have
    \begin{align}
    \label{rank-conditions}
    \begin{split}
    \Rank \cF(\cG(t,u))
    &=
    \Rank \cF(A_{\min})
    +
    \Rank 
        \begin{pmatrix}
            t & u \\ 
            u & \eta
        \end{pmatrix},\\
    \Rank \cF(\cG(t,u))_{\cB\setminus \{X^k\}}
    &\leq
    \Rank \cF(A_{\min})_{\cB\setminus \{X^k\}}
    +
    \Rank 
        \begin{pmatrix}
            t & u \\ 
            u & \eta
        \end{pmatrix}\\
    &\leq
    \Rank \cF(A_{\min})
    +
    \Rank 
        \begin{pmatrix}
            t & u \\ 
            u & \eta
        \end{pmatrix},\\
    \Rank \cF(\cG(t,u))_{\cB\setminus \{YX^{k-1}\}}
    &\leq
    \Rank \cF(A_{\min})_{\cB\setminus \{YX^{k-1}\}}
    +
    \Rank 
        \begin{pmatrix}
            t & u \\ 
            u & \eta
        \end{pmatrix}\\
    &\leq
    \Rank \cF(A_{\min})
    +
    \Rank 
        \begin{pmatrix}
            t & u \\ 
            u & \eta
        \end{pmatrix}.
    \end{split}
    \end{align}
    If $\cG(\tilde t_0,\tilde u_0)$
    satisfies ($\widetilde{\text{Hyp}}$)$_1$,
    then all inequalities in \eqref{rank-conditions}
    must be equalities and in particular,
    $A_{\min}$ satisfies ($\widetilde{\text{Hyp}}$)$_1$.
    If $\cG(\tilde t_0,\tilde u_0)$
    satisfies ($\widetilde{\text{Hyp}}$)$_2$,
    then clearly 
        $
            \Rank \cF(A_{\min})=2k-1.
        $
    From now on we separate two cases according to the value of $\eta$.\\

    \noindent
    \textbf{Case 1: $\eta=0$.}
    For $\eta=0$ we have $\widehat A_{\min}=A_{\min}$.
    By \eqref{forms-of-R1-R2}, we have $\tilde u_0=0$ and $\tilde t_0\geq 0$.
    By Lemma \ref{Hyp3-claim1}, 
    $\cG(\tilde t_0,0)$ cannot
    satisfy ($\widetilde{\text{Hyp}}$)$_2$
    and hence it satisfies ($\widetilde{\text{Hyp}}$)$_1$.
    But then by the explanation above, $A_{\min}$
    satisfies ($\widetilde{\text{Hyp}}$)$_1$ and by
    Corollary \ref{090622-1611-hyp-v3}, it has a 
    $\cZ(ay+x^2-y^2)$--rm. By Theorem \ref{Hamburger},
    $\RR$--representability of $\gamma(\tilde t_0,0)$
    implies $\RR$--representability of $\gamma(0,0)$.
    This is Theorem \ref{221023-1854}.\eqref{221023-1857-pt3.1}.\\

    \noindent
    \textbf{Case 2: $\eta>0$.}
        By Lemma \ref{Hyp3-claim4}, 
        $\partial\mc R_1\cap\partial \mc R_2$ has one or two elements.
    We separate two cases according to the number of elements in $\partial\mc R_1\cap\partial \mc R_2$.\\

    \noindent
    \textbf{Case 2.1: 
    $\partial\mc R_1\cap\partial \mc R_2$ has two elements.}
    By Lemma \ref{Hyp3-claim4},
    $H_2/H_{22}>0$.  
    Assume that $H_2\not\succ 0$.
    Then
    there is a nontrivial column relation among columns $X^2,\ldots,X^k$ in $H_2$. By Proposition \ref{extension-principle},
    the same holds for $\cH(\mc G(\tilde t_0,\tilde u_0))$. 
    Let $\sum_{i=0}^{k-2} c_i X^{i+2}=\mathbf{0}$ be the nontrivial column relation in $\cH(\mc G(\tilde t_0,\tilde u_0))$.
    But then
        $\cZ(x^2\sum_{i=0}^{k-2} c_i x^i)=\cZ(x\sum_{i=0}^{k-2} c_i x^i)$
    and it follows by \cite{CF96} that 
    $\sum_{i=0}^{k-2} c_i X^{i+1}=\mathbf{0}$
    is also a nontrivial column relation in $\cH(\mc G(\tilde t_0,\tilde u_0))$.
    Inductively, this implies $H_2/H_{22}=0$,
    which is a contradiction. 
    Hence, $H_2\succ 0$.
    This is the case of
    Theorem \ref{221023-1854}.\eqref{221023-1857-pt3.2.1}.
    \\

    \noindent
    \textbf{Case 2.2: 
    $\partial\mc R_1\cap\partial \mc R_2$ has one element.}
     Let us denote this element by $(\tilde t,\tilde u)$.
    By Lemma \ref{Hyp3-claim4}, 
    $\partial\mc R_1\cap\partial \mc R_2=\mc R_1\cap \mc R_2$
    or
    $\partial \mc R_2=\mc R_2=\{(t,u_0)\colon t\leq t_0\}$
    and 
    $\partial\mc R_1\cap\partial \mc R_2\subsetneq \mc R_1\cap \mc R_2= 
    \{(t,u_0)\colon \tilde t\leq t\leq t_0\}$
    .
    We separate two cases according to these two possibilities.\\

    \noindent
    \textbf{Case 2.2.1: $\partial\mc R_1\cap\partial \mc R_2=\mc R_1\cap \mc R_2$.} In this case 
    $(\tilde t_0,\tilde u_0)=(\tilde t,\tilde u)$ and hence
    $\gamma(\tilde t,\tilde u)$ is $\RR$--representable,
    while by Corollary \ref{090622-1611-hyp-v3},
    $\cF(\mc G(\tilde t,\tilde u))$ 
    satisfies $(\widetilde{\text{Hyp}})$.
    This is the case of Theorem \ref{221023-1854}.\eqref{221023-1857-pt3.2.2}.\\

    \noindent
    \textbf{Case 2.2.2: 
    $\partial \mc R_2=\mc R_2=\{(t,u_0)\colon t\leq t_0\}$
    and 
    $\partial\mc R_1\cap\partial \mc R_2
    \subsetneq \mc R_1\cap \mc R_2= 
    \{(t,u_0)\colon \tilde t\leq t\leq t_0\}$.}
    By \eqref{forms-of-R1-R2}, it follows that $H_2/H_{22}=0$ (see definition
    \eqref{101123-1550} of $h(\mathbf{t})$).
    Since $H_2$ is not pd, Theorem \ref{Hamburger} used for  
    $\cH(\mc G(\tilde t_0,\tilde u_0))$, implies that the last column of $H_2$ is in the
    span of the others. Hence, by Proposition \ref{extension-principle}, the same holds for $\cH(\mc G(\tilde t,\tilde u))$ and $\cH(\mc G(t_0,\tilde u))$, whence 
    $\gamma(\tilde t,\tilde u)$ and $\gamma(t_0,\tilde u)$ are both $\RR$--representable.
    By Lemma \ref{Hyp3-claim1},
    $
    \Rank \cF(\cG(\tilde t,\tilde u))
    =\Rank \cF(A_{\min})+1
    $
    and
    $
    \Rank \cF(\cG(t,\tilde u))
    =\Rank \cF(A_{\min})+2
    $
    for $t\in (\tilde t,t_0]$.
    If $\cG(t,\tilde u)$ satisfies $(\widetilde{\text{Hyp}})_2$
    for some $t\in (\tilde t,t_0]$, then it satisfies 
    $\cG(t,\tilde u)$ satisfies $(\widetilde{\text{Hyp}})_2$
    for every $t\in (\tilde t,t_0]$.
    Similarly, by \eqref{rank-conditions},  
    if $\cG(t,\tilde u)$ satisfies $(\widetilde{\text{Hyp}})_1$
    for some $t\in (\tilde t,t_0]$, then it satisfies 
    $\cG(t,\tilde u)$ satisfies $(\widetilde{\text{Hyp}})_1$
    for every $t\in (\tilde t,t_0]$.
    This holds because validity of $(\widetilde{\text{Hyp}})_1$
    for one $t\in (\tilde t,t_0]$, implies that all inequalities in \eqref{rank-conditions} must be equalities,
    which is true only if
    \begin{align}
    \label{rank-conditions-v5}
    \begin{split}
    &(A_{\min})_{\{1,X\}}=\\
    =&
    \cF(\cG(t,\tilde u))_{\{1,X\},\cB\setminus \{1,X,X^k\}}
    \big(
    \cF(\cG(t,\tilde u))_{\cB\setminus \{1,X,X^k\}}
    \big)^\dagger
    \cF(\cG(t,\tilde u))_{\cB\setminus \{1,X,X^k\},\{1,X\}}\\
    =&
    \cF(\cG(t,\tilde u))_{\{1,X\},\cB\setminus \{1,X,YX^{k-1}\}}
    \big(
    \cF(\cG(t,\tilde u))_{\cB\setminus \{1,X,YX^{k-1}\}}
    \big)^\dagger
    \cF(\cG(t,\tilde u))_{\cB\setminus \{1,X,YX^{k-1}\},\{1,X\}}.
    \end{split}
    \end{align}
    But then \eqref{rank-conditions-v5}
    holds for every $t\in (\tilde t,t_0]$ and consequently
    $(\widetilde{\text{Hyp}})_1$
    holds for $\cG(t,\tilde u)$
    for every $t\in (\tilde t,t_0]$.
    If $\cG(t_0,\tilde u)$ does not satisfy $(\widetilde{\text{Hyp}})$, it does not admit a 
    $\cZ(ay+x^2-y^2)$--rm by Corollary \ref{090622-1611-hyp-v3},
    which further implies that $\cG(\tilde t,\tilde u)$ satisfies $(\widetilde{\text{Hyp}})$.
     This is the case of Theorem \ref{221023-1854}.\eqref{221023-1857-pt3.2.2}.\\
    
    \noindent This concludes the proof of the implication
    $\eqref{221023-1854-pt1}\Rightarrow\eqref{221023-1854-pt3}$
    of Theorem \ref{221023-1854}.\\


It remains to prove the implication 
$\eqref{221023-1854-pt3}\Rightarrow\eqref{221023-1854-pt1}$
of Theorem \ref{221023-1854}.
    We separate four cases according to the assumptions in $\text{Theorem }\ref{221023-1854}.\eqref{221023-1854-pt3}$.\\

    \noindent\textbf{Case 1: Theorem \ref{221023-1854}.\eqref{221023-1857-pt3.1} holds.}
    By Lemma \ref{Hyp3-claim3}, $(0,0)\in \mc R_1\cap \mc R_2$.
    Further, $\eta=0$ implies that 
    $\widehat A_{\min}=A_{\min}=\cG(0,0)$ and hence 
    $\cG(0,0)$ satisfies $(\widetilde{\text{Hyp}})_1$ by assumption.
    By Corollary \ref{090622-1611-hyp-v3}, $\cF(\cG(0,0))$ admits a $\Rank (\cF(A_{\min}))$--atomic
    $\cZ(ay+x^2-y^2)$--rm.
    By assumption, $\gamma(0,0)$ is $\RR$--representable.
    Hence, $(0,0)$ is a good choice for $(\tilde t_0,\tilde u_0)$ in \eqref{251023-1603-v2}. 
    This proves \eqref{221023-1854-pt1} in this case.
    \\

    \noindent\textbf{Case 2: Theorem \ref{221023-1854}.\eqref{221023-1857-pt3.2.1}
    holds.}
    By assumption, $\partial \mc R_1\cap \partial \mc R_2=\{(t_1,u_1),(t_2,u_2)\}$ has two distinct elements. Hence, $\partial \mc R_{2}$ is not a half-line 
    and $\mc R_1\cap\mc R_2$ has a non--empty interior, which is equal to 
    $\interior{\mc R}_1\cap \interior{\mc R}_2$.
    Since $H_2\succ 0$, it follows, by Lemma \ref{Hyp3-claim2},
    that $\cH(\cG(t,u))\succ 0$ for every 
    $(t,u)\in \interior{\mc R}_2$. Hence, 
    $\gamma(t,u)$ is $\RR$--representable for every 
    $(t,u)\in \interior{\mc R}_2$.
    We separate two cases according to which of the assumptions:
    \begin{itemize}
    \item $A_{\min}$ satisfies ($\widetilde{\text{Hyp}}$)$_1$.
    \item $\Rank \cF(A_{\min})=2k-1$.
    \end{itemize}
    holds.\\
    
    \noindent \textbf{Case 2.1: $ A_{\min}$ satisfies ($\widetilde{\text{Hyp}}$)$_1$.}
        By Lemma \ref{Hyp3-claim1}, we have
        $$
        \Rank \cF(\cG(t_1, u_1))
        =
        \Rank \cF(\cG(t_2,u_2))
        =\Rank \cF(A_{\min})+1.
        $$
        We will prove that
        \begin{equation}
        \label{assum:1609}
        \Rank \cF(\cG(t_i,u_i))_{\cB\setminus \{X^k\}}
        =\Rank \cF(A_{\min})_{\cB\setminus \{X^k\}}+1
        \quad \text{for }i=1,2.
        \end{equation}
        If \eqref{assum:1609} is not true, then
        \begin{align*}
        (A_{\min})_{\{1,X\}}
        &\neq  
        \underbrace{\cF(\cG(t_i,u_i)_{\{1,X\},\cB\setminus \{1,X,X^k\}}
    \big(
    \cF(\cG(t_i,u_i))_{\cB\setminus \{1,X,X^k\}}
    \big)^\dagger
    \cF(\cG(t_i,u_i))_{\cB\setminus \{1,X,X^k\},\{1,X\}}}_{\breve A_{\min}}.
        \end{align*}
        Note that the definition of $\breve A_{\min}$ does not depend on $i$, because $t_i$ and $u_i$ do not appear in the corresponding restrictions of $\cF(\cG(t_i,u_i))$.
        Clearly, 
        $$
        \begin{pmatrix}
        t_i & u_i \\ u_i & \eta
        \end{pmatrix}\succeq (A_{\min})_{\{1,X\}}-\breve A_{\min}\succeq 0
        \quad \text{for }i=1,2,
        $$
        whence 
        \begin{equation}
        \label{kernels-inlcusion}
        \ker\big((A_{\min})_{\{1,X\}}-\breve A_{\min}\big)\subseteq 
        \ker\begin{pmatrix}
        t_i &  u_i \\  u_i & \eta
        \end{pmatrix}\quad \text{for }i=1,2.
        \end{equation}
        Since
        \eqref{kernels-inlcusion} holds for $i=1,2$, it follows
        that 
        $\ker\big((A_{\min})_{\{1,X\}}-\breve A_{\min}\big)=\RR^2$,
        which contradicts to 
        $(A_{\min})_{\{1,X\}}\neq  \breve A_{\min}$.
        Hence, \eqref{assum:1609} is true.
        Similarly,
        $$
        \Rank \cF(\cG(t_i,u_i))_{\cB\setminus \{YX^{k-1}\}}
        =\Rank \cF(A_{\min})_{\cB\setminus \{YX^{k-1}\}}+1
        \quad \text{for }i=1,2.
        $$
        So $\cG(t_i,u_i)$ satisfies
        $(\widetilde{\text{Hyp}})_1$ for $i=1,2$.
        By Corollary \ref{090622-1611-hyp-v3},
        $\cF(\cG(t_i,u_i))$ admits a
        $\cZ(ay+x^2-y^2)$--rm for $i=1,2$.
        By Lemma \ref{Hyp3-claim5}, there is $j\in \{1,2\}$
    such that $\cH(\cG(t_j,u_j))$ admits a $\RR$--rm,
    whence $( t_j,u_j)$ is a good choice for $(\tilde t_0,\tilde u_0)$ in \eqref{251023-1603-v2}. 
    This proves \eqref{221023-1854-pt1} in this case.
        \\

    \noindent \textbf{Case 2.2: $\Rank\cF(A_{\min})=2k-1$.}
    By Lemma \ref{Hyp3-claim1}, 
    $\Rank\cF(\cG(t,u))=2k+1$ for every $(t,u)\in \interior{\mc R}_1$.
    By Corollary \ref{090622-1611-hyp-v3},
        $\cF(\cG(t,u))$ admits a
        $\cZ(ay+x^2-y^2)$--rm for every $(t,u)\in \interior{\mc R}_1$.
        Hence, $(t,u)\in \interior{\mc R}_1\cap \interior{\mc R}_2$ is a good choice for $(\tilde t_0,\tilde u_0)$ in \eqref{251023-1603-v2}. 
    This proves \eqref{221023-1854-pt1} in this case.\\

    \noindent\textbf{Case 3: Theorem \ref{221023-1854}.\eqref{221023-1857-pt3.2.2}
    holds.}
    Clearly, one of the points $(\tilde t, \tilde u)$
    or $(t_0,\tilde u)$  
    is a good choice for $(\tilde t_0,\tilde u_0)$ in \eqref{251023-1603-v2}. 
    This proves \eqref{221023-1854-pt1} in this case.\\

        \noindent This concludes the proof of the implication 
$\eqref{221023-1854-pt3}\Rightarrow\eqref{221023-1854-pt1}$
of Theorem \ref{221023-1854}.
    \end{proof}

\subsection{Cardinality of a minimal representing measure}

The following theorem characterizes the cardinality of a minimal measure in case 
$\beta$ admits a $\cZ(p)$--rm.

\begin{theorem}
\label{thm:hyperbolic-3-minimal-measures}
    Let $p(x,y)=y(ay+x^2-y^2)$,
        $a\in \RR\setminus\{0\}$,
	and $\beta=(\beta_{i,j})_{i,j\in \ZZ_+,i+j\leq 2k}$, where $k\geq 3$,
        admits a $\cZ(p)$--representing measure.
        Assume the notation above.
        The following statements hold:
        \begin{enumerate}
        \item There exists at most $(\Rank \widetilde{\mc{M}}(k;\beta)+2)$--atomic $\cZ(p)$--representing 
            measure for $\beta$.
            \smallskip
        \item There is no 
        $\cZ(p)$--representing measure with less than $\Rank \widetilde{\mc{M}}(k;\beta)+2$ atoms
	if and only if 
                $A_{\min}$ does not satisfy $(\widetilde{\text{Hyp}})_1$,
                $\Rank\cF(A_{\min})=2k-1$,
                $\eta>0$,
                $\partial\mc R_1\cap\partial\mc R_2$ has two elements, $H_2$ is positive definite
                and $\Rank \cH(A_{\min})=k$.
                \smallskip
        \item There exists a $\Rank \widetilde{\mc{M}}(k;\beta)$--atomic $\cZ(p)$--representing measure for $\beta$
	   if and only if any of the following holds:
        \smallskip
        \begin{enumerate}
            \item 
                $\eta=0$.
                \smallskip
            \item 
                $\eta>0$,
                $\Card(\partial \mc R_1\cap \partial \mc R_2)=2$,
                $\widehat A_{\min}$ satisfies ($\widetilde{ \text{Hyp}}$)$_1$,
                $\cH(A_{\min})$ is positive definite.
                \smallskip
            \item 
                $\eta>0$,
                $\Card(\partial \mc R_1\cap \partial \mc R_2)=\Card(\mc R_1\cap \mc R_2)=1$
                and the equality
                $\Rank \cH(A_{\min})
                =\Rank H_{22}+2
                $ holds.
                \smallskip
            \item 
                $\eta>0$,
                $\Card(\partial \mc R_1\cap \partial \mc R_2)=1$,
                $\{(\tilde t,\tilde u)\}=\partial \mc R_1\cap \partial \mc R_2\subsetneq \mc R_1\cap \mc R_2$,
                $\cF(\cG(\tilde t,\tilde u))$ admits a $\cZ(ay+x^2-y^2)$--representing measure
                and
                $\cH(A_{\min})=\Rank H_{22}+2$.
        \end{enumerate}
        \end{enumerate}
        
        In particular, a $p$--pure sequence $\beta$ with a 
            measure
admits at most $(3k+1)$--atomic $\cZ(p)$--representing 
            measure.
\end{theorem}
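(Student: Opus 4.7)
The plan is to reduce the counting problem to minimising $\Rank\cF(\cG(\tilde t_0,\tilde u_0))+\Rank\cH(\cG(\tilde t_0,\tilde u_0))$ over admissible pairs $(\tilde t_0,\tilde u_0)\in\mc R_1\cap\mc R_2$. Indeed, by Lemma \ref{071123-0646} and \eqref{251023-1603-v2}, every $\cZ(p)$--rm for $\beta$ is obtained by gluing a $\cZ(ay+x^2-y^2)$--rm for $\cF(\cG(\tilde t_0,\tilde u_0))$ with an $\RR$--rm for $\cH(\cG(\tilde t_0,\tilde u_0))$, and by Corollary \ref{090622-1611-hyp-v3} and Theorem \ref{Hamburger} the minimal cardinalities of such rms equal the respective ranks. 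The benchmark to compare against is Lemma \ref{071123-2008}.\eqref{071123-2008-pt3}, which gives
$$\Rank \widetilde{\mc M}(k;\beta)=\Rank\cF(A_{\min})+\Rank\cH(A_{\min}).$$

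The ranks appearing in the atom count are controlled by Lemmas \ref{Hyp3-claim1} and \ref{Hyp3-claim2}: the rank of $\cF(\cG(t,u))$ exceeds $\Rank\cF(A_{\min})$ by $0$, $1$ or $2$ according to whether $(t,u)=(0,0)$ with $\eta=0$, lies on $\partial\mc R_1\setminus\{(0,0)\}$, or in $\interior{\mc R}_1$; similarly the rank of $\cH(\cG(t,u))$ exceeds $\Rank H_{22}$ by $0$, $1$ or $2$ depending on the position of $(t,u)$ in $\mc R_2$. The baseline $\Rank\cH(A_{\min})$ differs from $\Rank H_{22}$ by at most two because $\cH(A_{\min})=\cH(\widehat A_{\min})+\eta E^{(k+1)}_{2,2}$ and the Schur complement $\cH(\widehat A_{\min})/H_{22}$ is the explicit $2\times2$ matrix of \eqref{081123-1936}. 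Tracking these discrepancies in each regime of $\eta$ yields the explicit comparison between the minimal atom count and $\Rank\widetilde{\mc M}(k;\beta)$.

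I would then perform the case analysis mirroring the proof of Theorem \ref{thm:hyperbolic-2-minimal-measures}. If $\eta=0$, the unique candidate is $(\tilde t_0,\tilde u_0)=(0,0)$ (by Theorem \ref{221023-1854}.\eqref{221023-1857-pt3.1} and Lemma \ref{Hyp3-claim3}), so the sum of ranks equals $\Rank\widetilde{\mc M}(k;\beta)$ exactly, giving the minimal case. If $\eta>0$ and $\Card(\partial\mc R_1\cap\partial\mc R_2)=2$, Lemma \ref{Hyp3-claim5} guarantees a boundary pair at which both ranks bump by $1$; by refining the argument in Case~2.1 of the proof of Theorem \ref{221023-1854}, $\widehat A_{\min}$ satisfies $(\widetilde{\text{Hyp}})_1$ iff the $\cF$--rank bump is only $1$ rather than $2$, and combining with the three possible values of $\Rank\cH(A_{\min})\in\{\Rank H_{22},\Rank H_{22}+1,\Rank H_{22}+2\}$ (the first of which forces $H_2\not\succ0$, contrary to assumption, when both $\widehat A_{\min}$ fails $(\widetilde{\text{Hyp}})_1$ and $\Rank\cF(A_{\min})<2k-1$) pins down exactly when each of $\Rank\widetilde{\mc M}(k;\beta)$, $+1$, or $+2$ atoms is attainable. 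If $\Card(\partial\mc R_1\cap\partial\mc R_2)=1$, Lemma \ref{Hyp3-claim4} splits into the point-only case and the segment case $\mc R_2=\{(t,u_0):t\leq t_0\}$: in the former the unique boundary point $(\tilde t,\tilde u)$ is forced and the atom count equals $\Rank\cF(A_{\min})+1+\Rank H_{22}+1$, compared to $\Rank\widetilde{\mc M}(k;\beta)$; in the latter one may either stay at $(\tilde t,\tilde u)$ (boundary $\mc R_1$, boundary $\mc R_2$) or move to $(t_0,\tilde u)$ (interior $\mc R_1$, boundary $\mc R_2$), again producing the claimed minima after subtracting the various values of $\Rank\cH(A_{\min})-\Rank H_{22}\in\{0,1,2\}$.

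The hard part will be the bookkeeping for the $(\widetilde{\text{Hyp}})_1$ column relations: in the sub-case where $\widehat A_{\min}$ fails $(\widetilde{\text{Hyp}})_1$ one must, as in Case~2.3.2.2 of the proof of Theorem \ref{sol:x-y-axy}, show that moving into $\interior{\mc R}_1\cap\interior{\mc R}_2$ to force $\Rank\cF(\cG(\tilde t_0,\tilde u_0))=2k+1$ unavoidably costs two extra atoms for $\cF$, not one, and simultaneously that the corresponding $\cH$--rank cannot drop to compensate; this is precisely the scenario where $\Rank\widetilde{\mc M}(k;\beta)+2$ atoms are necessary. For the moreover part, $p$--purity and $\widetilde{\mc M}(k;\beta)\succeq 0$ together with \eqref{181123-0849} force $\Rank\cF(A_{\min})=2k-1$ and $\cH(A_{\min})$ invertible, so $A_{\min}$ automatically fulfils $(\widetilde{\text{Hyp}})$ through the rank-$2k-1$ alternative of Corollary \ref{090622-1611-hyp-v3}, ruling out the $+2$ scenario and leaving at most $\Rank\widetilde{\mc M}(k;\beta)+1\le3k+1$ atoms.
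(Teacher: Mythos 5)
Your plan reduces the atom count to minimizing $\Rank\cF(\cG(\tilde t_0,\tilde u_0))+\Rank\cH(\cG(\tilde t_0,\tilde u_0))$ over admissible pairs, benchmarks against $\Rank\widetilde{\mc M}(k;\beta)=\Rank\cF(A_{\min})+\Rank\cH(A_{\min})$ from Lemma \ref{071123-2008}, and runs the same $\eta=0$ / $\eta>0$ and $\Card(\partial\mc R_1\cap\partial\mc R_2)$ case split using Lemmas \ref{Hyp3-claim1}--\ref{Hyp3-claim5} — this is exactly the paper's proof strategy, including the observation that the $+2$ scenario requires $\cH(A_{\min})$ singular (which $p$-purity rules out). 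Your outline is sound modulo bookkeeping that you explicitly flag; minor slips (the phrase "unique candidate" for $\eta=0$, where $\mc R_1$ is a half-line and one must instead argue that representability at $(\tilde t_0,0)$ transfers to $(0,0)$; the reference to \eqref{181123-0849} instead of the analogous identity \eqref{261023-1536}) do not affect the conclusion.
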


\begin{proof}[Proof of Theorem \ref{thm:hyperbolic-3-minimal-measures}]    
    By Lemma \ref{071123-2008}.\eqref{071123-2008-pt3},
    \begin{equation}
        \label{261023-1536}
        \Rank \widetilde{\mc M}(k;\beta)
        =\Rank \cF(A_{\min})+
        \Rank \cH(A_{\min}).
    \end{equation}
        We observe again the proof of the implication
    $
    \eqref{221023-1854-pt3}
    \Rightarrow
    \eqref{221023-1854-pt1}
    $
    of Theorem \ref{221023-1854}.
    
    In the proof of the implication
    Theorem $\ref{221023-1854}.\eqref{221023-1857-pt3.1}\Rightarrow
    \eqref{251023-1603-v2}$
    we established that
    $\cF(A_{\min})$ and $\cH(A_{\min})$
    admit
    a $\Rank \cF(A_{\min})$--atomic
    and
    a $\Rank \cH(A_{\min})$--atomic rm\textit{s}.
    Using \eqref{261023-1536} it follows that $\beta$ has a 
    $\Rank \widetilde{\mc M}(k;\beta)$--atomic $\cZ(p)$--rm.\\
    
    In the proof of the implication
    Theorem $\ref{221023-1854}.\eqref{221023-1857-pt3.2.1}\Rightarrow
    \eqref{251023-1603-v2}$ we separated two cases:\\
    
    \noindent \textbf{Case 1: $A_{\min}$ satisfies ($\widetilde{\text{Hyp}}$)$_1$.}
    In this case we established that $\gamma(t',u')$ is $\RR$--representable
    for some $(t',u')\in \partial\mc R_1\cap\partial\mc R_2$,
    where $\Rank\cF(\cG(t',u'))=\Rank \cF(A_{\min})+1$
    and $\Rank\cH(\cG(t',u'))=k$.
    Since 
    $\cH(A_{\min})\succeq \Rank\cH(\cG(t',u'))$,
    it follows that 
    \begin{align*}
    \label{rank-equalities-v7}
    \begin{split}
    &\Rank\cF(\cG(t',u'))+
    \Rank\cH(\cG(t',u'))=\\
    =&\left\{
    \begin{array}{rl}
        \Rank \widetilde{\mc M}(k;\beta),&
        \text{if }
        \Rank\cH(\cG(t',u'))=
        \Rank\cH(A_{\min})-1,\\
        \Rank \widetilde{\mc M}(k;\beta)+1,&
        \text{if }
        \Rank\cH(\cG(t',u'))=
        \Rank\cH(A_{\min}).
    \end{array}
    \right.
    \end{split}
    \end{align*}
    
    It remains to show that if there does not exist $(t,u)$,
    which is a good choice for $(\tilde t_0,\tilde u_0)$ in \eqref{251023-1603-v2}, such that 
    $\Rank\cH(\cG(t,u))=
        \Rank\cH(A_{\min})-1$, then
    there is no 
    $\big(\Rank \widetilde{\mc M}(k;\beta)\big)$--atomic $\cZ(p)$--rm. Since $\eta>0$,
    it follows, by Lemma \ref{Hyp3-claim1},
    that $\Rank\cF(\cG(t,u))\geq \Rank \cF(A_{\min}+1$
    for any good choice $(t,u)$.
    Since also  $\Rank\cH(\cG(t,u))\geq 
        \Rank\cH(A_{\min})$,
    it follows that 
     $\Rank\cF(\cG(t,u))+\Rank\cH(\cG(t,u))\geq  
        \Rank  \widetilde{\mc M}(k;\beta)+1$.\\

    \noindent \textbf{Case 2: $\Rank \cF(A_{\min})=2k-1$.}
    If $A_{\min}$ does not satisfy ($\widetilde{\text{Hyp}}$)$_1$, 
    $\cG(t,u)$ does not satisfy ($\widetilde{\text{Hyp}}$)$_1$
    for any $(t,u)\in \RR^2$. So every $(t,u)$ which is a good choice for $(\tilde t_0,\tilde u_0)$, must satisfy 
    $\Rank \cF(\cG(t,u))=2k+1=\Rank \cF(A_{\min})+2$.
    By Lemma \ref{Hyp3-claim5}, there exists
    $(t',u')\in \interior{\mc R}_1\cap \partial{\mc R}_2$,
    such that $\cH(\cG(t',u'))$ admits a $\RR$--rm
    and satisfies 
    $$\Rank\cH(\cG(t',u'))
    =
    \Rank H_2=
    \left\{
    \begin{array}{rl}
    \Rank \cH(A_{\min}),& \text{if }\Rank \cH(A_{\min})=k,\\
    \Rank \cH(A_{\min})-1,& \text{if }\Rank \cH(A_{\min})=k+1.
    \end{array}
    \right.
    $$
    Hence,
    \begin{align*}
    \begin{split}
    &\Rank\cF(\cG(t',u'))+
    \Rank\cH(\cG(t',u'))=\\
    =&\left\{
    \begin{array}{rl}
        \Rank \widetilde{\mc M}(k;\beta)+2,&
        \text{if }
        \Rank\cH(A_{\min})=k,\\
        \Rank \widetilde{\mc M}(k;\beta)+1,&
        \text{if }
        \Rank\cH(A_{\min})=k+1.
    \end{array}
    \right.
    \end{split}
    \end{align*}
    If $(t,u)$ is a good choice for $(\tilde t_0,\tilde u_0)$, then $\Rank\cH(\cG(t,u))\geq k$ (since $H_2\succ 0$) and also $\Rank\cF(\cG(t,u))=\Rank \cF(A_{\min})+2$.
    So 
    \begin{align*}
    &\Rank\cF(\cG(t,u))+
    \Rank\cH(\cG(t,u))\geq\\
    \geq&
    \Rank \cF(A_{\min})+2
    +
    \left\{
    \begin{array}{rl}
    \Rank\cH(A_{\min}),& \text{if }\Rank\cH(A_{\min})=k,\\
    \Rank\cH(A_{\min})-1,& \text{if }\Rank\cH(A_{\min}=k+1.
    \end{array}
    \right.
    \end{align*}
    So the measure cannot contain less atoms than the one in $(t',u')$ above.\\
    
    Under the assumption
    Theorem $\ref{221023-1854}.\eqref{221023-1857-pt3.2.2}$
    we separate two cases:\\
    
    \noindent \textbf{Case 1: $\partial\mc R_1\cap \partial\mc R_2=\mc R_1\cap \mc R_2=\{(\tilde t,\tilde u)\}$.}
    Under the assumptions of this case,
    $\gamma(\tilde t,\tilde u)$ is $\RR$--representable.
    Hence, $(\tilde t,\tilde u)$ is a good choice for $(\tilde t_0,\tilde u_0)$ in \eqref{251023-1603-v2}.
    If $\Rank \cH(A_{\min})=\Rank H_{22}+2$, then a $\big(\Rank \widetilde{\mc M}(k;\beta)\big)$--atomic $\cZ(p)$--rm exists. This is due to the rank equality
    $$r:=\Rank \cF(\cG(\tilde t,\tilde u))+\Rank \cH(\cG(\tilde t,\tilde u))
    \underbrace{=}_{\substack{\eqref{rank-R1-parabolic}\\\eqref{rank-R2-parabolic}}}\Rank \cF(A_{\min})+1+\Rank H_{22}+1.$$
    Hence,
    $r=\Rank \widetilde{\mc M}(k;\beta)$ if and only if $\Rank \cH(A_{\min})=\Rank H_{22}+2$.
    Otherwise we have $\Rank \cH(A_{\min})=\Rank H_2=\Rank H_{22}+1$ (since $\eta>0$ and $H_2/H_{22}=0$) and 
    $r=\Rank \widetilde{\mc M}(k;\beta)+1$.\\

    \noindent \textbf{Case 2: $(\tilde t,\tilde u)=:\partial\mc R_1\cap \partial\mc R_2\subsetneq\mc R_1\cap \mc R_2$.}
    In this case it follows by Theorem \ref{221023-1854}
    that one of the points
    $(\tilde t,\tilde u)$ or $(t_0,\tilde u)$
    is a good choice for $(\tilde t_0,\tilde u_0)$ in \eqref{251023-1603-v2}.
    
    Assume that $(t_0,\tilde u)$ is a good choice.
    Since 
        $\Rank \cH(\cG(t_0,\tilde u))=\Rank \cH(A_{\min})-1$, which is due to $\eta>0$ and $H_2/H_{22}=0$ (if $H_2/H_{22}>0$, then  \eqref{forms-of-R1-R2}
        would imply that 
        $\Card \partial\mc R_1\cap \partial\mc R_2>1$),
    and 
        $\Rank \cF(\cG(t_0,\tilde u))=\Rank \cF(A_{\min})+2$
    (by Lemma \ref{Hyp3-claim1}),
    it follows that 
    a $\big(\Rank \widetilde{\mc M}(k;\beta)+1\big)$--atomic $\cZ(p)$--rm exists.

    As in the proof of Case 1 above,
    if $(\tilde t,\tilde u)$ is a good choice for $(\tilde t_0,\tilde u_0)$ in \eqref{251023-1603-v2}, then 
    a $\big(\Rank \widetilde{\mc M}(k;\beta)\big)$--atomic $\cZ(p)$--rm exists if and only if 
    $\Rank \cH(A_{\min})=\Rank H_{22}+2$.
    Otherwise the measure is 
    $\big(\Rank \widetilde{\mc M}(k;\beta)+1\big)$--atomic.

    It remains to show that if $(\tilde t,\tilde u)$ is not a
     good choice for $(\tilde t_0,\tilde u_0)$ in \eqref{251023-1603-v2}, there does not exist a 
    $\big(\Rank \widetilde{\mc M}(k;\beta)\big)$--atomic $\cZ(p)$--rm. By Lemma \ref{Hyp3-claim4}, the candidates for a good choice are points $(t,u_0)$ for $t\in (\tilde t,t_0]$. But as in the second paragraph above,
    we have 
    $\Rank \cH(\cG(t,\tilde u))\geq \Rank \cH(A_{\min})-1$
    and $\Rank \cF(\cG(t,\tilde u))=\Rank \cF(A_{\min})+2$
    for every such $t$. So 
    $$
    \Rank \cH(\cG(t,\tilde u))+\Rank \cF(\cG(t,\tilde u))\geq 
    \Rank \widetilde{\mc M}(k;\beta)+1.
    $$
    \smallskip
    
        It remains to establish the moreover part.
    Note that in the case where $\Rank \widetilde{\mc M}(k;\beta)+2$ atoms might be needed, $\cH(A_{\min})$ is not pd.
    Since for a $p$--pure sequence $\beta$
    with $\widetilde{\mc M}(k;\beta)\succeq 0$, \eqref{261023-1536} implies that $\cH(A_{\min})$ is pd, 
    the existence of a $\cZ(p)$--rm
    implies the existence of at most
    $(\Rank \widetilde{\mc M}(k;\beta)+1)$--atomic $\cZ(p)$--rm.
    This concludes the proof of Theorem 
    \ref{thm:hyperbolic-3-minimal-measures}.
\end{proof}

\subsection{Example}{\footnote{The \textit{Mathematica} file with numerical computations can be found on the link \url{https://github.com/ZalarA/TMP_cubic_reducible}.}} 
\label{ex:hyperbolic-type-3}
In this subsection we demonstrate 
the use of Theorems \ref{221023-1854} and 
    \ref{thm:hyperbolic-3-minimal-measures}
on a numerical example.

Let $\beta$ be a bivariate degree 6 sequence given by
\begin{spacing}{1.3}
$\beta_{00} = 1$,

$\beta_{10} =\frac{37}{54}$,
$\beta_{01}  = \frac{2}{3}$

$\beta_{20}  = \frac{769}{648},$
$\beta_{11} = \frac{25}{54},$
$\beta_{02}  =\frac{1201}{648},$

$\beta_{30}  =\frac{11737}{7776},$
$\beta_{21}  =\frac{337}{648}$
$\beta_{12}  =\frac{12025}{7776},$
$\beta_{03}  =\frac{913}{216},$

$\beta_{40}  =\frac{258721}{93312},$
$\beta_{31}  =\frac{4825}{7776},$
$\beta_{22}  =\frac{169153}{93312},$
$\beta_{13}  =\frac{9625}{2592},$
$\beta_{04}  =\frac{957985}{93312},$

$\beta_{50} =\frac{5088937}{1119744},$
$\beta_{41} =\frac{72097}{93312},$
$\beta_{32} =\frac{2497225}{1119744},$
$\beta_{23} =\frac{136801}{31104},$
$\beta_{14} =\frac{10813225}{1119744},$
$\beta_{05} =\frac{2326373}{93312},$

$\beta_{60}=\frac{115846129}{13436928},$
$\beta_{51}=\frac{1107625}{1119744},$
$\beta_{42}=\frac{38072593}{13436928},$
$\beta_{33}=\frac{2034025}{373248},$
$\beta_{24}=\frac{156268657}{13436928},$
$\beta_{15}=\frac{27728525}{1119744},$

\hfill $\beta_{06}=\frac{826264081}{13436928}.$
\end{spacing}
	We will prove below that $\beta$ 
    admits a $9$--atomic $\cZ(p)$--rm by applying Theorems \ref{221023-1854} and 
    \ref{thm:hyperbolic-3-minimal-measures}.
	It is easy to check that $\widetilde{\mc M}(3)$ is psd
	and satisfies only one column relation $
    2Y^2+X^2Y-Y^3=\mathbf{0}$.
        It turns out that $\eta=0$,
        $\Rank \cF(A_{\min})=\Rank \cF(A_{\min})_{\cB\setminus\{X^{k}\}}=
    \Rank \cF(A_{\min})_{\cB\setminus\{YX^{k-1}\}}=5$, whence 
    $A_{\min}$ satisfies ($\widetilde{\text{Hyp}}$)$_1$.
    By Theorem \ref{221023-1854}, $\beta$ has a $\cZ(p)$--rm.
    By Theorem 
    \ref{thm:hyperbolic-3-minimal-measures},
    there is a $\Rank \widetilde{M}(3)$--atomic $\cZ(p)$--rm (i.e., $9$--atomic).

\end{document}